\newtheorem{thm}{Theorem}[section]
\newtheorem{lemma}[thm]{Lemma}
\newtheorem{cor}[thm]{Corollary}
\newtheorem{prop}[thm]{Proposition}
\theoremstyle{definition} 
\newtheorem{mydef}[thm]{Definition}  
\newtheorem{example}[thm]{Example}
\newtheorem{question}[thm]{Question} 
\theoremstyle{remark}
\newtheorem{rmk}[thm]{Remark}
\newcommand\Ainf{{\rm inf}}
\newcommand\Core{{\rm Core}}
\newcommand\sCore{{\rm Core_{s}}}
\newcommand\im{{\rm im}}
\newcommand\perfd{{\rm perfd}}
\newcommand\spc{{\rm sp}}
\newcommand\Spa{{\rm Spa}}
\newcommand\Spec{{\rm Spec}}
\newcommand\supp{{\rm supp}} 
\newcommand\TopSpec{{\rm Spec_{Top}}}
\newcommand\Zar{{\rm Zar}}
\begin{document}

\title{Topological spectrum and perfectoid Tate rings}
\author{Dimitri Dine}
\date{}
\maketitle

\begin{abstract} 
We study the topological spectrum $\TopSpec(R)$ of a seminormed ring $R$ which we define as the space of prime ideals $\mathfrak{p}$ such that $\mathfrak{p}$ equals the kernel of some bounded power-multiplicative seminorm. For any seminormed ring $R$ we show that $\TopSpec(R)$ is a quasi-compact sober topological space. When $R$ is a perfectoid Tate ring we construct a natural homeomorphism \begin{equation*} \TopSpec(R) \simeq \TopSpec(R^{\flat}) \end{equation*} between the topological spectrum of $R$ and the topological spectrum of its tilt $R^{\flat}$. As an application, we prove that a perfectoid Tate ring $R$ is an integral domain if and only if its tilt is an integral domain.           
\end{abstract}

\section{Introduction}

In his thesis \cite{Scholze} introducing the revolutionary concept of a perfectoid space, Scholze constructed a functor $A \mapsto A^{\flat}$ from the category of certain Banach rings, called perfectoid rings, of characteristic $0$ to the category of perfect Banach rings of characteristic $p$. He proved that for any given perfectoid field $K$ this so-called tilting functor defines an equivalence between the category of perfectoid Banach $K$-algebras and the category of perfectoid (equivalently, perfect) Banach algebras over $K^{\flat}$. The tilt $A^{\flat}$ of a perfectoid ring $A$ can be described explicitly as the set of sequences $(f^{(n)})_{n \geq 0}$ of elements of $A$ such that $f^{(n+1)}$ is a $p$-th root of $f^{(n)}$ for each $n$, endowed with the ring operations $fg = (f^{(n)}g^{(n)})_{n \geq 0}$ and $f+g=((f+g)^{(n)})_{n \geq 0}$ where $(f+g)^{(n)}$ is defined as $(f+g)^{(n)}=\lim_{m \to \infty}(f^{(n+m)} + g^{(n+m)})^{p^m}$. There results a natural map $f \mapsto f^{\#} := f^{(0)}$ from $A^{\flat}$ to $A$ (this map is multiplicative, but not additive) inducing a map $x \mapsto x^{\flat}$ between the adic spectrum $\Spa(A, A^{+})$ of a perfectoid affinoid $K$-algebra $(A,A^{+})$ and the adic spectrum of its tilt $(A^{\flat},A^{\flat+})$. Namely, we can define $x^{\flat}$ by $f \mapsto |f(x^{\flat})| := |f^{\#}(x)|$. In Corollary 6.7 of loc.~cit.~Scholze proved that the map $x \mapsto x^{\flat}$ is in fact a homeomorphism identifying rational subsets. The analogous map from the Berkovich spectrum $\mathcal{M}(A)$ of $A$ to that of $A^{\flat}$ is a homeomorphism as well. Moreover, as observed by Kedlaya in \cite{Kedlaya17}, Corollary 2.4.5, the tilting equivalence induces a bijection between the set of closed ideals $I$ of $A$ with $A/I$ uniform and the set of closed perfect ideals of $A^{\flat}$. 

However, it is a priori not easy to determine whether the quotient $A/I$ is a uniform Banach algebra for some given closed ideal $I$ in $A$. For example, it is a priori a non-trivial question whether for every maximal ideal $\mathfrak{m}$ in a perfectoid algebra $A$ the field $A/\mathfrak{m}$ must be a uniform Banach field (if it is, then it is automatically a perfectoid field by Theorem 3.7 or Theorem 4.2 in \cite{Kedlaya18}). Furthermore, the map $f \mapsto f^{\#}$ is far from being surjective, so it is in general rather hard to relate the ideal structure or algebraic properties of a perfectoid algebra $A$ to those of $A^{\flat}$. E.g., if $A^{\flat}$ is an integral domain, is $A$ necessarily an integral domain? In a similar vein, if $x \in \Spa(A,A^{+})$ is a continuous valuation such that $x^{\flat} \in \Spa(A^{\flat},A^{\flat +})$ is an absolute value, it is a priori unclear if $x$ must be itself an absolute value.

This paper is devoted to the study of ideals in perfectoid algebras and, more generally, in perfectoid Tate rings in the sense of Fontaine (see the definition in \cite{Fontaine}). In the following $\mathcal{M}(R)$ denotes the Berkovich spectrum of a seminormed ring $R$ and $\phi \mapsto \phi^{\#}$ is the inverse of the homeomorphism $\mathcal{M}(R) \simeq \mathcal{M}(R^{\flat}), \phi \mapsto \phi^{\flat}$, between the Berkovich spectrum of a perfectoid Tate ring $R$ and the Berkovich spectrum of its tilt.  We note that Berkovich \cite{Berkovich} only defined his spectrum (called the Gelfand spectrum in that work) for Banach rings but the same definition works also for general seminormed rings leading to the same basic properties. We call an ideal $I$ of a seminormed ring $R$ \textit{spectrally reduced} if $I$ is the kernel of some bounded power-multiplicative seminorm on $R$ (equivalently, $I$ is an intersection of prime ideals of the form $\ker(\phi)$ for some $\phi \in \mathcal{M}(R)$). To formulate our results we need a notion of 'spectrum' for a seminormed ring $R$ which takes into account the seminorm on $R$ but is still algebraic in nature.
\begin{mydef}[Definition \ref{Definition of the topological spectrum}]~
The topological spectrum $\TopSpec(R)$ of a seminormed ring $R$ is the set of spectrally reduced prime ideals of $R$, endowed with its subspace topology induced from the Zariski topology on $\Spec(R)$. 
\end{mydef} 
It follows from a theorem of Berkovich (\cite{Berkovich}, Theorem 1.2.1) that the topological spectrum is non-empty for any seminormed ring $R$.  When $R$ is a Banach ring (i.e., complete), every maximal ideal is spectrally reduced and thus the topological spectrum contains the set of all maximal ideals of $R$ (more general seminormed rings which enjoy the same property are characterized in a number of ways in Section 3). With regard to the topological properties of $\TopSpec(R)$ we prove the following result. 
\begin{thm}[Theorem \ref{Radical ideals in Noetherian Banach algebras}, Corollary \ref{Sobriety}, Theorem \ref{The topological spectrum is quasi-compact}] \label{Zeroth theorem} For any seminormed ring $R$ the topological space $\TopSpec(R)$ is quasi-compact and sober. If $R$ is a Noetherian Banach algebra over some nonarchimedean field, then every radical ideal of $R$ is spectrally reduced and thus $\TopSpec(R)$ coincides with $\Spec(R)$. 
\end{thm}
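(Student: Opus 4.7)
For quasi-compactness, my plan is to transfer an arbitrary basic open cover of $\TopSpec(R)$ to the Berkovich spectrum $\mathcal{M}(R)$, which is compact by Berkovich's theorem. Given $\TopSpec(R)=\bigcup_i D(f_i)\cap\TopSpec(R)$, I would define $U_i:=\{\phi\in\mathcal{M}(R):\phi(f_i)\neq 0\}$. Each $\phi\in\mathcal{M}(R)$ has $\ker\phi\in\TopSpec(R)$ (being the kernel of a bounded multiplicative, hence power-multiplicative, seminorm), and $\ker\phi$ lies in some $D(f_i)$, so $\phi\in U_i$; hence the $U_i$ cover $\mathcal{M}(R)$, and compactness yields a finite subcover $U_{i_1},\ldots,U_{i_n}$. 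For any $\mathfrak{p}\in\TopSpec(R)$ written as $\bigcap_\alpha\ker\phi_\alpha$ with $\phi_\alpha\in\mathcal{M}(R)$, the assumption that $\mathfrak{p}$ contains every $f_{i_k}$ would force each $\phi_\alpha$ to vanish on all $f_{i_k}$, contradicting $\phi_\alpha\in\bigcup_k U_{i_k}$. Hence some $f_{i_k}\notin\mathfrak{p}$, yielding the required finite subcover.

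For sobriety, let $C\subseteq\TopSpec(R)$ be an irreducible closed subset and set $J:=\bigcap_{\mathfrak{p}\in C}\mathfrak{p}$. Since each $\mathfrak{p}\in C$ is itself an intersection of kernels of elements of $\mathcal{M}(R)$, so is $J$; hence $J$ is spectrally reduced, and a routine comparison with any closed presentation $C=V(I)\cap\TopSpec(R)$ (using $J\supseteq I$) shows $C=V(J)\cap\TopSpec(R)$. If $J$ were not prime, pick $a,b\notin J$ with $ab\in J$; then $V(J+(a))\cap\TopSpec(R)$ and $V(J+(b))\cap\TopSpec(R)$ are proper closed subsets of $C$ (properness because otherwise $a$ resp.\ $b$ would lie in every $\mathfrak{p}\in C$ and thus in $J$) whose union is $C$, contradicting irreducibility. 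So $J\in\TopSpec(R)$ is a generic point of $C$, and uniqueness follows from $T_0$-ness inherited from $\Spec(R)$.

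For the Noetherian statement, since every radical ideal in a Noetherian ring is a finite intersection of minimal primes and spectrally reduced ideals are closed under arbitrary intersection, it suffices to show that every prime $\mathfrak{p}$ in a Noetherian Banach $K$-algebra $A$ is spectrally reduced. Using that ideals in such $A$ are automatically closed, the quotient $A/\mathfrak{p}$ is again a Noetherian Banach $K$-algebra and is a domain. I would then invoke the classical fact that the spectral seminorm on a reduced Noetherian Banach $K$-algebra is a norm, equivalently $\bigcap_{\phi\in\mathcal{M}(A/\mathfrak{p})}\ker\phi=(0)$; its pullback to $A$ is then a bounded power-multiplicative seminorm with kernel exactly $\mathfrak{p}$. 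The main obstacle is this last input: for strict affinoid algebras it follows from Tate's Nullstellensatz together with the Jacobson property---writing $\mathfrak{p}$ as the intersection of the maximal ideals containing it and using that each residue field is a finite extension of $K$ carrying a canonical absolute value---but in the stated generality one must either establish the analogous facts directly for Noetherian Banach $K$-algebras or reduce to the affinoid case.
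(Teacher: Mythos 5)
Your quasi-compactness and sobriety arguments are correct, though they take somewhat different routes than the paper's. For quasi-compactness you pull the cover back to the compact space $\mathcal{M}(R)$ via $U_i = \{\phi : \phi(f_i)\neq 0\}$ and argue directly; the paper instead first replaces $R$ by its Zariskisation $R^{\Zar}$ (Lemma~\ref{Topological spectrum and Zariskisation}), where every maximal ideal is spectrally reduced, so that the cover forces the $f_i$ to generate the unit ideal. For sobriety, your computation that $J=\bigcap_{\mathfrak{p}\in C}\mathfrak{p}$ is a spectrally reduced prime with $C=\mathcal{V}(J)$ is essentially the content of Lemma~\ref{Closed irreducible subsets}; the paper argues contrapositively that if $I$ is not prime then $\mathcal{V}(I)=\mathcal{V}(I_1)\cup\mathcal{V}(I_2)$ is a proper decomposition. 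Both routes are sound given what has been established earlier in the paper.

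The Noetherian part has a genuine gap, and you have located it yourself. Your reduction to showing that every prime $\mathfrak{p}$ of a Noetherian Banach $K$-algebra $A$ is spectrally reduced is fine. But the statement you then invoke --- that the spectral seminorm on the Noetherian Banach domain $B:=A/\mathfrak{p}$ is a norm --- is not an available classical input at this generality: the paper explicitly flags it as classically known only for affinoid algebras and new in general, and proving it is the point of Section~2. The paper's argument is the following. In a Noetherian Banach $K$-algebra every ideal, in particular every principal ideal $(x)_B$, is closed; hence $a\mapsto ax$ is strict by the Open Mapping Theorem, i.e.~$\sCore(B)=B$. Consequently, for $x\neq 0$ the constant $C_x:=\lVert x\rVert\cdot\inf\{\,c>0:\lVert f\rVert\leq c\lVert fx\rVert\ \forall f\in B\,\}$ is finite, and the telescoping estimate $\lVert x^n\rVert C_x^{\,n}\geq \lVert x\rVert\,\lVert x^{n-1}\rVert C_x^{\,n-1}\geq\cdots\geq\lVert x\rVert^{n}$ yields $|x|_{\spc}\geq \lVert x\rVert/C_x>0$. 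This is exactly Lemma~\ref{core and spectral seminorm}(i), and it is the lemma your proposal is missing. (As a side remark, your reduction to primes rather than applying the argument directly to $A/J$ for $J$ radical is in fact exactly what is needed to invoke that lemma, whose hypothesis is that $B$ has no non-zero divisors of zero.)
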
 
From the second statement of the theorem we deduce the following structural result for Noetherian Banach algebras. It is classically known for affinoid algebras, both in the sense of Tate and in the broader sense of Berkovich, but appears to be new in general. 
\begin{cor}[Corollary \ref{Noetherian Banach algebras are Jacobson}] \label{Zeroth corollary} Every Noetherian Banach algebra over a nonarchimedean field is a Jacobson ring. 
\end{cor}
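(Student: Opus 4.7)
The plan is to show that every prime ideal of $R$ equals the intersection of the maximal ideals containing it. Since a commutative ring is Jacobson if and only if every quotient by a prime ideal has trivial Jacobson radical, and since every prime of $R$ is closed (as the kernel of a bounded power-multiplicative seminorm, by Theorem \ref{Radical ideals in Noetherian Banach algebras}), we may replace $R$ with $R/\mathfrak{p}$ and reduce to the following claim: for every Noetherian Banach $k$-algebra $R$ which is an integral domain and every $0 \neq f \in R$, there exists a maximal ideal $\mathfrak{m}$ of $R$ with $f \notin \mathfrak{m}$.

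Applying Theorem \ref{Radical ideals in Noetherian Banach algebras} to the zero ideal of such an $R$ gives $(0) = \bigcap_{\phi \in S} \ker(\phi)$ for some set $S \subseteq \mathcal{M}(R)$; since $f \neq 0$, there is some $\phi_0 \in S$ with $\phi_0(f) > 0$, and hence $\ker(\phi_0)$ is a prime ideal not containing $f$. This does not yet conclude the proof, however, because $\ker(\phi_0)$ need not be maximal (the Gauss point of the Tate algebra $k\langle T\rangle$ is the prototypical obstruction, with kernel $(0)$). The main obstacle is therefore to upgrade this prime ideal to a maximal one.

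I would handle this by Noetherian induction on the Krull dimension of $R$. The base case $\dim R = 0$ is immediate since $R$ is then Artinian and every prime is maximal. For the inductive step $\dim R > 0$: if $\ker(\phi_0) \neq (0)$, then $R/\ker(\phi_0)$ is a Noetherian Banach $k$-algebra of strictly smaller Krull dimension, and the induction hypothesis applied to the nonzero image of $f$ in $R/\ker(\phi_0)$ yields a maximal ideal with the desired property, which lifts to a maximal ideal of $R$ avoiding $f$. The delicate remaining subcase is when every $\phi$ in the spectral decomposition of $(0)$ with $\phi(f) > 0$ has kernel equal to $(0)$ — that is, when $R$ admits a multiplicative norm on which $f$ is nonvanishing. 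Here I would apply Theorem \ref{Radical ideals in Noetherian Banach algebras} to an auxiliary radical ideal $\sqrt{(g)}$ for a non-unit $g \in R$ (which exists since $\dim R > 0$) chosen so that $f \notin \sqrt{(g)}$, thereby obtaining a nonzero prime of $R$ avoiding $f$ and reducing the Krull dimension. The crux of the proof, and the main technical difficulty I anticipate, is showing that such an auxiliary $g$ can always be selected; this should follow from exploiting the full strength of Theorem \ref{Radical ideals in Noetherian Banach algebras} to ensure $R$ admits enough spectrally reduced primes avoiding $f$, the alternative being a rather constrained situation — every nonzero non-unit dividing some power of $f$ — which I expect to rule out using the interplay between the multiplicative norm and the Noetherian hypothesis.
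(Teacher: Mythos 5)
Your reduction to showing that the Jacobson radical of every Noetherian Banach domain vanishes is the right first move (and it coincides with what the paper effectively does), but from that point your proof diverges from the paper's and, more importantly, it has a genuine unresolved gap that you yourself flag as ``the crux.''

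Concretely, the problematic step is this. In the subcase where some bounded multiplicative norm $\phi_0$ on the Noetherian Banach domain $R$ has $\ker(\phi_0)=(0)$, you need, for a non-unit $0\neq f\in R$, a non-unit $g$ with $f\notin\sqrt{(g)}$. Unwinding, this is equivalent to ruling out the possibility that every nonzero prime of $R$ contains $f$, i.e.\ that $R_f=\Frac(R)$. That a Noetherian Banach domain over a nontrivially valued nonarchimedean field cannot have $R_f=\Frac(R)$ for a non-unit $f$ is precisely the nontrivial content of the corollary (it says, roughly, that $R$ cannot behave like a one-dimensional local domain such as a DVR). You cannot get this from Theorem \ref{Radical ideals in Noetherian Banach algebras} alone, which only tells you that radical ideals are spectrally reduced --- it gives no lower bound on how many primes exist, and in particular does not by itself exclude the ``every nonzero prime contains $f$'' scenario. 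In addition, your induction is on Krull dimension, but finiteness of $\dim R$ for a general Noetherian Banach $K$-algebra is not established anywhere and is not automatic; you would need a separate argument (or to switch to Noetherian induction on ideals).

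The paper closes exactly this gap by a Banach-algebraic argument that sidesteps the prime ideal combinatorics entirely: if $x$ lies in the Jacobson radical of $R$, its spectrum $\spc(x)$ is $\{0\}$, so picking $\lambda_n\in K^\times$ with $\lambda_n\to 0$ gives units $x-\lambda_n$ converging to the non-unit $x$; hence $x\in\partial R^\times$, which by Schikhof's Lemma (Lemma \ref{Schikhof's lemma}) consists of topological divisors of zero. Since a Noetherian Banach domain has $\sCore=R$ and hence no nonzero topological divisors of zero (Example \ref{Normed rings with full core}(1) and Lemma \ref{The core and top. divisors of zero}), this forces $x=0$. This is Theorem \ref{Jacobson-semisimple Banach algebras} applied to $R/\mathfrak{p}$. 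The key tool you are missing is precisely this ``spectrum $\{0\}\Rightarrow$ boundary of units $\Rightarrow$ topological divisor of zero'' chain, which is what makes the Noetherian hypothesis (via closedness of principal ideals and the strict core) yield a global conclusion about maximal ideals. Without some substitute for Schikhof's lemma, I do not see how your Krull-dimension approach can be completed.
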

In non-Noetherian situations the topological spectrum can differ quite dramatically from the usual prime ideal spectrum. For example, if $K$ is a nonarchimedean field with dense valuation and $X$ is an infinite totally disconnected compact Hausdorff space, then every closed prime ideal in the Banach algebra $C(X, K)$ of continuous $K$-valued functions on $X$ is already maximal (\cite{Kaplansky}, Theorem 29) but there exist non-maximal prime ideals in $C(X, K)$ (loc.~cit., example after the proof of Theorem 30).    

Our main result relates the topological spectrum of a perfectoid Tate ring $R$ to the topological spectrum of its tilt. It leads to affirmative answers to all of the questions raised at the beginning of this introduction.
\begin{thm}[Theorem \ref{Spectrally reduced quotients are perfectoid}, Proposition \ref{Spectrally reduced ideals and tilting}, Theorem \ref{Prime ideals and tilting}] \label{First theorem} Let $R$ be a perfectoid Tate ring with tilt $R^{\flat}$ and let $I$ be a spectrally reduced ideal of $R$. Then the quotient ring $R/I$, equipped with the quotient norm, is a perfectoid Tate ring. There is an inclusion-preserving bijection $I \mapsto I^{\flat}$ between the set of spectrally reduced ideals of $R$ and the set of spectrally reduced ideals of $R^{\flat}$ given by the maps \begin{equation*} I \mapsto I^{\flat} := \{\, f = (f^{(n)})_{n \in \mathbb{N}_{0}} \mid f^{(n)} \in I \, \text{for all}~n\} \end{equation*} and \begin{equation*} J \mapsto J^{\#} := \bigcap_{\substack{\phi \in \mathcal{M}(R^{\flat}) \\ J \subseteq \ker(\phi)}}\ker(\phi^{\#}). \end{equation*} This bijection sends prime ideals to prime ideals and thus defines a homeomorphism \begin{equation*} \TopSpec(R) \simeq \TopSpec(R^{\flat}). \end{equation*}\end{thm}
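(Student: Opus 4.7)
The plan is to prove the three claims in sequence. For the first, that $R/I$ is perfectoid when $I$ is spectrally reduced, I would descend the bounded power-multiplicative seminorm $\|\cdot\|'$ with $\ker\|\cdot\|' = I$ to a power-multiplicative norm on $R/I$ and verify its equivalence with the quotient norm; this makes $R/I$ a uniform Tate ring. A pseudouniformiser $\pi$ of $R$ remains one in $R/I$ because $\phi(\pi) \neq 0$ for every $\phi \in \mathcal{M}(R)$, forcing $\pi \notin I$. Perfectoidness of $R/I$ then follows from the general principle that a uniform Tate quotient of a perfectoid Tate ring by a closed ideal is perfectoid, since the Frobenius surjectivity on $R^\circ/\pi$ descends to the quotient.

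For the second claim, I would apply the tilting functor to the strict surjection $R \twoheadrightarrow R/I$, which, since $R/I$ is perfectoid, yields a strict surjection $R^\flat \twoheadrightarrow (R/I)^\flat$ whose kernel is precisely $\{(f^{(n)}) : f^{(n)} \in I \text{ for all } n\} = I^\flat$ by the explicit description of the tilt as an inverse limit. Hence $R^\flat/I^\flat \cong (R/I)^\flat$ is perfectoid and uniform, making $I^\flat$ a spectrally reduced ideal of $R^\flat$. For the inverse direction, writing every spectrally reduced $J \subseteq R^\flat$ as the intersection $\bigcap_{J \subseteq \ker\phi} \ker\phi$ over $\phi \in \mathcal{M}(R^\flat)$ and transporting through the homeomorphism $\mathcal{M}(R^\flat) \cong \mathcal{M}(R)$, $\phi \mapsto \phi^\#$, yields the spectrally reduced ideal $J^\#$ of the statement. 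That the two maps are mutually inverse hinges on the elementwise identity $\ker(\phi^\flat) = (\ker\phi)^\flat$, which follows from $\phi^\flat(f) = \phi(f^\#)$ together with the relations $(f^{(n)})^{p^n} = f^{(0)}$. Inclusion preservation is immediate, and $(I_1 \cap I_2)^\flat = I_1^\flat \cap I_2^\flat$ is verified coordinatewise, so the bijection is an isomorphism of lattices.

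To deduce the homeomorphism $\TopSpec(R) \simeq \TopSpec(R^\flat)$, I would establish the following lattice-theoretic characterisation: a spectrally reduced ideal $I$ is prime if and only if it is meet-irreducible in the lattice of spectrally reduced ideals. One direction uses $I_1 I_2 \subseteq I_1 \cap I_2$ combined with primality; the other, starting from $ab \in I$ with $a, b \notin I$ and writing $I = \bigcap_{\phi \in S}\ker\phi$, partitions $S$ into the subsets where $\phi(a) = 0$ and where $\phi(b) = 0$, producing two strictly larger spectrally reduced ideals whose intersection is $I$. Since the bijection preserves intersections and inclusions, it preserves meet-irreducibility and hence primality. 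Finally, the subspace Zariski topology on $\TopSpec(R) \subseteq \Spec(R)$ has closed sets of the form $\{\mathfrak{p} \in \TopSpec(R) : \mathfrak{p} \supseteq I\}$ for $I$ spectrally reduced (since any such closed set depends only on the smallest spectrally reduced ideal containing the defining ideal), and these correspond under the inclusion-preserving bijection, giving the homeomorphism.

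The principal difficulty is the meet-irreducibility characterisation of primes among spectrally reduced ideals: it converts primality, a purely algebraic condition, into a lattice-theoretic condition that is manifestly preserved by the tilting bijection. Without it, one would have to argue directly that a perfectoid Tate integral domain of characteristic $p$ lifts to a perfectoid Tate integral domain, which is hard to approach head-on since the sharp map is multiplicative but neither additive nor surjective. The approach via the spectrally reduced lattice sidesteps this and extracts the correspondence of primes as a formal consequence of the lattice isomorphism already established.
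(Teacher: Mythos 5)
Your treatments of parts 2 and 3 essentially track the paper: for the bijection you tilt the strict surjection $R\twoheadrightarrow R/I$ to obtain $R^{\flat}\twoheadrightarrow (R/I)^{\flat}$ with kernel $I^{\flat}$, exactly as in Lemma \ref{The tilt of an ideal and perfectoid quotients}; for primes you use meet-irreducibility in the lattice of spectrally reduced ideals, which is a clean minor variant of the paper's argument via spectral radicals $I_{\spc}$ (both hinge on the bijection commuting with intersections and both exploit, directly or implicitly, the primality of the ideals $\ker(\phi)$). Those parts are fine.

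The genuine gap is in part 1. You propose to descend the power-multiplicative seminorm with kernel $I$ to a power-multiplicative norm on $R/I$ and to ``verify its equivalence with the quotient norm.'' But this equivalence is not a verification; it is the entire nontrivial content of the first assertion. The hypothesis that $I$ is spectrally reduced says only that the spectral seminorm on $R/I$ is a norm, i.e.\ that the canonical map $R/I\to (R/I)^{u}$ to the uniformization is injective. Uniformity of $R/I$ is the strictly stronger statement that this map is also surjective (equivalently, an isomorphism of Banach rings), and a priori there is no reason for the quotient norm to be equivalent to any power-multiplicative norm. The paper establishes this by invoking Bhatt--Scholze (\cite{Prismatic}, Theorem 7.4): it shows $S=R^{\circ}/(I\cap R^{\circ})$ is semiperfectoid, uses the existence and surjectivity of $S\to S_{\perfd}$, identifies $S_{\perfd}[\varpi^{-1}]$ with $(R/I)^{u}$ via the universal property of uniformization (Lemma \ref{Universal property of uniformization}), and concludes that $R/I\to (R/I)^{u}$ is surjective. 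The paper is explicit that this step rests on Andr\'e's flatness lemma and that the uniformity of such quotients was an open problem (see Remark \ref{Remark on spectral radicals} and Kedlaya's Remark~2.9.16 in \cite{Kedlaya17}). Your ``general principle that a uniform Tate quotient of a perfectoid Tate ring by a closed ideal is perfectoid'' is the easy implication (it is \cite{Kedlaya-Liu2}, Theorem 3.3.18), but it presupposes uniformity, which your proposal never actually proves. Everything downstream in your second paragraph --- that $R^{\flat}/I^{\flat}\cong(R/I)^{\flat}$, that $I^{\flat}$ is spectrally reduced --- relies on $R/I$ being perfectoid, so the omission propagates through the whole argument.
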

Note that if $I$ is any ideal of $R$, then the analog of the set $I^{\flat}$ which we used in the above theorem for spectrally reduced ideals need not a priori be an ideal of $R^{\flat}$, due to the highly transcendental nature of the definition of the sum $f + g$ of two elements $f, g \in R^{\flat}$. However, it is an ideal when $I$ is assumed to be a \textit{closed} ideal of $R$, and spectrally reduced ideals are always closed. The first assertion of Theorem \ref{First theorem} (concerning quotients by spectrally reduced ideals) is essentially a formal consequence of a recent result due to Bhatt and Scholze (\cite{Prismatic}, Theorem 7.4) whose proof relies on a key flatness lemma first proved by Andr\'{e} in the course of his work on the direct summand conjecture (see \cite{Andre18-2}, \S 2.5, or \cite{Bhatt}, Theorem 2.3). Assuming this first assertion, the bijection in our theorem coincides with the one introduced in \cite{Kedlaya17}, Corollary 2.4.5, since it is known that closed radical ideals in a perfectoid Tate ring of characteristic $p$ are spectrally reduced (in loc.~cit., Kedlaya works with closed perfect ideals of $R^{\flat}$; but if $J \subsetneq R^{\flat}$ is a perfect ideal and $f \in R^{\flat}$ is an element with $f^{m} \in J$, then there exists an integer $k \geq 0$ such that $p^{k} \geq m$ and then $f^{p^{k}} = f^{p^{k}-m}f^{m} \in J$ showing that perfect ideals in $R^{\flat}$ are the same as radical ideals). However, the fact that this bijection identifies the sets of spectrally reduced \textit{prime} ideals was not observed in \cite{Kedlaya17} and appears to be new. As a consequence of Theorem \ref{First theorem} we obtain the following result. 
\begin{cor}[Corollary \ref{Integral domains and tilting}] \label{First corollary} A perfectoid Tate ring $R$ is an integral domain if and only if its tilt $R^{\flat}$ is an integral domain.\end{cor}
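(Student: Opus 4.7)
The plan is to apply Theorem \ref{First theorem} directly to the zero ideals. The first step is to verify that $(0)$ is a spectrally reduced ideal in any perfectoid Tate ring. Since perfectoid Tate rings are uniform, the spectral seminorm $\rho$ on $R$ — which is a bounded power-multiplicative seminorm — is in fact equivalent to the given norm and so has trivial kernel. Equivalently, by Berkovich's formula $\rho(f) = \sup_{\phi \in \mathcal{M}(R)} \phi(f)$, the intersection $\bigcap_{\phi \in \mathcal{M}(R)} \ker(\phi)$ vanishes, so $(0)$ is an intersection of prime ideals of the form $\ker(\phi)$. The same reasoning applied to $R^{\flat}$ (which is perfectoid, hence uniform) shows $(0) \in \TopSpec(R^{\flat})$.

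Next I would compute the image of $(0) \subset R$ under the bijection $I \mapsto I^{\flat}$ of Theorem \ref{First theorem}. Straight from the definition,
\[
(0)^{\flat} = \{\, f = (f^{(n)})_{n \in \mathbb{N}_0} \in R^{\flat} \mid f^{(n)} = 0 \text{ for all } n \,\} = (0) \subset R^{\flat},
\]
so the zero ideal on the one side corresponds to the zero ideal on the other. Theorem \ref{First theorem} asserts that the bijection identifies spectrally reduced prime ideals of $R$ with spectrally reduced prime ideals of $R^{\flat}$. Combining these facts yields the conclusion: $(0)$ is a prime ideal of $R$ if and only if $(0)$ is a prime ideal of $R^{\flat}$, i.e., $R$ is an integral domain if and only if $R^{\flat}$ is.

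There is essentially no obstacle beyond invoking Theorem \ref{First theorem}; the only check is uniformity of perfectoid Tate rings, which ensures $(0)$ belongs to both topological spectra. The entire content of Corollary \ref{First corollary} is thus encoded in the tilting compatibility of $\TopSpec$ established previously. An alternative, completely equivalent, final step is to note that $(0)$ is the unique minimal spectrally reduced ideal on each side (by uniformity) and that the bijection is inclusion-preserving, so minima match.
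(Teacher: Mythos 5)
Your proof is correct and follows the same route as the paper: observe that $(0)$ is spectrally reduced by uniformity, check that $(0)^{\flat} = (0)$, and invoke the homeomorphism $\TopSpec(R) \simeq \TopSpec(R^{\flat})$ from Theorem \ref{Prime ideals and tilting}. The paper states this in two sentences; you have merely spelled out the same argument in more detail.
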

We also derive several other consequences from Theorem \ref{First theorem}. For example, we show that a bounded multiplicative seminorm $\phi \in \mathcal{M}(R)$ (or a continuous valuation $x \in \Spa(R, R^{+})$, where $R^{+}$ is a ring of integral elements) is an absolute value if and only if the corresponding seminorm or valuation on $R^{\flat}$ is an absolute value (Corollary \ref{Absolute values and tilting}, Corollary \ref{Absolute values of higher rank and tilting}). By comparing Shilov boundaries of a perfectoid Tate ring and its tilt we deduce from this statement that a perfectoid Tate ring $R$ which contains a nonarchimedean field has no non-zero topological divisors of zero if and only if the same property holds true for its tilt $R^{\flat}$ (Corollary \ref{Topological zero-divisors and tilting}). \newline    
\linebreak
\textbf{Outline.} Let us conclude this introduction with a brief overview of the individual sections. In Section 2 we explain first basic properties of the topological spectrum of a seminormed ring. We introduce the notion of the core of a normed ring and use it to prove the second part of Theorem \ref{Zeroth theorem}~and Corollary \ref{Zeroth corollary}. Section 3 continues our discussion of seminormed rings and algebras begun in Section 2. It is devoted to Zariskian seminormed rings, a class of seminormed rings which is strictly bigger than the class of Banach rings but whose members still share some of the pleasant properties of the latter. We use a notion of 'Zariskisation' to study properties of the space $\TopSpec(R)$ for a general seminormed ring $R$. In particular, we prove the first part of Theorem \ref{Zeroth theorem}~saying that the topological spectrum is quasi-compact and sober. In Section 4 we prove our results on perfectoid Tate rings; in particular, we prove Theorem \ref{First theorem}~and Corollary \ref{First corollary}. \newline        
\linebreak
\textbf{Notation and terminology.} All rings considered in this paper will be commutative and with $1$. We will regard the element $0$ in a ring as a zero-divisor. Recall that a seminorm on an abelian group $G$ is a function $\lVert \cdot \rVert: G \to \mathbb{R}_{\geq 0}$ such that $\lVert 0 \rVert = 0$ and such that the triangle inequality $\lVert f + g \rVert \leq \lVert f \rVert + \lVert g \rVert$ holds for any two elements $f, g \in G$. A seminorm is said to be nonarchimedean if for any $f, g \in G$ the stronger inequality $\lVert f + g \rVert \leq \max(\lVert f \rVert, \lVert g \rVert)$ is satisfied. Unless explicitly stated otherwise we only consider nonarchimedean seminorms in this paper. A seminorm $\lVert \cdot \rVert$ is called a norm if $0$ is the only element of $G$ which is mapped to $0$ under $\lVert \cdot \rVert$. Two seminorms $\lVert \cdot \rVert_{1}$ and $\lVert \cdot \rVert_{2}$ are called equivalent if there exist constants $C_{1}, C_{2} > 0$ such that \begin{equation*} \lVert f \rVert_{2} C_{2} \leq \lVert f \rVert_{1} \leq C_{1} \lVert f \rVert_{2} \end{equation*}holds for all elements $f \in G$. 

For a ring $A$, a seminorm on $A$ is a seminorm on the underlying additive abelian group of $A$ such that $\lVert 1 \rVert = 1$. A seminorm $\lVert \cdot \rVert$ on a ring $A$ is said to be submultiplicative (or a ring seminorm) if it is compatible with multiplication in the sense that \begin{equation*} \lVert fg \rVert \leq \lVert f \rVert \lVert g \rVert \end{equation*}for any two elements $f, g \in A$. For any abelian group $G$ a seminorm $\lVert \cdot \rVert$ on $G$ equips $G$ with the structure of a topological group by letting the open balls around $0$ with respect to this seminorm form a fundamental system of open neighbourhoods of zero. Thereby two equivalent seminorms always define the same topology on $G$. If $\lVert \cdot \rVert$ is a submultiplicative seminorm on a ring $A$, then the resulting topological group on $A$ is even a topological ring (that is, the multiplication map $A \times A \to A$ is continuous). By a (semi)normed ring (respectively, a Banach ring) $(A, \lVert \cdot \rVert)$ we will mean a ring $A$ equipped with a submultiplicative (semi)norm (resp., a complete submultiplicative seminorm) $\lVert \cdot \rVert$. In this paper we tacitly assume all our seminorms to be submultiplicative. We often also tacitly identify equivalent seminorms on a ring $A$. Oftentimes, we also suppress the seminorm $\lVert \cdot \rVert$ from the notation for a seminormed ring and simply write $A$ instead of $(A, \lVert \cdot \rVert)$. In such cases, a choice of seminorm (or an equivalence class of seminorms) on $A$ is to be understood.    

The trivial norm on a ring $A$ is the norm defined by $\lVert f \rVert = 1$ for all $f \in A, f \neq 0$. In this paper we only consider nontrivially normed rings. A seminorm on a ring $A$ is said to be power-multiplicative (resp., multiplicative) if $\lVert f^{n} \rVert = \lVert f \rVert^{n}$ for all $f$ and all integers $n \geq 0$ (resp., $\lVert fg \rVert = \lVert f \rVert \lVert g \rVert$ for all $f, g \in A$). A multiplicative norm on a ring $A$ is also called an absolute value. By a nonarchimedean field we mean a field $K$ complete with respect to a (nonarchimedean) absolute value on it. 

A homomorphism $\varphi: G \to H$ between seminormed abelian groups is said to be bounded if there exists a constant $C > 0$ with $\lVert \varphi(g) \rVert \leq C \lVert g \rVert$ for all $g \in G$. It is said to be submetric if the above constant $C$ can be taken to be equal to $1$. Every bounded homomorphism is continuous and the converse is also true for homomorphisms between seminormed spaces over a nonarchimedean field $K$. Indeed, the latter assertion is proved for normed $K$-vector spaces in \cite{BGR}, Corollary 2.1.8/3, and the general case follows from this by observing that every continuous $K$-linear map $f: V \to W$ of seminormed $K$-vector spaces sends elements of seminorm $0$ to elements of seminorm $0$ and then applying loc.~cit.~to the induced continuous map of normed $K$-vector spaces \begin{equation*} \overline{f}: V/\ker(\lVert\cdot\rVert_{V}) \to W/\ker(\lVert\cdot\rVert_{W}). \end{equation*}For a seminormed ring $(A, \lVert \cdot \rVert)$ we call another seminorm $\phi$ on $A$ bounded if the identity homomorphism is bounded as a homomorphism from $(A, \lVert \cdot \rVert)$ to $(A, \phi)$. In other words, $\phi$ is called bounded if there exists a constant $C > 0$ such that $\phi(f) \leq C \lVert f \rVert$ for all $f \in A$. If $\varphi: A \to B$ is a bounded homomorphism between seminormed rings and the seminorm on $A$ is power-multiplicative, then $\varphi$ is automatically submetric (see, for example, \cite{BGR}, Proposition 1.3.1/2). In particular, a power-multiplicative bounded seminorm $\phi$ on a seminormed ring $(A, \lVert \cdot \rVert)$ satisfies $\phi(f) \leq \lVert f \rVert$ for all $f \in A$. 

A seminormed ring $(A, \lVert \cdot \rVert)$ is called uniform if its topology can be defined by a bounded power-multiplicative seminorm. For every seminormed ring $A$ there exists a maximal bounded power-multiplicative seminorm on $A$, called the spectral seminorm. We refer the reader to \cite{Berkovich}, 1.3, or to (the discussion preceding) Remark \ref{The spectral seminorm for non-complete rings} for its definition and alternative descriptions. A seminormed ring is uniform if and only if the spectral seminorm defines the same topology on it as the original seminorm of $A$. For a seminormed ring $(A, \lVert \cdot \rVert)$ we denote by $A^{u}$ the completion of $A$ with respect to its spectral seminorm. The uniform Banach ring $A^{u}$ is called the uniformization of $A$.   

A Huber ring $A$ is a topological ring that contains an open adic subring $A_0$ (the so-called ring of definition) with a finitely generated ideal of definition. A Huber ring $A$ is called a Tate ring if it contains a topologically nilpotent unit. We sometimes refer to a topologically nilpotent unit in a Tate ring as a pseudo-uniformizer. In the sequel we will use without further comment the following well-known relationship between Tate rings and seminormed rings (see, for example, \cite{Kedlaya-Liu}, Remark 2.4.4). If $A$ is a Tate ring (respectively, a Hausdorff Tate ring) with topologically nilpotent unit $\varpi$, then, for any $c \in (0,1)$, 
\begin{equation*} \lVert x \rVert := \inf \{\, c^n : n \in \mathbb{Z}, x \in \varpi^{n}A_{0} \,\},  \\\ x \in A \end{equation*} 
defines a seminorm (respectively, a norm) which induces the topology of $A$ (more generally, one can equip any Huber ring with a natural structure of a seminormed ring, as is explained for Banach rings in \cite{Kedlaya17}, Remark~1.5.4). If our Tate ring $A$, with ring of definition $A_{0}$, is a $K$-algebra over some nonarchimedean field $K$ and the inclusion $K \hookrightarrow A$ is continuous, then $A$ carries a natural structure of a seminormed $K$-algebra. Indeed, if for every $\gamma \in |K^{\times}|$ we choose $t_{\gamma} \in K$ with $|t_{\gamma}| = \gamma$, then \begin{equation*} \lVert x \rVert = \inf \{\, \gamma \mid x \in t_{\gamma}A_{0} \,\}, \\\ x \in A, \end{equation*} is a $K$-algebra seminorm which induces the topology of $A$. Conversely, every seminormed ring with a topologically nilpotent unit is a Tate ring, where \begin{equation*} A_{\leq 1} = \{\, x \in A \mid \lVert x \rVert \leq 1 \,\} \end{equation*} can be taken as ring of definition. For example, if $A$ is a normed $K$-algebra over some nonarchimedean field $K$, then $A$ is a Tate ring, where we can take any $\varpi \in K^{\times}$ with $|\varpi| < 1$ as topologically nilpotent unit. We call an element $f$ in a topological ring $A$ power-bounded if the set of all powers of $f$ is a bounded subset of $A$ and we call a Huber ring $A$ uniform if its subring $A^{\circ}$ consisting of power-bounded elements is bounded. If $A$ is a seminormed ring whose underlying topological ring is a Huber ring, then this Huber ring is uniform in the above sense if and only if $A$ is uniform as a seminormed ring in the sense of the preceding paragraph (see, for example, \cite{Kedlaya17}, Exercise~1.5.13). We denote by $\widehat{A}$ the completion of a seminormed ring or Huber ring $A$. In this paper complete topological rings will always be Hausdorff and the term 'completion' will always refer to the Hausdorff completion of a topological ring. \newline 
\linebreak 
\textbf{Acknowledgements.} The author expresses his gratitude to Eva Viehmann for her reading of this manuscript as well as for her encouragement and many helpful discussions on the topics of this work. The author is also grateful to Peter Scholze for important comments on a draft version of this paper and, in particular, for his comments with regard to the main results on perfectoid rings. Finally, the author would like to thank Tomoki Mihara for suggesting an improvement to the proof of Theorem 2.23 and the anonymous referee for careful reading of the paper and useful remarks.

\section{The core of a nonarchimedean Banach algebra}

Recall that a map $\varphi: G \to H$ of seminormed abelian groups is said to be strict if the induced isomorphism 
\begin{equation*} G/\ker(\varphi) \to \im(\varphi) \end{equation*}is a homeomorphism or, equivalently, if the quotient seminorm defines the same topology on $\im(\varphi)$ as the seminorm induced from $H$. Clearly, a continuous map $\varphi: G \to H$ is strict if and only if it is open onto its image.

In his book \cite{Warner} on topological fields Warner defines the core of a norm $N$ on a ring $A$ as the set of elements $x \in A$ such that $N(xy)=N(x)N(y)$ for all $y \in A$. The next definition can be seen as a generalisation of this concept.
\begin{mydef}[The core of a normed ring] \label{Def. of the core} Given a normed ring $(A,\lVert \cdot \rVert)$ we define the strict core of $(A,\lVert \cdot \rVert)$ as the subset \begin{equation*} \sCore(A,\lVert \cdot \rVert) := \{\, x \in A \mid \text{the map  $A \to A, a \mapsto ax$, is strict}\, \}. \end{equation*} 
We then define the simple core (also to be called the core) of $A$ as \begin{equation*} 
\Core(A,\lVert \cdot \rVert) := \sCore(A,\lVert \cdot \rVert) \cup \{\, x \in A \mid x \ \text{is a zero-divisor in} \ A \, \}. 
\end{equation*}\end{mydef}
Note that the above definition depends only on the norm topology of a given normed ring and not on a particular choice of norm. We will write simply $\sCore(A)$ and $\Core(A)$ when there is no possibility of confusion on the norm topology. 

If $V$, $W$ are Banach spaces over a nonarchimedean field $K$ and $\phi : V \to W$ is a bounded $K$-linear map, then Banach's Open Mapping Theorem tells us that $\phi$ is strict if and only if it has closed image. By the more general version of the Open Mapping Theorem proved in \cite{Henkel} the same holds true for any $A$-linear continuous map between complete metrizable topological $A$-modules over a complete Tate ring $A$. Hence, for any complete Tate ring $A$ and any $x \in A$, $x \in \sCore(A)$ if and only if the principal ideal $(x)_A$ generated by $x$ is closed.
\begin{example} \label{Normed rings with full core}~\begin{enumerate} 
\item If $A$ is a Noetherian complete Tate ring, then every ideal of $A$ is closed and hence $\sCore(A) = A$. 
\item If $A$ is a normed ring whose underlying ring is an integral domain, then for any $x \in A$ the quotient norm on $(x)_{A} = \im(f \mapsto fx)$ is given by \begin{equation*} \lVert fx \rVert = \lVert f \rVert \end{equation*}for any $f \in A$. Hence if the norm on $A$ is multiplicative, then the quotient norm on $(x)_{A}$ is equivalent to the restriction of the norm $\lVert \cdot \rVert$ on $A$, so we have the equality $\sCore(A) = A$ for any multiplicatively normed ring. 
\item Let $F$ be a perfectoid field of characteristic $p>0$ with valuation ring $\mathcal{O}_{F}$, and let $\varpi_{F} \in F^{\times}$ be a non-zero element of norm $<1$. Consider $\mathbb{A}_{\Ainf} = W(\mathcal{O}_{F})$, the ring of $p$-typical Witt vectors of $\mathcal{O}_{F}$, endowed with the $([\varpi_{F}], p)$-adic norm. By \cite{Fargues-Fontaine}, Propositions 1.4.9 and 1.4.11, the $([\varpi_{F}], p)$-adic topology on $\mathbb{A}_{\Ainf}$ can also be defined by a multiplicative norm, so we have $\sCore(\mathbb{A}_{\Ainf}) = \mathbb{A}_{\Ainf}$.\end{enumerate}
\end{example} 
The notion of the core is intimately related to the notion of a topological divisor of zero which was first introduced by Shilov (\cite{Gelfand-Raikov-Shilov}) and which comes up in the study of Banach algebras both over a nonarchimedean field $K$ and over $\mathbb{C}$.
\begin{mydef} (Topological divisor of zero) Let $A$ be a normed ring. An element $x \in A$ is called a topological divisor of zero if there exists a sequence $(x_{\lambda})_{\lambda}$ in $A$ such that $\inf_{\lambda} \lVert x_{\lambda} \rVert>0$ and $\lim_{\lambda}x_{\lambda}x = 0$. 
\end{mydef} \begin{lemma} \label{The core and top. divisors of zero} Let $A$ be a normed ring and let $x \in A$ such that $x$ is not a divisor of zero. Then $x \in \Core(A)$ if and only if $x$ is not a topological divisor of zero.\end{lemma}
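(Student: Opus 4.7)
The plan is to reformulate $x \in \Core(A)$ as a topological property of the multiplication-by-$x$ endomorphism $\mu_x \colon A \to A$, $a \mapsto ax$, and then read the topological-divisor-of-zero condition directly off the (non-)continuity of its inverse. As a preliminary reduction, since $x$ is not a zero-divisor, the union $\Core(A) = \sCore(A) \cup \{y : y \text{ is a zero-divisor in } A\}$ contributes nothing from the second set for our $x$, so $x \in \Core(A)$ is equivalent to $x \in \sCore(A)$, i.e.\ to strictness of $\mu_x$. Because $x$ is not a zero-divisor, $\mu_x$ is injective, and it is automatically continuous via $\|ax\| \leq \|x\|\|a\|$. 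Thus strictness of $\mu_x$, in the sense recalled just above Definition~\ref{Def. of the core}, is equivalent to the set-theoretic inverse $\mu_x^{-1} \colon xA \to A$, with $xA$ carrying its subspace topology from $A$, being a continuous homomorphism of normed abelian groups.

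For the ``only if'' direction I would argue contrapositively: given any sequence $(x_\lambda)$ in $A$ with $x_\lambda x \to 0$, applying continuity of $\mu_x^{-1}$ at the origin yields $x_\lambda = \mu_x^{-1}(x_\lambda x) \to 0$, so $\inf_\lambda \|x_\lambda\| = 0$ and no such sequence can witness that $x$ is a topological divisor of zero. For the ``if'' direction, suppose $\mu_x^{-1} \colon xA \to A$ is not continuous; then failure of continuity at $0$ supplies some $\epsilon > 0$ such that for every $n \in \mathbb{N}$ there is a $y_n \in xA$ with $\|y_n\| < 1/n$ and $\|\mu_x^{-1}(y_n)\| \geq \epsilon$. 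Setting $x_n := \mu_x^{-1}(y_n)$ then gives $\inf_n \|x_n\| \geq \epsilon > 0$ and $x_n x = y_n \to 0$, so $(x_n)$ exhibits $x$ as a topological divisor of zero.

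I do not foresee any genuine obstacle: the whole argument is a routine translation between the definitions of strict morphism, of the inverse of an injective continuous map, and of topological divisor of zero, and completeness of $A$ is nowhere needed (were $A$ complete, one could alternatively invoke the Open Mapping Theorem of \cite{Henkel} to identify strictness of $\mu_x$ with closedness of the principal ideal $xA$, as remarked earlier in the paper). The one subtlety worth flagging is that the non-zero-divisor hypothesis is indispensable: without it $\mu_x$ fails to be injective and the inverse-map reformulation collapses, and indeed the conclusion itself fails, since every non-zero zero-divisor lies in $\Core(A)$ by fiat while being simultaneously a topological divisor of zero via a constant annihilating sequence.
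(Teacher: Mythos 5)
Your proof is correct and follows essentially the same route as the paper's: reduce to $x \in \sCore(A)$, observe that $\mu_x$ being a continuous bijection onto $xA$ means strictness is equivalent to continuity of $\mu_x^{-1}$ at $0$, and then match this up against the definition of topological divisor of zero. Your version is a little more careful than the paper's in that it explicitly produces, in the contrapositive direction, a sequence with $\inf_n\|x_n\| \geq \epsilon > 0$ (rather than tacitly passing from ``$(f_\lambda)$ does not tend to $0$'' to ``$\inf_\lambda\|f_\lambda\| > 0$,'' which strictly speaking requires a subsequence argument), but this is a presentational refinement rather than a different method.
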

\begin{proof} If $x \in A$ is not a zero-divisor, then the map $f \mapsto fx$ defines a group isomorphism $A \to Ax$. That is, there exists an inverse homomorphism which recovers $f$ from $fx$. Thus the map $f \mapsto fx$ is strict if and only if this inverse homomorphism is continuous. But a homomorphism of topological groups is continuous if and only if it is continuous at $0$. Hence $f \mapsto fx$ is strict if and only if for every sequence $(f_{\lambda})_{\lambda}$ such that $(f_{\lambda}x)_{\lambda}$ converges to $0$ we have that $(f_{\lambda})_{\lambda}$ must converge to $0$ itself.\end{proof} The following examples are taken from the lecture notes \cite{Kedlaya17} by Kedlaya.
\begin{example} 
(loc. cit., Lemma 1.5.26) Consider a uniform Banach ring $A$ and let $x = \sum_{n=0}^{\infty}x_{n}T^{n} \in A\langle T \rangle$ such that the $x_{n}$ generate the unit ideal in $A$. Then $x \in \sCore(A\langle T \rangle)$. In particular, if $A$ is a uniform Banach field, then $\sCore(A\langle T \rangle) = A\langle T \rangle$.  
\end{example} 
\begin{example}  
(loc. cit., Exercise 1.1.17) Let $p$ be a prime number. Let $A$ be the quotient of the infinite Tate algebra ${\mathbb{Q}_p}\langle T, U_1, V_1, U_2, V_2, \dots \rangle$ by the closure of the ideal $(TU_1 - pV_1, TU_2 - p^{2}V_2, \dots )$. Then $T$ is not a zero-divisor in $A$, but the ideal $(T)_A$ is not closed in $A$. Hence $T \in A$ is a topological divisor of zero which is not a divisor of zero; in particular, $\Core(A)  \neq A$. \end{example}In the rest of this section we study some properties of normed rings (and, in particular, of Banach algebras) satisfying $\Core(A) = A$ and use these properties to elucidate the structure of the topological spectrum (to be introduced in Definition \ref{Definition of the topological spectrum}) of certain Banach algebras $A$. \begin{lemma} \label{The core and norm inequalities} Let $K$ be a nonarchimedean field. A non-zero-divisor $x$ in a normed $K$-algebra $A$ belongs to $\Core(A)$ if and only if there exists a real number $c > 0$ such that 
\begin{equation*} \lVert f \rVert \leq c \lVert fx \rVert  \\\ \hspace{2cm} \text{for all} \ f \in A. \end{equation*} \end{lemma}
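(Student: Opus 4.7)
The plan is to unwind Definition \ref{Def. of the core} for the specific element $x$ and translate strictness of the multiplication-by-$x$ map into the desired norm inequality, using that continuous $K$-linear maps of normed $K$-vector spaces are bounded.

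First I would reduce to a statement purely about $\sCore(A)$: since by hypothesis $x$ is not a zero-divisor, membership in $\Core(A)$ is equivalent to membership in $\sCore(A)$, i.e., to strictness of the multiplication map $\mu_{x}: A \to A$, $f \mapsto fx$. Because $x$ is a non-zero-divisor, $\mu_{x}$ is injective with image the principal ideal $(x)_{A} = Ax$, so $\mu_{x}$ induces an algebraic isomorphism $A = A/\ker(\mu_{x}) \xrightarrow{\sim} Ax$; under this identification the quotient seminorm on $A/\ker(\mu_{x})$ coincides with the original norm $\lVert \cdot \rVert$ on $A$. Hence $\mu_{x}$ is strict if and only if $\lVert \cdot \rVert$ on $A$ and the restriction of $\lVert \cdot \rVert$ to $Ax \subseteq A$ (pulled back via $\mu_{x}$) define equivalent norms on $A$.

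Next I would write out what that equivalence means. One inequality, $\lVert fx \rVert \leq \lVert x \rVert \lVert f \rVert$, is automatic from submultiplicativity of the ring norm on $A$, so equivalence is the same as the existence of a constant $c > 0$ such that $\lVert f \rVert \leq c \lVert fx \rVert$ for all $f \in A$; this is exactly the claimed inequality. To justify that equivalence of these two norms is really what is needed (rather than merely that the inverse of $\mu_{x}$ be continuous), I would invoke the fact cited in the introduction: for $K$-linear maps of seminormed $K$-vector spaces over a nonarchimedean field $K$, continuity is equivalent to boundedness (\cite{BGR}, 2.1.8/3, combined with the quotient argument explained in the introduction). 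Applied to the inverse $K$-linear map $Ax \to A$, $fx \mapsto f$, continuity therefore gives the desired constant $c$, and conversely such a $c$ manifestly yields continuity.

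There is no real obstacle here; the only thing to be careful about is that $\mu_{x}$ and its inverse are only $K$-linear, not necessarily submetric or norm-preserving, so strictness must be expressed via the quotient-seminorm formulation of Section 2, and the nonarchimedean-field hypothesis is used precisely to pass between continuity of the inverse and the bound $\lVert f \rVert \leq c \lVert fx \rVert$. Putting these steps together yields the lemma.
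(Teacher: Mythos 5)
Your argument is correct and follows essentially the same path as the paper's own proof: reduce to strictness of $\mu_{x}\colon A \to Ax$ via $\sCore$, note that strictness here just means continuity of the inverse $K$-linear map $Ax \to A$, and then convert continuity to boundedness using the nonarchimedean-field criterion from \cite{BGR}, Corollary~2.1.8/3. The detour through ``equivalence of norms'' is slightly more elaborate than necessary (since the forward bound $\lVert fx\rVert \leq \lVert x\rVert\,\lVert f\rVert$ is automatic, strictness is already equivalent to the single inequality), but this is only a matter of exposition and not a gap.
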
 
\begin{proof} We have seen that, for $x$ a non-zerodivisor in a normed algebra $A$ over $K$ with $x \in \Core(A)$, the inverse map to $A \to (x)_A, f \mapsto fx$, is continuous. But $(x)_A$ is a normed vector space over $K$ and a $K$-linear map between normed $K$-vector spaces is continuous if and only if it is bounded. This proves the lemma. \end{proof} In the following we will set, for every $x \in \Core(A)$ not a zero-divisor, \begin{equation*} c_x := \inf \{\, c>0 \mid \lVert f \rVert \leq c \lVert fx \rVert \\\ \forall{f \in A}\, \} \end{equation*} and we will set $C_x := \lVert x \rVert c_x$.
One of the notions of spectrum in the setting of normed (and seminormed) rings $A$ is the Berkovich spectrum, whose definition we recall below in slightly greater generality than is common in the literature.\begin{mydef}[Berkovich spectrum] For any seminormed ring $A$ the Berkovich spectrum $\mathcal{M}(A)$ of $A$ is the space of bounded multiplicative seminorms on $A$ endowed with the weakest topology making the evaluation maps $\phi \mapsto \phi(f)$ continuous for every $f \in A$. Equivalently, $\mathcal{M}(A)$ is endowed with the subspace topology induced from the product topology on $\mathbb{R}_{\geq 0}^{A}$. \end{mydef}For Banach rings the Berkovich spectrum was studied by Berkovich in \cite{Berkovich} under the name Gelfand spectrum (in fact, in the case of normed algebras over a nonarchimedean field this notion had also been considered before that by Guennebaud in \cite{Guennebaud}). By Thm.~1.2.1 of \cite{Berkovich} the topological space $\mathcal{M}(A)$ is a non-empty compact Hausdorff space whenever $A$ is a Banach ring. For the sake of completeness we give (in the proof of Lemma \ref{Berkovich spectrum}) a brief argument reducing the analogous assertions for general seminormed rings to the case of Banach rings. To this end, we first record the following simple lemma. 
\begin{lemma} \label{Berkovich spectrum of a quotient} Let $J \subsetneq A$ be an ideal in a seminormed ring $A$ and suppose that $J$ is not dense in $A$. There is a natural bijection $\phi \mapsto \widetilde{\phi}$ between the set of bounded power-multiplicative seminorms $\phi$ on $A$ with $\ker(\phi) \supseteq J$ and the set of bounded power-multiplicative seminorms on $A/J$. This bijection sends multiplicative seminorms to multiplicative seminorms. Moreover, for any bounded power-multiplicative seminorm $\phi$ on $A$ with $\ker(\phi) \supseteq J$ we have $\ker(\widetilde{\phi}) = \ker(\phi)/J$. \end{lemma}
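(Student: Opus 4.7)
The plan is to write down the bijection explicitly and verify each claim in sequence. Given a bounded power-multiplicative seminorm $\phi$ on $A$ with $\ker(\phi) \supseteq J$, I would define
\begin{equation*}
\widetilde{\phi}(f + J) := \phi(f), \quad f \in A.
\end{equation*}
Well-definedness follows from the nonarchimedean triangle inequality: if $f_{1} - f_{2} \in J \subseteq \ker(\phi)$, then $\phi(f_{1}) \leq \max(\phi(f_{2}), \phi(f_{1}-f_{2})) = \phi(f_{2})$ and symmetrically, so $\phi(f_{1}) = \phi(f_{2})$. Conversely, given a bounded power-multiplicative seminorm $\psi$ on $A/J$, I would define $\phi := \psi \circ \pi$, where $\pi : A \to A/J$ is the canonical surjection; clearly $\ker(\phi) \supseteq J$. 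These two constructions are manifestly inverse to each other.

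Next I would verify the algebraic properties. The identities $\widetilde{\phi}(\overline{1}) = \phi(1) = 1$, submultiplicativity, power-multiplicativity, and (if $\phi$ is multiplicative) multiplicativity all transport from $\phi$ to $\widetilde{\phi}$ directly from the definition. This also proves the claim that multiplicative seminorms correspond to multiplicative seminorms. For $\widetilde{\phi}$ to be recognized as a seminorm in the sense used in this paper, one needs the quotient $A/J$ to be a genuine seminormed ring, which is why the assumption that $J$ is not dense is needed: it guarantees $\lVert \overline{1} \rVert_{A/J} = \inf_{g \in J}\lVert 1 - g \rVert > 0$, so that the quotient seminorm is non-degenerate at $1$ (and thus, being power-multiplicative on a ring, equal to $1$ there).

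For boundedness, if $\phi(f) \leq C\lVert f \rVert$ for all $f \in A$, then for any $g \in J$ we have $\widetilde{\phi}(\overline{f}) = \phi(f-g) \leq C\lVert f - g \rVert$, and taking the infimum over $g \in J$ yields $\widetilde{\phi}(\overline{f}) \leq C\lVert \overline{f} \rVert_{A/J}$. In the reverse direction, boundedness of $\phi = \psi \circ \pi$ follows from boundedness of $\psi$ since $\pi$ is submetric with respect to the quotient seminorm. Finally, the kernel formula is immediate:
\begin{equation*}
\ker(\widetilde{\phi}) = \{\, \overline{f} \in A/J \mid \phi(f) = 0 \,\} = \ker(\phi)/J,
\end{equation*}
where the last equality uses $\ker(\phi) \supseteq J$.

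The only conceptual subtlety is the normalization issue on $A/J$, which is precisely what the nondensity hypothesis takes care of; the rest is a routine transport-of-structure argument.
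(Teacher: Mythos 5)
Your proof is correct and follows essentially the same approach as the paper: define $\widetilde{\phi}$ by transport along the quotient map $\pi$, verify the transfer of boundedness and of the (power-)multiplicativity conditions, check the kernel formula, and observe that $\phi \mapsto \widetilde{\phi}$ and $\psi \mapsto \psi\circ\pi$ are mutually inverse. One small slip in your aside: the quotient seminorm $\lVert\cdot\rVert_{A/J}$ need not be power-multiplicative, since the ambient seminorm on $A$ is only assumed submultiplicative; the correct justification that $\lVert\overline{1}\rVert_{A/J}=1$ once it is nonzero uses submultiplicativity, namely $\lVert\overline{1}\rVert_{A/J} \leq \lVert\overline{1}\rVert_{A/J}^{2}$ forces $\lVert\overline{1}\rVert_{A/J}\geq 1$, which together with $\lVert\overline{1}\rVert_{A/J}\leq\lVert 1\rVert = 1$ gives the claim.
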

\begin{proof} Since $J$ is not dense in $A$, the quotient $A/J$ is a seminormed ring when endowed with its quotient seminorm (recall that a seminorm is required to satisfy $\lVert 1 \rVert \neq 0$, so the condition that $J$ is not dense is really necessary). Given a seminorm $\phi$ on $A$ with $\ker(\phi) \supseteq J$ we define a power-multiplicative seminorm $\widetilde{\phi}$ on $A/J$ by \begin{equation*} \widetilde{\phi}(f + J) = \phi(f) \end{equation*}for $f \in A$. It is clear that the seminorm $\widetilde{\phi}$ on $A/J$ is power-multiplicative (resp., multiplicative) if and only if $\phi$ is power-multiplicative (resp., multiplicative). We claim that $\phi$ is bounded if and only if $\widetilde{\phi}$ is bounded with respect to the quotient norm on $A/J$. In order to show this, assume that $\phi$ is bounded and let $f \in A$. If $g \in A$ such that $g - f \in J$, then $\widetilde{\phi}(f + J) = \widetilde{\phi}(g + J) \leq \lVert g \rVert$. It follows that \begin{equation*} \widetilde{\phi}(f + J) \leq \inf_{\substack{g \in A \\ g - f \in J}}\lVert g \rVert = \lVert f + J \rVert, \end{equation*}that is, $\widetilde{\phi}$ is bounded with respect to the quotient norm on $A/J$. Conversely, if $\widetilde{\phi}$ is bounded with respect to the quotient norm, then $\phi(f) = \widetilde{\phi}(f + J) \leq \lVert f + J \rVert \leq \lVert f \rVert$ for every $f \in A$, showing that $\phi$ is bounded. 

On the other hand, for a power-multiplicative seminorm $\alpha$ on $A/J$ we can define a power-multiplicative seminorm $\alpha'$ on $A$ with $\ker(\alpha') \supseteq J$ by putting \begin{equation*} \alpha'(f) = \alpha(f + J) \end{equation*}for $f \in A$. We see by inspection that the two maps $\phi \mapsto \widetilde{\phi}$ and $\alpha \mapsto \alpha'$ are inverse to each other. The last assertion of the lemma follows immediately from the construction of the map $\phi \mapsto \widetilde{\phi}$.\end{proof}
\begin{lemma} \label{Berkovich spectrum} For any seminormed ring $A$ with completion $\widehat{A}$ there is a natural homeomorphism $\mathcal{M}(A) \simeq \mathcal{M}(\widehat{A})$. In particular, the Berkovich spectrum $\mathcal{M}(A)$ is a non-empty compact Hausdorff space. \end{lemma}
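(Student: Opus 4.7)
The plan is to reduce to the case of a Banach ring and apply Berkovich's theorem. First, I would observe that every bounded multiplicative seminorm $\phi$ on $A$ is automatically submetric, i.e.\ $\phi(f) \leq \lVert f \rVert$ for all $f \in A$. Indeed, if $\phi(f) \leq C \lVert f \rVert$ for some $C > 0$, then multiplicativity gives $\phi(f)^{n} = \phi(f^{n}) \leq C \lVert f^{n} \rVert \leq C \lVert f \rVert^{n}$, and taking $n$-th roots and letting $n \to \infty$ yields $\phi(f) \leq \lVert f \rVert$. In particular, every such $\phi$ vanishes on the kernel $\mathfrak{a} := \{f \in A \mid \lVert f \rVert = 0\}$ of the seminorm on $A$. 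Since $\mathfrak{a}$ is closed and $\lVert 1 \rVert = 1$, it is not dense in $A$, so Lemma~\ref{Berkovich spectrum of a quotient} applied to $J = \mathfrak{a}$ yields a bijection between $\mathcal{M}(A)$ and $\mathcal{M}(A/\mathfrak{a})$. This bijection is visibly a homeomorphism, since it is induced by precomposition with the quotient map $A \to A/\mathfrak{a}$, and reduces the proof to the case where $A$ is a normed ring.

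Next, assume $A$ is a normed ring with (Hausdorff) completion $\widehat{A}$, and consider the restriction map $\mathcal{M}(\widehat{A}) \to \mathcal{M}(A)$; it is continuous, since evaluation at any $f \in A$ on $\mathcal{M}(\widehat{A})$ is the composition of evaluation at the image of $f$ in $\widehat{A}$ with the restriction map. For the inverse, note that any bounded multiplicative seminorm $\phi$ on $A$ is $1$-Lipschitz in $\lVert \cdot \rVert$, since $|\phi(f) - \phi(g)| \leq \phi(f - g) \leq \lVert f - g \rVert$ by the reverse triangle inequality and the submetric property established above. Hence $\phi$ is uniformly continuous and extends uniquely to a continuous function $\widehat{\phi} : \widehat{A} \to \mathbb{R}_{\geq 0}$. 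By continuity of addition and multiplication on $\widehat{A}$ and density of $A$, the extension $\widehat{\phi}$ is again a seminorm and multiplicative; it is bounded by the norm of $\widehat{A}$, hence lies in $\mathcal{M}(\widehat{A})$. The assignments $\phi \mapsto \widehat{\phi}$ and $\psi \mapsto \psi|_{A}$ are manifestly mutually inverse.

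Finally, I would verify that the bijection is a homeomorphism. The topology on $\mathcal{M}(\widehat{A})$ is a priori generated by the evaluation maps $\psi \mapsto \psi(f)$ for all $f \in \widehat{A}$, but since each $\psi \in \mathcal{M}(\widehat{A})$ is $1$-Lipschitz and $A$ is dense in $\widehat{A}$, the evaluation at any $f \in \widehat{A}$ is the uniform limit (on $\mathcal{M}(\widehat{A})$) of evaluation maps at elements of $A$. Therefore the topology on $\mathcal{M}(\widehat{A})$ already agrees with the one induced by evaluation at elements of $A$, so the restriction bijection is a homeomorphism. The "in particular" assertion then follows from Berkovich's theorem (\cite{Berkovich}, Theorem 1.2.1) applied to the Banach ring $\widehat{A}$. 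No step looks especially hard; the subtlest point is the reduction via the kernel ideal $\mathfrak{a}$, which hinges on the automatic submetric property of bounded multiplicative seminorms.
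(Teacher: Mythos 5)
Your proof is correct and follows essentially the same route as the paper: reduce to the normed case via Lemma~\ref{Berkovich spectrum of a quotient}, identify $\mathcal{M}(A)$ with $\mathcal{M}(\widehat{A})$ by restriction and extension-by-continuity, and cite Berkovich's theorem for the completed ring. The only cosmetic difference is that you verify the two homeomorphisms by hand (checking that evaluation at elements of $\widehat{A}$, resp.\ of $A/\mathfrak{a}$, generates the same topology as evaluation at elements of $A$), whereas the paper shortcuts this by observing that a continuous bijection from a compact Hausdorff space onto a Hausdorff space is automatically a homeomorphism.
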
 \begin{proof} We first note that $\mathcal{M}(A)$ (whenever it is non-empty) must always be a Hausdorff space since it is endowed with the subspace topology induced from the product space $\mathbb{R}_{\geq 0}^{A}$ (recall that products of Hausdorff spaces are Hausdorff). If $A$ is a normed ring, then we have a natural bijection $\mathcal{M}(A) \simeq \mathcal{M}(\widehat{A})$, where the map $\mathcal{M}(\widehat{A}) \to \mathcal{M}(A)$ is restriction and the map $\mathcal{M}(A) \to \mathcal{M}(\widehat{A})$ is extension of a bounded multiplicative seminorm $\phi$ by continuity from $A$ to its completion $\widehat{A}$. By \cite{Berkovich}, Thm.~1.2.1, the space $\mathcal{M}(\widehat{A})$ is non-empty and compact. Since every bijection from a compact Hausdorff space onto a Hausdorff space is a homeomorphism we see that $\mathcal{M}(A)$ is homeomorphic to $\mathcal{M}(\widehat{A})$. This proves the claim when $A$ is a normed ring. 

For an arbitrary seminormed ring $A$, the quotient $A/\overline{(0)}$, where $\overline{(0)}$ denotes the closure of the zero ideal, is a normed ring. Note that $\overline{(0)}$ is a proper ideal since $\lVert 1 \rVert = 1 \neq 0$. The kernel of any bounded multiplicative seminorm $\phi$ on $A$ is closed (since $\phi$ is continuous as a function $A \to \mathbb{R}_{\geq 0}$) and thus contains the ideal $\overline{(0)}$. It follows from Lemma \ref{Berkovich spectrum of a quotient} that we have a bijection $\mathcal{M}(A/\overline{(0)}) \simeq \mathcal{M}(A)$. By the first paragraph, $\mathcal{M}(A/\overline{(0)})$ is non-empty, compact and Hausdorff and $\mathcal{M}(A)$ is Hausdorff. Hence the above bijection must be a homeomorphism.\end{proof}We call an element $\varpi \in A$ in a seminormed ring $(A, \lVert\cdot\rVert)$ multiplicative if $\lVert\varpi f\rVert = \lVert\varpi\rVert\lVert f\rVert$ for all $f \in A$. The following lemma relates the Berkovich spectrum of a seminormed ring $A$ with a multiplicative topologically nilpotent unit to the set of all \textit{continuous} multiplicative seminorms on $A$.
\begin{lemma} \label{Continuity and boundedness} Let $(A, \lVert \cdot\rVert)$ be a seminormed ring and let $\phi$ be a continuous seminorm on $A$. Suppose that there exists a topologically nilpotent unit $\varpi \in A$ which is multiplicative both for $\lVert\cdot\rVert$ and $\phi$. Then there exists a real number $s>0$ such that $\phi^{s}$ is bounded with respect to $\lVert\cdot\rVert$. In particular, for every continuous multiplicative seminorm $\phi$ on a seminormed ring $A$ there exists some $s>0$ such that $\phi^{s} \in \mathcal{M}(A)$. \end{lemma}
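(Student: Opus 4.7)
First I would extract the two key numerical facts from the hypotheses. Since $\varpi$ is $\lVert\cdot\rVert$-multiplicative and $\lVert 1\rVert = 1$, we must have $\lVert\varpi\rVert > 0$; and since $\lVert\varpi^{n}\rVert = \lVert\varpi\rVert^{n} \to 0$ by topological nilpotence, we get $0 < \lVert\varpi\rVert < 1$. Applying the same reasoning to $\phi$ — with continuity of $\phi$ ensuring $\phi(\varpi^{n}) \to \phi(0) = 0$, and $\phi$-multiplicativity of $\varpi$ giving $\phi(\varpi^{n}) = \phi(\varpi)^{n}$ — I get $0 < \phi(\varpi) < 1$ (the lower bound again uses $\phi(1) = 1$, since if $\phi(\varpi) = 0$ then $\phi \equiv 0$ on $(\varpi) = A$). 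I would then define
\begin{equation*}
s := \frac{\log \lVert\varpi\rVert}{\log \phi(\varpi)} > 0,
\end{equation*}
so that $\phi(\varpi)^{s} = \lVert\varpi\rVert$.

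\textbf{The core estimate.} By continuity of $\phi$ at $0$, choose $\delta > 0$ such that $\lVert g\rVert < \delta$ forces $\phi(g) \leq 1$. For any $f \in A$ with $\lVert f\rVert > 0$, use that $\varpi$ is a \emph{unit} together with the multiplicativity identity $\lVert\varpi^{n}f\rVert = \lVert\varpi\rVert^{n}\lVert f\rVert$ (valid for every $n \in \mathbb{Z}$) to pick an integer $n$ with
\begin{equation*}
\delta\,\lVert\varpi\rVert \leq \lVert\varpi^{n}f\rVert < \delta.
\end{equation*}
The upper bound feeds into the continuity estimate: $\phi(\varpi^{n}f) = \phi(\varpi)^{n}\phi(f) \leq 1$, hence $\phi(f) \leq \phi(\varpi)^{-n} = \lVert\varpi\rVert^{-n/s}$. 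The lower bound gives $\lVert\varpi\rVert^{-n} \leq \lVert f\rVert/(\delta\lVert\varpi\rVert)$. Raising the first inequality to the $s$-th power and inserting the second yields
\begin{equation*}
\phi(f)^{s} \leq \lVert\varpi\rVert^{-n} \leq \frac{1}{\delta\,\lVert\varpi\rVert}\,\lVert f\rVert,
\end{equation*}
so $\phi^{s}$ is bounded by $\lVert\cdot\rVert$ with constant $C = 1/(\delta\lVert\varpi\rVert)$.

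\textbf{The zero locus and the ``in particular'' clause.} For $f$ with $\lVert f\rVert = 0$, the constant sequence $0$ converges to $f$, so continuity of $\phi$ forces $\phi(f) = 0$ and the desired inequality holds trivially. For the final assertion, observe that if $\phi$ is already multiplicative on all of $A$ then every element — in particular any topologically nilpotent unit $\varpi$ that is multiplicative for $\lVert\cdot\rVert$ (whose existence is implicit in the Tate-ring setup recalled in the notation section) — is automatically multiplicative for $\phi$, so the hypotheses of the first part are satisfied and $\phi^{s}$ is both multiplicative and bounded, i.e.\ lies in $\mathcal{M}(A)$.

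\textbf{Main obstacle.} The only subtle point is the two-sided choice of the integer $n$: a one-sided estimate (e.g.\ $\lVert\varpi^{n}f\rVert < \delta$ alone) would only give $\phi(f) \leq C'$, not a bound proportional to $\lVert f\rVert$. The two-sided pinch works precisely because $\varpi$ is a unit, so negative exponents are permitted, and because multiplicativity (as opposed to submultiplicativity) converts the translation $f \leadsto \varpi^{n}f$ into an exact rescaling of the seminorm value — this is exactly where both multiplicativity hypotheses on $\varpi$ are used.
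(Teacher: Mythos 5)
Your proof is correct and follows essentially the same strategy as the paper's: use continuity to find a threshold $\delta$ (the paper's $\epsilon$) below which $\phi \leq 1$, then rescale $f$ by an integer power of $\varpi$ to pin the $\lVert\cdot\rVert$-size of $\varpi^{n}f$ and convert the resulting one-sided $\phi$-bound into a two-sided estimate via the relation $\phi(\varpi)^{s} = \lVert\varpi\rVert$. The one small difference is cosmetic and mildly streamlines things: you pick $n$ directly so that $\lVert\varpi^{n}f\rVert$ lands in the half-open window $[\delta\lVert\varpi\rVert,\delta)$, whose multiplicative width $1/\lVert\varpi\rVert$ exactly matches the step of the geometric sequence $\bigl(\lVert\varpi\rVert^{n}\lVert f\rVert\bigr)_{n\in\mathbb{Z}}$, yielding the constant $C = 1/(\delta\lVert\varpi\rVert)$. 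The paper instead first picks an auxiliary exponent $m$ with $\lVert\varpi^{m}\rVert \leq \epsilon$ and then pins $\lVert\varpi^{mn}\rVert\lVert f\rVert$ around $1$, arriving at the constant $\lVert\varpi\rVert^{-2m}$; both bookkeepings work, and yours is a touch cleaner. Your remark that the ``in particular'' clause implicitly assumes a multiplicative topologically nilpotent unit for $\lVert\cdot\rVert$ is a fair observation — the paper uses that reading too, as the immediate corollary to seminormed $K$-algebras makes explicit.
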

\begin{proof} We follow the proof of Lemma 2.1.7 in \cite{Johansson-Newton2} (which is a corrected version of \cite{Johansson-Newton1}). By the continuity of $\phi$ we can find $\epsilon \in (0,1)$ such that $\lVert f\rVert \leq \epsilon$ implies $\phi(f) \leq 1$ for any $f \in A$. Given a topologically nilpotent unit $\varpi$ as in the statement of the lemma, we choose an integer $m>0$ with $\lVert \varpi^{m}\rVert \leq \epsilon$. We claim that there exists some $s>0$ such that \begin{equation*}\phi(f)^{s} \leq \lVert\varpi\rVert^{-2m}\lVert f\rVert\end{equation*}for all $f \in A$. In order to prove this claim let us first consider elements $f \in A$ with $\lVert f\rVert = 0$. To show that $\phi(f) = 0$ for such $f$ we observe that the kernel of any seminorm is precisely the closure of $\{0\}$ with respect to the topology defined by that seminorm. Since $\phi$ is continuous with respect to $\lVert \cdot\rVert$ (i.e., the topology on $A$ defined by $\phi$ is equal to or is weaker than the topology defined by $\lVert\cdot\rVert$), this entails $\ker(\lVert \cdot\rVert) \subseteq \ker(\phi)$. This settles the claim in the case $\lVert f\rVert=0$. Now, consider an element $f \in A$ with $\lVert f\rVert \neq 0$. If for every $n \in \mathbb{Z}$ we had $\lVert \varpi^{mn}\rVert\lVert f\rVert \leq 1$, then we would obtain $\lVert f\rVert \leq \lVert \varpi^{-mn}\rVert$ for all $n \in \mathbb{Z}$ and thus $\lVert f\rVert=0$. Hence there exists a lowest integer $n \in \mathbb{Z}$ such that $\lVert \varpi^{mn}\rVert\lVert f\rVert \leq 1$. In particular, we have $\lVert\varpi^{m(n-1)}\rVert\lVert f\rVert>1$. By our choice of $\epsilon$ and $m$ we have the estimate \begin{equation*}\lVert \varpi^{m(n+1)}f\rVert = \lVert\varpi^{m(n+1)}\rVert\lVert f\rVert \leq \lVert\varpi^{m}\rVert \leq \epsilon \end{equation*}and this entails $\phi(\varpi^{m(n+1)}f) \leq 1$. Let $s>0$ be such that $\phi(\varpi)^{s} = \lVert\varpi\rVert$. With this choice of $s>0$ we obtain \begin{equation*}\phi(f)^{s} \leq \phi(\varpi)^{-m(n+1)s} = \lVert\varpi\rVert^{-m(n+1)} = \lVert\varpi\rVert^{-2m}\lVert\varpi^{-m(n-1)}\rVert<\lVert\varpi\rVert^{-2m}\lVert f\rVert. \end{equation*}This establishes the claim made above and proves the first assertion of the lemma. As to the additional assertion about multiplicative seminorms, it follows from the first assertion since in this case every topologically nilpotent unit is multiplicative for $\phi$.  \end{proof} \begin{cor} \label{Continuity and boundedness for $K$-algebras}Let $(A, \lVert\cdot\rVert)$ be a seminormed algebra over a nonarchimedean field. For every continuous multiplicative seminorm $\phi$ on $A$ there exists $s>0$ such that $\phi^{s}$ is a bounded multiplicative seminorm. \end{cor}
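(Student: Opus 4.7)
The plan is to reduce this directly to Lemma \ref{Continuity and boundedness} by exhibiting a topologically nilpotent unit $\varpi \in A$ which is multiplicative both for $\lVert\cdot\rVert$ and for the given continuous multiplicative seminorm $\phi$; once such a $\varpi$ is available, the lemma supplies $s > 0$ with $\phi^{s}$ bounded with respect to $\lVert\cdot\rVert$, and $\phi^{s}$ remains multiplicative because $\phi$ is, so $\phi^{s} \in \mathcal{M}(A)$.

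To produce $\varpi$, I would simply pick any element $\varpi \in K$ with $0 < |\varpi| < 1$, which exists because $K$ carries a nontrivial nonarchimedean absolute value by the standing conventions of the paper. Viewed inside $A$ via the structure map $K \to A$, this $\varpi$ is a unit because $K$ is a field. The crucial feature of the $K$-algebra setting is that $\lVert\cdot\rVert$ is a $K$-algebra seminorm, i.e. $\lVert cf\rVert = |c|\,\lVert f\rVert$ for every $c \in K$ and every $f \in A$; in particular, each scalar from $K$ is multiplicative for $\lVert\cdot\rVert$, and $\lVert\varpi^{n}\rVert = |\varpi|^{n} \to 0$, so $\varpi$ is topologically nilpotent in $A$. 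Multiplicativity of $\varpi$ for $\phi$ is automatic, since $\phi$ is itself multiplicative, so every element of $A$ is multiplicative for $\phi$.

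With these verifications in place, the hypotheses of Lemma \ref{Continuity and boundedness} are satisfied for this $\varpi$, and its final assertion yields the corollary. There is essentially no obstacle: the corollary is a bookkeeping translation of the previous lemma into the $K$-algebra setting, where a topologically nilpotent unit multiplicative for the ambient seminorm is furnished for free by the ground field.
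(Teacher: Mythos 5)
Your proposal is correct and follows essentially the same route as the paper: the paper's one-line proof likewise observes that any $\varpi \in K^{\times}$ with $|\varpi| < 1$ is a multiplicative topologically nilpotent unit of $(A, \lVert\cdot\rVert)$ and then invokes Lemma \ref{Continuity and boundedness}. You have simply spelled out the verifications (that $\varpi$ is a unit, topologically nilpotent, and multiplicative for both seminorms) which the paper leaves implicit.
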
\begin{proof} Every $\varpi \in K^{\times}$ with $|\varpi|<1$ is a multiplicative topologically nilpotent unit of $(A, \lVert\cdot\rVert)$, so we can apply the last lemma.\end{proof}
\begin{rmk} \label{Uniform units} Some remarks are in order with regard to the hypotheses in the statement of Lemma \ref{Continuity and boundedness}. A multiplicative topologically nilpotent unit in a seminormed ring $(A, \lVert\cdot\rVert)$ is the same as a uniform unit in the sense of Kedlaya and Liu (\cite{Kedlaya-Liu}, Remark 2.3.9), i.e.~a topologically nilpotent unit $\varpi \in A$ such that $\lVert\varpi^{-1}\rVert = \lVert\varpi\rVert^{-1}$. Indeed, if $\varpi$ is multiplicative , then $1 = \lVert1\rVert = \lVert\varpi \varpi^{-1}\rVert = \lVert\varpi\rVert \lVert\varpi^{-1}\rVert$. Conversely, if $\lVert\varpi^{-1}\rVert = \lVert\varpi\rVert^{-1}$, then for every $f \in A$ we have \begin{equation*}\lVert\varpi\rVert \lVert f\rVert = \lVert \varpi\rVert \lVert\varpi^{-1}\varpi f\rVert \leq \lVert\varpi\rVert \lVert\varpi^{-1}\rVert\lVert \varpi f\rVert = \lVert \varpi f\rVert \end{equation*}which shows that $\varpi$ is a multiplicative element of $A$ (the opposite inequality is automatic, by the submultiplicativity of $\lVert\cdot\rVert$). We note that the equivalence between the notions of multiplicative topologically nilpotent units and uniform units has already been observed  by Johansson and Newton (see p.~8 of \cite{Johansson-Newton1}). We also note that by \cite{Kedlaya-Liu}, Remark 2.8.18, on every uniform Banach ring $A$ with a topologically nilpotent unit $\varpi$ we can find a norm defining the topology of $A$ which turns the given topologically nilpotent unit into a multiplicative one. This has the consequence that in a uniform complete Tate ring the notion of  spectrally reduced ideals (to be introduced below) does not depend on a particular choice of norm or equivalence class of norms (see Remark \ref{Topological spectrum might depend on the seminorm}). \end{rmk}
In the following we denote by $|\cdot|_{\spc}$ the spectral seminorm on $A$, i.e. \begin{equation*} |f|_{\spc} := \lim_{n \to \infty}{\lVert f^{n}\rVert}^{1/n} \hspace{2cm} \text{(for $f \in A$).}\end{equation*} It is well-known that the limit always exists, defines a seminorm on $A$ and that $|f|_{\spc} = \inf_{n}{\lVert f^{n} \rVert}^{1/n} = \sup_{\phi \in \mathcal{M}(A)}\phi(f)$, see \cite{Bourbaki}, 1.2.3, and \cite{Berkovich}, Thm.~1.3.1 (note that the above supremum is actually a maximum since $\mathcal{M}(A)$ is compact).
\begin{rmk} \label{The spectral seminorm for non-complete rings} Let $(A, \lVert \cdot \rVert)$ be any seminormed ring. By using the homeomorphism $\mathcal{M}(A) \simeq \mathcal{M}(\widehat{A})$ and applying \cite{Berkovich}, Thm.~1.3.1, to the Banach ring $\widehat{A}$ we see that \begin{equation*} |f|_{\spc} = \sup_{\phi \in \mathcal{M}(A)}\phi(f) \end{equation*}for all $f \in A$. Thus every power-multiplicative seminorm on a ring is a supremum over multiplicative seminorms. \end{rmk} 
\begin{mydef}[Spectrally reduced normed ring] We call a normed ring $A$ spectrally reduced if the spectral seminorm $|\cdot|_{\spc}$ is a norm. \end{mydef} By Remark \ref{The spectral seminorm for non-complete rings} above, $A$ is spectrally reduced if and only if \begin{equation*} \bigcap_{\phi \in \mathcal{M}(A)}\ker(\phi) = 0. \end{equation*} In this paper we will be particularly interested in the following class of ideals of a seminormed ring $A$.
\begin{mydef}[Spectrally reduced ideal] An ideal $J \subsetneq A$ of a seminormed ring $A$ is called spectrally reduced if there exists a power-multiplicative bounded seminorm $\phi$ on $A$ such that $\ker(\phi) = J$. 
\end{mydef} We record the following elementary properties of spectrally reduced ideals.
\begin{lemma}\label{Properties of spectrally reduced ideals} Let $A$ be a seminormed ring. \begin{enumerate}[(i)] \item Every spectrally reduced ideal is a closed radical ideal. \item An ideal $J \subsetneq A$ is spectrally reduced if and only if the normed ring $A/J$ is spectrally reduced.\end{enumerate}\end{lemma}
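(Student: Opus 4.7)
For part (i), I would simply unpack the definition: write $J = \ker(\phi)$ for a bounded power-multiplicative seminorm $\phi$. Boundedness of $\phi$ is equivalent to continuity of $\phi: A \to \mathbb{R}_{\geq 0}$, so $J = \phi^{-1}(\{0\})$ is closed. Power-multiplicativity gives $\phi(f^n) = \phi(f)^n$, so $f^n \in J$ forces $\phi(f) = 0$, i.e.~$f \in J$, showing that $J$ is radical.

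For the forward direction of (ii), my plan is to combine Lemma~\ref{Berkovich spectrum of a quotient} with the maximality property characterising the spectral seminorm. Assume $J = \ker(\phi)$ for a bounded power-multiplicative seminorm $\phi$. By part (i), $J$ is closed in $A$, and since $\phi(1) = 1$ we have $1 \notin J$, so $J$ is closed and proper, hence not dense. Lemma~\ref{Berkovich spectrum of a quotient} then produces a bounded power-multiplicative seminorm $\widetilde{\phi}$ on the (automatically normed) quotient ring $A/J$ with $\ker(\widetilde{\phi}) = J/J = 0$. Since the spectral seminorm on $A/J$ dominates every bounded power-multiplicative seminorm on $A/J$, we have $\widetilde{\phi}(f+J) \leq |f+J|_{\spc}$ pointwise, so $|\cdot|_{\spc}$ on $A/J$ is itself a norm, i.e.~$A/J$ is spectrally reduced.

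For the converse, suppose $A/J$ is a normed ring whose spectral seminorm is a norm; in particular $J$ is closed and proper in $A$. I would pull the spectral seminorm back along the quotient map: define $\psi: A \to \mathbb{R}_{\geq 0}$ by $\psi(f) := |f + J|_{\spc}$, where $|\cdot|_{\spc}$ denotes the spectral seminorm of $A/J$. This is manifestly power-multiplicative, it is bounded by $\lVert \cdot \rVert$ because the spectral seminorm is bounded by the quotient norm (as is any bounded power-multiplicative seminorm) and the quotient norm is itself bounded by $\lVert\cdot\rVert$, and its kernel is the preimage of $0$ in $A/J$, which is exactly $J$. Hence $\psi$ exhibits $J$ as spectrally reduced.

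The only wrinkle I anticipate is the pedantic verification that Lemma~\ref{Berkovich spectrum of a quotient}, as stated, really supplies the bijection on \emph{power-multiplicative} (and not merely multiplicative) seminorms used above --- but the statement and its proof already record this, together with the kernel identity $\ker(\widetilde{\phi}) = \ker(\phi)/J$ that we need. With that in hand the whole lemma falls out cleanly from the definitions and the maximality characterisation of the spectral seminorm.
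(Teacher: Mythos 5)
Your proof is correct and follows essentially the same route as the paper's: part (i) unpacks the definition exactly as the paper does, and part (ii) rests on the observation that the spectral seminorm is the maximal bounded power-multiplicative seminorm together with the kernel-preserving bijection of Lemma~\ref{Berkovich spectrum of a quotient}. The only stylistic difference is that you spell out both directions of (ii) separately (invoking the lemma's bijection forward and the explicit pullback $\psi(f):=|f+J|_{\spc}$ backward), whereas the paper cites the lemma once and lets the bijection carry both directions simultaneously; the content is identical.
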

\begin{proof} To prove the first part of the lemma, note that a bounded seminorm $\phi$ on a seminormed ring $A$ is in particular a continuous map $A \to \mathbb{R}_{\geq 0}$, so its kernel must be closed in $A$. If $\phi$ is power-multiplicative, then $f^{n} \in \ker(\phi)$ for $f \in A$ and $n > 0$ means that $\phi(f)^{n} = \phi(f^{n}) = 0$ and thus $\phi(f) = 0$. Thus $\ker(\phi)$ is indeed a closed radical ideal. As for the second part of the lemma, we first observe that by Remark \ref{The spectral seminorm for non-complete rings} the spectral seminorm of a seminormed ring $A$ is the maximal bounded power-multiplicative seminorm on $A$. Thus a normed ring $A$ is spectrally reduced if and only if there exists some bounded power-multiplicative norm on $A$. Hence part (ii) of the lemma follows from Lemma \ref{Berkovich spectrum of a quotient}. \end{proof} From the above lemma and Remark \ref{The spectral seminorm for non-complete rings} we see that an ideal $J \subsetneq A$ is spectrally reduced if and only if \begin{equation*} \bigcap_{\substack{\phi \in \mathcal{M}(A) \\ J \subseteq \ker(\phi)}}\ker(\phi) = J. \end{equation*} We now define a suitable notion of spectrum of a seminormed ring $A$ which is our main object of study in this paper.
\begin{mydef}[Topological spectrum] \label{Definition of the topological spectrum} The topological spectrum of a seminormed ring $A$ is the subspace \begin{equation*} \TopSpec(A) = \{\, \mathfrak{p} \in \Spec(A) \mid \mathfrak{p} \ \textrm{spectrally reduced} \,\} \end{equation*}of $\Spec(A)$ endowed with the subspace topology (induced from the Zariski topology on $\Spec(A)$). 
\end{mydef}
\begin{rmk} \label{Topological spectrum might depend on the seminorm}Note that the notions of spectrally reduced ideal and topological spectrum a priori depend on the choice of an equivalence class of seminorms and not only on the topology of a seminormed ring $A$. However, if we restrict our attention to seminormed rings $(A, \lVert \cdot\rVert)$ which admit a multiplicative topologically nilpotent unit (this includes seminormed algebras over a nonarchimedean field but also all uniform complete Tate rings, by Remark \ref{Uniform units}), then one can use Lemma \ref{Continuity and boundedness} to show that these notions depend only on the topology. More precisely, we claim that in such a seminormed ring an ideal $J$ is spectrally reduced if and only if it can be written in the form \begin{equation*} J = \bigcap_{\phi}\ker(\phi) \end{equation*}where $\phi$ ranges over all \textit{continuous} multiplicative seminorms $\phi$ with $\ker(\phi) \supseteq J$. To prove this, it suffices to fix a continuous multiplicative seminorm $\phi$ and verify that its kernel $\ker(\phi)$ is a spectrally reduced ideal. Note that $\ker(\phi) = \ker(\phi^{s})$ for any $s>0$. But by Lemma \ref{Continuity and boundedness} there exists a constant $s>0$ such that $\phi^{s} \in \mathcal{M}((A, \lVert\cdot\rVert))$. It follows that $\ker(\phi) = \ker(\phi^{s})$ is indeed a spectrally reduced ideal of $(A, \lVert\cdot\rVert)$, as claimed. \end{rmk}
We note that Definition \ref{Definition of the topological spectrum} is very algebraic in nature but still captures some information on the topological and metric properties of the seminormed ring in question. A simpler definition would be to let the topological spectrum be the subset of $\Spec(A)$ consisting of all \textit{closed} prime ideals of $A$. We do not know whether the resulting space would coincide with our notion of topological spectrum. More generally, we could ask whether the converse to Lemma \ref{Properties of spectrally reduced ideals}(1) holds true.   
\begin{question} \label{Question on spectrally reduced ideals} Is every closed radical ideal in a seminormed ring spectrally reduced? \end{question}As long as Question \ref{Question on spectrally reduced ideals} remains unanswered, we prefer to work with spectrally reduced ideals $I$ of a seminormed ring $A$ as opposed to general closed radical ideals. This gives us stronger control on the properties of the quotient $A/I$ as a normed ring, which is needed for our results on perfectoid Tate rings in Section 4. \begin{rmk} \label{Remark on spectral radicals}We note that Question \ref{Question on spectrally reduced ideals} has an affirmative answer for Noetherian Banach algebras over a nonarchimedean field, by Theorem \ref{Radical ideals in Noetherian Banach algebras} below. The question is also known to have an affirmative answer for perfect Banach rings of characteristic $p > 0$. Indeed, the quotient of such a Banach ring by a closed radical ideal is again a perfect Banach ring, and perfect Banach rings are automatically uniform and, a fortiori, spectrally reduced. However, even for perfectoid Tate rings in characteristic $0$, Question \ref{Question on spectrally reduced ideals} seems to be open. In fact, for perfectoid Tate rings the same question has been raised, in slightly different language, by Kedlaya in \cite{Kedlaya17}, Remark 2.9.16. \end{rmk} In Section 3 we will study the topological properties of $\TopSpec(A)$ for a general seminormed ring $A$ and in Section 4 we will turn our attention to perfectoid Tate rings. In the present section we confine ourselves to proving that for a Noetherian Banach algebra $A$ over a nonarchimedean field $K$ the topological spectrum and the usual (prime ideal) spectrum of $A$ coincide, and to deriving some consequences from this fact. In the case of affinoid algebras (both those in the sense of Tate and those in the sense of Berkovich) this is known classically (see, for example, \cite{Berkovich}, the second sentence in Proposition 2.1.4 (i)). Our proof for general Noetherian Banach algebras relies on the fact that these Banach algebras satisfy $\sCore(A) = A$. \begin{lemma} \label{core and spectral seminorm} Let $A$ be a Banach algebra without non-zero divisors of zero over a nonarchimedean field $K$. Suppose that $\Core(A,\lVert \cdot \rVert) = A$. Then \begin{enumerate}[(i)] 
\item $A$ is spectrally reduced. 
\item $\Core(A,|\cdot|_{\spc}) = A$. \end{enumerate} 
\end{lemma}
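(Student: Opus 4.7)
The plan is to exploit the quantitative content of the hypothesis $\Core(A,\lVert\cdot\rVert)=A$ via Lemma \ref{The core and norm inequalities}, and to iterate the resulting inequality $\lVert g\rVert \leq c_{x}\lVert gx\rVert$ against powers. The key observation is that the constant $c_{x}$ produced by that lemma is uniform in the input $g$, so it can be reused at every step of an induction, converting multiplication-by-$x$ estimates into control of $\lVert x^{n}\rVert^{1/n}$.

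For part (i) I would argue by contradiction. If $A$ were not spectrally reduced, then by Remark \ref{The spectral seminorm for non-complete rings} there would exist $x \in A$ with $x \neq 0$ and $|x|_{\spc}=0$. Since $A$ is a domain, $x$ is not a zero-divisor, so the hypothesis $\Core(A,\lVert\cdot\rVert)=A$ together with Lemma \ref{The core and norm inequalities} furnishes $c_{x}>0$ with $\lVert g\rVert \leq c_{x}\lVert gx\rVert$ for all $g \in A$. Applying this successively to $g = 1, x, x^{2}, \ldots, x^{n-1}$ telescopes to $\lVert x\rVert \leq c_{x}^{n-1}\lVert x^{n}\rVert$, whence $\lVert x^{n}\rVert^{1/n} \geq \lVert x\rVert^{1/n}/c_{x}^{(n-1)/n} \to 1/c_{x} > 0$, contradicting $\lim_{n}\lVert x^{n}\rVert^{1/n} = |x|_{\spc} = 0$. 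Here one uses crucially that $\lVert x\rVert > 0$, which holds because $\lVert\cdot\rVert$ is a genuine norm on the $K$-Banach algebra $A$.

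For part (ii), I would first observe that (i) makes $|\cdot|_{\spc}$ a norm on $A$, and that the identity $|a|_{\spc} = |a|$ for $a \in K$ makes $(A,|\cdot|_{\spc})$ a normed $K$-algebra, to which Lemma \ref{The core and norm inequalities} applies. Fix $0 \neq x \in A$; it is not a zero-divisor, so it suffices to produce a constant $c>0$ with $|f|_{\spc} \leq c|fx|_{\spc}$ for every $f \in A$. Iterating the original inequality $\lVert g\rVert \leq c_{x}\lVert gx\rVert$ with $g$ running through $f^{n}, f^{n}x, \ldots, f^{n}x^{n-1}$ yields
\begin{equation*}
\lVert f^{n}\rVert \leq c_{x}^{n}\lVert f^{n}x^{n}\rVert = c_{x}^{n}\lVert (fx)^{n}\rVert.
\end{equation*}
Taking $n$-th roots and passing to the limit gives $|f|_{\spc} \leq c_{x}|fx|_{\spc}$, placing $x$ in $\Core(A,|\cdot|_{\spc})$ via Lemma \ref{The core and norm inequalities}.

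The only mildly delicate point is precisely the uniformity of $c_{x}$ in $g$ used in both inductions; this is however the content of Lemma \ref{The core and norm inequalities}. Once that is granted, the argument reduces to two short telescoping computations, and the various sanity checks (that non-zero elements of a domain are non-zero-divisors, that $\lVert x\rVert > 0$ for $x \neq 0$, and that the spectral seminorm restricts to $|\cdot|$ on $K$) are all routine.
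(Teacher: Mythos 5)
Your argument is correct and follows essentially the same line as the paper's: both proofs hinge on iterating the core inequality $\lVert g\rVert \le c_x\lVert gx\rVert$ from Lemma \ref{The core and norm inequalities} against powers of $x$ to control $\lVert x^{n}\rVert^{1/n}$ and $\lVert f^{n}\rVert^{1/n}$. Your version of (ii) is marginally tidier --- you derive $\lVert f^n\rVert\le c_x^n\lVert(fx)^n\rVert$ directly, take $n$-th roots, and pass to the limit, whereas the paper works with the product $|f|_{\spc}|x|_{\spc}$ and the constant $C_x=\lVert x\rVert c_x$ and then divides by $|x|_{\spc}$ --- but the underlying computation is the same, and you correctly flag that part (i) is still needed in (ii) to know $(A,|\cdot|_{\spc})$ is a \emph{normed} algebra before invoking Lemma \ref{The core and norm inequalities}.
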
 
\begin{proof} Let $x \in A \setminus \{0\}$ be arbitrary. 

(i) As $\Core(A) = A$, we have $C_x < \infty$. For all $n \in \mathbb{N}_{>0}$ we obtain the chain of inequalities $\lVert x^{n} \rVert C_{x}^{n} \geq \lVert x \rVert \lVert x^{n-1} \rVert C_{x}^{n-1} \geq \dots \geq {\lVert x \rVert}^{n}$, i.e.~we have ${\lVert x^{n} \rVert}^{1/n}C_{x} \geq \lVert x \rVert$ for all $n$ and thus $|x|_{\spc}C_x \geq \lVert x \rVert$. Hence $|x|_{\spc} = 0$ implies $\lVert x \rVert = 0$.

(ii) For all $f \in A$ we have $|f|_{\spc}|x|_{\spc} = \lim_{n \to \infty}{\lVert f^{n}\rVert}^{1/n}\lim_{n \to \infty}{\lVert x^{n}\rVert}^{1/n} = \inf_{n}(\lVert f^{n} \rVert \lVert x^{n} \rVert)^{1/n} \\ \leq \inf_{n}({C_x}^{n}\lVert (fx)^{n}\rVert)^{1/n} = C_{x}\inf_{n}{\lVert (fx)^{n}\rVert}^{1/n} = C_{x}|fx|_{\spc}$. By (i), $|x|_{\spc} \neq 0$. Hence \begin{equation*} c := {C_x}/|x|_{\spc} \end{equation*} satisfies $|f|_{\spc}\leq c|fx|_{\spc}$ for all $f \in A$. By Lemma \ref{The core and norm inequalities}, this proves (ii). \end{proof}
\begin{thm} \label{Radical ideals in Noetherian Banach algebras} Let $A$ be a Noetherian Banach algebra over a nonarchimedean field $K$. Then every radical ideal of $A$ is spectrally reduced. In particular, $\TopSpec(A) = \Spec(A)$.\end{thm}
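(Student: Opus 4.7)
The plan is to reduce the statement successively: first to showing that $A/I$ is spectrally reduced for every radical ideal $I$, then (via primary decomposition) to the case where $I$ is prime, and finally to invoke Lemma \ref{core and spectral seminorm}(i) together with Example \ref{Normed rings with full core}(1).

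First I would invoke Lemma \ref{Properties of spectrally reduced ideals}(ii), which shows that $I \subsetneq A$ is spectrally reduced if and only if $A/I$, equipped with the quotient norm, is spectrally reduced. Since $A$ is a Noetherian complete Tate ring, every ideal (in particular $I$) is closed, so $A/I$ is again a Noetherian Banach $K$-algebra. It therefore suffices to prove: for every Noetherian Banach $K$-algebra $B$ which is reduced, the spectral seminorm $|\cdot|_{\spc}$ on $B$ is a norm.

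Next, since $B$ is Noetherian and reduced, the zero ideal is a finite intersection $(0) = \mathfrak{p}_1 \cap \cdots \cap \mathfrak{p}_n$ of the (finitely many) minimal prime ideals of $B$. Each $\mathfrak{p}_i$ is closed in $B$ (again because $B$ is Noetherian complete Tate, so every ideal is closed), hence each quotient $B_i := B/\mathfrak{p}_i$, equipped with the quotient norm, is itself a Noetherian Banach $K$-algebra, and moreover an integral domain. The natural injection $B \hookrightarrow \prod_{i=1}^n B_i$ allows us to compare spectral seminorms. Specifically, each quotient map $\pi_i : B \twoheadrightarrow B_i$ is a bounded homomorphism of Banach $K$-algebras, so pullback along $\pi_i$ carries $\mathcal{M}(B_i)$ into $\mathcal{M}(B)$; consequently, for every $f \in B$, we have $|\pi_i(f)|_{\spc, B_i} \leq |f|_{\spc, B}$ by the description $|\cdot|_{\spc} = \sup_{\phi \in \mathcal{M}(\cdot)} \phi(\cdot)$ of Remark \ref{The spectral seminorm for non-complete rings}. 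Thus $\ker(|\cdot|_{\spc, B}) \subseteq \bigcap_{i=1}^n \pi_i^{-1}\bigl(\ker(|\cdot|_{\spc, B_i})\bigr)$.

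It now remains to show that each $B_i$ is spectrally reduced, i.e., that $\ker(|\cdot|_{\spc, B_i}) = 0$. Here $B_i$ is a Noetherian Banach $K$-algebra and an integral domain. By Example \ref{Normed rings with full core}(1) (applicable because $B_i$ is a Noetherian complete Tate ring), $\sCore(B_i) = B_i$, and since $B_i$ is a domain this forces $\Core(B_i) = B_i$. Then Lemma \ref{core and spectral seminorm}(i) directly yields that $B_i$ is spectrally reduced. Combining with the previous paragraph, $\ker(|\cdot|_{\spc, B}) \subseteq \bigcap_{i=1}^n \mathfrak{p}_i = (0)$, so $B$ is spectrally reduced, proving the first assertion. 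The equality $\TopSpec(A) = \Spec(A)$ is then immediate, since prime ideals are radical and every prime of $A$ (being radical) is therefore spectrally reduced.

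The main conceptual input is already carried by Lemma \ref{core and spectral seminorm}, which handles the integral domain case; the only genuine work in the present theorem is the reduction from radical ideals to minimal primes, which is straightforward once one observes that all ideals are closed in the Noetherian Banach setting, so quotients remain Noetherian Banach $K$-algebras and the whole argument stays within the hypotheses of the lemma.
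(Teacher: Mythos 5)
Your proof is correct and follows the same underlying strategy as the paper: reduce to $B=A/J$ via Lemma~\ref{Properties of spectrally reduced ideals}(ii) (equivalently, Lemma~\ref{Berkovich spectrum of a quotient}), use closedness of ideals in a Noetherian Banach algebra to get $\sCore = $ everything, and invoke Lemma~\ref{core and spectral seminorm}(i). However, you add a reduction step that the paper does not make explicit, and this step is actually needed. Lemma~\ref{core and spectral seminorm} is stated and proved only for Banach algebras \emph{without non-zero zero-divisors} (the quantity $C_x$ underlying its proof is defined, via Lemma~\ref{The core and norm inequalities}, only for non-zero-divisors). The paper applies that lemma directly to $B=A/J$, but $J$ radical makes $B$ merely reduced, not a domain. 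You resolve this correctly: decompose $(0)=\bigcap_i \mathfrak{p}_i$ into the finitely many minimal primes of the Noetherian reduced ring $B$, pass to the domains $B_i=B/\mathfrak{p}_i$ (again Noetherian Banach $K$-algebras since all ideals are closed), apply Example~\ref{Normed rings with full core}(1) and Lemma~\ref{core and spectral seminorm}(i) to each $B_i$, and then use boundedness of the projections $\pi_i$ (so $|\pi_i(f)|_{\spc,B_i} \le |f|_{\spc,B}$ via Remark~\ref{The spectral seminorm for non-complete rings}) to conclude $\ker(|\cdot|_{\spc,B}) \subseteq \bigcap_i\mathfrak{p}_i = 0$. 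In short: same core lemma, but your version supplies the missing reduction from ``reduced'' to ``integral domain'' that the lemma's hypotheses require, and is the more careful of the two.
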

\begin{proof}In a Noetherian Banach $K$-algebra all ideals are closed; see \cite{BGR}, Prop.~3.7.2/2. For $\mathfrak{p}\subsetneq A$ a prime ideal, the quotient $A/\mathfrak{p}$ is again Noetherian. For any $x\in A/\mathfrak{p}$ the ideal $(x)_{A/\mathfrak{p}}$ is then closed and, consequently, multiplication by $x$ is a strict map by Banach’s Open Mapping Theorem. Hence $\Core(A/\mathfrak{p})=A/\mathfrak{p}$ for every prime ideal $\mathfrak{p}$ of $A$. By Lemma 2.22(i) this entails that, for every prime ideal $\mathfrak{p}$ of $A$, the spectral seminorm on $A/\mathfrak{p}$ is a norm. This norm lifts to a bounded power-multiplicative seminorm $\phi_{\mathfrak{p}}$ on $A$ whose kernel is $\mathfrak{p}$. Therefore, for any radical ideal $J$ of $A$, we see that $J=\bigcap_{J\subseteq \mathfrak{p}}\mathfrak{p}$ coincides with the kernel of the bounded power-multiplicative seminorm $\phi_{J}=\sup_{J\subseteq \mathfrak{p}}\phi_{\mathfrak{p}}$. \end{proof}
Recall from the Notation Section, that a seminormed ring $(A,\lVert \cdot \rVert)$ is called uniform if its topology can be defined by a bounded power-multiplicative seminorm (which can then be taken to be equal to the spectral seminorm). We record the following simple lemma for later use. 
\begin{lemma} \label{Ring of power-bounded elements} If $A$ is a seminormed ring and $\lVert \cdot \rVert$ is any power-multiplicative seminorm defining the topology on $A$, then the ring of power-bounded elements $A^{\circ}$ of $A$ has the following description: \begin{equation*} A^{\circ} = \{\,f \in A \mid \lVert f \rVert \leq 1 \,\}. \end{equation*}\end{lemma}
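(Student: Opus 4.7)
My plan is to prove the two set-theoretic inclusions separately, with the main work concentrated in the reverse inclusion.

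For the inclusion $\{f \in A \mid \lVert f \rVert \leq 1\} \subseteq A^\circ$, I would invoke power-multiplicativity to obtain $\lVert f^n \rVert = \lVert f \rVert^n \leq 1$ for all $n \geq 0$ whenever $\lVert f \rVert \leq 1$, so the set of powers of $f$ is contained in the closed unit ball $\overline{B}(0,1) := \{g \in A : \lVert g \rVert \leq 1\}$. It then suffices to verify that $\overline{B}(0,1)$ is a bounded subset of the topological ring $A$, which follows directly from submultiplicativity of the seminorm: given any open neighborhood $U$ of $0$ containing some $B(0,\epsilon)$, the open ball $V = B(0,\epsilon)$ is a neighborhood of $0$ and $V \cdot \overline{B}(0,1) \subseteq B(0,\epsilon) \subseteq U$.

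For the reverse inclusion $A^\circ \subseteq \{f \in A \mid \lVert f \rVert \leq 1\}$, I would argue contrapositively. Suppose $\lVert f \rVert > 1$. Since $\lVert \cdot \rVert$ is power-multiplicative, the identity $\lVert f^n \rVert^{1/n} = \lVert f \rVert$ shows $\lVert \cdot \rVert$ coincides with its own spectral seminorm, and the formula $|\cdot|_{\spc} = \sup_{\phi \in \mathcal{M}(A)} \phi$ recalled in Remark \ref{The spectral seminorm for non-complete rings} produces some $\phi \in \mathcal{M}(A)$ with $\phi(f) > 1$. Assuming for contradiction that $\{f^n : n \geq 0\}$ is topologically bounded, choose a neighborhood $V \supseteq B(0,\delta)$ of $0$ satisfying $v f^n \in B(0,1)$ for all $v \in V$ and $n \geq 0$. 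Since $\phi$ is a bounded multiplicative seminorm with $\phi \leq \lVert \cdot \rVert$ (the latter inequality being the standard consequence of power-multiplicativity noted in the Notation Section), we obtain $\phi(v)\phi(f)^n = \phi(vf^n) \leq \lVert vf^n \rVert < 1$ for every $v \in B(0,\delta)$ and every $n$; since $\phi(f)^n \to \infty$, this forces $\phi(v) = 0$ for every $v \in B(0,\delta)$.

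The main obstacle is to convert this vanishing of $\phi$ on a whole open neighborhood of zero into a contradiction with $\phi(1) = 1$; this is where the topological structure of $A$ enters. In the settings relevant to this paper, namely seminormed rings possessing a topologically nilpotent unit $\varpi$ (which covers arbitrary Tate rings and, in particular, all seminormed algebras over a nonarchimedean field), a sufficiently high power $\varpi^k$ lies in $B(0,\delta)$, so multiplicativity of $\phi$ yields $\phi(\varpi)^k = \phi(\varpi^k) = 0$ and hence $\phi(\varpi) = 0$, contradicting $\phi(\varpi)\phi(\varpi^{-1}) = \phi(1) = 1$. The same argument works in any seminormed ring whose topology admits no open proper ideals, a condition that is automatic in every application of the lemma in the sequel.
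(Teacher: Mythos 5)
Your forward inclusion matches the paper's. For the reverse inclusion, however, you have taken a long detour through the Berkovich spectrum where the paper uses a one-line elementary argument: taking power-boundedness of $f$ to mean $\sup_{n}\lVert f^{n}\rVert = C < \infty$, power-multiplicativity gives $\lVert f\rVert^{n} = \lVert f^{n}\rVert \leq C$ for all $n$, hence $\lVert f\rVert \leq C^{1/n}$, and letting $n \to \infty$ yields $\lVert f\rVert \leq 1$. No appeal to $\mathcal{M}(A)$, to compactness, or to the formula $|\cdot|_{\spc} = \sup_{\phi}\phi$ is needed.

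The subtlety you raise --- whether ``bounded subset'' should be read topologically (the absorption condition $V \cdot S \subseteq U$) rather than metrically (the seminorm values on $S$ are bounded above) --- is a fair observation, and you are right that under the purely topological reading one needs some extra hypothesis, such as a topologically nilpotent unit or the absence of proper open ideals, to convert ``$\phi$ vanishes on an open ball'' into ``$\phi$ vanishes identically''; your last paragraph identifies this correctly. But the paper's proof makes plain that the metric reading is the one intended here, which is the standard convention for seminormed rings and agrees with the topological one in every situation this paper cares about (Tate rings and seminormed $K$-algebras, where a unit of norm in $(0,1)$ lets one pass between the two). In short, your argument is correct under the reading you adopt, but it replaces a direct unwinding of definitions by a multi-step argument through the spectral seminorm and the Berkovich spectrum, and in doing so introduces an apparent dependence on extra hypotheses that the lemma, as actually proved, does not need.
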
\begin{proof}Since the seminorm is power-multiplicative, $\lVert f\rVert \leq 1$ implies $\lVert f^{n}\rVert \leq 1$ for all $n$. Hence the right hand side is contained in the left hand side. Conversely, let $f$ be any power-bounded element of $A$. Then there exists some $C > 0$ such that $\lVert f\rVert^{n} = \lVert f^{n}\rVert \leq C$ for all $n$. But this entails $\lVert f\rVert \leq C^{1/n}$ for all $n$, so taking the limit as $n \to \infty$ we obtain the inequality $\lVert f\rVert \leq 1$.\end{proof}
A subset $\mathcal{S} \subset{\mathcal{M}(A)}$ is called a boundary for a normed ring $(A,\lVert \cdot \rVert)$ if for all $f \in A$ there exists $\phi \in \mathcal{S}$ such that $\phi(f) = \lVert f \rVert$. A closed boundary $\mathcal{S} \subset{\mathcal{M}(A)}$ is called Shilov boundary for $(A,\lVert \cdot \rVert)$ if it is the smallest of all closed boundaries for $(A,\lVert \cdot \rVert)$ with respect to inclusion. It was shown by Escassut and Mainetti \cite{EM1} that the Shilov boundary exists for any uniform normed algebra over a nonarchimedean field $K$. 

In \cite{Escassut1} Escassut established a close relationship between the notion of Shilov boundary and that of topological divisors of zero in a uniform normed $K$-algebra. 
\begin{thm}[Escassut] \label{Escassut's theorem} Let $A$ be a uniform normed $K$-algebra over a nonarchimedean field $K$ and let $\mathcal{S}$ denote the Shilov boundary for $(A,\lVert \cdot \rVert)$. An element $f \in A$ is a topological divisor of zero if and only if there exists $\psi \in \mathcal{S}$ such that $\psi(f) = 0$.\end{thm}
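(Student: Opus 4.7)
The plan is to prove both directions by leveraging the two defining properties of the Shilov boundary: it is a closed boundary (so for every $g \in A$ the norm $\lVert g\rVert$ is attained at some $\phi \in \mathcal{S}$) and it is minimal among closed boundaries (so for any proper closed subset $T \subsetneq \mathcal{S}$ there exists $g \in A$ with $\sup_{\phi \in T}\phi(g) < \lVert g\rVert$). Since $A$ is uniform, I may assume without loss of generality that $\lVert \cdot\rVert$ is power-multiplicative and equals the spectral seminorm, so that $\lVert g\rVert = \sup_{\phi \in \mathcal{M}(A)}\phi(g)$ for every $g \in A$; otherwise the notion of boundary would be vacuous.

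For the direction ``topological divisor of zero implies vanishing at some point of $\mathcal{S}$'', let $(f_n)$ be a sequence with $c := \inf_{n}\lVert f_n\rVert > 0$ and $\lVert f_n f\rVert \to 0$. Since $\mathcal{S}$ is a closed boundary, I can choose $\psi_n \in \mathcal{S}$ with $\psi_n(f_n) = \lVert f_n\rVert \geq c$; multiplicativity of $\psi_n$ then gives $c\,\psi_n(f) \leq \psi_n(f_n)\psi_n(f) = \psi_n(f_n f) \leq \lVert f_n f\rVert \to 0$, so $\psi_n(f) \to 0$. The space $\mathcal{S}$ is a closed subset of the compact Hausdorff space $\mathcal{M}(A)$ by Lemma \ref{Berkovich spectrum}, so a subsequence of $(\psi_n)$ converges to some $\psi \in \mathcal{S}$; continuity of $\phi \mapsto \phi(f)$ yields $\psi(f) = 0$.

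For the converse, fix $\psi \in \mathcal{S}$ with $\psi(f) = 0$ and an arbitrary $\epsilon > 0$. Continuity of $\phi \mapsto \phi(f)$ on $\mathcal{M}(A)$ furnishes an open neighbourhood $V$ of $\psi$ with $\phi(f) < \epsilon$ for all $\phi \in V$. The set $T := \mathcal{S}\setminus V$ is a closed subset of $\mathcal{S}$, and is proper because $\psi \in V \cap \mathcal{S}$; by minimality of $\mathcal{S}$ there exists $g \in A$ with $r := \sup_{\phi \in T}\phi(g) < \lVert g\rVert$, and after rescaling I may assume $\lVert g\rVert = 1$. Since $\mathcal{S}$ is a boundary and each $\phi \in \mathcal{S}$ is multiplicative, for every $n \geq 1$,
\begin{equation*}
\lVert g^n f\rVert = \sup_{\phi \in \mathcal{S}}\phi(g)^n \phi(f) \leq \max\bigl(\epsilon,\ r^n \lVert f\rVert\bigr),
\end{equation*}
using the decomposition $\mathcal{S} = (\mathcal{S}\cap V)\cup T$ together with $\phi(g) \leq 1$ on $V \cap \mathcal{S}$ and $\phi(g) \leq r$ on $T$. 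Choosing $n$ large enough that $r^n \lVert f\rVert \leq \epsilon$ yields an element $g^n$ with $\lVert g^n\rVert = 1$ and $\lVert g^n f\rVert \leq \epsilon$. Letting $\epsilon$ run through a sequence tending to $0$ and assembling the corresponding $g$'s and exponents produces a sequence witnessing that $f$ is a topological divisor of zero.

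The hard part is the converse: it hinges on the minimality of $\mathcal{S}$ to produce a ``peak'' element $g$ concentrated near $\psi$, which is precisely where the existence theorem of the Shilov boundary in the uniform nonarchimedean setting (due to Escassut and Mainetti) is used. The forward direction is essentially formal, relying only on compactness of $\mathcal{S}$ and the boundary property.
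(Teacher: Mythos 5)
Since the paper's proof is only a citation to Escassut's Th\'eor\`eme~1.4, your self-contained argument is a genuinely different (and more informative) route: it isolates exactly how the two defining features of the Shilov boundary do the work --- the boundary property and compactness for the forward direction, and minimality of $\mathcal{S}$ (to manufacture a ``peak'' element concentrated near $\psi$) for the converse --- and makes visible that, once existence of $\mathcal{S}$ (Escassut--Mainetti) is granted, nothing beyond compactness of $\mathcal{M}(A)$ and the identity $\lVert h\rVert = \sup_{\phi\in\mathcal{S}}\phi(h)$ is needed. Two points deserve tightening. In the forward direction, $\mathcal{M}(A)$ is compact Hausdorff but need not be metrizable, so $(\psi_n)$ need not admit a convergent \emph{subsequence}; take instead a cluster point $\psi\in\mathcal{S}$ of the sequence, which exists by compactness, and note that continuity of evaluation at $f$ together with $\psi_n(f)\to 0$ forces $\psi(f)=0$. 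In the converse, ``after rescaling I may assume $\lVert g\rVert=1$'' is not literally available when $\lVert g\rVert\notin|K^\times|$. Instead, keep $g$ as produced and use power-multiplicativity to obtain
\begin{equation*}
\lVert g^n f\rVert \;\le\; \max\bigl(\lVert g\rVert^n\,\epsilon,\; r^n\,\lVert f\rVert\bigr),
\end{equation*}
so that $\lVert g^n f\rVert / \lVert g\rVert^n \le \max\bigl(\epsilon,\,(r/\lVert g\rVert)^n\lVert f\rVert\bigr) \to \epsilon$; then choose scalars $\lambda_n\in K^\times$ with $\lVert g\rVert^n/|\lambda_n|$ bounded above and below by fixed positive constants (possible because $K$ is nontrivially valued) and set $x_n:=\lambda_n^{-1}g^n$, which keeps $\lVert x_n\rVert$ bounded away from $0$ while $\lVert x_n f\rVert\to 0$. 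Finally, handle the degenerate case $T=\mathcal{S}\setminus V=\emptyset$: there the argument gives $\lVert f\rVert\le\epsilon$ for every $\epsilon>0$, so $f=0$, which is trivially a topological divisor of zero. With these repairs the proof is correct.
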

\begin{proof} See \cite{Escassut1}, Theorem 1.4.\end{proof} 
From this theorem we can deduce that some Banach $K$-algebras, including all Noetherian ones, have (in a certain sense) "enough" bounded multiplicative seminorms. This strengthens the assertion of Theorem \ref{Radical ideals in Noetherian Banach algebras}. 
\begin{prop} \label{The core and the topological spectrum} Let $A$ be a Banach algebra over a nonarchimedean field $K$ and let $\mathfrak{p} \subsetneq A$ be a closed prime ideal such that $\Core(A/\mathfrak{p}) = A/\mathfrak{p}$. Then $\mathfrak{p}\subsetneq{A}$ is the kernel of some element $\phi \in \mathcal{M}(A)$.\end{prop}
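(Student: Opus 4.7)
My plan is to reduce to the quotient $B := A/\mathfrak{p}$, which is a Banach $K$-algebra (since $\mathfrak{p}$ is closed), an integral domain (since $\mathfrak{p}$ is prime), and satisfies $\Core(B) = B$ by hypothesis. If I can exhibit some $\psi \in \mathcal{M}(B)$ with $\ker(\psi) = 0$, then the pullback $\phi := \psi \circ (A \twoheadrightarrow B)$ lies in $\mathcal{M}(A)$, because the quotient map $A \to B$ is submetric, and has kernel exactly $\mathfrak{p}$.

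To produce such a $\psi$, I first apply Lemma~\ref{core and spectral seminorm} to conclude that the spectral seminorm $|\cdot|_{\spc}$ on $B$ is a norm and that $\Core(B, |\cdot|_{\spc}) = B$, and then pass to the uniformization $B^u$, i.e.\ the completion of $B$ with respect to $|\cdot|_{\spc}$. This $B^u$ is a uniform Banach $K$-algebra into which $B$ embeds as a dense subring, and since every bounded multiplicative seminorm on $B$ is automatically dominated by $|\cdot|_{\spc}$ (by Remark~\ref{The spectral seminorm for non-complete rings}), restriction yields a bijection $\mathcal{M}(B^u) \simeq \mathcal{M}(B)$. The Escassut--Mainetti theorem now furnishes a non-empty Shilov boundary $\mathcal{S} \subseteq \mathcal{M}(B^u)$; I pick any $\psi \in \mathcal{S}$ and claim $\ker(\psi) \cap B = 0$. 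Suppose for contradiction that some non-zero $f \in B$ satisfies $\psi(f) = 0$. By Theorem~\ref{Escassut's theorem} applied to $B^u$, $f$ is a topological divisor of zero in $B^u$, so there is a sequence $(x_n)$ in $B^u$ with $\inf_n |x_n|_{\spc} > 0$ and $|x_n f|_{\spc} \to 0$. After multiplying each $x_n$ by a suitable integer power of a pseudo-uniformizer of $K$, I may assume that the values $|x_n|_{\spc}$ lie in a fixed bounded interval bounded away from $0$. Since $B$ is dense in $B^u$ with respect to $|\cdot|_{\spc}$, I can choose $y_n \in B$ with $|x_n - y_n|_{\spc} < 1/n$; for $n$ large enough, the sequence $(y_n) \subset B$ then satisfies $\inf_n |y_n|_{\spc} > 0$ and $|y_n f|_{\spc} \to 0$, exhibiting $f$ as a topological divisor of zero in $(B, |\cdot|_{\spc})$. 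Since $B$ is an integral domain, $f$ is not a zero-divisor, so Lemma~\ref{The core and top. divisors of zero} combined with $\Core(B, |\cdot|_{\spc}) = B$ gives the required contradiction.

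The main obstacle is precisely this transfer step: Escassut's theorem delivers the topological-zero-divisor data a priori only in the completion $B^u$, whereas the hypothesis on $\Core(B)$ is a statement about $B$ itself, and bridging this gap requires the rescaling together with the density approximation outlined above. The remaining steps---reducing to $B$, passing to the spectral norm, identifying $\mathcal{M}(B)$ with $\mathcal{M}(B^u)$, and extracting $\phi$ from the Shilov boundary point $\psi$---are comparatively formal.
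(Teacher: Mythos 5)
Your proof is correct and follows the same overall strategy as the paper: reduce to $B = A/\mathfrak{p}$, invoke Lemma~\ref{core and spectral seminorm} to conclude that $(B,|\cdot|_{\spc})$ is a uniform normed $K$-algebra with $\Core(B,|\cdot|_{\spc})=B$ (hence no non-zero topological divisors of zero, by Lemma~\ref{The core and top. divisors of zero}), use the Escassut--Mainetti/Escassut theory to produce a Shilov-boundary point with trivial kernel, and lift it to $\mathcal{M}(A)$. The one genuine divergence is that you pass through the completion $B^{u}$ before applying Escassut's theorem, and then must transfer the topological-zero-divisor information back to $B$ by a rescaling-plus-density approximation. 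That transfer argument is correct (the rescaling keeps $|x_n|_{\spc}$ in a compact interval away from $0$, and nonarchimedean density lets you replace $x_n$ by $y_n\in B$ with $|y_n|_{\spc}=|x_n|_{\spc}$ eventually and $|y_nf|_{\spc}\le\max(|x_nf|_{\spc},\,|y_n-x_n|_{\spc}\,|f|_{\spc})\to0$). However, it is superfluous: the paper's Theorem~\ref{Escassut's theorem} and the Escassut--Mainetti existence result are both stated for uniform normed $K$-algebras without any completeness hypothesis, so the paper applies them directly to $(B,|\cdot|_{\spc})$ and skips the detour. Your version would only earn its keep if one distrusted Escassut's theorem off the complete case; as a bonus, it records explicitly (via Lemma~\ref{uniformization}, i.e.~$\mathcal{M}(B)\simeq\mathcal{M}(B^u)$) how the boundary point restricts to a bounded multiplicative \emph{norm} on $B$, which the paper's phrasing leaves more implicit.
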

\begin{proof} Let $\mathfrak{p}\subsetneq{A}$ be a closed prime ideal as in the statement of the proposition. By hypothesis, the Banach algebra $B := A/\mathfrak{p}$ satisfies $\Core(B,\lVert \cdot \rVert) = B$, where $\lVert \cdot \rVert$ is the quotient norm induced from $A$. As usual we denote by $|\cdot|_{\spc}$ the corresponding spectral seminorm. By Lemma \ref{core and spectral seminorm} (i) the seminorm $|\cdot|_{\spc}$ is a norm and by Lemma \ref{core and spectral seminorm} (ii) we have $\Core(B,|\cdot|_{\spc}) = B$, so $(B,|\cdot|_{\spc})$ is a uniform normed $K$-algebra without nonzero topological divisors of zero. By Theorem \ref{Escassut's theorem} we conclude that there exist absolute values on $B$ continuous with respect to $|\cdot|_{\spc}$, hence also continuous with respect to $\lVert \cdot \rVert$. Any such absolute value lifts to an element $\phi \in \mathcal{M}(A)$ with $\ker(\phi) = \mathfrak{p}$.\end{proof}
\begin{cor} \label{Topological spectrum of Noetherian Banach algebras} Let $A$ be a Noetherian Banach algebra. Then every prime ideal of $A$ is of the form $\ker(\phi)$ for some $\phi \in \mathcal{M}(A)$.\end{cor}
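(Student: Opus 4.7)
The plan is to deduce this corollary directly from Proposition \ref{The core and the topological spectrum} together with the Noetherian closedness of ideals already used in the proof of Theorem \ref{Radical ideals in Noetherian Banach algebras}. Given an arbitrary prime ideal $\mathfrak{p} \subsetneq A$, I first observe that every ideal in a Noetherian Banach $K$-algebra is automatically closed by \cite{BGR}, Prop.~3.7.2/2, so in particular $\mathfrak{p}$ is a closed prime ideal. This puts me in the setting of the preceding proposition, provided I can verify the hypothesis $\Core(A/\mathfrak{p}) = A/\mathfrak{p}$.

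To verify this hypothesis, I note that the quotient $B := A/\mathfrak{p}$ is again a Noetherian Banach $K$-algebra (a quotient of a Noetherian ring by an ideal is Noetherian, and the quotient norm makes it a Banach algebra since $\mathfrak{p}$ is closed). Hence every principal ideal $(x)_{B}$ in $B$ is closed. By Banach's Open Mapping Theorem — which, as recalled right after Definition \ref{Def. of the core}, implies in any complete Tate ring that $x \in \sCore(B)$ if and only if $(x)_{B}$ is closed — it follows that $\sCore(B) = B$, and a fortiori $\Core(B) = B$.

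With this, Proposition \ref{The core and the topological spectrum} applies to $\mathfrak{p}$ and produces the desired $\phi \in \mathcal{M}(A)$ with $\ker(\phi) = \mathfrak{p}$.

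There is no real obstacle here; the whole argument is a one-line application of the proposition once the two easy observations (closedness of every ideal in a Noetherian Banach algebra, and fullness of the core of the quotient) are in place. The only conceptual point worth emphasizing is that the Noetherian hypothesis is used twice: once to ensure that $\mathfrak{p}$ itself is closed, and once to ensure that every principal ideal in $B$ is closed so that the Open Mapping Theorem gives us full strict core.
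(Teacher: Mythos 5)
Your proof is correct and follows exactly the route the paper intends: the corollary is stated without an explicit proof, but it is designed to be an immediate consequence of Proposition \ref{The core and the topological spectrum} once one knows that in a Noetherian Banach $K$-algebra all ideals (hence $\mathfrak{p}$) are closed and that every quotient $B = A/\mathfrak{p}$ has $\sCore(B) = B$ via the Open Mapping Theorem — precisely the two observations you make, which are also the ones appearing in the proof of Theorem \ref{Radical ideals in Noetherian Banach algebras}.
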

The next lemma (in the case of Banach algebras) is due to Schikhof and was taken from \cite{Schikhof} (p. 48); it shows that we can use the condition $\Core(A) = A$ to get some control of the algebraic properties of $A$. We replicate the proof, which works in a more general setting. Note that for any Banach ring $A$ (and, in particular, for any Banach algebra $A$ over a nonarchimedean field) the group of units $A^{\times}$ is open.
\begin{lemma}[Schikhof] \label{Schikhof's lemma} Let $K$ be a nonarchimedean field and $A$ a normed algebra over $K$ such that the group of units $A^{\times}$ is open (for example, every Banach algebra has this property). Then the topological boundary $\partial A^{\times}$ of $A^{\times}$ consists of topological divisors of zero.\end{lemma}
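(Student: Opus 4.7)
The plan is to exploit the openness of $A^{\times}$: any $x \in \partial A^{\times}$ is a limit of units $u_n \in A^{\times}$ but is not itself a unit, and we build the witness sequence for $x$ being a topological divisor of zero out of appropriate rescalings of the inverses $u_n^{-1}$. Since $A^{\times}$ is open, one has $\partial A^{\times} = \overline{A^{\times}} \setminus A^{\times}$, so such a sequence $(u_n)$ with $u_n \to x$ and $x \notin A^{\times}$ indeed exists.

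The starting point is the identity
\begin{equation*}
u_n^{-1} x \,=\, 1 - u_n^{-1}(u_n - x),
\end{equation*}
obtained by writing $u_n^{-1}x = u_n^{-1}(u_n - (u_n - x))$. First I would dispose of the easy dichotomy: if the sequence $(\lVert u_n^{-1}\rVert)_n$ were bounded by some $M > 0$, then $\lVert u_n^{-1}(u_n - x)\rVert \leq M\lVert u_n - x\rVert \to 0$, so $u_n^{-1}x \to 1$ in $A$. As $A^{\times}$ is an open neighborhood of $1$, this would force $u_n^{-1}x$, and hence $x = u_n \cdot u_n^{-1}x$, to be a unit for all sufficiently large $n$, contradicting $x \notin A^{\times}$. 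After passing to a subsequence, I may therefore assume $\lVert u_n^{-1}\rVert \to \infty$.

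Now I rescale to obtain a sequence of bounded, unit-order norm. Fix $\varpi \in K^{\times}$ with $|\varpi| < 1$, which exists since $K$ is a nonarchimedean field (and hence nontrivially valued in the convention of the paper). For each $n$, pick the unique integer $m_n \geq 0$ with $|\varpi|^{m_n}\lVert u_n^{-1}\rVert \in [1, |\varpi|^{-1})$, which exists by the archimedean property of $\mathbb{R}$. Set $y_n := \varpi^{m_n} u_n^{-1} \in A$. Using $K$-homogeneity of the norm on a normed $K$-algebra, $\lVert y_n\rVert = |\varpi|^{m_n}\lVert u_n^{-1}\rVert \geq 1$, so $\inf_n \lVert y_n\rVert > 0$. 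Multiplying the key identity by $\varpi^{m_n}$ gives
\begin{equation*}
y_n x \,=\, \varpi^{m_n} - \varpi^{m_n} u_n^{-1}(u_n - x).
\end{equation*}
The first term has norm $|\varpi|^{m_n} \leq |\varpi|^{-1}/\lVert u_n^{-1}\rVert \to 0$, while the second has norm at most $|\varpi|^{m_n}\lVert u_n^{-1}\rVert \cdot \lVert u_n - x\rVert < |\varpi|^{-1}\lVert u_n - x\rVert \to 0$. Hence $y_n x \to 0$, and $(y_n)$ witnesses that $x$ is a topological divisor of zero.

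The main technical point is the rescaling step: the inverses $u_n^{-1}$ are the natural obstruction to $x$ being a unit, but they blow up in norm, and we must normalize them to norms of unit order without leaving $A$. This is precisely where we use the $K$-algebra structure together with the existence of a nontrivial $\varpi \in K$ with $|\varpi| < 1$, rather than working purely inside the ring $A$.
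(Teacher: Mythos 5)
Your proof is correct and takes essentially the same route as the paper's: both deduce $\lVert u_n^{-1}\rVert \to \infty$ from the openness of $A^{\times}$ and then rescale the inverses $u_n^{-1}$ by suitable scalars of $K$ to obtain a witness sequence of norm bounded away from $0$ and above, using the identity $u_n^{-1}x = 1 - u_n^{-1}(u_n - x)$. The only minor departures are cosmetic: you deduce unboundedness of $\lVert u_n^{-1}\rVert$ by a contradiction against $u_n^{-1}x \to 1$ rather than the paper's direct inequality $\lVert f_n - f\rVert \geq 1/\lVert f_n^{-1}\rVert$, and you spell out the choice of scalars $\varpi^{m_n}$ explicitly (where the paper posits $\lambda_n$, $c_1$, $c_2$ with $c_1 \leq \lVert\lambda_n^{-1}f_n^{-1}\rVert \leq c_2$); note only that $m_n \geq 0$ exists once $\lVert u_n^{-1}\rVert \geq 1$, which holds after passing to the subsequence you already took.
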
 
\begin{proof} Let $f \in \partial A^{\times}$ and let $(f_{n})_{n}$ be a sequence in $A^{\times}$ with $f_{n} \to f$. Since $A^{\times}$ is open, $f \notin A^{\times}$. Assume that $\lVert f_{n} - f \rVert < 1/{\lVert {f_{n}}^{-1}\rVert}$ for some $n$. Then $\lVert 1- {f_{n}}^{-1}f\rVert = \lVert {f_{n}}^{-1}(f_{n} - f)\rVert \leq \lVert {f_{n}}^{-1}\rVert \lVert f_{n} - f\rVert < 1$. Hence ${f_{n}}^{-1}f \in A^{\times}$ leading to $f \in A^{\times}$, a contradiction. Thus we have $\lVert f_{n} - f\rVert \geq 1/{\lVert {f_{n}}^{-1}}\rVert$ for all $n$ and, consequently, we have $\lVert {f_{n}}^{-1} \rVert \to \infty$. Choose $\lambda_{n} \in K, c_1, c_2 > 0$ such that for all $n \in \mathbb{N}$ \begin{equation*} c_1 \leq \lVert {\lambda_{n}}^{-1}{f_{n}}^{-1}\rVert \leq c_2 \end{equation*} We have $|\lambda_{n}| \geq {c_2}^{-1}\lVert {f_n}^{-1}\rVert$, thus $|\lambda_{n}| \to \infty$. For every $n \in \mathbb{N}$ set $t_n := {\lambda_{n}}^{-1}{f_n}^{-1}$. Then we obtain $\inf_{n}\lVert t_n \rVert \geq c_1 > 0$ and \begin{equation*} \lVert {t_n}f\rVert = \lVert {\lambda_{n}}^{-1}{f_{n}}^{-1}(f - f_{n}) + {\lambda_{n}}^{-1}\rVert \leq \max (c_{2}\lVert f - f_{n}\rVert , |\lambda_{n}|^{-1}) \to 0 \end{equation*} proving that $f$ is a topological divisor of zero in $A$.\end{proof}The lemma leads to a structural result for Banach algebras without non-zero topological divisors of zero. 

Recall that a ring is said to be Jacobson-semisimple if the intersection of all of its maximal ideals (the Jacobson radical) is zero. A ring is said to be Jacobson if every prime ideal is an intersection of maximal ideals.   
\begin{mydef} We call a seminormed ring $A$ topologically Jacobson if every spectrally reduced prime ideal $\mathfrak{p}\subsetneq{A}$ is an intersection of closed maximal ideals. 
\end{mydef}
If $A$ is a Banach $K$-algebra and $x \in A$, we denote by $\spc(x)$ the spectrum of $x$, i.e. $\spc(x)$ is the set of $\lambda \in K$ such that the element $x - \lambda$ is not invertible in $A$. 
\begin{thm} \label{Jacobson-semisimple Banach algebras} Every Banach algebra $A$ without non-zero topological divisors of zero is Jacobson-semisimple. If for every spectrally reduced prime ideal $\mathfrak{p} \subsetneq A$ the quotient Banach algebra has no non-zero topological divisors of zero, then $A$ is topologically Jacobson.\end{thm}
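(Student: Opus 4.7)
The plan is to combine Schikhof's Lemma (Lemma \ref{Schikhof's lemma}) with a scalar perturbation trick to handle the first assertion, and then to derive the second assertion as a formal consequence by passing to the quotient $A/\mathfrak{p}$.

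For the first statement, take $r$ in the Jacobson radical of $A$; the goal is to force $r = 0$. The standard commutative criterion says that $1 + ar \in A^{\times}$ for every $a \in A$, and substituting $a = \lambda^{-1}$ for $\lambda \in K^{\times}$ gives that $\lambda + r = \lambda(1 + \lambda^{-1}r)$ is a unit for every $\lambda \in K^{\times}$. Since $K$ is nontrivially valued, picking $\lambda_{n} \in K^{\times}$ with $\lambda_{n} \to 0$ yields units $\lambda_{n} + r$ converging to $r$, so $r \in \overline{A^{\times}}$. At the same time, $r$ cannot itself be a unit, because substituting $a = -r^{-1}$ in the Jacobson criterion would force $0$ to be a unit. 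Hence $r$ lies in $\overline{A^{\times}} \setminus A^{\times} = \partial A^{\times}$, and Schikhof's Lemma identifies $r$ as a topological divisor of zero. The hypothesis of the theorem then forces $r = 0$.

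For the second statement, fix a spectrally reduced prime ideal $\mathfrak{p} \subsetneq A$. By Lemma \ref{Properties of spectrally reduced ideals}(i) the ideal $\mathfrak{p}$ is closed, so $A/\mathfrak{p}$ equipped with the quotient norm is again a Banach $K$-algebra, and by hypothesis it has no non-zero topological divisors of zero. The first part then shows that its Jacobson radical vanishes. Pulling the intersection of all maximal ideals of $A/\mathfrak{p}$ back along the continuous projection $A \twoheadrightarrow A/\mathfrak{p}$ expresses $\mathfrak{p}$ as the intersection of all maximal ideals of $A$ that contain it. To finish, note that in any Banach algebra the group of units $A^{\times}$ is open, so $A \setminus A^{\times}$ is a closed set containing every proper ideal; the closure of a proper ideal is therefore still proper, forcing each maximal ideal to coincide with its own closure. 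The main subtlety lies in the first part: one must somehow produce units of $A$ converging to a given element of the Jacobson radical, and the essential observation is that Jacobson-radical membership is preserved under translation by arbitrary nonzero scalars from $K$, which is exactly what makes Schikhof's Lemma applicable.
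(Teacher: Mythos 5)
Your proof is correct and follows essentially the same route as the paper: reducing the first assertion to Schikhof's Lemma by producing units $\lambda_n + r$ (with $\lambda_n \to 0$ in $K^{\times}$) converging to the non-unit $r$ in the Jacobson radical, then deducing the second assertion by applying the first to $A/\mathfrak{p}$ and recalling that maximal ideals in a Banach algebra are closed. The only cosmetic difference is that you invoke the $1 + ar$ Jacobson-radical criterion directly, whereas the paper phrases the same step as showing $\mathrm{sp}(x) = \{0\}$ via maximal ideals; also, your final parenthetical remark ("Jacobson-radical membership is preserved under translation") is stated imprecisely — the point is rather that translation by a nonzero scalar carries an element of the Jacobson radical into $A^{\times}$ — but the argument itself is unaffected.
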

\begin{proof} The second assertion follows immediately from the first (recall that every maximal ideal in a Banach ring is closed). Let $A$ be a Banach $K$-algebra without non-zero topological divisors of zero. We want to show that $A$ is Jacobson-semisimple. To this end, consider an element $x$ in the Jacobson radical of $A$. Then $\spc(x) = \{0\}$. Indeed, assume $\lambda \in K$ is an element such that $x - \lambda \notin A^{\times}$. Then $x - \lambda$ belongs to some maximal ideal $\mathfrak{m}\subsetneq{A}$ of $A$. But since $x$ is in the Jacobson radical this means that $\lambda = x - (x-\lambda) \in \mathfrak{m}$ and thus $\lambda = 0$. Now pick any sequence $(\lambda_{n})_{n} \subset{K}$ converging to $0$. Then, since $\spc(x)=\{0\}$, we obtain a sequence $(x -\lambda_{n})_{n}$ of units in $A$ converging to the non-unit $x$. By Lemma \ref{Schikhof's lemma} $x$ must then be a topological divisor of zero. Then, by hypothesis, $x = 0$ as desired. 
\end{proof} 
\begin{cor} \label{Noetherian Banach algebras are Jacobson} Every Noetherian Banach $K$-algebra is Jacobson.\end{cor}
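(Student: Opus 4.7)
The plan is to assemble the corollary by chaining the results proved earlier in the section. A Noetherian Banach $K$-algebra is Jacobson iff every prime ideal is an intersection of maximal ideals, so I want to invoke Theorem \ref{Jacobson-semisimple Banach algebras}; the combinatorial task is to verify its hypotheses for every prime of $A$.

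First I would use Theorem \ref{Radical ideals in Noetherian Banach algebras} to replace the relevant notion of topological Jacobson by the usual one: since every radical ideal in a Noetherian Banach $K$-algebra is spectrally reduced, in particular every prime ideal of $A$ is spectrally reduced, so the conclusion \emph{topologically Jacobson} (from Theorem \ref{Jacobson-semisimple Banach algebras}) literally says every prime of $A$ is an intersection of closed maximal ideals. Since in any Banach $K$-algebra maximal ideals are automatically closed (as used in the proof of Theorem \ref{Jacobson-semisimple Banach algebras}), this is precisely the Jacobson condition.

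Next I would check that for any prime ideal $\mathfrak{p} \subsetneq A$ the quotient Banach algebra $A/\mathfrak{p}$ has no non-zero topological divisors of zero, which is exactly the hypothesis needed to apply the second part of Theorem \ref{Jacobson-semisimple Banach algebras}. The quotient $A/\mathfrak{p}$ is itself a Noetherian Banach $K$-algebra (hence a Noetherian complete Tate ring in the natural way) and is an integral domain. By Example \ref{Normed rings with full core}(1), in a Noetherian complete Tate ring every ideal is closed, so $\sCore(A/\mathfrak{p}) = A/\mathfrak{p}$ and hence $\Core(A/\mathfrak{p}) = A/\mathfrak{p}$. Since $A/\mathfrak{p}$ is a domain, each non-zero $x \in A/\mathfrak{p}$ is a non-zero-divisor lying in $\Core(A/\mathfrak{p})$, and Lemma \ref{The core and top. divisors of zero} then says $x$ is not a topological divisor of zero.

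Combining these two observations, the hypotheses of Theorem \ref{Jacobson-semisimple Banach algebras} are met for every prime of $A$, so $A$ is topologically Jacobson; by the first step this is the same as being Jacobson. There is no real obstacle here — the work has already been done in Theorem \ref{Radical ideals in Noetherian Banach algebras} (to pass from \emph{prime} to \emph{spectrally reduced prime}) and in Theorem \ref{Jacobson-semisimple Banach algebras} (to pass from \emph{no topological zero-divisors} to \emph{Jacobson-semisimple}); the corollary is really just the observation that Example \ref{Normed rings with full core}(1) supplies the hypothesis of Theorem \ref{Jacobson-semisimple Banach algebras} uniformly over all prime quotients in the Noetherian Banach setting.
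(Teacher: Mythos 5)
Your proof is correct and follows essentially the same route as the paper's: invoke Theorem \ref{Radical ideals in Noetherian Banach algebras} so that "topologically Jacobson" collapses to "Jacobson," verify via Example \ref{Normed rings with full core}(1) and Lemma \ref{The core and top. divisors of zero} that each quotient $A/\mathfrak{p}$ has no non-zero topological divisors of zero, and then apply the second part of Theorem \ref{Jacobson-semisimple Banach algebras}. You have simply spelled out the intermediate steps more explicitly than the paper does.
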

\begin{proof} Let $A$ be a Noetherian Banach algebra. Recall from Theorem \ref{Radical ideals in Noetherian Banach algebras} that all prime ideals of $A$ are spectrally reduced. For any prime ideal $\mathfrak{p}$ the quotient $A/\mathfrak{p}$ is a Noetherian Banach algebra whose underlying ring is an integral domain, so $A/\mathfrak{p}$ has no non-zero topological divisors of zero by Example \ref{Normed rings with full core}(1) and Lemma \ref{The core and top. divisors of zero}. By the last theorem, any prime ideal $\mathfrak{p}$ is an intersection of maximal ideals, as desired.\end{proof}
\begin{rmk} \label{Perfect Banach algebras are topologically Jacobson} Another class of topologically Jacobson Banach algebras is given by perfectoid algebras over an algebraically closed nonarchimedean field, see Corollary \ref{Perfectoid algebras over an algebraically closed field are topologically Jacobson}.\end{rmk}
We conclude this section with a toy application of the theory of Banach algebras with $\Core(A) = A$. It is well-known that the completion of a normed ring whose underlying ring is a field need not be a field, unless the norm is assumed to be multiplicative (for a simple example of this phenomenon, see \cite{Kedlaya18}, Remark 1.8). Using the notion of the core of a normed algebra we would like to collect some sufficient conditions on a normed field $A$ which would ensure that the completion $\widehat{A}$ is a field. To begin with, we prove a simple algebraic lemma. 

We call a ring $A$ a full quotient ring if every non-unit in $A$ is a zero-divisor. Equivalently, $A$ coincides with its own classical ring of fractions (in the literature full quotient rings have also come to be known as classical rings).  
\begin{lemma} \label{Full quotient rings} Let $A$ be a reduced full quotient ring with no nontrivial idempotents and only finitely many minimal prime ideals. Then $A$ is a field.\end{lemma}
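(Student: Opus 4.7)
The plan is to show that the total ring of fractions $Q(A)$ decomposes as a finite product of fields, and then to use the ``no nontrivial idempotents'' hypothesis to force the product to have exactly one factor. Let $\mathfrak{p}_{1}, \ldots, \mathfrak{p}_{n}$ denote the minimal primes of $A$. Since $A$ is reduced, the intersection $\mathfrak{p}_{1} \cap \cdots \cap \mathfrak{p}_{n}$ is the nilradical of $A$, hence equals $(0)$. A standard fact for reduced rings with finitely many minimal primes tells us that the set of zero-divisors of $A$ coincides with $\bigcup_{i=1}^{n} \mathfrak{p}_{i}$: one inclusion is immediate, and the reverse follows from prime avoidance applied to an element $y_{i} \in \bigcap_{j \neq i} \mathfrak{p}_{j} \setminus \mathfrak{p}_{i}$, which satisfies $y_{i} y_{j} = 0$ for $i \neq j$. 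Since $A$ is a full quotient ring, every non-unit is a zero-divisor, and the multiplicative set $S$ of non-zero-divisors consists entirely of units of $A$, so $A = S^{-1}A = Q(A)$.

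Next I would identify $S^{-1}A$ with a product of fields. The primes of $S^{-1}A$ correspond to primes of $A$ contained in $\bigcup_{i} \mathfrak{p}_{i}$, and prime avoidance shows these are precisely the $\mathfrak{p}_{i}$. Since each $\mathfrak{p}_{i}$ is minimal, the localization $A_{\mathfrak{p}_{i}}$ has a unique prime ideal; combined with reducedness, this forces $A_{\mathfrak{p}_{i}}$ to be a field $K_{i}$. Thus $S^{-1}A$ is a zero-dimensional semi-local reduced ring with maximal ideals $S^{-1}\mathfrak{p}_{1}, \ldots, S^{-1}\mathfrak{p}_{n}$, and by the Chinese Remainder Theorem (or the standard structure theorem for such rings) we obtain $A \simeq S^{-1}A \simeq \prod_{i=1}^{n} K_{i}$. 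The idempotents of $\prod_{i=1}^{n} K_{i}$ are exactly the $2^{n}$ tuples in $\{0,1\}^{n}$; the hypothesis that $A$ has no nontrivial idempotents forces $n = 1$, and so $A \simeq K_{1}$ is a field. There is no real obstacle here beyond carefully justifying the identification $Q(A) \simeq \prod_{i} A_{\mathfrak{p}_{i}}$ from the hypotheses; this is where the assumptions of reducedness and finitely many minimal primes are combined.
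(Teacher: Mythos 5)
Your proof is correct and follows essentially the same route as the paper's: both arguments reduce to showing, via prime avoidance and the identification of zero-divisors with the union of minimal primes, that $A$ is a zero-dimensional semi-local reduced ring, hence a finite product of fields by CRT, and then invoke the absence of nontrivial idempotents. Your detour through $Q(A) = S^{-1}A$ and the localizations $A_{\mathfrak{p}_i}$ is a minor reorganization rather than a different idea (and note the auxiliary fact you actually use is $y_i\,\mathfrak{p}_i = 0$, not just $y_i y_j = 0$).
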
 
\begin{proof}Recall that in any reduced ring the set of zerodivisors coincides with the union of all minimal prime ideals. Now, if $A$ is a reduced full quotient ring as in the lemma and $\mathfrak{m}$ is a maximal ideal in $A$, then $\mathfrak{m}$ consists of zerodivisors and is thus contained in the union of the finitely many minimal prime ideals $\mathfrak{p_1}, \dots, \mathfrak{p_n}$. By the Prime Avoidance Lemma we conclude that $\mathfrak{m}$ must be equal to one of the $\mathfrak{p_i}$. Since this applies to any maximal ideal, $A$ is semilocal (has only finitely many maximal ideals) and zero-dimensional. Since $A$ is reduced, $A$ is isomorphic to a finite product of fields by the Chinese Remainder Theorem. But we assumed that $A$ has no nontrivial idempotent elements, so $A$ is a field. 
\end{proof}
We can now prove the following 
\begin{prop} \label{Completion of a normed field} Let $K$ be a nonarchimedean field, let $A$ be a normed field over $K$, i.e. $A$ is a normed $K$-algebra which is algebraically a field. Denote by $\widehat{A}$ the completion of $A$, a Banach $K$-algebra. The following are equivalent: 
\begin{enumerate}[(i)] 
\item $\widehat{A}$ is a Banach field; 
\item $\widehat{A}$ contains no non-zero topological divisors of zero (that is, $\widehat{A}$ is an integral domain and $\Core(\widehat{A}) = \widehat{A}$); 
\item $\widehat{A}$ is Noetherian, reduced and contains no nontrivial idempotents; 
\item $\widehat{A}$ is reduced, contains no nontrivial idempotents, satisfies $\Core(\widehat{A}) = \widehat{A}$ and has only a finite number of minimal prime ideals. 
\end{enumerate} 
\end{prop}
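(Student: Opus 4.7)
The plan is to prove the cyclic implications (i) $\Rightarrow$ (iii) $\Rightarrow$ (iv) $\Rightarrow$ (i) together with the easier equivalence (i) $\Leftrightarrow$ (ii); the heart of the argument is the implication (iv) $\Rightarrow$ (i), which combines Schikhof's Lemma, the hypothesis $\Core(\widehat{A})=\widehat{A}$, and the algebraic Lemma \ref{Full quotient rings}.

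The implication (i) $\Rightarrow$ (iii) is immediate because a field is Noetherian, reduced, and has only the idempotents $0$ and $1$. For (iii) $\Rightarrow$ (iv), I would note that any Banach $K$-algebra is a complete Tate ring (with a pseudo-uniformizer from $K^{\times}$), so a Noetherian $\widehat{A}$ falls under Example \ref{Normed rings with full core}(1), giving $\sCore(\widehat{A})=\widehat{A}$ and hence $\Core(\widehat{A})=\widehat{A}$; the finiteness of the set of minimal primes is a standard property of Noetherian rings. The equivalence with (ii) will emerge from the same argument as (iv) $\Rightarrow$ (i), together with the obvious (i) $\Rightarrow$ (ii) (in a field, every non-zero element is a unit, hence neither a zero-divisor nor a topological divisor of zero, so $\Core(\widehat{A})=\widehat{A}$).

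The core of the argument is (iv) $\Rightarrow$ (i). The key input is that the dense subring $A \subseteq \widehat{A}$ consists entirely of units of $\widehat{A}$ together with $0$: indeed, $A$ is algebraically a field, so every non-zero $a \in A$ has an inverse in $A$, which remains its inverse in $\widehat{A}$. I would then take any non-zero, non-unit $f \in \widehat{A}$ and approximate it by a sequence $(f_n)_n \subseteq A \setminus \{0\} \subseteq \widehat{A}^{\times}$. Since $\widehat{A}^{\times}$ is open (as $\widehat{A}$ is a Banach ring) and $f \notin \widehat{A}^{\times}$, we have $f \in \partial \widehat{A}^{\times}$, so Lemma \ref{Schikhof's lemma} yields that $f$ is a topological divisor of zero. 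Combining $\Core(\widehat{A})=\widehat{A}$ with Lemma \ref{The core and top. divisors of zero} forces $f$ to be a zero-divisor. Together with the convention that $0$ is a zero-divisor, this shows that $\widehat{A}$ is a full quotient ring. Given that $\widehat{A}$ is reduced, has no nontrivial idempotents, and only finitely many minimal primes, Lemma \ref{Full quotient rings} concludes that $\widehat{A}$ is a field.

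The same argument (density of $A$ inside $\widehat{A}^\times \cup \{0\}$, Schikhof, the $\Core$-to-zero-divisor step) establishes (ii) $\Rightarrow$ (i) directly without invoking Lemma \ref{Full quotient rings}: if $\widehat{A}$ is an integral domain, then any non-zero non-unit would be a topological divisor of zero and simultaneously a non-zero-divisor, contradicting $\Core(\widehat{A})=\widehat{A}$. The main subtle point, and the one to be careful about, is checking that every non-zero element of $\widehat{A}$ really is a limit of \emph{units} of $\widehat{A}$; this is where it matters that the embedding $A \hookrightarrow \widehat{A}$ is injective (since $A$ is normed, not merely seminormed) and that the algebraic inverses computed in $A$ automatically serve as topological inverses in $\widehat{A}$. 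Once this density is in place, everything else is a straightforward combination of the lemmas already proved in this section.
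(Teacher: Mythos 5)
Your proof is correct and takes essentially the same route as the paper: both arguments hinge on the density of $A \setminus \{0\} \subseteq \widehat{A}^{\times}$, Schikhof's lemma, the lemma relating $\Core$ to topological divisors of zero, and (for the non-obvious implication) Lemma \ref{Full quotient rings}. The only cosmetic difference is the choice of cyclic decomposition — you prove $(i)\Rightarrow(iii)\Rightarrow(iv)\Rightarrow(i)$ plus $(i)\Leftrightarrow(ii)$, while the paper proves $(ii)\Rightarrow(i)$, $(iv)\Rightarrow(i)$, $(iii)\Rightarrow(iv)$ and declares the rest obvious — but the content is identical.
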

\begin{proof} Since $A$ is dense in $\widehat{A}$ we see that $\widehat{A}$ has dense group of units ${\widehat{A}}^{\times}$, so by Lemma \ref{Schikhof's lemma} statement (ii) immediately implies statement (i).
\\ 
Assume (iv). Since $\Core(\widehat{A}) = \widehat{A}$ we know (see Lemma \ref{The core and top. divisors of zero}) that every topological divisor of zero in $\widehat{A}$ is actually a divisor of zero. Again, the group of units ${\widehat{A}}^{\times}$ is dense in $\widehat{A}$. Using Lemma \ref{Schikhof's lemma} we see that $\widehat{A}$ is a full quotient ring whence we deduce that $\widehat{A}$ is a field by Lemma \ref{Full quotient rings}. 
\\ Let us now assume (iii). Then $\widehat{A}$ is a Noetherian Banach algebra and all of its ideals are closed. A fortiori all principal ideals are closed which implies $\Core(\widehat{A}) = \widehat{A}$ (see our remark immediately after Definition \ref{Def. of the core}). This shows that (iii) implies (iv). The other implications are obvious. 
\end{proof}
Recall from Section $1$ that for a seminormed ring $A$ we denote by $A^{u}$ the uniformization of $A$ which is defined as the completion of $A$ with respect to the spectral seminorm $|\cdot|_{\spc}$.  
\begin{lemma} \label{uniformization} Let $A$ be a Banach ring. Then there is a homeomorphism $\mathcal{M}(A) \simeq \mathcal{M}(A^{u})$. 
\end{lemma}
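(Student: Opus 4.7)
The plan is to construct mutually inverse maps between $\mathcal{M}(A)$ and $\mathcal{M}(A^{u})$ and then upgrade the bijection to a homeomorphism by appealing to the fact that both spaces are compact Hausdorff (Lemma \ref{Berkovich spectrum}). The crucial input is the identity $|f|_{\spc} = \sup_{\phi \in \mathcal{M}(A)}\phi(f)$ from Remark \ref{The spectral seminorm for non-complete rings}, which yields the pointwise estimate $\phi(f) \leq |f|_{\spc}$ for every $\phi \in \mathcal{M}(A)$ and every $f \in A$. Note also that the norm on the Banach ring $A^{u}$ is power-multiplicative by construction, so $A^{u}$ is uniform and its own spectral seminorm agrees with its given norm.

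First I would produce the forward map $\mathcal{M}(A^{u}) \to \mathcal{M}(A)$ by restriction along the canonical map $A \to A^{u}$. For $\psi \in \mathcal{M}(A^{u})$, the restriction $\psi|_{A}$ is clearly multiplicative, and boundedness with respect to $\lVert \cdot \rVert$ follows from the chain $\psi|_{A}(f) \leq |f|_{\spc} \leq \lVert f \rVert$, using Remark \ref{The spectral seminorm for non-complete rings} applied to the Banach ring $A^{u}$ and the fact that $|\cdot|_{\spc} \leq \lVert \cdot \rVert$ on $A$. For the inverse direction, given $\phi \in \mathcal{M}(A)$, the estimate $\phi(f) \leq |f|_{\spc}$ shows that $\phi$ is uniformly continuous with respect to the spectral seminorm, hence extends uniquely by continuity to a continuous function $\widehat{\phi}$ on $A^{u}$; continuity of the ring operations on $A^{u}$ forces $\widehat{\phi}$ to remain multiplicative, and the estimate $\widehat{\phi} \leq |\cdot|_{\spc}$ persists on $A^{u}$ by density, so $\widehat{\phi} \in \mathcal{M}(A^{u})$. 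The two constructions are inverse to one another: restricting $\widehat{\phi}$ back to $A$ recovers $\phi$ by construction, while extending $\psi|_{A}$ recovers $\psi$ because $A$ is dense in $A^{u}$ and continuous extensions are unique.

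To finish, I would observe that the restriction map $\mathcal{M}(A^{u}) \to \mathcal{M}(A)$ is continuous: for each $f \in A$, the evaluation $\psi \mapsto \psi(f)$ on $\mathcal{M}(A^{u})$ factors through the evaluation at $f$ on $\mathcal{M}(A)$ under the bijection, and the Berkovich topology on each side is precisely the weakest topology making such evaluations continuous. Since $\mathcal{M}(A^{u})$ is compact Hausdorff and $\mathcal{M}(A)$ is Hausdorff, any continuous bijection between them is automatically a homeomorphism. The only point requiring a little care is the verification that $\widehat{\phi}$ really lands in $\mathcal{M}(A^{u})$ — i.e.\ that boundedness persists under the extension — but this is immediate from the density of $A$ in $A^{u}$ together with the pointwise bound $\phi \leq |\cdot|_{\spc}$, so there is no real obstacle in the argument.
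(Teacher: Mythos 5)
Your proof is correct, and it follows the same underlying approach as the paper: the paper disposes of this lemma in one line by citing Lemma \ref{Berkovich spectrum} (observing implicitly that $A^{u}$ is the completion of the seminormed ring $(A,|\cdot|_{\spc})$ and that $\mathcal{M}(A,|\cdot|_{\spc}) = \mathcal{M}(A,\lVert\cdot\rVert)$), whereas you unwind that citation and carry out the restriction/extension construction plus the compactness upgrade directly. The two arguments have identical content; yours is just written out in full rather than delegated to the earlier lemma.
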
 
\begin{proof} This is a special case of Lemma \ref{Berkovich spectrum}.    
\end{proof} \begin{cor} Let $A$ be a Banach field over $K$ and denote by $A^{u}$ the uniformization of $A$. The following are equivalent: 
\begin{enumerate}[(i)] 
\item $A^{u}$ satisfies $\Core(A^{u}) = A^{u}$ and has only finitely many minimal prime ideals;
\item $A^{u}$ is Noetherian; 
\item $A^{u}$ is a uniform Banach field. 
\end{enumerate} 
If the ground field $K$ is nondiscretely valued (the value semigroup $|K^{\times}|$ is dense in $\mathbb{R}_{\geq 0}$), then the above assertions are also equivalent to \\\
\linebreak
(iv) $A^{u}$ is a nonarchimedean field. 
\end{cor}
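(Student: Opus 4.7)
The plan is to establish $(iii)\Rightarrow(ii)\Rightarrow(i)\Rightarrow(iii)$. The key tool is Proposition~\ref{Completion of a normed field}, applied to the normed field $(A, |\cdot|_{\spc})$, whose completion is precisely $A^{u}$. Note that $(A, |\cdot|_{\spc})$ is a genuine normed field: since $A$ is algebraically a field and $|1|_{\spc}=1$, the kernel of $|\cdot|_{\spc}$ is a proper ideal of the field $A$, hence equal to $(0)$.

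The implication $(iii)\Rightarrow(ii)$ is immediate because every field is Noetherian. For $(ii)\Rightarrow(i)$, I would invoke \cite{BGR}, Prop.~3.7.2/2: in a Noetherian Banach algebra over a nonarchimedean field every ideal is closed, so in particular every principal ideal of $A^{u}$ is closed; this yields $\sCore(A^{u})=A^{u}$ and hence $\Core(A^{u})=A^{u}$, while Noetherianity supplies finitely many minimal primes.

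The main content lies in $(i)\Rightarrow(iii)$. The strategy is to verify the four hypotheses of Proposition~\ref{Completion of a normed field}(iv) on $A^{u}$ and thereby conclude that $A^{u}$ is a Banach field; combined with the automatic uniformity of $A^{u}$ (by construction, it is the completion of $A$ with respect to a power-multiplicative seminorm), this is (iii). Reducedness of $A^{u}$ is automatic for uniform Banach rings ($f^{n}=0$ forces $|f|_{\spc}^{n}=0$, whence $f=0$), and $\Core(A^{u})=A^{u}$ together with the finiteness of minimal primes is part of (i). The only nontrivial condition is the absence of non-trivial idempotents. For this, I would first observe that $A^{\times}=A\setminus\{0\}$ is dense in $A$ because the norm on $A$ is nontrivial, and $A$ is dense in $A^{u}$; since the image of $A^{\times}$ sits inside $(A^{u})^{\times}$, the open subgroup of units $(A^{u})^{\times}$ is dense in $A^{u}$. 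Hence $\partial(A^{u})^{\times}$ coincides with $A^{u}\setminus(A^{u})^{\times}$, so by Schikhof's Lemma~\ref{Schikhof's lemma} every non-unit is a topological divisor of zero, and by Lemma~\ref{The core and top. divisors of zero} combined with $\Core(A^{u})=A^{u}$ every topological divisor of zero is an actual divisor of zero. Hence $A^{u}$ is a reduced full quotient ring with finitely many minimal primes, and the Chinese Remainder argument in the proof of Lemma~\ref{Full quotient rings} decomposes $A^{u}$ as a finite product of fields $F_{1}\times\cdots\times F_{n}$.

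The main obstacle is ruling out $n\geq 2$. My proposal here is to exploit the completeness of $A$ together with the density of $A\hookrightarrow A^{u}\simeq F_{1}\times\cdots\times F_{n}$: each projection $A\to F_{i}$ is an injective continuous ring map out of the field $A$, but if $n\geq 2$ then one can approximate a non-trivial idempotent $e=(1,0,\ldots,0)\in A^{u}$ by a sequence $(a_{k})$ of elements of $A$ which are necessarily units (because $e\neq 0$), and examination of the inverses $a_{k}^{-1}$ shows that at least one of their projections has norm tending to infinity, forcing $\|a_{k}^{-1}\|_{A}\to\infty$; combined with the continuity of inversion on the open unit group of the Banach algebra $A$ and the fact that $e$ cannot lie in $A$ (as $A$ has no non-trivial idempotents), this produces a contradiction with the Cauchy behavior of $(a_{k})$ relative to the two topologies. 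Once $n=1$ is secured, Proposition~\ref{Completion of a normed field}(iv)$\Rightarrow$(i) yields that $A^{u}$ is a Banach field, completing the verification of (iii).
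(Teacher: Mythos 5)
Your implications $(iii)\Rightarrow(ii)$ and $(ii)\Rightarrow(i)$ are fine and match the paper, but the argument for $(i)\Rightarrow(iii)$ has a genuine gap at the decisive step of ruling out $n\geq 2$ in the decomposition $A^{u}\simeq F_{1}\times\cdots\times F_{n}$. The sequence $(a_k)\subset A$ with $u(a_k)\to e$ in $A^{u}$ is Cauchy only with respect to the spectral seminorm $|\cdot|_{\spc}$ on $A$, which is in general strictly coarser than the original Banach norm $\lVert\cdot\rVert$ on $A$; there is no reason for $(a_k)$ to be Cauchy in $\lVert\cdot\rVert$. Consequently the (correct) observation that $\lVert a_k^{-1}\rVert\to\infty$ produces no contradiction: it is entirely consistent with $(a_k)$ failing to converge in $(A,\lVert\cdot\rVert)$, which is in any case what one should expect given $e\notin A$. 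The ``Cauchy behavior of $(a_k)$ relative to the two topologies'' that you invoke does not exist, and continuity of inversion on $A^{\times}$ has nothing to grip since there is no convergence in the original norm topology to feed into it.

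The paper sidesteps this difficulty with a topological argument. By Shilov's Idempotent Theorem (\cite{Berkovich}, Theorem 7.4.1), a Banach $K$-algebra has no non-trivial idempotents if and only if its Berkovich spectrum is connected, and Lemma~\ref{uniformization} gives a homeomorphism $\mathcal{M}(A^{u})\simeq\mathcal{M}(A)$; since $A$ is a field it has no non-trivial idempotents, so $\mathcal{M}(A)$ is connected, hence so is $\mathcal{M}(A^{u})$, hence $A^{u}$ has no non-trivial idempotents, and now Proposition~\ref{Completion of a normed field}(iv)$\Rightarrow$(i) applies. This transfer of connectedness across the Berkovich spectra is the key ingredient your more hands-on approach is missing. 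You also leave the additional equivalence with (iv) in the nondiscretely valued case unaddressed; the paper settles it by citing Kedlaya's theorem (\cite{Kedlaya18}, Theorem 3.7) that every uniform Banach field over a nondiscretely valued nonarchimedean field is itself a nonarchimedean field.
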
 
\begin{proof} Assume (i). It is clear that every uniform Banach ring is reduced. Hence by Proposition \ref{Completion of a normed field} it suffices to show that $A^{u}$ has no nontrivial idempotents to conclude that it is a field. By Shilov's Idempotent Theorem (see \cite{Berkovich}, Thm. 7.4.1) a Banach $K$-algebra has no nontrivial idempotents if only if its Berkovich spectrum is a connected topological space. But by Lemma \ref{uniformization}, $\mathcal{M}(A^{u})$ is homeomorphic to $\mathcal{M}(A)$. This proves (i) $\Rightarrow$ (iii). Concerning the remaining implications, (iii) immediately implies (ii), and (ii) implies (i) since all ideals in a Noetherian Banach algebra are closed. The additional statement in the case when $K$ is a nondiscretely valued field follows from a result of Kedlaya (namely, \cite{Kedlaya18}, Theorem 3.7) which states that every uniform Banach field over a nondiscretely valued nonarchimedean field $K$ is necessarily a nonarchimedean field.\end{proof}

\section{Zariskian seminormed rings} 

We have seen in Section 2 that some properties classically known for Banach algebras hold also for more general normed algebras $A$ under the weaker hypothesis that the group of units $A^{\times}$ of $A$ is open (see, for example, Lemma \ref{Schikhof's lemma}). In this section we want to throw a closer look at seminormed rings satisfying this condition. We then use properties of these seminormed rings to study topological properties of the topological spectrum $\TopSpec(A)$ of a seminormed ring $A$.   

For $A$ a seminormed ring we denote by $A^{\circ \circ}$ as usual the set of topologically nilpotent elements of $A$. The following definition generalizes the definition of Zariskian adic rings in commutative algebra and the definition of Zariskian Huber rings given in \cite{Tanaka}.  
\begin{mydef} We call a seminormed ring $A$ Zariskian if the open subset $1+A^{\circ \circ} \subset A$ consists of units. 
\end{mydef} 
\begin{rmk} We observe that the property of a seminormed ring $A$ being Zariskian does not depend on a particular choice of seminorm but only on the topology of that seminormed ring. \end{rmk} 
Note that complete normed rings are Zariskian by the geometric series test and, in fact, Zariskian normed rings are defined so as to share several important properties of complete rings. The Zariskian condition can be described in many equivalent ways (our proof of the next proposition as well as of Proposition \ref{The Zariskian condition 2} below largely follows \cite{Ka-N} where analogous statements are shown for normed algebras over $\mathbb{C}$).
\begin{prop} \label{The Zariskian condition} Let $(A, \lVert \cdot \rVert)$ be a seminormed ring. Denote by $A_{<1}$ the subset of elements with seminorm $<1$. The following are equivalent: 
\begin{enumerate} 
\item The group of units $A^{\times}$ is open; 
\item $A$ is Zariskian; 
\item $1 + A_{<1} \subset A^{\times}$; 
\item Every maximal ideal of $A$ is closed; 
\item Every maximal ideal of $A$ is the kernel of some bounded multiplicative seminorm $\phi$ on $A$; 
\item For every $x \in A^{\circ \circ}$ the geometric series $\sum_{n=0}^{\infty}x^{n}$ converges in $A$.\end{enumerate}\end{prop}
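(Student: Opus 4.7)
My plan is to prove the cycle (1)$\Rightarrow$(4)$\Rightarrow$(5)$\Rightarrow$(3)$\Rightarrow$(2)$\Rightarrow$(1) together with the separate equivalence (2)$\Leftrightarrow$(6), which by transitivity establishes the proposition. The two elementary facts driving the argument are that $A_{<1}\subseteq A^{\circ\circ}$ (by power-submultiplicativity, since $\lVert x^n\rVert \leq \lVert x\rVert^n \to 0$) and that any bounded power-multiplicative seminorm $\phi$ on $(A,\lVert\cdot\rVert)$ satisfies $\phi\leq\lVert\cdot\rVert$ pointwise, as recorded in the Notation section.

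For (1)$\Rightarrow$(4), the closure $\overline{\mathfrak{m}}$ of a maximal ideal is again an ideal; were it equal to $A$, the dense set $\mathfrak{m}$ would meet the open set $A^{\times}\ni 1$, contradicting properness, so $\overline{\mathfrak{m}}=\mathfrak{m}$. For (4)$\Rightarrow$(5), a closed proper ideal $\mathfrak{m}$ is not dense, so by Lemma \ref{Berkovich spectrum of a quotient} the quotient $A/\mathfrak{m}$ carries a natural structure of seminormed field; Lemma \ref{Berkovich spectrum} then produces a bounded multiplicative seminorm on $A/\mathfrak{m}$, which must have trivial kernel (if $\phi(a)=0$ for some $a\neq 0$ in the field, multiplicativity forces $\phi(1)=\phi(a)\phi(a^{-1})=0$, contradicting $\phi(1)=1$), and pulling it back along $A\to A/\mathfrak{m}$ yields the seminorm required in (5). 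For (5)$\Rightarrow$(3), if $x\in A_{<1}$ and $1+x\notin A^{\times}$, choose a maximal ideal $\mathfrak{m}\ni 1+x$ and $\phi\in\mathcal{M}(A)$ with $\ker\phi=\mathfrak{m}$; then $\phi(1+x)=0$ combined with the nonarchimedean inequality gives $1=\phi(1)\leq\max(\phi(1+x),\phi(x))=\phi(x)\leq\lVert x\rVert<1$, a contradiction. For (3)$\Rightarrow$(2), given $x\in A^{\circ\circ}$ pick an odd integer $n$ with $\lVert x^n\rVert<1$; the factorization $1+x^n=(1+x)(1-x+x^2-\cdots+x^{n-1})$ exhibits $1+x$ as a factor of the unit $1+x^n$, and in a commutative ring any factor of a unit is a unit. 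For (2)$\Rightarrow$(1), given $a\in A^{\times}$ and $y$ with $\lVert y\rVert<\lVert a^{-1}\rVert^{-1}$, the element $a^{-1}y$ lies in $A_{<1}\subseteq A^{\circ\circ}$, so by (2) the factorization $a+y=a(1+a^{-1}y)$ exhibits an open neighborhood of $a$ inside $A^{\times}$.

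Finally, for (2)$\Leftrightarrow$(6): assuming (6), the telescoping identity $(1+x)\sum_{k=0}^{n}(-x)^k=1+(-1)^n x^{n+1}$ together with $x^{n+1}\to 0$ and continuity of multiplication show that $s:=\sum_{n\geq 0}(-x)^n$ satisfies $(1+x)s=1$, proving (2). Conversely, assuming (2), for $x\in A^{\circ\circ}$ the inverse $y=(1-x)^{-1}$ exists, and iterating $y=1+xy$ gives $y-\sum_{k=0}^{n}x^k=x^{n+1}y$, which tends to $0$ by continuity of multiplication by $y$, so the series converges to $y\in A$. The main obstacle in the whole argument is the step (4)$\Rightarrow$(5), as it is the only implication relying on nontrivial external input, namely the non-emptiness of the Berkovich spectrum via Lemma \ref{Berkovich spectrum}; all remaining steps are essentially formal manipulations with seminorms, ideal closures, geometric series, and the factorization of $1-y^n$, with the most delicate point being the simultaneous use of the nonarchimedean inequality and the pointwise bound $\phi\leq\lVert\cdot\rVert$ in (5)$\Rightarrow$(3).
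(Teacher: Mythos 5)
Your proof is correct and follows essentially the same approach as the paper: the same cycle of implications $(1)\Rightarrow(4)\Rightarrow(5)\Rightarrow(3)\Rightarrow(2)\Rightarrow(1)$ (the paper states the cycle starting at (4), but it is the same set of edges, proved by the same arguments), plus the same separate equivalence $(2)\Leftrightarrow(6)$ via telescoping. The small cosmetic differences -- working with $1+x$ instead of $1-x$ and insisting on an odd exponent in $(3)\Rightarrow(2)$, and spelling out openness of $A^{\times}$ via a specific ball rather than invoking that the unit group of a topological ring with nonempty interior is open -- do not change the underlying argument.
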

\begin{proof} (4) $\Rightarrow$ (5): If $\mathfrak{m}$ is a closed maximal ideal of $A$, then the quotient seminorm on $A/\mathfrak{m}$ is a norm. By a result of Berkovich (\cite{Berkovich}, Theorem 1.2.1) there exists a bounded multiplicative seminorm on the completion of $A/\mathfrak{m}$. This seminorm restricts to a bounded absolute value on $A/\mathfrak{m}$ which lifts to a bounded multiplicative seminorm $\phi$ on $A$ with $\ker(\phi)=\mathfrak{m}$. \\\
(5) $\Rightarrow$ (3): Let $\phi$ be a bounded multiplicative seminorm on $A$. Then, for all $x \in A_{<1}$, we have $\phi(1 + x)=1$. In particular, $1 + x \notin \ker(\phi)$. So, $1 + x$ does not belong to any maximal ideal of $A$. That is, the set $1 + A_{<1}$ consists of units. \\\
(3) $\Rightarrow$ (2): Let $x \in A^{\circ \circ}$. Then $x^{k} \in A_{<1}$ for some $k \in \mathbb{N}_{>0}$. By assumption $1 - x^{k}$ is a unit. But by a telescoping argument 
\begin{equation*} (1 - x)(1 + x + x^{2} + \dots + x^{k-1}) = 1 - x^{k}, 
\end{equation*}
so $1 - x$ is a unit. \\\
(2) $\Rightarrow$ (1): The group of units in a topological ring is open whenever it has nonempty interior. \\\ 
(1) $\Rightarrow$ (4): If some maximal ideal of $A$ is not closed, then it is dense (since its closure is an ideal). But then the group of units is not open. 

It remains to prove the equivalence of (6) and any of the other conditions. We prove the equivalence of (6) and (2). For the implication $(2) \Rightarrow (6)$ assume that $A$ is Zariskian and $x \in A^{\circ \circ}$. By assumption, $1 - x \in A^{\times}$. A telescoping argument yields \begin{equation*} 1 + x + x^{2} + \dots + x^{n} = (1-x)^{-1}(1-x^{n+1}). \end{equation*}Thus the left hand side converges to $(1-x)^{-1}$ as $n \to \infty$, the element $x$ being topologically nilpotent. For the converse implication, suppose that for every $x \in A^{\circ \circ}$ the series $\sum_{n=0}^{\infty}x^{n}$ converges in $A$ and let $x \in A^{\circ \circ}$. Then \begin{equation*} (1-x)\sum_{n=0}^{\infty}x^{n} = \sum_{n=0}^{\infty}x^{n} - \sum_{n=1}^{\infty}x^{n} = 1, \end{equation*}so $1-x$ is invertible in $A$, as desired.\end{proof}We note the following elementary consequence of Proposition \ref{The Zariskian condition}.
\begin{cor}Let $A$ be a spectrally reduced Zariskian normed ring. Then the normed ring $(A, |\cdot|_{\spc})$ is again Zariskian.\end{cor}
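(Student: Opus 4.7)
The plan is to verify condition (5) of Proposition \ref{The Zariskian condition} for the normed ring $(A, |\cdot|_{\spc})$, namely that every maximal ideal of $A$ is the kernel of some element of $\mathcal{M}((A, |\cdot|_{\spc}))$. Once this is established, Proposition \ref{The Zariskian condition} applied to the normed ring $(A, |\cdot|_{\spc})$ (which makes sense because the spectrally reduced hypothesis guarantees that $|\cdot|_{\spc}$ is in fact a norm) immediately delivers the conclusion.

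To carry this out, fix a maximal ideal $\mathfrak{m} \subsetneq A$. Since $(A, \lVert\cdot\rVert)$ is Zariskian, Proposition \ref{The Zariskian condition} provides some $\phi \in \mathcal{M}((A, \lVert\cdot\rVert))$ with $\ker(\phi) = \mathfrak{m}$. The key observation is that this same $\phi$ automatically lies in $\mathcal{M}((A, |\cdot|_{\spc}))$. Indeed, $\phi$ is in particular a bounded power-multiplicative seminorm on $(A, \lVert\cdot\rVert)$, and by Remark \ref{The spectral seminorm for non-complete rings} the spectral seminorm admits the description $|f|_{\spc} = \sup_{\psi \in \mathcal{M}(A)} \psi(f)$, so $\phi(f) \leq |f|_{\spc}$ for every $f \in A$. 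Consequently $\phi$ is bounded (even submetric) with respect to $|\cdot|_{\spc}$, and it remains multiplicative, so $\phi \in \mathcal{M}((A, |\cdot|_{\spc}))$ with kernel $\mathfrak{m}$, as required.

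There is essentially no obstacle in this argument; its whole content is the observation that multiplicative seminorms bounded by $\lVert\cdot\rVert$ are automatically bounded by the spectral seminorm, which is immediate from the universal property of $|\cdot|_{\spc}$ as the maximal bounded power-multiplicative seminorm on $A$. The spectrally reduced assumption plays no role beyond ensuring that the phrase ``the normed ring $(A, |\cdot|_{\spc})$'' in the statement is well posed.
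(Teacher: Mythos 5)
Your proof is correct and follows essentially the same route as the paper: both arguments observe that every $\phi \in \mathcal{M}(A, \lVert\cdot\rVert)$ is bounded by $|\cdot|_{\spc}$ and hence lies in $\mathcal{M}(A, |\cdot|_{\spc})$, and then invoke the equivalence of (2) and (5) in Proposition \ref{The Zariskian condition}. Your version just spells out the logic of the equivalence a little more explicitly.
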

\begin{proof} If $\phi \in \mathcal{M}(A)$, then $\phi(x) \leq |x|_{\spc}$ for all $x$, i.e. $\phi \in \mathcal{M}(A, |\cdot|_{\spc})$. Now apply the equivalence of (2) and (5) from the proposition.\end{proof}
\begin{cor}Let $A$ be a Huber ring with pair of definition $(A_{0}, I)$ equipped with a structure of seminormed ring as in \cite{Kedlaya17}, Remark 1.5.4. Then $A$ is Zariskian if and only if $A_{0}$ is Zariskian with respect to the $I$-adic seminorm.\end{cor}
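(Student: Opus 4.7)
The plan is to work directly from the definition $1 + A^{\circ\circ} \subset A^{\times}$ and exploit two simple structural facts: first, that $A_{0}$ is open in $A$ (hence also closed, since any open subgroup of a topological abelian group is closed), and second, that the subspace topology on $A_{0}$ coincides with the $I$-adic topology, so that for elements $a \in A_{0}$ we have $a \in A_{0}^{\circ\circ}$ (i.e.\ $a \in \sqrt{I}$) if and only if $a \in A^{\circ\circ}$. A further useful observation is that if $a \in A^{\circ\circ}$ is an arbitrary topologically nilpotent element of the Huber ring $A$, then because $\{I^{n}\}_{n \geq 0}$ is a fundamental system of neighbourhoods of $0$ in $A$, some power $a^{m}$ already lies in $I \subset A_{0}$, and consequently $a^{m} \in A_{0}^{\circ\circ}$.

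For the direction ``$A$ Zariskian $\Rightarrow$ $A_{0}$ Zariskian'', I would pick $a \in A_{0}^{\circ\circ}$. The continuity of $A_{0} \hookrightarrow A$ gives $a \in A^{\circ\circ}$, so by hypothesis $1 - a$ is a unit in $A$; using characterization (6) of Proposition \ref{The Zariskian condition}, its inverse is the convergent sum $s := \sum_{n=0}^{\infty} a^{n}$ in $A$. The partial sums all lie in $A_{0}$, and since $A_{0}$ is closed in $A$ the limit $s$ lies in $A_{0}$ as well. The identity $(1-a) s = 1$ then holds in $A_{0}$, so $1 - a \in A_{0}^{\times}$, as required.

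For the reverse direction, take $a \in A^{\circ\circ}$ and choose $m$ with $a^{m} \in I$. Then $a^{m} \in A_{0}^{\circ\circ}$ and the hypothesis that $A_{0}$ is Zariskian yields an inverse $b = (1 - a^{m})^{-1} \in A_{0}$. The telescoping identity
\begin{equation*}
(1-a)\bigl(1 + a + a^{2} + \dots + a^{m-1}\bigr) = 1 - a^{m}
\end{equation*}
then shows that $(1 + a + \dots + a^{m-1}) b \in A$ is a two-sided inverse of $1 - a$ in $A$, so $A$ is Zariskian.

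There is no real obstacle here; the argument is essentially a bookkeeping exercise, and the only point that might seem subtle is the use of the fact that $A_{0}$ is closed in $A$ (which is why the limit of an $A_{0}$-valued convergent series in $A$ stays in $A_{0}$) together with the factorization trick that lifts invertibility of $1 - a^{m}$ to invertibility of $1 - a$.
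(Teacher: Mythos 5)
Your proof is correct. The paper's own argument is shorter: it invokes the equivalence (2)$\iff$(3) of Proposition \ref{The Zariskian condition} together with the observation that, for the seminorm from \cite{Kedlaya17}, Remark 1.5.4, one has $A_{<1} = I = (A_{0})_{<1}$, so both Zariskian conditions are reformulated as $1 + I \subset A^{\times}$ and $1 + I \subset A_{0}^{\times}$ respectively. One implication is then immediate since $A_{0}^{\times} \subset A^{\times}$, but the paper leaves implicit the non-trivial direction, namely that if $(1+x)^{-1}$ exists in $A$ for $x \in I$ then it already lies in $A_{0}$; this is exactly the point your argument settles via the closedness of $A_{0}$ (an open subgroup of $(A,+)$ is closed) together with characterization (6), which puts the inverse as the limit of $A_{0}$-valued partial sums. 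Your approach also has the mild advantage of never referring to the specific form of the seminorm: you work directly with the Huber-ring topology, noting that the subspace topology on $A_{0}$ is the $I$-adic one, so $A_{0}^{\circ\circ} \subset A^{\circ\circ}$, and you dispense with the reverse implication by the same telescoping factorization $(1-a)(1 + a + \dots + a^{m-1}) = 1 - a^{m}$ that appears in the proof of $(3)\Rightarrow(2)$ in the proposition. So: same underlying mechanism, but your write-up is more explicit where the paper is terse, and slightly more topology-intrinsic.
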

\begin{proof}By the definition of the seminorm on $A$ an element $x \in A$ is of seminorm $<1$ if and only if $x \in I$. The same is true for the $I$-adic seminorm on $A_{0}$. Thus the corollary follows from the equivalence of properties (2) and (3) in Proposition \ref{The Zariskian condition}.\end{proof}
For normed $K$-algebras over a nonarchimedean field $K$ besides the equivalences of Proposition \ref{The Zariskian condition} one can characterize the Zariskian property as follows.  
\begin{prop} \label{The Zariskian condition 2} Let $K$ be a nonarchimedean field and $A$ a normed $K$-algebra. Then the following are equivalent: 
\begin{enumerate} 
\item $A$ is Zariskian;  
\item Every element in the boundary $\partial A^{\times}$ of $A^{\times}$ is a topological divisor of zero; 
\item  The function $\tau : A \to \mathbb{R}_{\geq 0}$ defined by $\tau(x) := \sup\{\, |\lambda| \mid \lambda \in \spc(x)\cup \{0\} \,\}$ satisfies $\tau(x) \leq |x|_{\spc}$ for all $x$; 
\item The function $\tau$ satisfies $\tau(x) \leq \lVert x \rVert$ for all $x$; 
\item There exists some $c > 0$ such that $\tau(x) \leq c\lVert x \rVert$ for all $x$; 
\item If $x \in A$ satisfies $\lVert x \rVert < 1$, then $1-x^{n} \in A^{\times}$ for some $n \in \mathbb{N}_{>0}$; 
\item If $x \in A$ has the property that the series $\sum_{n=0}^{\infty}{\lVert x \rVert}^{n}$ converges, then the series $\sum_{n=0}^{\infty}x^{n}$ converges in $A$;   
\end{enumerate} \end{prop}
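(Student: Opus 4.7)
My approach would be to prove the equivalences via two cycles together with a pair of direct arguments covering (6) and (7). I would first establish $(1) \Leftrightarrow (2)$ using Schikhof's Lemma. By Proposition \ref{The Zariskian condition}, $A$ is Zariskian if and only if $A^{\times}$ is open, so Lemma \ref{Schikhof's lemma} gives $(1) \Rightarrow (2)$ at once. For the converse I would argue by contraposition: if $A$ is not Zariskian, then $A^{\times}$ is not open, so some unit $u \in A^{\times}$ is a limit of non-units, whence $u \in \partial A^{\times}$; but a unit is never a topological divisor of zero (if $\lVert x_{\lambda}\rVert$ is bounded below and $x_{\lambda}u \to 0$, then $\lVert x_{\lambda}\rVert \leq \lVert u^{-1}\rVert \lVert x_{\lambda}u\rVert \to 0$, a contradiction), violating (2).

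Next I would close the cycle $(1) \Rightarrow (4) \Rightarrow (3) \Rightarrow (5) \Rightarrow (1)$. The step $(1) \Rightarrow (4)$ is the heart of the argument: if $\lambda \in \spc(x)$ satisfied $|\lambda| > \lVert x \rVert$, then $\lVert x/\lambda \rVert < 1$ would make $x/\lambda$ topologically nilpotent, so $1 - x/\lambda \in A^{\times}$ by the Zariskian hypothesis, and therefore $\lambda - x = \lambda(1 - x/\lambda) \in A^{\times}$, contradicting $\lambda \in \spc(x)$. The step $(4) \Rightarrow (3)$ uses the inclusion $\{\lambda^{n} : \lambda \in \spc(x)\} \subseteq \spc(x^{n})$ (which follows from $\lambda^{n} - x^{n} = (\lambda - x)(\lambda^{n-1} + \dots + x^{n-1})$), giving $\tau(x)^{n} \leq \tau(x^{n}) \leq \lVert x^{n}\rVert$ for every $n$, whence $\tau(x) \leq \lVert x^{n}\rVert^{1/n}$ and so $\tau(x) \leq |x|_{\spc}$ in the limit. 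The step $(3) \Rightarrow (5)$ is immediate from the standard estimate $|x|_{\spc} \leq \lVert x \rVert$, with $c = 1$. Finally, for $(5) \Rightarrow (1)$: given $x \in A^{\circ\circ}$, pick $n$ large enough that $c\lVert x^{n}\rVert < 1$; then $\tau(x^{n}) < 1$ forces $1 - x^{n} \in A^{\times}$, and the telescoping identity $(1 - x)(1 + x + \dots + x^{n-1}) = 1 - x^{n}$ then forces $1 - x$ to be a unit in the commutative ring $A$, so $A$ is Zariskian.

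The remaining equivalences $(1) \Leftrightarrow (6)$ and $(1) \Leftrightarrow (7)$ are straightforward. For $(1) \Rightarrow (6)$ one takes $n = 1$ in Proposition \ref{The Zariskian condition}(3), and the converse uses the same telescoping argument to convert $1 - x^{n} \in A^{\times}$ into $1 - x \in A^{\times}$ for $\lVert x\rVert < 1$, recovering Proposition \ref{The Zariskian condition}(3). For $(1) \Rightarrow (7)$, convergence of $\sum \lVert x\rVert^{n}$ forces $\lVert x\rVert < 1$, hence $x \in A^{\circ\circ}$, and Proposition \ref{The Zariskian condition}(6) supplies convergence of $\sum x^{n}$ in $A$. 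For $(7) \Rightarrow (1)$, if $\lVert x \rVert < 1$ the partial sums $s_{N} = \sum_{k=0}^{N}x^{k}$ converge to some $s \in A$, and passing to the limit in $(1 - x)s_{N} = 1 - x^{N+1}$ (using $x^{N+1} \to 0$) gives $(1 - x)s = 1$; replacing $x$ by $-x$ yields $1 + A_{<1} \subseteq A^{\times}$ and hence Zariskianness via Proposition \ref{The Zariskian condition}(3).

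The single genuinely delicate point in the plan is the implication $(2) \Rightarrow (1)$, since the hypothesis is metric in flavour while being Zariskian is an algebraic-topological condition on all topologically nilpotent elements. The short resolution above---that a unit is automatically \emph{not} a topological divisor of zero, so that no unit can decorate $\partial A^{\times}$---collapses the apparent gap and makes the whole web of equivalences fit together cleanly; the other implications are variations on the scaling and telescoping techniques already employed in the proof of Proposition \ref{The Zariskian condition}.
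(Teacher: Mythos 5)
Your proof is correct, and it takes a genuinely different route from the paper at its most substantial step. Both you and the paper begin by establishing $(1) \Leftrightarrow (2)$ from Schikhof's Lemma \ref{Schikhof's lemma} together with the observation that a unit cannot be a topological divisor of zero. The divergence occurs in how the implication from being Zariskian to the spectral bound $\tau(x) \leq |x|_{\spc}$ is obtained. The paper proves $(1) \Rightarrow (3)$ directly by invoking the characterization \ref{The Zariskian condition}(5) of Zariskian rings: for $\lambda \in \spc(x)$ one takes a maximal ideal $\mathfrak{m}$ containing $x - \lambda$, applies Berkovich's theorem to get a bounded multiplicative seminorm $\phi$ with $\ker \phi = \mathfrak{m}$, and reads off $|\lambda| = \phi(x) \leq |x|_{\spc}$. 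You instead prove the a priori weaker $(1) \Rightarrow (4)$ by a purely elementary scaling argument (if $|\lambda| > \lVert x \rVert$ then $x/\lambda$ is topologically nilpotent, so $1 - x/\lambda$ is a unit, contradicting $\lambda \in \spc(x)$), and then upgrade to $(3)$ via the spectral-power inclusion $\{\lambda^{n} : \lambda \in \spc(x)\} \subseteq \spc(x^{n})$. This sidesteps Berkovich's existence theorem entirely and is the more self-contained argument. Your logical organization is also different: the paper runs the single chain $(3) \Rightarrow (4) \Rightarrow (5) \Rightarrow (6) \Rightarrow (7) \Rightarrow (1) \Rightarrow (3)$, whereas you close a shorter cycle $(1) \Rightarrow (4) \Rightarrow (3) \Rightarrow (5) \Rightarrow (1)$ (with $(5) \Rightarrow (1)$ done directly via the telescoping identity rather than routed through $(6)$ and $(7)$) and then handle $(1) \Leftrightarrow (6)$ and $(1) \Leftrightarrow (7)$ as independent equivalences using Proposition \ref{The Zariskian condition}. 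One small point in your favor: the paper's claim that $(7)$ ``clearly'' implies condition (6) of Proposition \ref{The Zariskian condition} is not immediate, since a topologically nilpotent $x$ need not satisfy $\lVert x \rVert < 1$; your version of $(7) \Rightarrow (1)$, which passes through condition (3) of Proposition \ref{The Zariskian condition} instead, avoids this wrinkle.
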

\begin{proof} Clearly $(3) \Rightarrow (4) \Rightarrow (5)$. \\\
$(2) \Rightarrow (1)$: A unit in a normed ring cannot be a topological divisor of zero, and a subset of a topological space which is disjoint from its own boundary is open. Hence the condition (2) implies that the group of units $A^{\times}$ in $A$ is open. It follows from Proposition \ref{The Zariskian condition} that $A$ is Zariskian. \\\
$(1) \Rightarrow (2)$: As by Proposition \ref{The Zariskian condition} the group of units is open, this follows from Lemma \ref{Schikhof's lemma}. \\\
$(5) \Rightarrow (6)$: Given $x \in A$ with $\lVert x \rVert < 1$ pick $n \in \mathbb{N}_{>0}$ so large that ${\lVert x \rVert}^{n} < 1/c$. For $\lambda \in \spc(x^{n})$ we have $|\lambda| \leq \tau(x^{n}) \leq c\lVert x^{n}\rVert \leq c{\lVert x \rVert}^{n} < 1$. In particular, $1 \notin \spc(x^{n})$, i.e. $1-x^{n} \in A^{\times}$. \\\ 
$(6) \Rightarrow (7)$:  Let $x \in A$ such that the series $\sum_{n=0}^{\infty}{\lVert x \rVert}^{n}$ converges. Then $x$ is necessarily topologically nilpotent and $\lVert x^{k}\rVert < 1$ for some $k$. Then we know that for some (possibly larger) $k$ the element $1-x^{k}$ is a unit. By a telescoping argument 
\begin{equation*} (1 - x)(1 + x + x^{2} + \dots + x^{k-1}) = 1 - x^{k}, 
\end{equation*}
so $1 - x$ is invertible. Another telescoping argument yields 
\begin{equation*} 1 + x + x^{2} + \dots + x^{n} = {(1 - x)}^{-1}(1 - x^{n+1}) 
\end{equation*} 
and the expression on the right hand side converges to $(1-x)^{-1}$ since $x$ is topologically nilpotent.
$(7) \Rightarrow (1)$: Property (7) clearly implies property (6) in Proposition \ref{The Zariskian condition}, hence (1) follows from Proposition \ref{The Zariskian condition}. 
$(1) \Rightarrow (3)$: Fix some element $x \in A$. Let $\lambda \in \spc(x)$; then $x - \lambda$ belongs to some maximal ideal $\mathfrak{m}$ of $A$. By Proposition \ref{The Zariskian condition} there exists a bounded absolute value $|\cdot|$ on the quotient $A/\mathfrak{m}$. If $\chi: A \to A/\mathfrak{m}$ is the natural quotient map, then $|\chi(x)| = |\lambda|$. As $|\chi|$ defines an element of $\mathcal{M}(A)$, we obtain \begin{equation*} \tau(x) \leq \sup_{\phi \in \mathcal{M}(A)}\phi(x) = |x|_{\spc}.\end{equation*}\end{proof}
\begin{rmk} In the case of normed algebras over $\mathbb{C}$, the algebras we call Zariskian have come to be known as Q-algebras, a terminology introduced by Kaplansky in \cite{Kaplansky}. We have opted for the terminology of \cite{Tanaka} since it is more in step with the case of adic rings.\end{rmk}   
The different characterisations of Zariskian normed $K$-algebras can be used to prove automatic continuity results as exemplified by the next proposition. It generalises Theorem 15.3 in Escassut's book \cite{Escassut2} to (possibly incomplete) Zariskian normed algebras. The symbol $\tau$ has the same meaning as in the previous proposition.
\begin{prop} \label{Spectral radius and automatic continuity} Let $B$ be a normed $K$-algebra whose spectral seminorm $|\cdot|_{\spc}^{B}$ satisfies $|x|_{\spc}^{B} = \tau(x)$ for all $x \in B$ (note that $B$ is automatically Zariskian). Then any $K$-algebra homomorphism $\phi: A \to B$ where $A$ is a Zariskian normed algebra with spectral semi-norm $|\cdot|_{\spc}^{A}$ satisfies 
\begin{equation*} |\phi(x)|_{\spc}^{B} \leq |x|_{\spc}^{A}.\end{equation*}
\end{prop}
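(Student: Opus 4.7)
The plan is to chain together three simple facts. First, I would rewrite $|\phi(x)|_{\spc}^{B}$ using the hypothesis on $B$, which gives directly $|\phi(x)|_{\spc}^{B} = \tau(\phi(x))$. Next, I would bound $\tau(\phi(x))$ by $\tau(x)$ by a purely algebraic argument about ring homomorphisms. Finally, I would use the Zariskian hypothesis on $A$ (specifically the equivalence $(1) \Leftrightarrow (3)$ of Proposition \ref{The Zariskian condition 2}) to bound $\tau(x)$ by $|x|_{\spc}^{A}$. Putting these three inequalities together yields the proposition.

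The algebraic comparison of spectra is the heart of the argument, and it does not use any continuity of $\phi$ (indeed, no continuity is assumed in the statement, which is the automatic continuity content of the result). Concretely, for any $\lambda \in K$, if $x - \lambda$ is a unit in $A$, then $\phi(x - \lambda) = \phi(x) - \lambda$ is a unit in $B$ since $\phi$ is a $K$-algebra homomorphism. Contrapositively, $\spc(\phi(x)) \subseteq \spc(x)$, and hence by definition of $\tau$ we have $\tau(\phi(x)) \leq \tau(x)$.

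Combining this with Proposition \ref{The Zariskian condition 2} applied to $A$ (which supplies $\tau(x) \leq |x|_{\spc}^{A}$ precisely because $A$ is Zariskian) and with the hypothesis that $|\cdot|_{\spc}^{B} = \tau$ on $B$, I obtain the chain
\begin{equation*}
|\phi(x)|_{\spc}^{B} \;=\; \tau(\phi(x)) \;\leq\; \tau(x) \;\leq\; |x|_{\spc}^{A},
\end{equation*}
which is the desired inequality.

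There is really no main obstacle here: all the substantive work has already been done in establishing Proposition \ref{The Zariskian condition 2}. The only subtlety worth flagging is that the preimage $\phi^{-1}(\mathfrak{n})$ of a maximal ideal $\mathfrak{n} \subset B$ need not be maximal in $A$, so one should avoid trying to transfer maximal ideals; the inclusion of spectra, however, is formal from the fact that $\phi$ preserves units, and that is all that is needed.
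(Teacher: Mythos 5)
Your proof is correct and is essentially identical to the paper's: both establish the inclusion $\spc(\phi(x)) \subseteq \spc(x)$ from $\phi$ preserving units (and fixing $K$), which gives $\tau(\phi(x)) \leq \tau(x)$, then chain $|\phi(x)|_{\spc}^{B} = \tau(\phi(x)) \leq \tau(x) \leq |x|_{\spc}^{A}$, the last inequality coming from the Zariskian characterization (3) in Proposition~\ref{The Zariskian condition 2}. Your added remark about not trying to pull back maximal ideals is a reasonable caution but plays no role in either argument.
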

\begin{proof} Since $\phi$ maps invertible elements to invertible elements, $\tau(\phi(x)) \leq \tau(x)$ for all $x \in A$. Then, for any $x \in A$, $|\phi(x)|_{\spc}^{B} = \tau(\phi(x)) \leq \tau(x) \leq |x|_{\spc}^{A}$, where the last inequality follows from $A$ being Zariskian by Proposition \ref{The Zariskian condition 2}.\end{proof}
The Zariskian condition is stable under some basic operations on seminormed rings.
\begin{prop} \label{Zariskian condition and quotients} Let $A$ be a Zariskian seminormed ring and let $I \subsetneq A$ be a closed ideal. Then the normed ring $A/I$ is Zariskian.\end{prop}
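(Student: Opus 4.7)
The plan is to verify one of the equivalent characterizations of the Zariskian property from Proposition \ref{The Zariskian condition}, rather than appealing directly to the definition. The most convenient one is condition (3), namely that $1 + (A/I)_{<1} \subset (A/I)^{\times}$.

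First I would observe that since $I \subsetneq A$ is a proper closed ideal, the quotient seminorm on $A/I$ is in fact a norm (the closure of $\{0\}$ in $A/I$ is trivial), so $A/I$ is a normed ring to which Proposition \ref{The Zariskian condition} applies. Next, given a class $\bar{y} \in A/I$ whose quotient norm satisfies $\lVert \bar{y} \rVert < 1$, the definition of the quotient norm as an infimum provides a representative $y \in A$ with $y + I = \bar{y}$ and $\lVert y \rVert < 1$. Since $A$ is Zariskian, the equivalence of (2) and (3) in Proposition \ref{The Zariskian condition} tells us that $1 + y \in A^{\times}$. Applying the ring homomorphism $A \to A/I$ and using that images of units are units, we conclude that $1 + \bar{y} \in (A/I)^{\times}$.

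This establishes condition (3) of Proposition \ref{The Zariskian condition} for $A/I$, whence $A/I$ is Zariskian by the equivalence of (2) and (3) in that proposition. The only delicate point in the argument is the lifting step, which depends on having a genuine representative of norm strictly less than $1$; this is guaranteed by the infimum description of the quotient norm, so there is no real obstacle. The proof is therefore short and formal, relying entirely on the equivalences already collected in Proposition \ref{The Zariskian condition}.
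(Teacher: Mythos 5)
Your proof is correct, and it takes a genuinely different route from the paper. You verify condition (3) of Proposition \ref{The Zariskian condition} for $A/I$ by lifting an element $\bar{y}$ of quotient norm $< 1$ to a representative $y \in A$ of norm $< 1$ (possible by the infimum description of the quotient norm), applying the Zariskian property of $A$ to get $1+y \in A^\times$, and pushing down. The paper instead verifies condition (5): it takes a maximal ideal $\mathfrak{m} \subsetneq A/I$, pulls it back to a maximal ideal $\mathfrak{m}'$ of $A$, uses condition (5) for $A$ to realize $\mathfrak{m}' = \ker(\phi)$ for some $\phi \in \mathcal{M}(A)$, and then descends $\phi$ to a bounded multiplicative seminorm on $A/I$ with kernel $\mathfrak{m}$. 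Your argument is more elementary and self-contained: it only uses the formal equivalence $(2) \Leftrightarrow (3)$ (a telescoping argument), whereas the paper's route passes through $(4) \Leftrightarrow (5)$, which rests on Berkovich's non-emptiness theorem for $\mathcal{M}(\cdot)$. Both are valid; the paper's choice has the side benefit of exhibiting explicitly which bounded multiplicative seminorms on $A/I$ cut out the maximal ideals, which fits the surrounding discussion of spectrally reduced ideals, but for the bare statement your approach is the shorter path.
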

\begin{proof} For a maximal ideal $\mathfrak{m} \subsetneq A/I$ consider the pre-image $\mathfrak{m}'$ of $\mathfrak{m}$ in $A$. By Proposition \ref{The Zariskian condition} we can choose $\phi \in \mathcal{M}(A)$ with $\mathfrak{m}' = \ker(\phi)$. Then $\widetilde{\phi}(x + I) = \phi(x), x \in A$, defines an element $\widetilde{\phi}$ of $\mathcal{M}(A/I)$ with $\ker(\widetilde{\phi}) = \mathfrak{m}$. 
\end{proof} Recall from \cite{FK}, Ch.~0, Section 7.3(b), that, given an adic ring $A_{0}$ with ideal of definition $I$, the associated Zariski ring or Zariskisation $A_{0}^{\Zar}$ of $A_{0}$ is the localization of $A$ at the multiplicative subset $1 + I$. Now observe that, in any nonarchimedean seminormed ring $A$, the set $1 + A_{<1}$ is a multiplicative subset of $A$. The following proposition was proved in the case of Zariskian Huber rings by Tanaka (\cite{Tanaka}, Theorem 3.12 and Theorem 3.15).\begin{prop} \label{Zariskisation} The inclusion of the category of Zariskian seminormed rings and bounded ring homomorphisms into the category of all seminormed rings and bounded ring homomorphisms possesses a left-adjoint given by $(A, \lVert \cdot \rVert) \mapsto (A^{\Zar}, \lVert \cdot \rVert_{\Zar})$ where \begin{equation*} A^{\Zar} := (1 + A_{<1})^{-1}A \end{equation*} and the seminorm $\lVert \cdot \rVert_{\Zar}$ on $A^{\Zar}$ is defined by \begin{equation*} \lVert \frac{a}{s}\rVert_{\Zar} = \frac{\lVert a \rVert}{\lVert s \rVert} = \lVert a \rVert. \end{equation*}
\end{prop}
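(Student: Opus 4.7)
The plan is to construct $A^{\Zar}$ as the ring-theoretic localization of $A$ at the multiplicative subset $S = 1 + A_{<1}$, equip it with the stated seminorm, and then check each required property in turn. First I verify that $S$ is multiplicatively closed and that every $s \in S$ has $\lVert s\rVert = 1$: submultiplicativity together with the nonarchimedean triangle inequality give $\lVert 1 + x\rVert \leq 1$ when $\lVert x\rVert < 1$, while $1 = \lVert (1+x) - x\rVert \leq \max(\lVert 1+x\rVert, \lVert x\rVert)$ forces the reverse inequality. The crucial observation is the identity $\lVert ua\rVert = \lVert a\rVert$ for any $u = 1 + x \in S$ and any $a \in A$: the estimate $\lVert xa\rVert \leq \lVert x\rVert \lVert a\rVert < \lVert a\rVert$ (when $\lVert a\rVert > 0$) together with the nonarchimedean property forces $\lVert a + xa\rVert = \lVert a\rVert$, with the case $\lVert a\rVert = 0$ being trivial.

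This identity makes the formula $\lVert a/s\rVert_{\Zar} := \lVert a\rVert$ well-defined: if $a/s = a'/s'$ in $A^{\Zar}$, then some $t \in S$ satisfies $ts'a = tsa'$ in $A$, and since $ts', ts \in S$ the identity yields $\lVert a\rVert = \lVert ts'a\rVert = \lVert tsa'\rVert = \lVert a'\rVert$. Submultiplicativity, the normalization $\lVert 1\rVert_{\Zar} = 1$, and the nonarchimedean triangle inequality (which uses $\lVert s\rVert = \lVert s'\rVert = 1$ to control $\lVert as' + a's\rVert$) are then routine. To see that $A^{\Zar}$ is itself Zariskian, take $z = a/s$ with $\lVert z\rVert_{\Zar} = \lVert a\rVert < 1$ and write $s = 1 + x$; then $s + a = 1 + (x + a)$ again lies in $S$ since $\lVert x + a\rVert < 1$, so $1 + z = (s+a)/s$ is a quotient of two elements of $S$ and hence a unit in $A^{\Zar}$. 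The natural map $\iota: A \to A^{\Zar}$, $a \mapsto a/1$, is a ring homomorphism that is isometric (and in particular bounded) by construction.

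For the adjunction, let $\varphi: A \to B$ be a bounded ring homomorphism with $B$ a Zariskian seminormed ring. Continuity of $\varphi$ carries $A^{\circ\circ}$ into $B^{\circ\circ}$, so $\varphi(S) \subseteq 1 + B^{\circ\circ} \subseteq B^{\times}$ by the definition of the Zariskian property; the usual universal property of ring-theoretic localization then produces a unique ring homomorphism $\psi: A^{\Zar} \to B$ satisfying $\psi(a/s) = \varphi(a)\varphi(s)^{-1}$ and $\psi \circ \iota = \varphi$. The remaining point, and the main obstacle, is the boundedness of $\psi$, since a priori the norms $\lVert \varphi(s)^{-1}\rVert_B$ for $s \in S$ admit no obvious uniform bound. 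To handle this I rescale the seminorm on $B$ within its equivalence class (allowed since the paper tacitly identifies equivalent seminorms) so that $\varphi$ becomes submetric. Then for any $s = 1 + x \in S$ we have $\lVert \varphi(x)\rVert_B \leq \lVert x\rVert_A < 1$, and Proposition \ref{The Zariskian condition}(6) provides the convergent expansion $\varphi(s)^{-1} = \sum_{n\geq 0}(-\varphi(x))^n$ with $\lVert \varphi(s)^{-1}\rVert_B \leq 1$ by the nonarchimedean estimate on the sum. This yields $\lVert \psi(a/s)\rVert_B \leq \lVert \varphi(a)\rVert_B \leq \lVert a\rVert_A = \lVert a/s\rVert_{\Zar}$, which upon translation back to the original seminorm on $B$ shows that $\psi$ is bounded, completing the verification of the universal property.
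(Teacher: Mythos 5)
Your proposal follows essentially the same route as the paper: well-definedness of $\lVert\cdot\rVert_{\Zar}$ via the identity $\lVert ua\rVert=\lVert a\rVert$ for $u\in 1+A_{<1}$, verification that $A^{\Zar}$ is Zariskian by exhibiting $1+z$ as a quotient of two elements of $1+A_{<1}$, and the universal property via continuity sending topologically nilpotent elements to topologically nilpotent elements. You go one step beyond the paper by explicitly worrying about the boundedness of the induced map $\psi\colon A^{\Zar}\to B$, which the paper's proof leaves tacit. That is the right instinct, but your fix has a gap: you cannot in general ``rescale the seminorm on $B$ within its equivalence class'' to make $\varphi$ submetric. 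A ring seminorm is required to satisfy $\lVert 1\rVert=1$, so scalar rescaling $\lVert\cdot\rVert_B\mapsto C^{-1}\lVert\cdot\rVert_B$ is forbidden, and there is no evident reason why an equivalent \emph{ring} seminorm making $\varphi$ submetric should exist in general (the claim is certainly not a formal consequence of the definitions you have available). Fortunately the rescaling is unnecessary: if $\lVert\varphi(a)\rVert_B\leq C\lVert a\rVert_A$ for all $a$, then for $s=1+x\in 1+A_{<1}$ the geometric series $\varphi(s)^{-1}=\sum_{n\geq 0}(-\varphi(x))^n$ converges (by Proposition \ref{The Zariskian condition}(6), since $\varphi(x)$ is topologically nilpotent and $B$ is Zariskian), and the nonarchimedean estimate gives
\begin{equation*}
\lVert\varphi(s)^{-1}\rVert_B\leq\sup_{n\geq 0}\lVert\varphi(x^n)\rVert_B\leq\sup_{n\geq 0}C\lVert x\rVert_A^{n}\leq C,
\end{equation*}
a bound uniform in $s$. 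Hence $\lVert\psi(a/s)\rVert_B\leq\lVert\varphi(a)\rVert_B\,\lVert\varphi(s)^{-1}\rVert_B\leq C^2\lVert a\rVert_A=C^2\lVert a/s\rVert_{\Zar}$, so $\psi$ is bounded. With the rescaling step replaced by this direct estimate, your proof is correct and even slightly more complete than the one in the paper.
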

\begin{proof} We first show that the function $\lVert \cdot \rVert_{\Zar}$ on $A^{\Zar}$ is well-defined, i.e.~that it does not depend on the representation of an element of $A^{\Zar}$ by a fraction. Hence let $a, b \in A$ and $t, s \in 1 + A_{<1}$ be elements such that \begin{equation*} \frac{a}{s} = \frac{b}{t} \end{equation*} in $A^{\Zar}$. This means that for some $t' \in 1 + A_{<1}$ we have the relation $t'ta = t'sb$ in $A$. Since $1 + A_{<1}$ is a multiplicative subset of $A$ we can write $t't = 1+x$ and $t's = 1+y$ for some $x, y \in A_{<1}$. But then \begin{equation*} \lVert xa \rVert \leq \lVert x \rVert \lVert a \rVert < \lVert a \rVert \end{equation*}and, consequently, \begin{equation*} \lVert t'ta \rVert = \lVert a + xa \rVert = \lVert a \rVert. \end{equation*}Similarly, $\lVert t'sb\rVert = \lVert b + yb \rVert = \lVert b \rVert$. This shows that $\lVert a \rVert = \lVert b \rVert$ and the function $\lVert \cdot \rVert_{\Zar}$ is well-defined. 

It is now readily seen that $\lVert \cdot \rVert_{\Zar}$ is indeed a seminorm on the ring $A^{\Zar}$, for any seminormed ring $A$. We want to verify that, for any seminormed ring $A$, the seminormed ring $A^{\Zar}$ is Zariskian. Let $y \in (A^{\Zar})_{<1}$. Write $y$ in the form $y = \frac{a_{1}}{1 + a_{2}}$ with $a_{1} \in A, a_{2} \in A_{<1}$. Then $\lVert a_1 \rVert = \lVert y \rVert_{\Zar} < 1$. Hence $1 + y = 1+ \frac{a_{1}}{1 + a_{2}} = \frac{1 + a_{1} + a_{2}}{1 + a_{2}} \in (1+A_{<1})^{-1}(1 + A_{<1})$ whence we see that $1+ y$ is a unit $A^{\Zar}$. It follows by Proposition \ref{The Zariskian condition}~that $A^{\Zar}$ is Zariskian. 

The natural map $A \to A^{\Zar}$ is bounded (by definition, it is even submetric), so the image of $A^{\circ \circ}$ in $A^{\Zar}$ is contained in $(A^{\Zar})^{\circ \circ}$. In particular, the image of the set $1 + A^{\circ \circ} \subset A$ consists of units in $A^{\Zar}$. This means that the natural homomorphism $A^{\Zar} = (1+ A_{<1})^{-1}A \to (1 + A^{\circ \circ})^{-1}A$ is an isomorphism.   

Now let $\varphi: A \to B$ be a bounded homomorphism from a seminormed ring $A$ to a Zariskian seminormed ring $B$. Since $\varphi$ is continuous, it sends $1 + A^{\circ \circ}$ into $1 + B^{\circ \circ}$. Since all elements of $1 + B^{\circ \circ}$ are invertible in $B$, the homomorphism $\varphi$ factors uniquely through the natural map $A \to A^{\Zar}$. This shows that the assignment $A \mapsto A^{\Zar}$ is a functor which is left-adjoint to the full embedding of the category of Zariskian seminormed rings into the category of seminormed rings.\end{proof}
For any seminormed ring we call the seminormed ring $A^{\Zar}$ from Proposition \ref{Zariskisation} the Zariskisation of $A$. We also use this name for the functor $A \mapsto A^{\Zar}$.\begin{cor} \label{Zariskian condition and filtered colimits} Any filtered colimit (in the category of seminormed rings and bounded ring homomorphisms) of Zariskian seminormed rings is Zariskian.\end{cor}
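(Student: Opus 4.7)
The strategy is to invoke the characterization from Proposition \ref{The Zariskian condition}~(3), namely that a seminormed ring $B$ is Zariskian if and only if $1 + B_{<1} \subseteq B^{\times}$. Thus, given a filtered diagram $(A_{i})_{i \in I}$ of Zariskian seminormed rings with colimit $A = \operatorname{colim}_{i \in I} A_{i}$ in the category of seminormed rings, it suffices to show that every element of the form $1 + x$ with $\lVert x \rVert_{A} < 1$ is a unit in $A$.

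To carry this out, I would first recall the explicit description of filtered colimits in the category of seminormed rings with bounded ring homomorphisms. The underlying ring of $A$ is the filtered colimit of the underlying rings of the $A_{i}$, with canonical ring maps $\phi_{i} : A_{i} \to A$, and the seminorm on $A$ is given by the infimum
\[ \lVert x \rVert_{A} = \inf \{\, \lVert a \rVert_{A_{i}} \mid i \in I, \ a \in A_{i}, \ \phi_{i}(a) = x \,\}. \]
Given any $x \in A$ with $\lVert x \rVert_{A} < 1$, the defining property of the infimum yields some index $i \in I$ and some $a \in A_{i}$ with $\phi_{i}(a) = x$ and $\lVert a \rVert_{A_{i}} < 1$. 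Since $A_{i}$ is Zariskian, Proposition \ref{The Zariskian condition}~guarantees that $1 + a$ is a unit in $A_{i}$; let $b \in A_{i}$ be its inverse. Applying the ring homomorphism $\phi_{i}$, we get $\phi_{i}(b)(1 + x) = \phi_{i}(b(1+a)) = 1$ in $A$, so $1 + x \in A^{\times}$. This shows $1 + A_{<1} \subseteq A^{\times}$, and $A$ is Zariskian by the cited proposition.

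The principal technical obstacle is the first step: verifying that the infimum-seminorm construction indeed computes the filtered colimit in the category of seminormed rings with \emph{bounded} (as opposed to submetric) ring homomorphisms. One has to check submultiplicativity, the relation $\lVert 1 \rVert_{A} = 1$, and the universal property; since the transition maps are only bounded, the infimum could a priori behave badly and requires some care. Once this construction is in hand, the transfer of the Zariskian property is a purely algebraic matter, relying solely on the fact that ring homomorphisms send units to units. An alternative framing via Proposition \ref{Zariskisation}, using that the Zariskisation functor is a left adjoint and hence preserves colimits, is possible but ultimately reduces to the same verification, since one still has to compare the colimit computed in the full subcategory of Zariskian seminormed rings with the one computed ambiently.
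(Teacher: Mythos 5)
Your proof is correct and takes a genuinely different route from the paper's. The paper's argument is a one-liner: by Proposition~\ref{Zariskisation} the Zariskisation $A \mapsto A^{\Zar}$ is a left adjoint (with the inclusion of Zariskian seminormed rings as its right adjoint), hence it preserves colimits. Your argument is instead direct and element-level, via the criterion $1 + A_{<1} \subseteq A^{\times}$ of Proposition~\ref{The Zariskian condition}(3): lift $x \in A_{<1}$ to some $a \in (A_i)_{<1}$, invert $1 + a$ in $A_i$ using the Zariskian hypothesis there, and push the inverse forward along $\phi_i$. You rightly isolate the technical hinge, namely that the colimit seminorm has the lifting property ``$\lVert x \rVert_A < 1$ implies $x = \phi_i(a)$ for some $a$ with $\lVert a \rVert_{A_i} < 1$''; the infimum formula you write supplies exactly this, and one does need to check it yields a seminorm (for instance $\lVert 1 \rVert_A = 1$, which follows because a lift $a \in A_i$ of $1$ satisfies $\psi_{ij}(a^n) = 1$ in some $A_j$ and hence $\lVert a \rVert_{A_i}^n \geq C_{ij}^{-1}$ for all $n$, forcing $\lVert a \rVert_{A_i} \geq 1$) together with the universal property. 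Your closing remark is particularly apt: the adjoint-functor argument gives $A^{\Zar} \cong \operatorname{colim} A_i$ computed \emph{inside} the reflective subcategory of Zariskian rings, but since reflective subcategories are not in general closed under ambient colimits, one still has to see that the unit $A \to A^{\Zar}$ is an isomorphism, and that is precisely what your lifting step establishes. So your direct proof makes explicit the content the paper's terse proof leaves implicit; the trade-off is that the paper's route is shorter and conceptually cleaner, while yours is more self-contained and makes clear where the filtered-colimit structure is actually used.
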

\begin{proof}By the above proposition the Zariskisation functor $A \mapsto A^{\Zar}$ has a right-adjoint, hence it commutes with filtered colimits.\end{proof}
As we mentioned before, all Banach rings (complete normed rings) are Zariskian. Corollary \ref{Zariskian condition and filtered colimits} supplies us with many examples of Zariskian normed rings which are not complete. For example, for a perfectoid field $K$ with pseudo-uniformizer $\varpi \in K$ the $K$-algebra \begin{equation*} (T_{n})^{1/p^{\infty}} = K \langle X_{1}, \dots, X_{n} \rangle [X_{1}^{1/p^{\infty}}, \dots, X_{n}^{1/p^{\infty}}], \end{equation*} equipped with the Gauss norm $\lVert \sum_{\nu \in (\mathbb{Z}[1/p]_{\geq 0})^{n}}a_{\nu}X^{\nu} \rVert = \max_{\nu}|a_{\nu}|$, is the direct limit of the Banach algebras \begin{equation*} (T_{n})^{1/p^{m}} = K \langle X_{1}, \dots, X_{n} \rangle [X_{1}^{1/p^{m}}, \dots, X_{n}^{1/p^{m}}] \end{equation*} (each of which is in fact isometrically isomorphic to $T_{n} = K \langle X_{1}, \dots, X_{n} \rangle$ via the map which raises each variable to its $p^{m}$-th power). Hence it is Zariskian by virtue of Corollary \ref{Zariskian condition and filtered colimits}. However, $(T_{n})^{1/p^{\infty}}$ is not complete since the sequence \begin{equation*} (\sum_{k= 0}^{m}\varpi^{k}X_{1}^{1/p^{k}})_{m \in \mathbb{N}} \subset (T_{n})^{1/p^{\infty}} \end{equation*} is a Cauchy sequence without limit in $(T_{n})^{1/p^{\infty}}$.

We now turn to the discussion of topological properties of the topological spectrum of a seminormed ring. Therein the notion of Zariskisation introduced above plays a crucial role. We begin with a simple lemma which ensures that the topological spectrum of a seminormed ring is non-empty.
\begin{lemma}\label{Existence of spectrally reduced ideals} If an ideal $I \subsetneq A$ in a seminormed ring is not dense, then it is contained in a spectrally reduced prime ideal. In particular, the topological spectrum of any seminormed ring $A$ is non-empty.\end{lemma}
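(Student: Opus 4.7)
The plan is to reduce to the non-emptiness of the Berkovich spectrum of a Banach ring, which we already have via Lemma \ref{Berkovich spectrum}. Given a non-dense proper ideal $I \subsetneq A$, I first replace it by its closure $\overline{I}$, which is again a proper ideal: the closure of an ideal in a topological ring is an ideal (multiplication and addition are continuous), and $\overline{I}$ cannot be all of $A$ because $I$ is not dense. In particular $1 \notin \overline{I}$, so the quotient seminormed ring $A/\overline{I}$ has $\lVert 1 + \overline{I}\rVert = 1$ (submultiplicativity forces this quotient seminorm at $1$ to be either $0$ or at least $1$, while $1 \notin \overline{I}$ rules out $0$), and in fact the quotient seminorm on $A/\overline{I}$ is a norm because $\overline{I}$ is closed.

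Next I invoke Lemma \ref{Berkovich spectrum} to get that $\mathcal{M}(A/\overline{I})$ is non-empty; pick any bounded multiplicative seminorm $\phi$ on $A/\overline{I}$. Pulling $\phi$ back along the quotient map $A \twoheadrightarrow A/\overline{I}$ (equivalently, using the bijection of Lemma \ref{Berkovich spectrum of a quotient}) yields a bounded multiplicative seminorm $\widetilde{\phi} \in \mathcal{M}(A)$ whose kernel contains $\overline{I} \supseteq I$. Since $\widetilde{\phi}$ is multiplicative, $\ker(\widetilde{\phi})$ is a prime ideal; and since any multiplicative seminorm is in particular power-multiplicative, $\ker(\widetilde{\phi})$ is spectrally reduced by definition. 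Thus $I \subseteq \ker(\widetilde{\phi})$ with $\ker(\widetilde{\phi})$ a spectrally reduced prime ideal, which is the first assertion.

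For the second assertion I apply the first to $I = (0)$. The zero ideal is not dense in $A$: if it were, then by continuity of the seminorm we would have $\lVert 1\rVert = 0$, contradicting our standing convention $\lVert 1\rVert = 1$. Hence $(0)$ is contained in some spectrally reduced prime ideal, giving a point of $\TopSpec(A)$.

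The argument is essentially formal once Lemma \ref{Berkovich spectrum} is granted; the only point requiring a little care is checking that passing from $I$ to $\overline{I}$ does not destroy properness and that the quotient seminorm on $A/\overline{I}$ really is a norm of seminormed-ring type (i.e. with $\lVert 1\rVert = 1$), since these are exactly what is needed to apply Lemmas \ref{Berkovich spectrum of a quotient} and \ref{Berkovich spectrum}. I do not anticipate any serious obstacle.
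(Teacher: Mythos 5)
Your proof is correct and follows essentially the same route as the paper: pass to the quotient, invoke non-emptiness of its Berkovich spectrum (Lemma \ref{Berkovich spectrum}), and pull back via Lemma \ref{Berkovich spectrum of a quotient} to obtain a bounded multiplicative seminorm on $A$ whose kernel is a spectrally reduced prime containing $I$. The only inessential difference is your intermediate step of replacing $I$ by its closure $\overline{I}$; this is not needed since Lemma \ref{Berkovich spectrum} already applies to the seminormed (not necessarily normed) ring $A/I$, and in fact the proof of that lemma internally performs exactly the reduction to $A/\overline{(0)}$ that you are replicating.
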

\begin{proof}The first statement follows from Lemma \ref{Berkovich spectrum of a quotient} and non-emptiness of the Berkovich spectrum of the quotient $A/I$ (Lemma \ref{Berkovich spectrum}). The second statement follows by applying the first to the zero ideal of $A$ since the zero ideal in a seminormed ring is never dense.\end{proof}
Just as in commutative algebra the notion of a radical ideal in a ring leads to the notion of the radical of an arbitrary ideal, we can introduce a notion of 'spectral radical' for ideals in a seminormed ring. This will prove useful not only in this section but also while studying the tilting map for spectrally reduced ideals of a perfectoid Tate ring in Section 4. 
\begin{mydef}[Spectral radical] \label{Spectral radical} Let $A$ be a seminormed ring and $I \subsetneq A$ any ideal which is not dense in $A$. We define the spectral radical $I_{\spc}$ of $I$ in $A$ as the intersection of all spectrally reduced ideals of $A$ containing $I$. Equivalently, \begin{equation*} I_{\spc} = \bigcap_{\substack{\phi \in \mathcal{M}(A) \\ \ker(\phi) \supseteq I}}\ker(\phi). \end{equation*}
\end{mydef}
\begin{rmk}Lemma \ref{Existence of spectrally reduced ideals} ensures that the notion of the spectral radical is well-defined for any non-dense ideal in a seminormed ring. Note that by Proposition \ref{The Zariskian condition} Zariskian seminormed rings have no dense ideals, so the spectral radical is well-defined for every ideal in a Zariskian seminormed ring. \end{rmk}
\begin{lemma}\label{Topological spectrum and Zariskisation} For any seminormed ring there is an inclusion-preserving homeomorphism \begin{equation*} \TopSpec(A) \simeq \TopSpec(A^{\Zar}). \end{equation*}\end{lemma}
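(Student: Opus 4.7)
The plan is to exploit that $A^{\Zar} = (1+A_{<1})^{-1}A$ is a localization of $A$, so the canonical map $\Spec(A^{\Zar}) \hookrightarrow \Spec(A)$ is an inclusion-preserving homeomorphism onto the set of primes of $A$ disjoint from the multiplicative subset $1 + A_{<1}$. I will show that, under this identification, the subspaces $\TopSpec(A^{\Zar})$ and $\TopSpec(A)$ correspond to one another, from which the lemma follows immediately.

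First I would check that every spectrally reduced prime $\mathfrak{p} = \ker(\phi)$ of $A$, with $\phi$ bounded and power-multiplicative, is automatically disjoint from $1 + A_{<1}$. The point is that any nonarchimedean seminorm satisfies the strict triangle inequality: if $\phi(a) \neq \phi(b)$ then $\phi(a+b) = \max(\phi(a),\phi(b))$. For $x \in A_{<1}$ one has $\phi(x) \leq \lVert x \rVert < 1 = \phi(1)$, so $\phi(1+x) = 1$, and therefore $1+x \notin \mathfrak{p}$. Conversely, every spectrally reduced prime $\mathfrak{q} = \ker(\psi)$ of $A^{\Zar}$ contracts to a spectrally reduced prime of $A$: simply pull $\psi$ back along the submetric map $A \to A^{\Zar}$ of Proposition \ref{Zariskisation} to obtain a bounded power-multiplicative seminorm on $A$ with kernel $\mathfrak{q}^{c}$.

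It remains to show that the extension $\mathfrak{p}^{e} = \mathfrak{p} A^{\Zar}$ of a spectrally reduced prime of $A$ is again spectrally reduced in $A^{\Zar}$. Given $\mathfrak{p} = \ker(\phi)$ as above, I would define $\phi'(a/s) := \phi(a)$ on $A^{\Zar}$, modelled on the formula for $\lVert \cdot\rVert_{\Zar}$ in Proposition \ref{Zariskisation}. The key technical observation is that for $s = 1+s' \in 1+A_{<1}$ one has $\phi(s') \leq \lVert s'\rVert < 1 = \phi(1)$, so the strict triangle inequality forces $\phi(sa) = \phi(a)$ for every $a \in A$; well-definedness of $\phi'$ then follows exactly as in the proof of Proposition \ref{Zariskisation}. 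Boundedness is immediate from $\phi'(a/s) = \phi(a) \leq \lVert a\rVert = \lVert a/s\rVert_{\Zar}$. For power-multiplicativity one notes that $s^{n} \in 1+A_{<1}$ whenever $s \in 1+A_{<1}$ (expand via the binomial formula, using that $\phi(n) \leq 1$ for every integer $n$), so $\phi(s^{n}) = 1$ and hence $\phi'((a/s)^{n}) = \phi(a^{n}) = \phi(a)^{n} = \phi'(a/s)^{n}$. Finally, $\ker(\phi') = \mathfrak{p}^{e}$ since $\mathfrak{p}$ is prime and disjoint from $1+A_{<1}$.

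Putting these pieces together, the localization bijection identifies $\TopSpec(A)$ with $\TopSpec(A^{\Zar})$ as sets; it is inclusion-preserving by standard properties of localization, and since it is already a homeomorphism at the level of prime spectra for the Zariski topology, restricting to $\TopSpec$ with the subspace topology yields the desired homeomorphism. The main obstacle is the bookkeeping with the strict triangle inequality required to see that $\phi'$ is a well-defined power-multiplicative seminorm; everything else follows formally from the theory of localization of rings and the characterisations of the Zariskian condition collected in Proposition \ref{The Zariskian condition}.
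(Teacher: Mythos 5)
Your proof is correct and in broad outline follows the same strategy as the paper's: both identify $\Spec(A^{\Zar})$ with the set of primes of $A$ disjoint from $1+A_{<1}$, observe that spectrally reduced primes of $A$ are automatically disjoint from $1+A_{<1}$ (via the strict triangle inequality applied to a bounded power-multiplicative seminorm $\phi$ and $f\in A_{<1}$, giving $\phi(1+f)=1$), and check that contraction of spectrally reduced primes from $A^{\Zar}$ to $A$ is given simply by restricting the seminorm along $A\to A^{\Zar}$. However, your proof is \emph{more complete} than the paper's on one point, and that is worth flagging. The paper constructs a map $J\mapsto JA^{\Zar}$ from spectrally reduced ideals of $A$ to spectrally reduced ideals of $A^{\Zar}$ and asserts it is ``well-defined'' because no spectrally reduced ideal meets $1+A_{<1}$; but this only shows $JA^{\Zar}$ is a \emph{proper} ideal, not that it is spectrally reduced. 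The paper's surjectivity argument runs in the other direction (every spectrally reduced $I\subsetneq A^{\Zar}$ has spectrally reduced contraction), and so it never actually establishes that the extension $\mathfrak{p}A^{\Zar}$ of a spectrally reduced prime $\mathfrak{p}\subsetneq A$ is itself spectrally reduced — precisely the fact needed for $\mathfrak{p}\mapsto\mathfrak{p}A^{\Zar}$ to land in $\TopSpec(A^{\Zar})$. Your construction of the seminorm $\phi'$ on $A^{\Zar}$ by $\phi'(a/s):=\phi(a)$, with the verification that $\phi(sa)=\phi(a)$ for $s\in 1+A_{<1}$ (using $\phi(s')\leq\lVert s'\rVert<1$ and the ultrametric equality), supplies exactly this missing ingredient. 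Two tiny remarks: in the power-multiplicativity step you do not need the binomial expansion — the paper already records, just before Proposition~\ref{Zariskisation}, that $1+A_{<1}$ is a multiplicative subset (this follows from submultiplicativity of $\lVert\cdot\rVert$), so $s^n\in 1+A_{<1}$ for free; and in that same step you write ``$\phi(n)\leq 1$'' where the relevant inequality is $\lVert n\rVert\leq 1$ for the original norm, since membership in $A_{<1}$ refers to $\lVert\cdot\rVert$, not $\phi$.
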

\begin{proof}It suffices to construct an inclusion-preserving surjection from the set of spectrally reduced ideals of $A$ onto the set of spectrally reduced ideals of $A^{\Zar}$ which takes prime ideals to prime ideals and whose restriction to $\TopSpec(A)$ is injective. To see this, suppose that such a surjection $\pi$ has been constructed. A closed subset of $\TopSpec(A)$ is of the form \begin{equation*} \mathcal{V}_{A}(I) = \{\, \mathfrak{p} \in \TopSpec(A) \mid \mathfrak{p} \supseteq I \,\} \end{equation*} and analogously for $A^{\Zar}$. If $\mathcal{V}_{A}(I)$ is non-empty (note that by Lemma \ref{Existence of spectrally reduced ideals} this is the case if and only if $I$ is not dense in $A$), the spectral radical $I_{\spc}$ of $I$ is well-defined and we have $\mathcal{V}_{A}(I) = \mathcal{V}_{A}(I_{\spc})$. But $I_{\spc}$ is spectrally reduced, so we can apply the map $\pi$ to it and see that a spectrally reduced prime ideal $\mathfrak{p} \subsetneq A$ contains $I_{\spc}$ if and only if $\pi(\mathfrak{p})$ contains $\pi(I_{\spc})$. That is, $\pi(\mathcal{V}_{A}(I_{\spc})) = \mathcal{V}_{A^{\Zar}}(\pi(I_{\spc}))$. This shows that the restriction of $\pi$ to $\TopSpec(A)$ is a closed map. The fact that $\pi$ is also a continuous map is shown in a completely analogous way.   

We now turn to the construction of a surjection $\pi$ as above. We claim that the map from the set of spectrally reduced ideals of $A$ to the set of spectrally reduced ideals of $A^{\Zar}$ which is given by $J \mapsto JA^{\Zar}$ has the required properties. First of all, this map is well-defined since no spectrally reduced ideal of $A$ can intersect the set $1 + A_{<1}$ (every $\phi \in \mathcal{M}(A)$ satisfies $\phi(f) < 1$ and hence $\phi(1 + f) = 1$ for any $f \in A_{<1}$). The map $J \mapsto JA^{\Zar}$ is clearly inclusion-preserving, takes prime ideals to prime ideals and its restriction to $\TopSpec(A)$ is injective, since $\mathfrak{p}A^{\Zar} \cap A = \mathfrak{p}$ if $\mathfrak{p}$ is a spectrally reduced prime ideal. It remains to prove that the map is surjective. Let $I$ be a spectrally reduced ideal of $A^{\Zar}$, let $J = I \cap A$. Then $I = JA^{\Zar}$ by the usual properties of localization and $J$ is spectrally reduced since the restriction to $A$ of any bounded power-multiplicative seminorm $\phi$ on $A^{\Zar}$ with $\ker(\phi) = I$ must have kernel $J$.
\end{proof}
\begin{cor} \label{Maximal spectrally reduced ideals} In any seminormed ring $A$, every spectrally reduced ideal is contained in a maximal spectrally reduced ideal.\end{cor}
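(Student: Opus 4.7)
The plan is to reduce the statement to the Zariskian case via the Zariskisation functor of Proposition \ref{Zariskisation} and the bijection constructed in the proof of Lemma \ref{Topological spectrum and Zariskisation}. In the Zariskian case the conclusion is essentially immediate from Zorn's lemma combined with the characterization of Zariskian seminormed rings in Proposition \ref{The Zariskian condition}.

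More precisely, I would first observe that in any Zariskian seminormed ring $B$, every maximal ideal is a maximal spectrally reduced ideal. Indeed, spectrally reduced ideals are proper (since any bounded multiplicative seminorm $\phi$ satisfies $\phi(1) = 1$, so $1 \notin \ker(\phi)$), while by condition (5) of Proposition \ref{The Zariskian condition} every maximal ideal of $B$ is of the form $\ker(\phi)$ for some $\phi \in \mathcal{M}(B)$ and is therefore itself spectrally reduced. Combining these two facts, the maximal ideals of $B$ coincide with the maximal spectrally reduced ideals of $B$, and by the usual Zorn argument every proper ideal of $B$ is contained in such a maximal ideal.

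For a general seminormed ring $A$ and a spectrally reduced ideal $I \subsetneq A$, I would apply the Zariskisation $A \mapsto A^{\Zar}$. The proof of Lemma \ref{Topological spectrum and Zariskisation} established that the assignment $J \mapsto JA^{\Zar}$ is an inclusion-preserving bijection between the set of spectrally reduced ideals of $A$ and the set of spectrally reduced ideals of $A^{\Zar}$, with inverse $I' \mapsto I' \cap A$. Hence $IA^{\Zar}$ is a (proper) spectrally reduced ideal of the Zariskian seminormed ring $A^{\Zar}$. By the first paragraph there exists a maximal ideal $\mathfrak{m} \subsetneq A^{\Zar}$ containing $IA^{\Zar}$, and $\mathfrak{m}$ is a maximal spectrally reduced ideal of $A^{\Zar}$. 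Transporting back via the inclusion-preserving bijection, $\mathfrak{m} \cap A$ is a spectrally reduced ideal of $A$ containing $I$, and it is maximal among such ideals because any properly larger spectrally reduced ideal $J \supsetneq \mathfrak{m} \cap A$ of $A$ would give rise, via $J \mapsto JA^{\Zar}$, to a spectrally reduced ideal of $A^{\Zar}$ properly containing $\mathfrak{m}$, contradicting the maximality of $\mathfrak{m}$.

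I do not anticipate a serious obstacle here: the only point that requires some care is verifying that the bijection from the proof of Lemma \ref{Topological spectrum and Zariskisation} is inclusion-preserving in both directions on \emph{all} spectrally reduced ideals (not merely on spectrally reduced prime ideals, which is what the statement of the lemma emphasizes). This is however already contained in that proof, so the present corollary reduces to a short Zornification inside $A^{\Zar}$.
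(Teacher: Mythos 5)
Your proof is correct and takes the same route as the paper's own (terse) proof: reduce to the Zariskian case via the Zariskisation functor, invoke Zorn's lemma there together with the fact that every maximal ideal in a Zariskian seminormed ring is the kernel of some $\phi\in\mathcal{M}(A^{\Zar})$ (Proposition \ref{The Zariskian condition}(5)), and transport back through the bijection from Lemma \ref{Topological spectrum and Zariskisation}. You have merely spelled out the bookkeeping that the paper leaves implicit, including the mild extension of the lemma's bijection from prime to arbitrary spectrally reduced ideals, which indeed holds by the argument given in its proof.
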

\begin{proof} Every ideal of $A^{\Zar}$ is contained in a maximal ideal and every maximal ideal of $A^{\Zar}$ is spectrally reduced (Proposition \ref{The Zariskian condition}), so the claim follows from the above lemma.\end{proof}
We now give a description of closed irreducible subsets of $\TopSpec(A)$ which parallels the usual description of closed irreducible subsets of the usual prime ideal spectrum.  
\begin{lemma} \label{Closed irreducible subsets} Let $A$ be a seminormed ring. A closed subset of $\TopSpec(A)$ is irreducible if and only if it is of the form \begin{equation*} \mathcal{V}(\mathfrak{p}) = \{\, \mathfrak{q} \in \TopSpec(A) \mid \mathfrak{p} \subseteq \mathfrak{q} \,\} \end{equation*}for some spectrally reduced prime ideal $\mathfrak{p}$ of $A$. \end{lemma}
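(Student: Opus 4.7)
My plan is to mimic the classical proof for $\Spec(A)$, using the spectral radical (Definition \ref{Spectral radical}) to put closed subsets of $\TopSpec(A)$ into a canonical form. The first observation I would establish is that every non-empty closed subset $C \subseteq \TopSpec(A)$ has the form $\mathcal{V}(I) := \{\mathfrak{q} \in \TopSpec(A) \mid \mathfrak{q} \supseteq I\}$ for a uniquely determined spectrally reduced ideal $I$ of $A$, namely $I = \bigcap_{\mathfrak{q} \in C}\mathfrak{q}$. Indeed, by the definition of the subspace topology $C$ is cut out by some ideal $J$ of $A$; since $C$ is non-empty, $J$ is not dense, so we may replace $J$ by its spectral radical $J_{\spc}$ without changing $C$, and it remains to check that $J_{\spc} = \bigcap_{\mathfrak{q} \in \mathcal{V}(J_{\spc})} \mathfrak{q}$. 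The inclusion $\subseteq$ is trivial, and the reverse inclusion follows from the fact that $J_{\spc}$ is spectrally reduced, so $J_{\spc} = \bigcap_{\phi \in \mathcal{M}(A),\ \ker \phi \supseteq J_{\spc}}\ker(\phi)$, and each such $\ker(\phi)$ is a spectrally reduced prime ideal lying in $\mathcal{V}(J_{\spc})$.

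For the \emph{if} direction, let $\mathfrak{p}$ be a spectrally reduced prime and suppose $\mathcal{V}(\mathfrak{p}) = C_{1} \cup C_{2}$ with each $C_{i}$ closed. Writing $C_{i} = \mathcal{V}(J_{i})$ with $J_{i}$ spectrally reduced as above, the standard identity $\mathcal{V}(J_{1}) \cup \mathcal{V}(J_{2}) = \mathcal{V}(J_{1}\cap J_{2})$ (which holds for prime ideals by the usual prime-avoidance argument applied to $J_{1}J_{2} \subseteq J_{1}\cap J_{2}$) together with the membership $\mathfrak{p} \in \mathcal{V}(\mathfrak{p})$ gives $J_{1}J_{2} \subseteq \mathfrak{p}$, whence $J_{i} \subseteq \mathfrak{p}$ for some $i$. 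This in turn yields $\mathcal{V}(\mathfrak{p}) \subseteq \mathcal{V}(J_{i}) = C_{i} \subseteq \mathcal{V}(\mathfrak{p})$, so $C_{i} = \mathcal{V}(\mathfrak{p})$, proving irreducibility.

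For the \emph{only if} direction, let $C$ be a non-empty irreducible closed subset and write $C = \mathcal{V}(I)$ with $I = \bigcap_{\mathfrak{q}\in C}\mathfrak{q}$ spectrally reduced. I claim $I$ is prime. Assume not, and fix $a, b \in A \setminus I$ with $ab \in I$. Set $C_{a} := \mathcal{V}(I + aA)$ and $C_{b} := \mathcal{V}(I + bA)$, both closed subsets of $C$. For any $\mathfrak{q} \in C$ one has $ab \in I \subseteq \mathfrak{q}$, and primality of $\mathfrak{q}$ forces $a \in \mathfrak{q}$ or $b \in \mathfrak{q}$; thus $C = C_{a} \cup C_{b}$. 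Irreducibility now forces $C = C_{a}$ or $C = C_{b}$, but $C = C_{a}$ would yield $a \in \bigcap_{\mathfrak{q} \in C}\mathfrak{q} = I$, contradicting $a \notin I$. Hence $I$ is prime.

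The only genuinely nontrivial ingredient is the canonical correspondence in the first step, which crucially uses that intersections of kernels of seminorms in $\mathcal{M}(A)$ are spectrally reduced and recovers $I$ from $\mathcal{V}(I)$; once this bookkeeping is in place, the remainder is a direct transcription of the classical $\Spec(A)$ argument. I do not anticipate any further obstacles.
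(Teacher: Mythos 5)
Your proof is correct and follows essentially the same approach as the paper: normalize closed subsets to $\mathcal{V}(I)$ with $I$ spectrally reduced via the spectral radical, then run the classical $\Spec$ argument. The only differences are cosmetic — you phrase the forward direction as a contrapositive and use the decomposition $C = \mathcal{V}(I+aA) \cup \mathcal{V}(I+bA)$, which is slightly cleaner than the paper's choice of arbitrary $I_1, I_2$ with $I_1I_2 \subseteq I$ (since your choice automatically ensures $\mathcal{V}(I+aA) \subseteq \mathcal{V}(I)$, a containment the paper leaves implicit).
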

\begin{rmk} Note that all closed irreducible subsets of $\TopSpec(A)$ are trivially of the form $\mathcal{V}(\mathfrak{p})$ for some prime ideal $\mathfrak{p} \subsetneq A$, the topology on $\TopSpec(A)$ being the restriction of the Zariski topology of $\Spec(A)$. However, the spectral radical of an arbitrary prime ideal need not a priori be a prime ideal, so we have to prove that our prime ideal $\mathfrak{p}$ can be chosen to be spectrally reduced. \end{rmk}
\begin{proof}[Proof of Lemma \ref{Closed irreducible subsets}] Every closed subset of $\TopSpec(A)$ is of the form $\mathcal{V}(I)$ for some spectrally reduced ideal $I$. We want to show $\mathcal{V}(I)$ is irreducible if and only if $I$ is a spectrally reduced prime ideal. Suppose that $I$ is not a prime ideal. Then there exist ideals $I_1, I_2$ of $A$ with $I_{1}\cdot I_{2} \subseteq I$ but $I_{1} \not \subseteq I$ and $I_{2} \not \subseteq I$. The inclusion $I_{1}\cdot I_{2} \subseteq I$ means $\mathcal{V}(I) = \mathcal{V}(I_{1}) \cup \mathcal{V}(I_{2})$. Since every spectrally reduced ideal is an intersection of spectrally reduced prime ideals, $I_{i} \not \subseteq I$ (for $i = 1, 2$) means that for each $i = 1,2$ there exists a spectrally reduced prime ideal which contains $I$ but does not contain $I_{i}$. It follows that $\mathcal{V}(I) \neq \mathcal{V}(I_{i})$ for any $i = 1, 2$. This shows that $\mathcal{V}(I)$ can be written as a union of two proper closed subsets $\mathcal{V}(I_1), \mathcal{V}(I_2)$, so $\mathcal{V}(I)$ is not irreducible. Conversely, let $\mathfrak{p}$ be a spectrally reduced prime ideal of $A$ and let $I_{1}, I_{2}$ be ideals of $A$ such that $\mathcal{V}(\mathfrak{p}) = \mathcal{V}(I_{1}) \cup \mathcal{V}(I_{2})$. Since $\mathfrak{p}$ is itself a spectrally reduced prime ideal, $\mathfrak{p} \in \mathcal{V}(\mathfrak{p})$, so either $I_{1} \subseteq \mathfrak{p}$ and $\mathcal{V}(\mathfrak{p}) = \mathcal{V}(I_{1})$ or $I_{2} \subseteq \mathfrak{p}$ and $\mathcal{V}(\mathfrak{p}) = \mathcal{V}(I_{2})$, showing that the closed subset $\mathcal{V}(\mathfrak{p})$ is indeed irreducible.  
\end{proof} Recall that a topological space is called sober if every irreducible closed subset has a unique generic point.   
\begin{cor} \label{Sobriety} The topological spectrum $\TopSpec(A)$ of a seminormed ring $A$ is a sober topological space. 
\end{cor}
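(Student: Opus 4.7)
The plan is to deduce this directly from Lemma \ref{Closed irreducible subsets} together with a simple bookkeeping observation about which closed sets arise from individual points. Sobriety requires two things: every non-empty irreducible closed subset has a generic point (existence), and this generic point is unique.

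For existence, I would simply invoke Lemma \ref{Closed irreducible subsets}: any non-empty irreducible closed subset of $\TopSpec(A)$ has the form $\mathcal{V}(\mathfrak{p}) = \{\mathfrak{q} \in \TopSpec(A) \mid \mathfrak{p} \subseteq \mathfrak{q}\}$ for some spectrally reduced prime ideal $\mathfrak{p}$. I first need to check that $\mathfrak{p}$ is a generic point, i.e.\ that $\overline{\{\mathfrak{p}\}} = \mathcal{V}(\mathfrak{p})$ in $\TopSpec(A)$. Since the topology on $\TopSpec(A)$ is induced from the Zariski topology on $\Spec(A)$, closed subsets containing $\mathfrak{p}$ are of the form $V(I) \cap \TopSpec(A)$ for ideals $I \subseteq \mathfrak{p}$, and the smallest such is $V(\mathfrak{p}) \cap \TopSpec(A) = \mathcal{V}(\mathfrak{p})$. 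Hence $\mathfrak{p}$ is a generic point.

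For uniqueness, suppose $\mathfrak{q}$ is another generic point of $\mathcal{V}(\mathfrak{p})$. Then $\overline{\{\mathfrak{q}\}} = \mathcal{V}(\mathfrak{q}) = \mathcal{V}(\mathfrak{p})$ by the same reasoning as above. From $\mathfrak{p} \in \mathcal{V}(\mathfrak{q})$ we read off $\mathfrak{q} \subseteq \mathfrak{p}$, and from $\mathfrak{q} \in \mathcal{V}(\mathfrak{p})$ we read off $\mathfrak{p} \subseteq \mathfrak{q}$. Hence $\mathfrak{p} = \mathfrak{q}$, which gives uniqueness.

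There is no real obstacle here: the entire content is in the classification of irreducible closed subsets provided by Lemma \ref{Closed irreducible subsets}. The only subtlety worth mentioning explicitly in the write-up is that $\mathfrak{p}$ itself lies in $\TopSpec(A)$ (which is precisely why we chose a spectrally reduced representative in Lemma \ref{Closed irreducible subsets}), so that $\mathfrak{p}$ is actually a point of $\mathcal{V}(\mathfrak{p})$ and can serve as its generic point.
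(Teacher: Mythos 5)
Your proof is correct and takes exactly the same route as the paper: both deduce sobriety directly from Lemma \ref{Closed irreducible subsets}. The paper states it in a single sentence, while you spell out why $\mathfrak{p}$ is a generic point and why it is unique; this elaboration, including the remark that $\mathfrak{p} \in \TopSpec(A)$, is accurate and merely makes the paper's implicit reasoning explicit.
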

\begin{proof} By Lemma \ref{Closed irreducible subsets} every closed irreducible subset $Z$ of $\TopSpec(A)$ is of the form $Z = \mathcal{V}(\mathfrak{p})$ so that $\mathfrak{p}$ is the unique generic point of $Z$.\end{proof}
The following lemma tells us that, similarly to the usual prime ideal spectrum, the topological spectrum of a seminormed ring can be decomposed into irreducible components.   
\begin{lemma} \label{Minimal spectrally reduced primes} Let $(A, \lVert \cdot \rVert)$ be a seminormed ring. Every spectrally reduced prime ideal of $A$ contains a minimal spectrally reduced prime ideal of $A$.\end{lemma}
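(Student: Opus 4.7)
The natural strategy is a routine Zorn's lemma argument applied to the poset of spectrally reduced prime ideals of $A$ contained in a given $\mathfrak{p}$, ordered by reverse inclusion. Fix a spectrally reduced prime ideal $\mathfrak{p} \subsetneq A$ and let
\begin{equation*}
\mathcal{S} := \{\, \mathfrak{q} \in \TopSpec(A) \mid \mathfrak{q} \subseteq \mathfrak{p} \,\},
\end{equation*}
partially ordered by reverse inclusion. Then $\mathcal{S}$ is non-empty since it contains $\mathfrak{p}$, and any minimal element in the reverse-inclusion order is a minimal spectrally reduced prime ideal contained in $\mathfrak{p}$. So it suffices to verify that every totally ordered chain in $\mathcal{S}$ admits an upper bound.

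Let $\{\mathfrak{q}_i\}_{i \in I}$ be such a chain and set $\mathfrak{q} = \bigcap_{i \in I} \mathfrak{q}_i$. I would then verify two things: first, that $\mathfrak{q}$ is spectrally reduced, and second, that $\mathfrak{q}$ is prime. The spectrally-reduced property is where the argument uses the specific setup of this paper, but it follows immediately from the equivalent characterization recorded after Lemma \ref{Properties of spectrally reduced ideals}: each $\mathfrak{q}_i$ is an intersection $\bigcap_{\phi \in S_i} \ker(\phi)$ with $S_i \subseteq \mathcal{M}(A)$, and therefore
\begin{equation*}
\mathfrak{q} \;=\; \bigcap_{i \in I}\bigcap_{\phi \in S_i}\ker(\phi) \;=\; \bigcap_{\phi \in \bigcup_{i}S_i}\ker(\phi),
\end{equation*}
exhibiting $\mathfrak{q}$ as an intersection of kernels of elements of $\mathcal{M}(A)$ and hence as a spectrally reduced ideal. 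The primality of $\mathfrak{q}$ is the standard chain-intersection argument: if $ab \in \mathfrak{q}$ with $a, b \notin \mathfrak{q}$, pick indices $i, j$ with $a \notin \mathfrak{q}_i$ and $b \notin \mathfrak{q}_j$, use the total order to assume $\mathfrak{q}_i \subseteq \mathfrak{q}_j$, and note that primality of $\mathfrak{q}_j$ forces $ab \notin \mathfrak{q}_j$, contradicting $\mathfrak{q} \subseteq \mathfrak{q}_j$. Finally $\mathfrak{q}$ is proper because it is contained in the proper ideal $\mathfrak{p}$, so $\mathfrak{q} \in \mathcal{S}$.

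Thus every chain in $\mathcal{S}$ has an upper bound in the reverse-inclusion order, and Zorn's lemma produces a minimal element, which is the required minimal spectrally reduced prime ideal contained in $\mathfrak{p}$. The only step that is not entirely formal is the stability of spectrally reduced ideals under arbitrary intersections, but this is an immediate consequence of the parenthetical reformulation of the definition of spectrally reduced ideals; no additional input (such as a compactness or completion argument) is needed.
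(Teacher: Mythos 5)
Your argument is correct and matches the paper's strategy: both are Zorn's lemma arguments on the poset of spectrally reduced prime ideals contained in $\mathfrak{p}$, verifying that the intersection over a chain remains a spectrally reduced prime ideal. The only difference is cosmetic: to show $\mathfrak{q} = \bigcap_i \mathfrak{q}_i$ is spectrally reduced, the paper chooses bounded power-multiplicative seminorms $\alpha_i$ with $\ker(\alpha_i) = \mathfrak{q}_i$ and forms the supremum $\alpha := \sup_i \alpha_i$ (finite since each $\alpha_i \le \lVert\cdot\rVert$), observing directly that $\ker(\alpha) = \mathfrak{q}$, whereas you appeal to the equivalent characterization of spectrally reduced ideals as intersections of kernels of elements of $\mathcal{M}(A)$ — but that characterization is itself established via the same supremum argument, so the two routes coincide in substance.
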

\begin{proof} Let $\mathfrak{p}_{0} \supsetneq \mathfrak{p}_{1} \supsetneq \dots \supsetneq \mathfrak{p}_{n} \supsetneq \dots$ be a strictly descending chain of spectrally reduced prime ideals in $A$ which are contained in a fixed spectrally reduced prime ideal $\mathfrak{p}$. Let $\alpha_{i}$ be a bounded power-multiplicative seminorm on $A$ with $\ker(\alpha_{i}) = \mathfrak{p}_{i}$. Consider the bounded power-multiplicative seminorm $\alpha$ defined by $\alpha(f) := \sup_{i}\alpha_{i}(f)$ (the supremum is always finite since $\alpha_{i}(f) \leq \lVert f \rVert$ for every $i$ and every $f \in A$). Then \begin{equation*} \ker(\alpha) = \bigcap_{i}\mathfrak{p}_{i}. \end{equation*} Let $f, g \in A$ be elements of $A$ such that $fg \in \bigcap_{i}\mathfrak{p}_{i} =: \mathfrak{p}$ and $g \notin \mathfrak{p}$. Choose $i$ such that $g \notin \mathfrak{p}_{i}$. Then for every $j \geq i$ we have $f \in \mathfrak{p}_{j}$, so $f \in \mathfrak{p}$. This shows that $\mathfrak{p} = \ker(\alpha)$ is a spectrally reduced prime ideal and hence it shows that every strictly descending chain of spectrally reduced prime ideals contained in $\mathfrak{p}$ admits a lower bound. We conclude by applying Zorn's Lemma 'downwards'.\end{proof}
\begin{thm} \label{The topological spectrum is quasi-compact} The topological spectrum $\TopSpec(A)$ of any seminormed ring $A$ is a quasi-compact topological space.\end{thm}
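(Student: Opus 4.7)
The plan is to reduce the problem to the Zariskian case via the homeomorphism $\TopSpec(A) \simeq \TopSpec(A^{\Zar})$ supplied by Lemma \ref{Topological spectrum and Zariskisation}, and then to mimic the classical argument for quasi-compactness of $\Spec$. The crucial feature of Zariskian seminormed rings that makes this go through is the equivalence $(1)\Leftrightarrow(5)$ in Proposition \ref{The Zariskian condition}: in a Zariskian ring every maximal ideal is the kernel of some bounded multiplicative seminorm, and in particular is a spectrally reduced prime ideal sitting inside $\TopSpec$.

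After this reduction, I would assume $A$ itself is Zariskian and verify the finite intersection property. Thus, take any family $\{V(I_{\alpha})\}_{\alpha\in\Lambda}$ of closed subsets of $\TopSpec(A)$, where each $I_{\alpha}$ is some (not necessarily spectrally reduced) ideal of $A$, and suppose
\begin{equation*}
\bigcap_{\alpha\in\Lambda}V(I_{\alpha})=\emptyset.
\end{equation*}
Setting $I:=\sum_{\alpha\in\Lambda}I_{\alpha}$, I plan to argue that $I=A$. If not, then by Zorn's Lemma $I$ would be contained in some maximal ideal $\mathfrak{m}\subsetneq A$; but $A$ is Zariskian, so by Proposition \ref{The Zariskian condition}(5) we could write $\mathfrak{m}=\ker(\phi)$ for some $\phi\in\mathcal{M}(A)$. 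This $\mathfrak{m}$ would then be a spectrally reduced prime ideal containing every $I_{\alpha}$, hence an element of $\bigcap_{\alpha}V(I_{\alpha})$, contradicting emptiness of the intersection. Consequently $1\in I$, and since the sum of a family of ideals is by definition the set of finite sums, there exist $\alpha_{1},\dots,\alpha_{n}\in\Lambda$ and $f_{i}\in I_{\alpha_{i}}$ with $1=f_{1}+\dots+f_{n}$. Hence $I_{\alpha_{1}}+\dots+I_{\alpha_{n}}=A$, which forces $V(I_{\alpha_{1}})\cap\dots\cap V(I_{\alpha_{n}})=\emptyset$. This is exactly the finite intersection property for $\TopSpec(A)$, so $\TopSpec(A)$ is quasi-compact.

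The main obstacle is not in the final combinatorial step (which is essentially the standard proof for $\Spec$) but in having \emph{enough} spectrally reduced primes available so that every proper ideal is contained in one: for a general seminormed ring this is far from obvious, and the Zariskisation is precisely the device that makes it automatic, since in $A^{\Zar}$ maximal ideals are always spectrally reduced. Everything else -- the fact that the Zariskisation functor exists, that it induces a homeomorphism on topological spectra, and that maximal ideals in Zariskian rings are kernels of elements of $\mathcal{M}$ -- has already been set up in the preceding discussion, so this reduction is the linchpin of the proof.
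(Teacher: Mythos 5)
Your proposal is correct and follows essentially the same route as the paper's proof: both reduce to the Zariskian case via the homeomorphism $\TopSpec(A)\simeq\TopSpec(A^{\Zar})$ and then exploit the fact that in a Zariskian seminormed ring every maximal ideal is spectrally reduced. The only (cosmetic) difference is that you verify the finite intersection property for arbitrary closed sets, while the paper verifies the finite subcover property for covers by principal opens $D_{\spc}(f)$; the underlying reduction to ``the $f_i$ (resp.\ the $I_\alpha$) generate the unit ideal, hence finitely many already do'' is identical.
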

\begin{proof} By Lemma \ref{Topological spectrum and Zariskisation}~we may assume that our seminormed ring $A$ is Zariskian. To prove quasi-compactness of $X = \TopSpec(A)$ it suffices to consider open covers of $X$ by principal open subsets \begin{equation*} D_{\spc}(f) = \{\, \mathfrak{p} \in \TopSpec(A) \mid f \notin \mathfrak{p} \,\} \end{equation*}since these form a basis for the topology on $X$. Let $(D_{\spc}(f_{i}))_{i \in I}$ be such a cover for some (possibly infinite) index set $I$. This means that the ideal of $A$ generated by the elements $f_{i}, i \in I$, is not contained in any spectrally reduced prime ideal of $A$. But $A$ is assumed to be Zariskian, so every maximal ideal of $A$ is spectrally reduced. Hence the ideal generated by $f_{i}, i \in I$, is not contained in any maximal ideal, i.e.~the elements $f_{i}, i \in I$, generate the unit ideal of $A$. Then we can find a finite subset $I'$ of $I$ such that $f_{i}, i \in I'$, generate the unit ideal. This gives us a finite subcover of the open cover $(D_{\spc}(f_{i}))_{i \in I}$ of $X$, establishing that $X$ is indeed quasi-compact.\end{proof}
In commutative algebra it is a common theme to describe certain properties of a ring $A$ as properties of the topological space $\Spec(A)$. Recall that a topological space is called a Jacobson space if its closed points are dense in every closed subset. For example, a ring $A$ is Jacobson if and only if $\Spec(A)$ is a Jacobson topological space (\cite{Stacks}, Part 1, Lemma 10.34.4), and every radical ideal of $A$ is the radical of a finitely generated ideal if and only if the topological space $\Spec(A)$ is Noetherian (see \cite{Ohm-Pendleton}, Proposition 2.1). Analogous assertions can be proved for the topological spectrum of a Zariskian seminormed ring.
\begin{prop} \label{Topological Jacobson property and topological spectrum} If a seminormed ring $A$ is topologically Jacobson, then $\TopSpec(A)$ is a Jacobson topological space. For Zariskian seminormed rings the converse is also true.\end{prop}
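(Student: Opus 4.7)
The plan is first to pin down the closed points of $\TopSpec(A)$ and then to translate the Jacobson-space condition into the algebraic statement defining the topologically Jacobson property. By Lemma~\ref{Closed irreducible subsets} the closure of $\{\mathfrak{p}\}$ in $\TopSpec(A)$ equals $\mathcal{V}(\mathfrak{p}) \cap \TopSpec(A)$, so a spectrally reduced prime $\mathfrak{p}$ is a closed point of $\TopSpec(A)$ precisely when no spectrally reduced prime of $A$ strictly contains it. I would like to observe at the outset that every \emph{closed} maximal ideal $\mathfrak{m} \subsetneq A$ is such a closed point: reasoning as in the proof of $(4)\Rightarrow(5)$ of Proposition~\ref{The Zariskian condition}, the quotient seminorm on $A/\mathfrak{m}$ is a norm, Berkovich's theorem supplies a bounded multiplicative seminorm on its completion, and this seminorm pulls back to a bounded multiplicative seminorm on $A$ with kernel $\mathfrak{m}$, so $\mathfrak{m}$ is spectrally reduced.

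For the first implication I would assume $A$ is topologically Jacobson and pick a closed subset $Z \subseteq \TopSpec(A)$, which, after replacing a defining ideal by its spectral radical (Definition~\ref{Spectral radical}), I may write as $Z = \mathcal{V}(I) \cap \TopSpec(A)$ with $I$ spectrally reduced. For any $\mathfrak{p} \in Z$ the hypothesis yields closed maximal ideals $\mathfrak{m}_{\alpha}$ of $A$ with $\mathfrak{p} = \bigcap_{\alpha} \mathfrak{m}_{\alpha}$; each $\mathfrak{m}_{\alpha}$ contains $I$ and is a closed point of $\TopSpec(A)$ by the preceding paragraph. The closure of the family $\{\mathfrak{m}_{\alpha}\}_{\alpha}$ in $\TopSpec(A)$ is then $\mathcal{V}\bigl(\bigcap_{\alpha} \mathfrak{m}_{\alpha}\bigr) \cap \TopSpec(A) = \mathcal{V}(\mathfrak{p}) \cap \TopSpec(A)$, which contains $\mathfrak{p}$. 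This shows that $\mathfrak{p}$ lies in the closure of the closed points contained in $Z$, whence the Jacobson property of $\TopSpec(A)$ follows.

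For the converse I would use the Zariskian hypothesis twice. By Proposition~\ref{The Zariskian condition} every maximal ideal of $A$ is both closed and spectrally reduced, and by Corollary~\ref{Maximal spectrally reduced ideals} every spectrally reduced prime is contained in some maximal ideal of $A$; together these identify the closed points of $\TopSpec(A)$ exactly with the maximal ideals of $A$. For a spectrally reduced prime $\mathfrak{p}$, applying the Jacobson property to $\overline{\{\mathfrak{p}\}} = \mathcal{V}(\mathfrak{p}) \cap \TopSpec(A)$ would yield
\begin{equation*}
\mathcal{V}(\mathfrak{p}) \cap \TopSpec(A) \;=\; \mathcal{V}(J) \cap \TopSpec(A), \qquad J := \bigcap_{\substack{\mathfrak{m} \supseteq \mathfrak{p} \\ \mathfrak{m} \text{ maximal}}} \mathfrak{m}.
\end{equation*}
Since $\mathfrak{p}$ itself is a spectrally reduced prime lying in the left-hand side, it also lies in the right-hand side, forcing $\mathfrak{p} \supseteq J$; the reverse inclusion is trivial, so $\mathfrak{p} = J$ is an intersection of (closed) maximal ideals, which is exactly what the topologically Jacobson property requires.

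The only genuinely delicate step I expect is the identification of the closed points of $\TopSpec(A)$ with the maximal ideals of $A$ in the converse direction: this is precisely where the Zariskian assumption is essential, since without it a closed point of $\TopSpec(A)$ is only guaranteed to be maximal among spectrally reduced primes and need not be a maximal ideal of $A$ at all.
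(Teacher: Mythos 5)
Your proof is correct and follows essentially the same route as the paper's. The converse direction is nearly identical: both form $J=\bigcap_{\mathfrak{m}\supseteq\mathfrak{p}}\mathfrak{m}$, use the Zariskian hypothesis to identify the closed points of $\TopSpec(A)$ with the maximal ideals of $A$, and then read off $J=\mathfrak{p}$ from the equality $\mathcal{V}(J)\cap\TopSpec(A)=\mathcal{V}(\mathfrak{p})\cap\TopSpec(A)$. In the forward direction the paper fixes a basic open $D_{\spc}(f)$ meeting a closed set $Z$ and exhibits a closed maximal ideal $\mathfrak{m}\in D_{\spc}(f)\cap Z$, whereas you show directly that each $\mathfrak{p}\in Z$ lies in the closure of the closed points of $Z$ (namely the $\mathfrak{m}_{\alpha}$ with $\bigcap_{\alpha}\mathfrak{m}_{\alpha}=\mathfrak{p}$); these are two equivalent ways to verify the Jacobson property. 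Two very minor notational remarks: the closure computation $\overline{\{\mathfrak{p}\}}=\mathcal{V}(\mathfrak{p})\cap\TopSpec(A)$ is elementary and does not really require Lemma~\ref{Closed irreducible subsets}, and Corollary~\ref{Maximal spectrally reduced ideals} speaks of maximal \emph{spectrally reduced} ideals rather than maximal ideals of $A$ — the two coincide under the Zariskian hypothesis (or one can just invoke Zorn's lemma directly, since proper ideals are always contained in maximal ideals and Proposition~\ref{The Zariskian condition} makes those spectrally reduced).
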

\begin{proof} Suppose that $A$ is topologically Jacobson. Let $Z \subset \TopSpec(A)$ be a closed subset. We have to show that the set of closed points in $Z$ is dense in $Z$. Let $f \in A$ such that $D_{\spc}(f) \cap Z \neq \emptyset$. Write $Z = \mathcal{V}(I)$ for a spectrally reduced ideal $I$ (if $I$ is not spectrally reduced, replace $I$ with its spectral radical). We see that $f \notin I$. Since by assumption $A$ is topologically Jacobson, the spectrally reduced ideal $I$ is the intersection of closed maximal ideals containing it. Hence $f \notin I$ implies that there exists some closed maximal ideal $\mathfrak{m}$ such that $I \subsetneq \mathfrak{m}$ but $f \notin \mathfrak{m}$. Since the Berkovich spectrum of the normed ring $A/\mathfrak{m}$ is non-empty (Lemma \ref{Berkovich spectrum}), the maximal ideal $\mathfrak{m}$ is spectrally reduced (by \cite{Berkovich}, Theorem 1.2.1). Hence $\mathfrak{m} \in D_{\spc}(f) \cap Z$, showing that $D_{\spc}(f) \cap Z$ contains a closed point of $Z$. 

Conversely, suppose that $A$ is Zariskian and $\TopSpec(A)$ is Jacobson. Let $\mathfrak{p} \subsetneq A$ be a spectrally reduced prime ideal of $A$. Let $J = \bigcap_{\mathfrak{p} \subset \mathfrak{m}}\mathfrak{m}$ be the intersection of the maximal ideals containing $\mathfrak{p}$. By Proposition \ref{The Zariskian condition}, $J$ is an intersection of spectrally reduced prime ideals, $\mathcal{V}(J) \subseteq \mathcal{V}(\mathfrak{p})$ and $\mathcal{V}(J)$ is the smallest closed subset of $\mathcal{V}(\mathfrak{p})$ containing all closed points of $\mathcal{V}(\mathfrak{p})$. By assumption $\mathcal{V}(J) = \mathcal{V}(\mathfrak{p})$. But this means $\mathfrak{p} \supseteq J$ whence $J = \mathfrak{p}$. We conclude that $A$ is topologically Jacobson.\end{proof}
The last result of this section is a partial analogue of Proposition 2.1 in \cite{Ohm-Pendleton}.\begin{prop}\label{Noetherian topological spectrum} Let $A$ be a Zariskian seminormed ring. If the topological space $\TopSpec(A)$ is Noetherian, then every spectrally reduced ideal is the spectral radical of a finitely generated ideal.\end{prop}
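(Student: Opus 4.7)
The plan is to mimic the classical argument of Ohm and Pendleton (\cite{Ohm-Pendleton}, Proposition 2.1), with $\Spec(A)$ replaced by $\TopSpec(A)$ and the usual nilradical replaced by the spectral radical from Definition \ref{Spectral radical}. Let $I \subsetneq A$ be a spectrally reduced ideal. I would consider the family
\[ \mathcal{F} \;=\; \{\, J \subseteq I \mid J \text{ is a finitely generated ideal of } A\,\} \]
and the associated family of closed subsets $\{\mathcal{V}(J) : J \in \mathcal{F}\}$ of $\TopSpec(A)$. This family is non-empty, since $(0) \in \mathcal{F}$ contributes $\mathcal{V}((0)) = \TopSpec(A)$; the Noetherianness hypothesis then guarantees the existence of a minimal element, say $\mathcal{V}(J_0)$ with $J_0 \in \mathcal{F}$.

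The claim would then be $(J_0)_{\spc} = I$. The inclusion $(J_0)_{\spc} \subseteq I$ is automatic, since $I$ is itself a spectrally reduced ideal containing $J_0$, and the spectral radical is by construction contained in every spectrally reduced ideal containing the given ideal. For the reverse inclusion, I would pick an arbitrary $f \in I$ and look at the finitely generated ideal $J_0 + (f)$, which again lies in $\mathcal{F}$. Since $J_0 + (f) \supseteq J_0$, one has $\mathcal{V}(J_0 + (f)) \subseteq \mathcal{V}(J_0)$; minimality of $\mathcal{V}(J_0)$ forces equality of these two closed subsets. Concretely this means that every spectrally reduced prime ideal $\mathfrak{p}$ containing $J_0$ already contains $f$. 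Applying this in particular to each prime of the form $\ker(\phi)$ with $\phi \in \mathcal{M}(A)$ and $\ker(\phi) \supseteq J_0$ (these are spectrally reduced primes lying in $\mathcal{V}(J_0)$), and then intersecting over all such $\phi$, one obtains $f \in (J_0)_{\spc}$ via the description of the spectral radical given in Definition \ref{Spectral radical}. Since $f \in I$ was arbitrary, this proves $I \subseteq (J_0)_{\spc}$.

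The argument is essentially formal once the spectral radical is re-expressed as an intersection of kernels of elements of the Berkovich spectrum, so I do not expect a serious obstacle. The one bookkeeping item that should be dispatched cleanly is that all ideals entering the argument ($I$, $J_0$, $J_0 + (f)$) are proper, so that their closed loci $\mathcal{V}(\cdot)$ and spectral radicals are genuinely defined; this is immediate because they are all contained in the proper (and, by Lemma \ref{Properties of spectrally reduced ideals}, closed) ideal $I$, and in any case in a Zariskian seminormed ring no proper ideal can be dense by the equivalent characterisations of Proposition \ref{The Zariskian condition}.
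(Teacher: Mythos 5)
Your proof is correct, and it takes a genuinely different route from the paper's. You encode Noetherianness as the descending chain condition on closed subsets and extract a minimal $\mathcal{V}(J_0)$ with $J_0$ finitely generated inside $I$, then show by the usual one-element-at-a-time contradiction that $(J_0)_{\spc}=I$; this is a faithful transplantation of the Ohm--Pendleton argument, with the spectral radical playing the role of the nilradical. The paper instead uses the equivalent characterisation that in a Noetherian space every open subset is quasi-compact: it covers $X\setminus\mathcal{V}(I)$ by the basic opens $D_{\spc}(f)$, extracts a finite subcover $D_{\spc}(f_1),\dots,D_{\spc}(f_n)$, and concludes that $I$ is the spectral radical of $(f_1,\dots,f_n)_A$. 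Both routes are elementary and of comparable length; your version has the small advantage of producing the finitely generated ideal $J_0$ \emph{a priori} inside $I$, so its spectral radical is well-defined with no further argument, whereas the paper's version tacitly relies on the observation that each $f_i$ must actually lie in $I$ (since $I$ is the intersection of the spectrally reduced primes containing it) before one may form the spectral radical of $(f_1,\dots,f_n)_A$. Your closing remark about non-dense ideals correctly handles the bookkeeping, and as you observe, the Zariskian hypothesis is not really exercised in either argument beyond guaranteeing that proper ideals are not dense -- something that, for the ideals occurring here, already follows from $I$ being spectrally reduced and hence closed.
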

\begin{proof} If the topological space $X$ is Noetherian, then every open subset of $X$ is quasi-compact. That is, the subset $X \setminus \mathcal{V}(I)$ is quasi-compact for any spectrally reduced ideal $I$ of $A$. Since the principal open subsets $D_{\spc}(f)$ (for $f \in A$) form a basis for the topology of $X$, this means that there exist finitely many elements $f_1, \dots, f_n \in A$ with $X \setminus \mathcal{V}(I) = \bigcup_{i=1}^{n}D_{\spc}(f_i)$. In other words, a spectrally reduced ideal $\mathfrak{p}$ of $A$ contains $I$ if and only if it contains all of the elements $f_{1}, \dots, f_{n}$. But this means precisely that $I$ is equal to the spectral radical of $(f_1, \dots, f_n)_{A}$.\end{proof}

\section{Perfectoid rings and spectrally reduced ideals} 

In the sequel fix a prime number $p$. A perfectoid Tate ring (in the sense of Fontaine) is a uniform Banach ring $R$ containing a topologically nilpotent unit $\varpi$ such that $p \in {\varpi}^{p}R^{\circ}$ and the Frobenius $\phi: R^{\circ}/{\varpi} \to R^{\circ}/{\varpi}^{p}$ is surjective. If $K$ is a perfectoid Tate ring which is a field, it is automatically a nonarchimedean field by Theorem 4.2 in \cite{Kedlaya18}, and $K$ is called a perfectoid field. For $R$ a uniform Banach algebra over some perfectoid field $K$ the perfectoid condition is equivalent to: The Frobenius map $\phi : R^{\circ}/pR^{\circ} \to R^{\circ}/pR^{\circ}$ on the mod-$p$ reduction of the ring of power-bounded elements $R^{\circ}$ is surjective. If moreover $K$ is a perfectoid field of characteristic $p$, then a Banach algebra $R$ over $K$ is perfectoid if and only if it is perfect (\cite{Scholze}, Proposition 5.9). In what follows we think of any perfectoid Tate ring as equipped with a power-multiplicative norm defining its topology.    

For any perfectoid Tate ring $R$ we can construct the tilt $R^{\flat}$ of $R$ which is a perfect uniform Banach ring of characteristic $p$. For any fixed perfectoid Tate ring $R$ we have an equivalence of categories $A \mapsto A^{\flat}, \varphi \mapsto \varphi^{\flat}$, between the category of perfectoid $R$-algebras and the category of perfectoid $R^{\flat}$-algebras. The definition of $R^{\flat}$ (resp. $A^{\flat}$) in terms of sequences of elements of $R$ (resp. of $A$) has been recalled in the introduction. The norm of an element $x = (x^{(n)})_{n} \in A^{\flat}$ is defined as $\lVert x \rVert := \lVert x^{(0)}\rVert$. On the level of morphisms, we associate to a bounded homomorphism $\varphi: A \to B$ of perfectoid $R$-algebras a bounded homomorphism of $R^{\flat}$-algebras $\varphi^{\flat}: A^{\flat} \to B^{\flat}$  defined by
\begin{equation*}\varphi^{\flat}(f) := (\varphi(f^{(n)}))_{n}, \ f = (f^{(n)})_{n \in \mathbb{N}_{0}} \in A^{\flat}.\end{equation*}
As in the introduction $f \mapsto f^{\#}$ is the natural multiplicative map $A^{\flat} \to A$: If $f \in A^{\flat}$ is given by the sequence $(f^{(n)})_{n \geq 0}$ of elements of $A$, then $f^{\#}$ is defined as the $0$-th term of that sequence. It turns out that the image of the map $f \mapsto f^{\#}$ generates a dense subring of $A$ (see \cite{Kedlaya-Liu}, Proposition 3.6.25(b)), so a morphism $\varphi$ can indeed be recovered from its tilt $\varphi^{\flat}$. There is also a correspondence $R^{+} \mapsto R^{\flat +}$ between subrings of integral elements of a perfectoid Tate ring $R$ and subrings of integral elements of $R^{\flat}$.

Perfectoid fields and perfectoid algebras over them have been introduced by Scholze in his thesis \cite{Scholze}. Kedlaya and Liu \cite{Kedlaya-Liu} defined perfectoid Banach algebras over $\mathbb{Q}_{p}$ that do not necessarily contain a perfectoid field. The general definition of a perfectoid Tate ring stated above has first appeared in Fontaine's Bourbaki talk \cite{Fontaine}. There is also a notion of 'integral perfectoid rings' which we recall and appeal to in the proof of Theorem \ref{Spectrally reduced quotients are perfectoid}~below.  

Let $R$ be a perfectoid algebra over some perfectoid field $K$. The map $R^{\flat} \to R$, $f \mapsto f^{\#}$, described above is certainly far from being surjective (not every element of $R$ has a compatible system of $p$-power roots). However, Scholze has proved a useful approximation lemma (Corollary 6.7 (i) in \cite{Scholze}) which, roughly speaking, states that every element $f \in R$ considered as a function $x \mapsto |f(x)|$ on the adic spectrum $\Spa(R, R^{+})$, with $R^{+}$ a subring of integral elements, can be approximated by a function of the form $g^{\#}$ (for some $g \in R^{\flat}$) at all points where $f$ and $g^{\#}$ are "not too small". From this approximation statement one deduces that, for any ring of integral elements $R^{+}$ of $R$, the natural map $\Spa(R, R^{+}) \to \Spa(R^{\flat}, R^{\flat+})$ given by $x \mapsto x^{\flat}$, where $|g(x^{\flat})| : = |g^{\#}(x)|$, is a homeomorphism (\cite{Scholze}, Corollary 6.7 (iii)). Similarly, the map $\phi \mapsto \phi^{\flat}$ with $\phi^{\flat}(f) = \phi(f^{\#})$ for $f \in R$ defines a homeomorphism between the Berkovich spectrum $\mathcal{M}(R)$ of $R$ and the Berkovich spectrum $\mathcal{M}(R^{\flat})$ of the tilt $R^{\flat}$. To see this, note that the Berkovich spectrum $\mathcal{M}(R)$ can be identified (as a set, not as a topological space) with the set of rank $1$ points of $\Spa(R, R^{\circ})$ (we use here that $R$ is Banach algebra over a nonarchimedean field so that every continuous seminorm on $R$ is bounded, by Lemma \ref{Continuity and boundedness}). The map $\Spa(R, R^{\circ}) \to \Spa(R^{\flat}, R^{\flat \circ}), x \mapsto x^{\flat}$, clearly respects the subsets of rank $1$ points, so its restriction $\phi \mapsto \phi^{\flat}$ is a bijection between $\mathcal{M}(R)$ and $\mathcal{M}(R^{\flat})$. But the map $\phi \mapsto \phi^{\flat}$ is readily seen to be continuous (the topology on the Berkovich spectrum being the topology of pointwise convergence), and a continuous bijection between two compact Hausdorff spaces is a homeomorphism. 

For a general perfectoid Tate ring $R$ which does not necessarily contain a perfectoid field we can fix some power-multiplicative norm defining its topology which again allows us to talk about the Berkovich spectrum of $R$. The result that the map $x \mapsto x^{\flat}$ defines homeomorphisms of adic spectra and of Berkovich spectra holds for any perfectoid Tate ring, as was proved by Kedlaya and Liu in \cite{Kedlaya-Liu}, \cite{Kedlaya-Liu2}. In fact, they first prove the homeomorphism $\mathcal{M}(R) \simeq \mathcal{M}(R^{\flat})$ (\cite{Kedlaya-Liu}, Theorem 3.3.7) by some explicit calculations involving norms on rings of Witt vectors and then deduce the analogous statement for adic spectra from it (see \cite{Kedlaya-Liu}, Theorem 3.6.14, for the case of perfectoid Banach algebras over $\mathbb{Q}_{p}$ and \cite{Kedlaya-Liu2}, Theorem 3.3.16). In their treatment of these questions the role of \cite{Scholze}, Corollary 6.7(iii), is played by \cite{Kedlaya-Liu}, Lemma 3.3.9 (an even more general statement is \cite{Kedlaya-Liu2}, Lemma 3.2.7). Since the language in \cite{Kedlaya-Liu2} is different from that in \cite{Scholze} we explain how a statement analogous to \cite{Scholze}, Corollary 6.7(iii), can be deduced from \cite{Kedlaya-Liu2}, Lemma 3.2.7. 
\begin{lemma} \label{Approximation lemma} Let $R$ be a perfectoid Tate ring with tilt $R^{\flat}$. For every $f \in R$, every $\epsilon > 0$ and every integer $c \geq 0$ there exists an element $g  = \sum_{n=0}^{\infty}p^{n}[\overline{g}_{n}] \in W(R^{\flat\circ})$ such that \begin{equation*} \phi(f - \overline{g}_{0}^{\#}) \leq p^{-1}\max \{\,\phi(\overline{g}_{0}^{\#}), \epsilon \,\} \end{equation*}for all $\phi \in \mathcal{M}(R)$. \end{lemma}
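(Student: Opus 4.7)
The plan is to deduce the lemma directly from the Kedlaya--Liu approximation lemma, \cite{Kedlaya-Liu2}, Lemma 3.2.7, whose role in the perfectoid formalism was recalled in the discussion immediately preceding the statement. The main substance of the argument is the translation of that lemma from its native valuation-theoretic formulation (via Witt vector rings and Gauss-type seminorms on them) into the Berkovich-spectral formulation used in this paper.

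First, I would apply \cite{Kedlaya-Liu2}, Lemma 3.2.7, to the element $f \in R$, choosing the accuracy parameter in loc.~cit.~in terms of $\epsilon$ and $c$. This produces a Witt vector $g = \sum_{n=0}^{\infty} p^{n}[\overline{g}_{n}] \in W(R^{\flat\circ})$ together with an approximation bound controlling $f - \overline{g}_{0}^{\#}$ with respect to a fixed but arbitrary continuous valuation on $R$. Then I would use the homeomorphism $\mathcal{M}(R) \simeq \mathcal{M}(R^{\flat})$ of Kedlaya--Liu (\cite{Kedlaya-Liu}, Theorem 3.3.7), together with the identity $\phi(h^{\#}) = \phi^{\flat}(h)$ for $h \in R^{\flat}$, to pass from their pointwise bound to one holding uniformly for all $\phi \in \mathcal{M}(R)$. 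The factor $p^{-1}$ on the right-hand side would then emerge as the single-step contraction coming from the Teichm\"uller truncation $\overline{g}_0$, and the maximum with $\epsilon$ would reflect the threshold below which the Kedlaya--Liu approximation is not required to be accurate, mirroring the role of $|p|^{c}$ in Scholze's Corollary 6.7(i).

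Anticipated chief obstacle: the bookkeeping of this translation. The bound in \cite{Kedlaya-Liu2}, Lemma 3.2.7, is stated through a spectral (or Gauss-type) seminorm on $W(R^{\flat\circ})$, and one must verify that it specializes, after restriction to $\phi \in \mathcal{M}(R)$, to the precise inequality $\phi(f - \overline{g}_{0}^{\#}) \leq p^{-1}\max(\phi(\overline{g}_{0}^{\#}), \epsilon)$ claimed here rather than to a weaker form in which $\phi(\overline{g}_{0}^{\#})$ is replaced by $\phi(f)$ or by some norm of $f$. For this one would need to use that the Teichm\"uller coefficients $\overline{g}_{n}$ for $n \geq 1$ in Kedlaya--Liu's construction are chosen inductively as successive corrections to $\overline{g}_{0}$, so that $\phi(\overline{g}_{n}^{\#}) \leq \max(\phi(\overline{g}_{0}^{\#}), \epsilon)$ for $n \geq 1$ and the contribution of the tail $\sum_{n \geq 1} p^{n}\overline{g}_{n}^{\#}$ to $f - \overline{g}_{0}^{\#}$ is controlled by $p^{-1}\max(\phi(\overline{g}_{0}^{\#}), \epsilon)$. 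Once this dictionary is set up, the lemma reduces to a strong-triangle-inequality combination of the Kedlaya--Liu bound with the tail estimate, and the integer $c$ is absorbed as the truncation index of the Witt vector.
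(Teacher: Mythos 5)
Your proposal takes essentially the same route as the paper's proof: reduce to $f \in R^{\circ}$, apply the Kedlaya--Liu approximation lemma (the paper uses \cite{Kedlaya-Liu}, Lemma~3.3.9, for which \cite{Kedlaya-Liu2}, Lemma~3.2.7, is the more general form) to produce the Witt vector $g = \sum_{n\geq 0}p^{n}[\overline{g}_n]$ with coefficient bounds, and then combine this with \cite{Kedlaya-Liu}, Theorem~3.3.7, which expresses any $\phi \in \mathcal{M}(R)$ as the quotient seminorm induced by the Gauss-type seminorm $\lambda(\phi^{\flat})$ on $W(R^{\flat\circ})$, to extract the bound on $\phi(f - \overline{g}_0^{\#})$ via the strong triangle inequality, exactly as you describe. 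One small correction: the integer $c$ is not a truncation index of the Witt vector (which keeps all its infinitely many components) but the precision parameter of the Kedlaya--Liu lemma governing the sharpness of the bound on the first correction coefficient, $\alpha(\overline{g}_1) \leq \max\{p^{-p^{-1}-\cdots-p^{-c}}\alpha(\overline{g}_0), \epsilon\}$; since the lemma's stated conclusion does not depend on $c$, your weaker tail estimate $\phi^{\flat}(\overline{g}_n) \leq \max\{\phi^{\flat}(\overline{g}_0), \epsilon\}$ for all $n \geq 1$ already gives $\phi(f-\overline{g}_0^{\#}) \leq \sup_{n\geq 1}p^{-n}\phi^{\flat}(\overline{g}_n) \leq p^{-1}\max\{\phi(\overline{g}_0^{\#}),\epsilon\}$, which is in fact slightly simpler than the paper's case analysis.
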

\begin{proof} Up to multiplying $f$ by a power of a pseudo-uniformizer, we may assume that $f \in R^{\circ}$. It follows from the general form of the tilting equivalence (\cite{Kedlaya-Liu2}, Theorem 3.3.8) that the ring of power-bounded elements $R^{\circ}$ of $R$ can be written in the form \begin{equation*} R^{\circ} = W(R^{\flat\circ})/zW(R^{\flat\circ}) \end{equation*}where $z = \sum_{n=0}^{\infty}p^{n}[\overline{z}_{n}] \in W(R^{\flat\circ})$ is a primitive element of degree $1$, which means here that $z - [\overline{z}_{0}] \in pW(R^{\flat\circ})^{\times}$ (see \cite{Kedlaya-Liu}, Definition 3.3.4). 

By \cite{Kedlaya-Liu}, Theorem 3.3.7, a bounded multiplicative seminorm $\phi$ on $R$ can be recovered from its image $\phi^{\flat}$ in $\mathcal{M}(R^{\flat})$ as follows. We first lift the restriction of $\phi^{\flat}$ to $R^{\flat\circ}$ to a seminorm $\lambda(\phi^{\flat})$ on $W(R^{\flat\circ})$ defined by \begin{equation*} \lambda(\phi^{\flat})(\sum_{n=0}^{\infty}p^{n}[\overline{f}_{n}]) = \sup_{n} \{p^{-n}\phi^{\flat}(\overline{f}_{n})\} \end{equation*}which is a multiplicative seminorm bounded by the $p$-adic norm, by \cite{Kedlaya13}, Lemma 4.1. Then \cite{Kedlaya-Liu}, Theorem 3.3.7, tells us that the restriction of $\phi$ to $R^{\circ} = W(R^{\flat\circ})/zW(R^{\flat\circ})$ is precisely the quotient seminorm induced by the seminorm $\lambda(\phi^{\flat})$ on $W(R^{\flat\circ})$. Now, given $\epsilon > 0$ and an integer $c \geq 0$, \cite{Kedlaya-Liu}, Lemma 3.3.9, allows us to lift our element $f \in R^{\circ}$ (along the quotient map $W(R^{\flat\circ}) \to W(R^{\flat\circ})/zW(R^{\flat\circ}))$ to an element $g = \sum_{n=0}^{\infty}p^{n}[\overline{g}_{n}] \in W(R^{\flat\circ})$ such that \begin{equation} \alpha(\overline{g}_{1}) \leq \max \{p^{-p^{-1} - \dots - p^{-c}}\alpha(\overline{g}_{0}), \epsilon \} \end{equation}and \begin{equation} \alpha(\overline{g}_{n}) \leq \max \{\alpha(\overline{g}_{0}), \epsilon \} \ \ \ \text{for} \ n>1\end{equation}for all $\alpha \in \mathcal{M}(R^{\flat})$.  

The map $R^{\flat\circ} \to R^{\circ}, f \mapsto f^{\#}$, coincides with the composite map $R^{\flat\circ} \to W(R^{\flat\circ}) \to W(R^{\flat\circ})/zW(R^{\flat\circ})$ where the first arrow is the usual Teichmüller map $[\cdot]: R^{\flat\circ} \to W(R^{\flat\circ})$ and the second arrow is reduction modulo $z$. Hence, for every $\phi \in \mathcal{M}(R)$, we have \begin{equation} \phi(f - \overline{g}^{\#}_{0}) \leq \lambda(\phi^{\flat})(g - [\overline{g}_{0}]) = \lambda(\phi^{\flat})(\sum_{n=1}^{\infty}p^{n}[\overline{g}_{n}]) = \sup_{n>0}\{p^{-n}\phi^{\flat}(\overline{g}_{n})\}. \end{equation}Given $\phi \in \mathcal{M}(R)$, let $m>0$ be an integer such that $p^{-m}\phi^{\flat}(\overline{g}_{m}) = \max_{n>0}\{p^{-n}\phi^{\flat}(\overline{g}_{n})\}$. If $m=1$, then by equations (1) and (3) we have \begin{equation*} \phi(f-\overline{g}^{\#}_{0}) \leq p^{-1}\max\{p^{-p^{-1} - \dots - p^{-c}}\phi^{\flat}(\overline{g}_{0}), \epsilon \} = p^{-1}\max\{p^{-p^{-1} - \dots - p^{-c}}\phi(\overline{g}^{\#}_{0}), \epsilon \}. \end{equation*}On the other hand, if $m>1$, then equations (2) and (3) imply \begin{equation*}\phi(f-\overline{g}^{\#}_{0}) \leq p^{-m}\max \{\phi^{\flat}(\overline{g}_{0}), \epsilon \} = p^{-m}\max \{\phi(\overline{g}^{\#}_{0}), \epsilon \}< p^{-1}\max \{\phi(\overline{g}^{\#}_{0}), \epsilon \}.\end{equation*}

Now, if $\epsilon$ belongs to the open interval $(p^{-p^{-1}-\dots-p^{-c}}\phi(\overline{g}^{\#}_{0}), \phi(\overline{g}^{\#}_{0}))$, then in the case $m=1$ we obtain $\phi(f-\overline{g}^{\#}_{0}) \leq p^{-1}\epsilon < p^{-1}\phi(\overline{g}^{\#}_{0})$ and in the case $m>1$ we obtain $\phi(f-\overline{g}^{\#}_{0})<p^{-1}\phi(\overline{g}^{\#}_{0})$. If $\epsilon \geq \phi(\overline{g}^{\#}_{0})$, then a fortiori $\epsilon > p^{-p^{-1}-\dots-p^{-c}}\phi(\overline{g}^{\#}_{0})$ and hence $\phi(f-\overline{g}^{\#}_{0}) \leq p^{-1}\epsilon$ both in the case $m=1$ and in the case $m>1$. Finally, if $\epsilon < p^{-p^{-1}-\dots-p^{-c}}\phi(\overline{g}^{\#}_{0})$, then also $\epsilon < \phi(\overline{g}^{\#}_{0})$ and hence, for any $m \geq 1$, we have $\phi(f-\overline{g}^{\#}_{0})<p^{-1}\phi(\overline{g}^{\#}_{0})$. This ends the proof of the lemma.\end{proof}We view Lemma \ref{Approximation lemma} as an analogue of \cite{Scholze}, Corollary 6.7(iii), in the context of Berkovich spectra of general perfectoid Tate rings. As in the case of loc.~cit., there is the following simple consequence which is often all that is needed for applications.\begin{cor} \label{Approximation lemma 2} Consider a perfectoid Tate ring $R$ with tilt $R^{\flat}$. For every $\epsilon > 0$ and every $f \in R$ there exists an element $g \in R^{\flat}$ such that for every $\phi \in \mathcal{M}(R)$ either $\phi(f) = \phi(g^{\#})$ or $\max \{\phi(f), \phi(g^{\#})\} < \epsilon$.\end{cor}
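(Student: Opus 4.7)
The plan is to derive the corollary directly from Lemma \ref{Approximation lemma} by applying it to the given $f$ and $\epsilon$ (with any auxiliary integer, e.g.~$c = 0$), setting $g := \overline{g}_0 \in R^{\flat\circ} \subset R^{\flat}$, and then analysing two cases for each $\phi \in \mathcal{M}(R)$ by means of the nonarchimedean (sharp) triangle inequality. The lemma supplies the key estimate
\begin{equation*}
\phi(f - g^{\#}) \leq p^{-1}\max\{\phi(g^{\#}), \epsilon\}
\end{equation*}
uniformly in $\phi$, and from this everything else is a short bookkeeping argument.

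Fix $\phi \in \mathcal{M}(R)$. If $\phi(g^{\#}) \geq \epsilon$, then the displayed inequality simplifies to $\phi(f - g^{\#}) \leq p^{-1}\phi(g^{\#}) < \phi(g^{\#})$, and the sharp triangle inequality forces $\phi(f) = \phi(g^{\#})$, putting us in the first alternative of the corollary. If instead $\phi(g^{\#}) < \epsilon$, then $\phi(f - g^{\#}) \leq p^{-1}\epsilon < \epsilon$, so that $\phi(f) \leq \max\{\phi(g^{\#}), \phi(f - g^{\#})\} < \epsilon$; combined with $\phi(g^{\#}) < \epsilon$, this gives $\max\{\phi(f), \phi(g^{\#})\} < \epsilon$, which is the second alternative. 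Since $\phi$ was arbitrary, the single element $g \in R^{\flat}$ works for all $\phi \in \mathcal{M}(R)$ simultaneously.

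I do not anticipate any real obstacle: the whole content lies in Lemma \ref{Approximation lemma} together with the fact that every $\phi \in \mathcal{M}(R)$ is a bounded \emph{nonarchimedean} multiplicative seminorm, so the ultrametric inequality in its strong form ($\phi(a+b) = \max\{\phi(a), \phi(b)\}$ whenever $\phi(a) \neq \phi(b)$) is available. The only minor subtlety is to be sure that $\overline{g}_0$, which comes from $W(R^{\flat\circ})$, can indeed be viewed as an element of $R^{\flat}$ and that its image under $(-)^{\#}$ under this identification agrees with the reduction modulo the chosen primitive element used in the proof of Lemma \ref{Approximation lemma}; both facts are built into the construction recalled in that proof.
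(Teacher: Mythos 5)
Your proof is correct and takes essentially the same approach as the paper: both take $g = \overline{g}_0$ from Lemma \ref{Approximation lemma} and conclude via the sharp ultrametric inequality. Your case split on $\phi(g^{\#}) \geq \epsilon$ versus $\phi(g^{\#}) < \epsilon$ is slightly more streamlined than the paper's, which instead assumes $\max\{\phi(f), \phi(g^{\#})\} \geq \epsilon$ and splits on whether $\phi(g^{\#}) \geq \phi(f)$, but the underlying argument is the same.
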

\begin{proof} We let $g$ be the element $\overline{g}_{0} \in R^{\flat}$ from Lemma \ref{Approximation lemma}. Then for every $\phi \in \mathcal{M}(R)$ we have $\phi(f-g^{\#}) \leq p^{-1}\max\{\phi(g^{\#}), \epsilon\}$. Suppose that $\max\{\phi(f), \phi(g^{\#})\} \geq \epsilon$. If $\phi(g^{\#}) \geq \phi(f)$, then $\phi(g^{\#}) \geq \epsilon$. Then \begin{equation*} \phi(f-g^{\#}) \leq p^{-1}\phi(g^{\#})<\phi(g^{\#}) \end{equation*}and therefore \begin{equation*}\phi(f) = \phi((f-g^{\#})+g^{\#}) = \phi(g^{\#}).\end{equation*}On the other hand, if $\phi(f)>\phi(g^{\#})$, then $\phi(f-g^{\#}) = \phi(f)$. If $phi(g^{\#}) \geq \epsilon$, then $\phi(f) = \phi(f-g^{\#})<\phi(g^{\#})$, a contradiction. It follows that $\phi(g^{\#})<\epsilon$ and then $\phi(f) = \phi(f-g^{\#})\leq p^{-1}\epsilon<\epsilon$ which contradicts the assumption that $\max\{\phi(f), \phi(g^{\#})\}\geq \epsilon$. This shows that for any $\phi \in \mathcal{M}(R)$ the condition $\max\{\phi(f), \phi(g^{\#})\}\geq \epsilon$ implies $\phi(f) = \phi(g^{\#})$, as desired.\end{proof}
This approximation statement allows us to prove the following proposition which compares the Shilov boundaries of a perfectoid Tate ring (containing a nonarchimedean field) and of its tilt. In the proof we identify the Berkovich spectrum (as a set) with the set of rank $1$ points in the adic spectrum of $(R, R^{\circ})$ and denote its elements by letters $x, y$ instead of the $\phi, \alpha$ used in the rest of the paper, to make the connection with \cite{Scholze}, Corollary 6.7(iii), more transparent.
\begin{prop} \label{Shilov boundary and tilting} Let $K$ be a nonarchimedean field and let $R$ be a perfectoid $K$-algebra with tilt $R^{\flat}$. The homeomorphism of Berkovich spectra $\mathcal{M}(R) \simeq \mathcal{M}(R^{\flat})$ identifies the Shilov boundaries of $R$ and $R^{\flat}$.\end{prop}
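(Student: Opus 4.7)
The plan is to use the homeomorphism $\mathcal{M}(R) \xrightarrow{\sim} \mathcal{M}(R^{\flat})$, $\phi \mapsto \phi^{\flat}$, together with the norm identity $\lVert g \rVert_{R^{\flat}} = \lVert g^{\#} \rVert_{R}$ for $g \in R^{\flat}$, to transfer the property of being a closed boundary back and forth; then the Shilov boundaries will correspond by minimality. Concretely, I would show that for a closed subset $\mathcal{S} \subseteq \mathcal{M}(R)$ with image $\mathcal{S}^{\flat} \subseteq \mathcal{M}(R^{\flat})$, the set $\mathcal{S}$ is a boundary for $R$ if and only if $\mathcal{S}^{\flat}$ is a boundary for $R^{\flat}$. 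Since $\phi \mapsto \phi^{\flat}$ is a homeomorphism, closedness is automatically preserved, so applying this equivalence to the Shilov boundaries of $R$ and of $R^{\flat}$ and invoking minimality in both directions yields the proposition.

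One direction is essentially formal. Assume $\mathcal{S}$ is a boundary of $\mathcal{M}(R)$ and let $g \in R^{\flat}$. By definition of the norm on $R^{\flat}$ and multiplicativity of $f \mapsto f^{\#}$, we have $\lVert g \rVert_{R^{\flat}} = \lVert g^{\#} \rVert_{R}$. Pick $\phi \in \mathcal{S}$ with $\phi(g^{\#}) = \lVert g^{\#} \rVert_{R}$; then $\phi^{\flat}(g) = \phi(g^{\#}) = \lVert g \rVert_{R^{\flat}}$, so $\mathcal{S}^{\flat}$ is a boundary of $\mathcal{M}(R^{\flat})$.

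The harder direction is the converse, and this is where the approximation statement of Corollary \ref{Approximation lemma 2} comes in. Suppose $\mathcal{S}^{\flat}$ is a boundary of $\mathcal{M}(R^{\flat})$; we want to show that $\mathcal{S}$ is a boundary of $\mathcal{M}(R)$. Fix $f \in R$ with $\lVert f \rVert_{R} > 0$ (the case $\lVert f \rVert_{R} = 0$ is trivial) and choose $\epsilon \in (0, \lVert f \rVert_{R})$. By Corollary \ref{Approximation lemma 2}, there is $g \in R^{\flat}$ such that for every $\phi \in \mathcal{M}(R)$ one has either $\phi(f) = \phi(g^{\#})$ or $\max\{\phi(f), \phi(g^{\#})\} < \epsilon$. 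Compactness of $\mathcal{M}(R)$ produces $\phi_{0}$ achieving $\phi_{0}(f) = \lVert f \rVert_{R} > \epsilon$, which forces $\phi_{0}(g^{\#}) = \lVert f \rVert_{R}$, so $\lVert g^{\#} \rVert_{R} \geq \lVert f \rVert_{R}$; conversely, for any $\phi$ with $\phi(g^{\#}) \geq \epsilon$ the dichotomy gives $\phi(g^{\#}) = \phi(f) \leq \lVert f \rVert_{R}$, and taking the supremum yields $\lVert g^{\#} \rVert_{R} \leq \max\{\lVert f \rVert_{R}, \epsilon\} = \lVert f \rVert_{R}$. Hence $\lVert g \rVert_{R^{\flat}} = \lVert g^{\#} \rVert_{R} = \lVert f \rVert_{R}$. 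Since $\mathcal{S}^{\flat}$ is a boundary, choose $\phi \in \mathcal{S}$ with $\phi^{\flat}(g) = \lVert g \rVert_{R^{\flat}}$, i.e.\ $\phi(g^{\#}) = \lVert f \rVert_{R} \geq \epsilon$; the dichotomy then rules out the small case and forces $\phi(f) = \phi(g^{\#}) = \lVert f \rVert_{R}$, exhibiting $\mathcal{S}$ as a boundary.

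The main obstacle is this converse direction, since the map $f \mapsto f^{\#}$ is not surjective and one has no a priori reason to expect values of $\phi$ at a given $f \in R$ to be reconstructible from values at untilts of $R^{\flat}$. The resolution is precisely Corollary \ref{Approximation lemma 2}: it lets one replace $f$ by a $g^{\#}$ with the same norm, and simultaneously arrange that $\phi(f) = \phi(g^{\#})$ at all points $\phi$ where either is appreciably large, which is exactly where the argument needs it. Once both directions are in hand, applying the equivalence to the Shilov boundary $\mathcal{S}_{R}$ of $\mathcal{M}(R)$ shows $\mathcal{S}_{R}^{\flat}$ is a closed boundary of $\mathcal{M}(R^{\flat})$ and hence contains $\mathcal{S}_{R^{\flat}}$; pulling $\mathcal{S}_{R^{\flat}}$ back through the homeomorphism and invoking the converse gives a closed boundary of $\mathcal{M}(R)$, which must contain $\mathcal{S}_{R}$. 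Together these force $\mathcal{S}_{R}^{\flat} = \mathcal{S}_{R^{\flat}}$.
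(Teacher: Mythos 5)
Your proof is correct and follows essentially the same route as the paper: transfer the boundary property across the homeomorphism $\phi\mapsto\phi^{\flat}$ using the norm identity $\lVert g\rVert_{R^{\flat}}=\lVert g^{\#}\rVert_{R}$ for the easy direction and Corollary \ref{Approximation lemma 2} for the hard one, then conclude by minimality. The only difference is presentational: you prove the hard direction directly (replacing $f$ by an untilt $g^{\#}$ of equal norm that agrees with $f$ at all points where either is $\geq\epsilon$), whereas the paper argues by contrapositive with an $\epsilon$ chosen pointwise; your version is a bit cleaner on that point since your single $\epsilon\in(0,\lVert f\rVert)$ and single $g$ serve all $\phi\in\mathcal{S}$ at once.
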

\begin{proof} Let $\Sigma$ be any boundary of $\mathcal{M}(R)$. Let $g \in R^{\flat}$. Since $\Sigma$ is a boundary there exists $x \in \Sigma$ such that $|g(x^{\flat})| = |g^{\#}(x)| = \lVert g^{\#} \rVert = \lVert g \rVert$. This shows that the image $\Sigma^{\flat}$ of $\Sigma$ is a boundary of $\mathcal{M}(R^{\flat})$. We claim that, conversely, every boundary of $\mathcal{M}(R^{\flat})$ is of the form $\Sigma^{\flat}$ for some boundary $\Sigma$ of $\mathcal{M}(R)$. Since the map $x \mapsto x^{\flat}$ is bijective it suffices to show: If $\Sigma \subset \mathcal{M}(R)$ is a subset such that $\Sigma^{\flat}$ is a boundary of $\mathcal{M}(R^{\flat})$, then $\Sigma$ is a boundary of $\mathcal{M}(R)$. To see this, suppose that $\Sigma \subset \mathcal{M}(R)$ is not a boundary. Then there exist $f \in R$ such that $\Vert f \rVert > |f(x)|$ for all $x \in \Sigma$. Since $\mathcal{M}(R)$ is compact and the norm on $R$ is power-multiplicative (and hence coincides with its spectral seminorm), there exists $y \in \mathcal{M}(R)$ with $|f(y)| = \lVert f \rVert$, thus $|f(y)| > |f(x)|$ for all $x \in \Sigma$. Fix an element $x \in \Sigma$. If $|f(x)|>0$, choose $\epsilon > 0$ with $|f(x)| \geq \epsilon$ (whence we automatically have $|f(y)| > \epsilon$) and if $|f(x)|=0$ choose $\epsilon>0$ with $|f(y)| \geq \epsilon$. For this choice of $\epsilon>0$, pick an element $g \in R^{\flat}$ as in Corollary \ref{Approximation lemma 2}. Then the corollary implies $|f(x)| = |g^{\#}(x)|$ and $|f(y)| = |g^{\#}(y)|$ if $|f(x)|>0$. If $|f(x)|=0$, the corollary implies $|g^{\#}(x)|<\epsilon$ and $|f(y)| = |g^{\#}(y)|$. It follows that \begin{equation*}|g(x^{\flat})| = |g^{\#}(x)| = |f(x)| < |f(y)| = |g^{\#}(y)| = |g(y^{\flat})| \end{equation*} if $|f(x)|>0$ and \begin{equation*} |g(x^{\flat})| = |g^{\#}(x)| < \epsilon \leq |f(y)| = |g^{\#}(y)| = |g(y^{\flat})| \end{equation*}if $|f(x)|=0$. Since $x \in \Sigma$ was arbitrary we conclude that $\Sigma^{\flat}$ is not a boundary of $\mathcal{M}(R^{\flat})$, proving the claim. \end{proof} 
Recall from Section 2 that an ideal $I$ of a seminormed ring $R$ is said to be spectrally reduced if there exists some bounded power-multiplicative seminorm on $R$ whose kernel is $I$ (equivalently, $I$ is spectrally reduced if the spectral seminorm derived from the quotient norm on $R/I$ is a norm on $R/I$, see Lemma \ref{Properties of spectrally reduced ideals}). Given a uniform complete Tate ring $R$ with pseudo-uniformizer $\varpi \in R$, Remark 2.8.18 in \cite{Kedlaya-Liu} (in conjunction with Remark \ref{Uniform units}) allows us to choose a power-multiplicative norm $\lVert\cdot\rVert$ defining the topology on $R$ such that $\varpi$ becomes multiplicative for the norm $\lVert\cdot\rVert$. We say that an ideal $I$ of the uniform Tate ring $R$ is spectrally reduced if it is spectrally reduced as an ideal of the normed ring $(R, \lVert\cdot\rVert)$. By Remark \ref{Topological spectrum might depend on the seminorm} the property of an ideal $I \subsetneq R$ being spectrally reduced depends only on the topology of the uniform Tate ring $R$ and does not depend on the choice of the norm $\lVert\cdot\rVert$.   

We are interested in describing quotients of a perfectoid ring $R$ modulo its spectrally reduced ideals. When $R$ is of characteristic $p$, the quotient $R/I$ of $R$ modulo any closed radical ideal $I$ is perfect, hence also uniform, by an application of Banach's Open Mapping Theorem (view $R/I$ as a Banach module over itself via the map $x \mapsto x^{p}$).~Thus in characteristic $p$ every such quotient is already perfectoid. One can ask if an analogue of this statement holds true in the characteristic $0$ case: If $R$ is a perfectoid ring of zero characteristic and $I$ is a spectrally reduced ideal of $R$, is the quotient $R/I$ perfectoid? An affirmative answer to this question follows from a recent result of Bhatt and Scholze (\cite{Prismatic}, Theorem 7.4).
\begin{thm} \label{Spectrally reduced quotients are perfectoid} Let $R$ be a perfectoid Tate ring and let $I \subsetneq R$ be an ideal of $R$. Then the quotient $R/I$ is perfectoid if and only if the ideal $I$ is spectrally reduced.\end{thm}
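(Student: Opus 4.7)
I will treat the two directions separately; the forward implication is direct, while the converse is the substantive content and will be reduced to a recent theorem of Bhatt and Scholze.

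For the forward direction, suppose $R/I$ is perfectoid. Then $R/I$ is in particular a uniform Hausdorff Banach ring, so its spectral seminorm $|\cdot|_{\spc}$ is a power-multiplicative norm equivalent to the quotient norm. Composing $|\cdot|_{\spc}$ with the bounded quotient map $R \to R/I$ produces a bounded power-multiplicative seminorm on $R$ whose kernel is exactly $I$, so $I$ is spectrally reduced.

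For the converse, suppose $I$ is spectrally reduced. By Lemma \ref{Properties of spectrally reduced ideals}(i), $I$ is closed, so the quotient norm makes $R/I$ into a Banach ring; the image of any pseudo-uniformizer $\varpi \in R$ remains a topologically nilpotent unit in $R/I$, so $R/I$ is a Tate ring, and by Lemma \ref{Properties of spectrally reduced ideals}(ii) the spectral seminorm on $R/I$ is a norm. It therefore remains to show that $R/I$ is \emph{uniform} (equivalently, that the spectral norm is equivalent to the quotient norm) and that the Frobenius $(R/I)^{\circ}/\varpi \to (R/I)^{\circ}/\varpi^{p}$ is surjective. My plan is to pass to the integral setting: since $R$ is perfectoid, its ring of power-bounded elements $R^{\circ}$ is integral perfectoid in the sense of Bhatt--Morrow--Scholze and $R = R^{\circ}[1/\varpi]$. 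I will consider the contracted ideal $J := I \cap R^{\circ}$, verify that it is of the kind covered by Theorem 7.4 of Bhatt--Scholze \cite{Prismatic}, and conclude from that theorem that the $\varpi$-adic completion of $R^{\circ}/J$ is again integral perfectoid. Inverting $\varpi$ will then yield a perfectoid Tate ring, which I expect to identify with $R/I$ using that $R/I$ is already a $\varpi$-adically complete Banach Tate ring. The role of André's flatness lemma, built into the proof of Bhatt--Scholze, will be to supply the $\varpi$-completely faithful flatness of the relevant base changes.

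The main obstacle will be the translation step: verifying that $J = I \cap R^{\circ}$ meets the hypotheses of Bhatt--Scholze's theorem. The spectrally-reduced hypothesis on $I$ is stated seminorm-theoretically, whereas Bhatt--Scholze's theorem is a purely integral statement about $p$-completely flat quotients of integral perfectoid rings. The bridge between the two should be the identification $R^{\circ} = \{ f \in R : |f|_{\spc} \leq 1 \}$ of Lemma \ref{Ring of power-bounded elements}, together with the description of $I$ as an intersection of kernels of elements $\phi \in \mathcal{M}(R)$. Using these I expect to show that $R^{\circ}/J$ is $p$-torsion free and $\varpi$-adically separated, and that its $\varpi$-adic topology coincides with the subspace topology inherited from the Banach ring $R/I$ — which are precisely the conditions needed to feed into Bhatt--Scholze and thereby obtain the perfectoid structure on $R/I$.
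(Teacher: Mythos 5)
The forward direction you give is fine and matches the paper. For the converse, you correctly locate the right tool (Bhatt--Scholze, \cite{Prismatic}, Theorem 7.4) and the right move (pass to $R^{\circ}$ and the contracted ideal $J = I\cap R^{\circ}$), but you mis-state what that theorem supplies, and as a result you do not see where the spectrally-reduced hypothesis on $I$ actually enters the argument. Theorem 7.4 of \cite{Prismatic} is not a flatness statement and it does not assert that the $\varpi$-adic completion of $R^{\circ}/J$ is integral perfectoid; it says that for a \emph{semiperfectoid} ring $S$ --- a derived $p$-adically complete quotient of an integral perfectoid ring --- the universal map $S \to S_{\perfd}$ to its universal integral perfectoid quotient (whose existence is Corollary 7.3 of loc.~cit.) is \emph{surjective}. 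Correspondingly, your plan to prove that $R^{\circ}/J$ is $p$-torsion free and $\varpi$-adically separated is aimed at hypotheses the theorem does not have; $p$-torsion freeness in particular is neither needed nor generally available, and you cannot expect the spectrally-reduced hypothesis to deliver it.

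Here is the bridge that is missing. Setting $S = R^{\circ}/(I\cap R^{\circ})$, the fact that $I$ is closed (Lemma \ref{Properties of spectrally reduced ideals}(i)) together with $(p)_{R^{\circ}}\subseteq(\varpi)_{R^{\circ}}$ shows that $S$ is $p$-adically complete, hence semiperfectoid. By Corollary 7.3 there is a universal $S\to S_{\perfd}$, and $R' := S_{\perfd}[\varpi^{-1}]$ is a perfectoid Tate ring with a universal property among perfectoid $R$-algebras killing $I$. The crucial observation --- absent from your proposal --- is that the uniformization $(R/I)^{u}$ is also a perfectoid Tate ring (by case (ii) of \cite{Kedlaya-Liu2}, Theorem 3.3.18) and by Lemma \ref{Universal property of uniformization} it enjoys the \emph{same} universal property. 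Hence $R' = (R/I)^{u}$, and Theorem 7.4 yields surjectivity of $R/I \to (R/I)^{u}$. The spectrally-reduced hypothesis on $I$ is then used exactly once, and not where you expected: it says the spectral seminorm on $R/I$ is a norm, so $R/I \to (R/I)^{u}$ is \emph{injective}. Combined with surjectivity, $R/I = (R/I)^{u}$ is uniform, and then perfectoid by case (iii) of \cite{Kedlaya-Liu2}, Theorem 3.3.18. Without the universal-property comparison and the surjectivity/injectivity endgame, the argument does not close.
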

The above theorem is a consequence of \cite{Prismatic}, Theorem 7.4, as has already been observed in Theorem 2.9.12 of the final version of Kedlaya's lecture notes \cite{Kedlaya17}. For the convenience of the reader we show how the assertion of the theorem can be deduced from the relevant assertions in \cite{Prismatic} (see also Remark 7.5 of that work for related discussion). Recall that the uniformization $R^{u}$ of a Banach ring $R$ is the completion of $R$ with respect to its spectral seminorm. The following lemma records a simple property of the uniformization. 
\begin{lemma} \label{Universal property of uniformization} Let $R$ be a Banach ring and let $R^{u}$ be the uniformization of $R$. Then $R^{u}$ satisfies the following universal property: Every bounded homomorphism from $R$ to a uniform Banach ring $S$ factors uniquely through the canonical map $u: R \to R^{u}$. 
\end{lemma}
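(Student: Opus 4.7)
The plan is to build the factorization using the universal property of Hausdorff completion together with the maximality characterization of the spectral seminorm. Recall that by definition $R^{u}$ is the Hausdorff completion of $R$ with respect to $|\cdot|_{\spc}$, so the canonical map $u\colon R\to R^{u}$ has dense image and the norm $|\cdot|_{\spc}$ extends to a power-multiplicative norm on $R^{u}$. Since $S$ is uniform and Banach, its spectral seminorm $|\cdot|_{\spc,S}$ is a power-multiplicative norm defining the topology of $S$, with respect to which $S$ is complete.

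Given a bounded ring homomorphism $\varphi\colon R\to S$ with $S$ uniform, I would define $\psi\colon R\to\mathbb{R}_{\geq 0}$ by $\psi(f):=|\varphi(f)|_{\spc,S}$. Because $|\cdot|_{\spc,S}$ is submultiplicative and power-multiplicative, so is $\psi$. Moreover, $\psi$ is bounded: one has $|\cdot|_{\spc,S}\leq \lVert\cdot\rVert_{S}$ on $S$, so combining this with the bound $\lVert\varphi(f)\rVert_{S}\leq C\lVert f\rVert_{R}$ coming from the hypothesis on $\varphi$ yields $\psi(f)\leq C\lVert f\rVert_{R}$ for all $f\in R$. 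Hence $\psi$ is a bounded power-multiplicative seminorm on $R$.

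By the maximality of the spectral seminorm among all bounded power-multiplicative seminorms on $R$ (recalled in Remark \ref{The spectral seminorm for non-complete rings}), this forces $\psi(f)\leq |f|_{\spc}$ for every $f\in R$. In other words, $\varphi$, viewed as a map $(R,|\cdot|_{\spc})\to (S,|\cdot|_{\spc,S})$, is submetric and in particular continuous. Since $(S,|\cdot|_{\spc,S})$ is complete and $u(R)$ is dense in $R^{u}$, $\varphi$ extends uniquely by continuity to a bounded ring homomorphism $\widehat{\varphi}\colon R^{u}\to S$ with $\widehat{\varphi}\circ u=\varphi$; uniqueness of the extension is automatic from the density of the image of $u$.

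The only genuine content is the use of the maximality of the spectral seminorm in the third step; once that is invoked, the statement reduces to the universal property of the Hausdorff completion of a topological ring, so I do not expect any serious obstacle beyond keeping the choices of (semi)norms consistent.
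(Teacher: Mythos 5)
Your proposal is correct and follows essentially the same route as the paper: the key step in both is to pull back the power-multiplicative norm on $S$ along $\varphi$ and invoke the maximality of the spectral seminorm to conclude that $\varphi$ is submetric (hence continuous) from $(R,|\cdot|_{\spc})$ to $S$. The only difference is that the paper writes out the extension to the completion explicitly (choosing Cauchy sequences and checking well-definedness, the homomorphism property, boundedness, and uniqueness), whereas you invoke the universal property of the Hausdorff completion of a topological ring; this is a stylistic shortcut of the same argument.
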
 In other words, the functor $R \mapsto R^{u}$ from the category of Banach rings to the category of uniform Banach rings is left-adjoint to the natural inclusion functor. \begin{proof}[Proof of the lemma] Let $\varphi: R \to S$ be a bounded homomorphism from $R$ to a uniform Banach ring $S$. By Remark \ref{The spectral seminorm for non-complete rings} the spectral seminorm on any seminormed ring is its maximal power-multiplicative bounded seminorm. It follows that $\varphi$ is bounded if $R$ is endowed with its spectral seminorm and $S$ is endowed with a power-multiplicative norm $\lVert \cdot \rVert$ defining its topology. Let $f \in R^{u}$ be an arbitrary element of the uniformization. Then there exists a sequence $(f_n)_{n} \subset R$ such that $u(f_n) \to f$ in $R^{u}$. The map $u: R \to R^{u}$ is an isometry with respect to the spectral seminorm $|\cdot|_{\spc}$ on $R$. Now set $\varphi^{u}(f) := \lim_{n}\varphi(f_{n})$. If $(g_{n})_{n} \subset R$ is another sequence with the property that $u(g_{n}) \to f$, then $|g_{n} - f_{n}|_{\spc} \to 0$ as $n \to \infty$ and thus \begin{equation*} \lVert \varphi(f_{n}) - \varphi(g_{n}) \rVert \leq |f_{n} - g_{n}|_{\spc} \to 0. \end{equation*}This shows that $\lim_{n}\varphi(g_{n}) = \lim_{n}\varphi(f_{n})$ in $S$ and hence $\varphi^{u}$ is a well-defined map $R^{u} \to S$ with the property that $\varphi^{u} \circ u = \varphi$. 

We now verify that $\varphi^{u}$ is a bounded homomorphism. Let $(f_{n})_{n}, (g_{n})_{n}$ be sequences in $R$ with $\lim_{n}u(f_n) = f, \lim_{n}u(g_n) = g$. Then $\lim_{n}u(f_n + g_n) = f + g$ and $\lim_{n}u(f_{n}g_{n}) = fg$, the map $u: R \to R^{u}$ being a homomorphism. Since $\varphi$ is a homomorphism we obtain $\varphi^{u}(f + g) = \lim_{n}\varphi(u(f_{n} + g_{n})) = \lim_{n}\varphi(u(f_{n})) + \lim_{n}\varphi(u(g_{n})) = \varphi^{u}(f) + \varphi^{u}(g)$ and, similarly, $\varphi^{u}(fg) = \lim_{n}\varphi(u(f_{n}g_{n})) = \lim_{n}\varphi(u(f_{n})) \varphi(u(g_{n})) = \varphi^{u}(f)\varphi^{u}(g)$. This shows that $\varphi^{u}$ is a homomorphism. By definition, this homomorphism is bounded on the image of $u$, which is a dense subring of $R^{u}$. Hence it must be bounded on all of $R^{u}$. 

It remains to prove uniqueness of the homomorphism $\varphi^{u}$. For this, let $\psi$ be another bounded homomorphism $R^{u} \to S$ with $\psi \circ u = \phi$. For $f \in R^{u}$, let $(f_{n})_{n}$ be any sequence with $u(f_{n}) \to f$. Then continuity of the maps $\psi, \phi^{u}$ and $\phi$ implies $\psi(f) = \psi(\lim_{n}u(f_{n})) = \lim_{n}\psi(u(f_{n})) = \lim_{n}\phi(f_{n}) = \lim_{n}\phi^{u}(u(f_{n})) = \phi^{u}(\lim_{n}u(f_{n})) = \phi^{u}(f)$, showing that $\psi = \phi^{u}$.\end{proof}
\begin{proof}[Proof of Theorem \ref{Spectrally reduced quotients are perfectoid}] Suppose that the quotient $R/I$ is perfectoid. Then $R/I$ is uniform and, in particular, $R/I$ is a spectrally reduced normed ring. By Lemma \ref{Properties of spectrally reduced ideals}(2) this means that the ideal $I$ is spectrally reduced. 

We now prove that the quotient of a perfectoid Tate ring $R$ by a spectrally reduced ideal $I$ is perfectoid. By case (iii) of \cite{Kedlaya-Liu2}, Theorem 3.3.18, the quotient $R/I$ is perfectoid if and only if it is uniform. So, we have to show that $R/I$ is a uniform Banach ring for every spectrally reduced ideal $I$ of $R$. 

If $R_{0}$ is a ring which is $\varpi$-adically complete for some non-zero divisor $\varpi$ such that $\varpi^{p}$ divides $p$, then we say that $R_{0}$ is an integral perfectoid ring if $R_{0}[\varpi^{-1}]$, endowed with the $\varpi$-adic topology on $R_{0}$, is a perfectoid Tate ring. Usually integral perfectoid rings are defined in a slightly different way (see \cite{BMS}, Definition 3.5) but Lemma 3.20 and Lemma 3.21 in \cite{BMS} show that for rings which are $\varpi$-adically complete for some non-zero-divisor $\varpi$ the two definitions are equivalent.

Fix a topologically nilpotent unit $\varpi \in R$ as in the definition of a perfectoid Tate ring. Then the topology on the subring $R^{\circ}$ is the $\varpi$-adic topology. In particular, $R^{\circ}$ is $\varpi$-adically complete. Let $I$ be a spectrally reduced ideal of $R$. Let $S = R^{\circ}/(I \cap R^{\circ})$. By Lemma \ref{Properties of spectrally reduced ideals} we know that $I$ is closed in $R$, so the ideal $I\cap R^{\circ}$ of $R^{\circ}$ is closed with respect to the $\varpi$-adic topology on $R^{\circ}$. Recall that the topologically nilpotent unit $\varpi$ is such that $\varpi^{p}$ divides $p$ in $R^{\circ}$. Thus $(p)_{R^{\circ}} \subseteq (\varpi^{p})_{R^{\circ}} \subseteq (\varpi)_{R^{\circ}}$. By \cite{Stacks}, Part 1, Lemma 10.96.8, this entails that $R^{\circ}$ is $p$-adically complete. The inclusion $(p)_{R^{\circ}} \subseteq (\varpi)_{R^{\circ}}$ tells us that the $p$-adic topology on $R^{\circ}$ is finer than the $\varpi$-adic topology. It follows that the ideal $I \cap R^{\circ}$ is also $p$-adically closed in $R^{\circ}$. Consequently, the quotient $S = R^{\circ}/(I \cap R^{\circ})$ is $p$-adically complete (and, a fortiori, derived $p$-adically complete). Thus $S$ is a semiperfectoid ring in the sense of Bhatt-Scholze \cite{Prismatic}, i.e.~$S$ is a derived $p$-adically complete quotient of an integral perfectoid ring. 

By \cite{Prismatic}, Corollary 7.3, there exists a universal integral perfectoid ring $S_{\perfd}$ equipped with a map $S \to S_{\perfd}$. Then $R' = S_{\perfd}[\varpi^{-1}]$ is a perfectoid Tate ring equipped with a continuous ring map $R \to R'$ such that $I$ is in the kernel of $R \to R'$ and such that every continuous ring map $\varphi: R \to R''$ from $R$ to a perfectoid Tate ring $R''$ containing $I$ in its kernel factors through $R \to R'$. On the other hand, the uniformization $(R/I)^{u}$ of $R/I$ is a perfectoid Tate ring by virtue of \cite{Kedlaya-Liu2}, Theorem 3.3.18 (ii), and, by Lemma \ref{Universal property of uniformization}, $(R/I)^{u}$ satisfies the same universal property in the category of perfectoid $R$-algebras (and even in the category of uniform Banach $R$-algebras) as the universal property just described for $R'$. It follows that $R' = (R/I)^{u}$. But by \cite{Prismatic}, Theorem 7.4, the map $S \to S_{\perfd}$ is surjective, so the map $R/I \to (R/I)^{u}$ is surjective. Since $I$ was assumed to be spectrally reduced, the map $R/I \to (R/I)^{u}$ is also injective. This shows that $R/I$ is already a uniform Banach ring, as claimed.
\end{proof}
\begin{cor} \label{Maximal ideals in perfectoid rings} Let $R$ be a perfectoid Tate ring and $\mathfrak{m} \subsetneq R$ a maximal ideal. Then the quotient Banach field $R/\mathfrak{m}$ is a perfectoid field.
\end{cor}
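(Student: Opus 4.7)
The plan is to combine the characterisation of maximal ideals in Zariskian seminormed rings (Proposition \ref{The Zariskian condition}) with the structural result of Theorem \ref{Spectrally reduced quotients are perfectoid}, and then invoke Kedlaya's theorem identifying perfectoid Tate rings which are fields with perfectoid fields.

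First I would note that $R$ is a Banach ring, hence Zariskian by the geometric series argument recorded just after the definition of a Zariskian seminormed ring. By the equivalence $(4) \Leftrightarrow (5)$ in Proposition \ref{The Zariskian condition}, the maximal ideal $\mathfrak{m}$ is the kernel of some bounded multiplicative seminorm $\phi \in \mathcal{M}(R)$. Since every multiplicative seminorm is a fortiori power-multiplicative, this exhibits $\mathfrak{m}$ as the kernel of a bounded power-multiplicative seminorm, so $\mathfrak{m}$ is a spectrally reduced ideal of $R$ in the sense of Definition (Spectrally reduced ideal).

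Next I would apply Theorem \ref{Spectrally reduced quotients are perfectoid} to the spectrally reduced ideal $\mathfrak{m}$. That theorem yields that the quotient $R/\mathfrak{m}$, equipped with its quotient norm, is a perfectoid Tate ring. But $R/\mathfrak{m}$ is algebraically a field, and a perfectoid Tate ring which is a field is by Theorem 4.2 of \cite{Kedlaya18} automatically a nonarchimedean field, hence a perfectoid field, as recalled at the start of Section 4. This concludes the proof.

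The argument is essentially formal given the results proved earlier, and I do not anticipate a real obstacle; the only subtlety to verify is that the quotient norm on $R/\mathfrak{m}$ that makes it a perfectoid Tate ring is the same structure that turns it into a perfectoid field, which is immediate because Theorem \ref{Spectrally reduced quotients are perfectoid} supplies a uniform Banach ring structure and Kedlaya's result applies to any uniform Banach field.
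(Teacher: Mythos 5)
Your proof is correct and follows the same route as the paper: identify $\mathfrak{m}$ as spectrally reduced (the paper cites this fact for Banach rings earlier, in exactly the Zariskian-ring form you use), apply Theorem \ref{Spectrally reduced quotients are perfectoid}, and then invoke Kedlaya's Theorem 4.2. You have simply filled in the step about spectrally reducedness which the paper leaves implicit.
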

\begin{proof} This is \cite{Kedlaya17}, Corollary 2.9.14. It follows from the above theorem by combining it with \cite{Kedlaya18}, Theorem 4.2.
\end{proof}
\begin{cor} \label{Perfectoid algebras over an algebraically closed field are topologically Jacobson} Every perfectoid algebra $R$ over an algebraically closed nonarchimedean field $K$ is topologically Jacobson.
\end{cor}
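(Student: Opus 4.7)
The plan is to reduce, via Theorem \ref{Jacobson-semisimple Banach algebras}, to showing that for every spectrally reduced prime ideal $\mathfrak{p} \subsetneq R$ the quotient $R/\mathfrak{p}$ has no non-zero topological divisors of zero. By Theorem \ref{Spectrally reduced quotients are perfectoid} the quotient $R/\mathfrak{p}$ is itself a perfectoid Tate $K$-algebra, and it is an integral domain since $\mathfrak{p}$ is prime; so the whole problem is reduced to a single non-zero-topological-divisor statement for perfectoid integral domains over algebraically closed $K$.

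Next I would pass to the tilt. The tilt $A := (R/\mathfrak{p})^{\flat}$ is a perfect Banach integral domain (by the forward-referenced Corollary \ref{First corollary}) over the algebraically closed perfectoid field $K^{\flat}$, since algebraic closure is preserved under tilting. Using the forward-referenced Corollary \ref{Topological zero-divisors and tilting}, the absence of non-zero topological zero-divisors transfers across the tilting equivalence, so it is enough to prove the statement for the characteristic $p$ algebra $A$.

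I would then invoke Escassut's Theorem \ref{Escassut's theorem}: in the uniform Banach $K^{\flat}$-algebra $A$, an element is a topological divisor of zero if and only if it lies in the kernel of some point of the Shilov boundary $\Sigma \subset \mathcal{M}(A)$. The task thus reduces to showing that every $\psi \in \Sigma$ has $\ker(\psi) = 0$, i.e.~that every Shilov boundary point of a perfect Banach integral domain over an algebraically closed nonarchimedean field of characteristic $p$ is an absolute value. Proposition \ref{Shilov boundary and tilting} combined with the forward-referenced Corollary \ref{Absolute values and tilting} confirms this is equivalent to the analogous statement for $R/\mathfrak{p}$, so one has the option of working on either side as convenient.

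The hard part, and the main obstacle I foresee, is this final assertion about Shilov boundary points. My plan is to exploit algebraic closure of the base. Assuming $\ker(\psi) = \mathfrak{q} \neq 0$ for some $\psi \in \Sigma$, the quotient $A/\mathfrak{q}$ is again a perfect Banach integral domain (perfectoid by Theorem \ref{Spectrally reduced quotients are perfectoid}) on which $\psi$ descends to a genuine absolute value. Over the algebraically closed base $K^{\flat}$, I would aim to lift $\psi$ to an approximating family of pairwise distinct absolute values $(\psi_{\lambda})_{\lambda}$ on $A$ itself, for example by exploiting Corollary \ref{Maximal ideals in perfectoid rings} (all residue fields at maximal ideals are perfectoid fields over $K^{\flat}$) together with the approximation statement of Corollary \ref{Approximation lemma 2} to transfer "nearby" geometric points from a perfectoid disk over $K^{\flat}$ into $\mathcal{M}(A)$, in such a way that $\sup_{\lambda}\psi_{\lambda}(f) = \psi(f)$ for every $f \in A$. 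Such a family would show that $\Sigma \setminus \{\psi\}$ is already a closed boundary, contradicting the minimality defining the Shilov boundary unless $\mathfrak{q} = 0$. Carrying out this lifting rigorously is the technically delicate step; without algebraic closure of $K^{\flat}$ there is no reason to expect enough geometric absolute values to be available.
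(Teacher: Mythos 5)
Your plan substitutes the paper's one-line argument with a much harder detour, and the hard step is neither carried out nor obviously true. The paper's proof is direct: by Theorem~\ref{Spectrally reduced quotients are perfectoid} the quotient $R/\mathfrak{p}$ by any spectrally reduced prime is perfectoid, hence a uniform Banach algebra, and then one simply cites the result of Escassut--Mainetti (\cite{EM2}, Corollary~a)) that every uniform Banach algebra over an algebraically closed nonarchimedean field is Jacobson-semisimple. That is the whole proof.

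Your route instead tries to establish the \emph{strictly stronger} hypothesis of the second sentence of Theorem~\ref{Jacobson-semisimple Banach algebras}, namely that every $R/\mathfrak{p}$ has no non-zero topological divisors of zero (equivalently, by Theorem~\ref{Escassut's theorem}, that every Shilov boundary point of $R/\mathfrak{p}$ has trivial kernel). But absence of topological zero-divisors is not implied by Jacobson-semisimplicity --- the implication in Theorem~\ref{Jacobson-semisimple Banach algebras} goes only one way, and the paper's own Exercise~1.1.17 example exhibits a Banach domain in which a non-zero-divisor is a topological zero-divisor. There is no reason to expect the stronger property to hold for all perfectoid domains over an algebraically closed field, and you offer no proof. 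You acknowledge that the Shilov boundary step is ``the technically delicate step,'' but the sketch you give of it also does not close: constructing an approximating family $(\psi_\lambda)$ with $\sup_\lambda \psi_\lambda(f) = \psi(f)$ would, in the topology of pointwise convergence on the compact space $\mathcal{M}(A)$, typically place $\psi$ in the closure of $\{\psi_\lambda\}$, so the set $\Sigma \setminus \{\psi\}$ augmented by the $\psi_\lambda$ need not be a closed boundary smaller than $\Sigma$, and the claimed contradiction with the minimality of the Shilov boundary does not follow. The missing ingredient is simply the \cite{EM2} citation; once you have that, the entire tilting and Shilov-boundary machinery in your plan becomes unnecessary.
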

\begin{proof} Let $I$ be a spectrally reduced prime ideal of $R$. We want to show that $I$ is an intersection of maximal ideals. By Theorem \ref{Spectrally reduced quotients are perfectoid} the quotient Banach algebra $R/I$ is uniform. But by \cite{EM2}, Corollary a), every uniform Banach algebra over an algebraically closed nonarchimedean field is Jacobson-semisimple. This proves that $I$ is an intersection of maximal ideals of $R$, as desired.   
\end{proof}
Let $R$ be a perfectoid Tate ring with tilt $R^{\flat}$. For $I \subsetneq R$ a closed ideal define a subset $I^{\flat}$ of $R^{\flat}$ by 
\begin{equation*} I^{\flat} = \{\, (f^{(n)})_{n \in \mathbb{N}} \in R^{\flat} \mid f^{(n)} \in I \ \text{for all}~n \, \}. \end{equation*} Since $I$ is an ideal, if $f, g \in R^{\flat}$ with $g \in I^{\flat}$, then $fg \in I^{\flat}$. Since $I$ is moreover a closed ideal, for $f, g \in I^{\flat}$, the limit $(f + g)^{(n)} = \lim_{m \to \infty}(f^{(m + n)} + g^{(m + n)})^{p^{m}}$ is in $I$ for every $n$ so that $f + g \in I^{\flat}$. We also see that $I^{\flat}$ is not the whole ring $R^{\flat}$ since otherwise $I$ would contain the element $1 = 1^{(0)}$. This shows that the subset $I^{\flat}$ is a proper ideal of $R^{\flat}$. We call it the \textit{tilt} of the closed ideal $I$ of $R$. Note that $I^{\flat} = \{\, (f^{(n)})_{n \in \mathbb{N}} \in R^{\flat} \mid f^{(0)} \in I \, \}$ whenever $I$ is a closed radical ideal. 

When the ideal $I$ is spectrally reduced, the ideal $I^{\flat}$ has a particularly simple description as we show next.
\begin{lemma} \label{The tilt of an ideal and perfectoid quotients} Let $R$ be a perfectoid Tate ring and let $I \subsetneq R$ be a spectrally reduced ideal so that the quotient Banach ring $R/I$ is again perfectoid. Let $\varphi: R \to R/I$ denote the quotient map and $\varphi^{\flat}$ its tilt. Then $\ker({\varphi}^{\flat}) = I^{\flat}$, and $(R/I)^{\flat}$ is topologically isomorphic to $R^{\flat}/I^{\flat}$. In particular, the ideal $I^{\flat}$ is closed.
\end{lemma}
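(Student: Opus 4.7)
The plan is to handle the three assertions in order, with the kernel computation as the keystone. For $\ker(\varphi^{\flat}) = I^{\flat}$ I would simply unpack the definitions: for $f = (f^{(n)})_{n \geq 0} \in R^{\flat}$, the tilt sends $f$ to $(\varphi(f^{(n)}))_{n \geq 0} \in (R/I)^{\flat}$, which vanishes precisely when $\varphi(f^{(n)}) = 0$, i.e.~$f^{(n)} \in I$, for every $n$. Since this is exactly the definition of $I^{\flat}$, we obtain $\ker(\varphi^{\flat}) = I^{\flat}$; continuity of $\varphi^{\flat}$ together with Hausdorffness of $(R/I)^{\flat}$ then immediately forces $I^{\flat}$ to be closed in $R^{\flat}$, which is the final ``in particular'' assertion.

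For the topological isomorphism $(R/I)^{\flat} \simeq R^{\flat}/I^{\flat}$, the kernel computation produces a continuous injective ring homomorphism $\overline{\varphi^{\flat}}: R^{\flat}/I^{\flat} \to (R/I)^{\flat}$, where $R^{\flat}/I^{\flat}$ is a Banach ring in the quotient norm (since $I^{\flat}$ is closed in the Banach ring $R^{\flat}$). Once surjectivity is established, Banach's Open Mapping Theorem will automatically upgrade this continuous bijection to a homeomorphism of Banach $R^{\flat}$-algebras. For surjectivity I would invoke Kedlaya's tilting correspondence for closed ideals (\cite{Kedlaya17}, Corollary 2.4.5): by Theorem \ref{Spectrally reduced quotients are perfectoid}, $R/I$ is a perfectoid Tate ring and in particular uniform, so that correspondence places $I$ in its bijection between closed ideals $J$ of $R$ with $R/J$ uniform and closed perfect ideals of $R^{\flat}$, and part of its content is a canonical identification $(R/J)^{\flat} \cong R^{\flat}/J^{\flat}$ realized by $\overline{\varphi^{\flat}}$. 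The hypotheses needed to invoke Kedlaya are in hand: closedness of $I^{\flat}$ was obtained above, and perfectness of $I^{\flat}$ follows from the characteristic-$p$ identity $\varphi^{\flat}(f)^{p} = \varphi^{\flat}(f^{p})$ combined with reducedness of the perfectoid ring $(R/I)^{\flat}$.

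The main obstacle, if one wished to avoid appealing directly to Kedlaya's Corollary 2.4.5, would be producing lifts of $(R/I)^{\flat}$-elements in $R^{\flat}$ by hand. The natural attempt is the Fontaine--Wintenberger limit: given $g = (g^{(n)}) \in (R/I)^{\flat}$, choose arbitrary lifts $\tilde g^{(n)} \in R$ of $g^{(n)}$ and define $f^{(n)} := \lim_{m \to \infty}(\tilde g^{(n+m)})^{p^{m}}$. Proving convergence of these limits in $R$ is non-trivial because the discrepancies $(\tilde g^{(n+m+1)})^{p} - \tilde g^{(n+m)}$ only lie in $I$, not a priori in any power of the pseudo-uniformizer $\varpi$; carrying the argument through requires the divisibility $\varpi^{p} \mid p$ in $R^{\circ}$ together with surjectivity of $R^{\circ}/(I \cap R^{\circ}) \to (R/I)^{\circ}$ supplied by \cite{Prismatic}, Theorem 7.4 (already the engine behind Theorem \ref{Spectrally reduced quotients are perfectoid}). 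These are precisely the ingredients underlying Kedlaya's correspondence, which is why I would cite it rather than reproduce the construction.
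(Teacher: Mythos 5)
Your proof is correct, and the overall skeleton matches the paper's exactly: unwind the definition to get $\ker(\varphi^{\flat}) = I^{\flat}$, deduce closedness from continuity, observe $\overline{\varphi^{\flat}}$ is a continuous bijection between Banach rings, and invoke Banach's Open Mapping Theorem to upgrade it to a homeomorphism. The one genuine divergence is in how you establish surjectivity of $\varphi^{\flat}$. The paper cites \cite{Kedlaya-Liu2}, Theorem 3.3.18(b), case (iii), which directly says that the tilting functor takes strict surjections of perfectoid Tate rings to strict surjections; since $\varphi: R \to R/I$ is a surjective map of perfectoid rings (by Theorem \ref{Spectrally reduced quotients are perfectoid}), this gives surjectivity of $\varphi^{\flat}$ in one step, without any need to discuss $I^{\flat}$ being perfect. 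You instead route through \cite{Kedlaya17}, Corollary 2.4.5 — the full tilting correspondence for closed ideals with uniform quotient — which forces you to first verify the hypotheses of that correspondence (closedness and perfectness of $I^{\flat}$), and then extract the identification $(R/I)^{\flat} \cong R^{\flat}/I^{\flat}$ as part of its content. Both routes are valid, and your perfectness verification (radical because the target $(R/I)^{\flat}$ is reduced, perfect because $R^{\flat}$ is perfect in characteristic $p$) is sound. But the paper's reference is the more parsimonious choice here: once Theorem \ref{Spectrally reduced quotients are perfectoid} is in hand, all that is really needed is the bare fact that tilting preserves surjectivity, which is an older and lighter result than the full ideal correspondence (the latter itself builds on Bhatt--Scholze, as you rightly note). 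Your approach is not circular, but it uses a heavier black box than necessary and quietly assumes a particular packaging of Kedlaya's corollary (namely that it comes with a canonical realization of the isomorphism by $\overline{\varphi^{\flat}}$) which a careful reader would want to see confirmed against the exact statement.
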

\begin{proof} Recall that the map ${\varphi}^{\flat}: R^{\flat} \to (R/I)^{\flat}$ is given by \begin{equation*} {\varphi}^{\flat}(f) = (\varphi(f^{(n)}))_{n}, \hspace{5mm} f = (f^{(n)})_{n \in \mathbb{N}_{0}} \in R^{\flat}. \end{equation*} This shows that $\ker({\varphi}^{\flat}) = \ker(\varphi)^{\flat} = I^{\flat}$. In particular, $I^{\flat}$ is closed in $R^{\flat}$ and $R^{\flat}/I^{\flat}$ is a Banach ring. By case (iii) of \cite{Kedlaya-Liu2}, Theorem 3.3.18(b), the bounded homomorphism $\varphi^{\flat}$ is surjective and thus also strict by Banach's Open Mapping Theorem. Hence the Banach rings $R^{\flat}/I^{\flat}$ and $(R/I)^{\flat}$ are isomorphic.
\end{proof}
We obtain a well-defined map $I \mapsto I^{\flat}$ from the set of spectrally reduced ideals of $R$ to the set of spectrally reduced ideals of $R^{\flat}$. We want to define an inverse to this map. Note that any spectrally reduced ideal in $R$ resp.~in $R^{\flat}$ is an intersection of ideals of the form $\ker(\phi)$ for $\phi \in \mathcal{M}(R)$ resp.~for $\phi \in \mathcal{M}(R^{\flat})$ (this is a consequence of \cite{Berkovich}, Theorem 1.3.1). Let $\phi \mapsto \phi^{\#}$ denote the inverse map of the homeomorphism $\mathcal{M}(R) \simeq \mathcal{M}(R^{\flat})$. For $J \subsetneq R^{\flat}$ a spectrally reduced ideal (equivalently, a closed radical ideal) define \begin{equation*} J^{\#} = \bigcap_{\substack{\phi \in \mathcal{M}(R^{\flat}) \\ J \subseteq \ker(\phi)}}\ker(\phi^{\#}). \end{equation*} In particular, if $J = \ker(\phi)$ for some $\phi \in \mathcal{M}(R^{\flat})$, then $J^{\#} = \ker(\phi^{\#})$. It is readily seen that the map $J \mapsto J^{\#}$ thus defined respects inclusions and intersections (for every $\phi \in \mathcal{M}(R^{\flat})$ the kernel $\ker(\phi)$ is a prime ideal). Then, for every spectrally reduced ideal $J$ of $R^{\flat}$, we have   
\begin{equation*}
(J^{\#})^{\flat} = \bigcap_{\substack{\phi \in \mathcal{M}(R^{\flat}) \\ J \subseteq \ker(\phi)}} \ker(\phi^{\#})^{\flat} = \bigcap_{\substack{\phi \in \mathcal{M}(R^{\flat}) \\ J \subseteq \ker(\phi)}} \ker(\phi^{\# \flat}) = \bigcap_{\substack{\phi \in \mathcal{M}(R^{\flat}) \\ J \subseteq \ker(\phi)}} \ker(\phi) = J.
\end{equation*} Conversely, let $I$ be a spectrally reduced ideal of $R$. The quotient $R/I$ is a perfectoid ring with tilt $R^{\flat}/I^{\flat}$ (Theorem \ref{Spectrally reduced quotients are perfectoid} and Lemma \ref{The tilt of an ideal and perfectoid quotients}). Thus we have a homeomorphism $\mathcal{M}(R/I) \simeq \mathcal{M}(R^{\flat}/I^{\flat})$, which, together with Lemma \ref{Berkovich spectrum of a quotient}, shows that for $\phi \in \mathcal{M}(R^{\flat})$ the condition $\ker(\phi) \supseteq I^{\flat}$ is equivalent to $\ker(\phi^{\#}) \supseteq I$. It follows that \begin{equation*} 
(I^{\flat})^{\#} = (\bigcap_{\substack{\phi \in \mathcal{M}(R^{\flat}) \\ \ker(\phi) \supseteq I^{\flat} }} \ker(\phi))^{\#} = \bigcap_{\substack{\phi \in \mathcal{M}(R^{\flat}) \\ \ker(\phi) \supseteq I^{\flat} }} \ker(\phi)^{\#} = \bigcap_{\substack{\phi \in \mathcal{M}(R) \\ \ker(\phi) \supseteq I }} \ker(\phi) = I. 
\end{equation*} We summarize our observations in the following proposition.\begin{prop} \label{Spectrally reduced ideals and tilting} Let $R$ be a perfectoid Tate ring. There is an inclusion-preserving bijection between the set of spectrally reduced ideals of $R$ and the set of spectrally reduced ideals of $R^{\flat}$ given by the maps \begin{equation*} I \mapsto I^{\flat} = \{\, (f^{(n)})_{n \in \mathbb{N}} \in R^{\flat} \mid f^{(0)} \in I \, \} \end{equation*} and \begin{equation*} J \mapsto J^{\#} = \bigcap_{\substack{\phi \in \mathcal{M}(R^{\flat}) \\ J \subseteq \ker(\phi)}}\ker(\phi^{\#}). \end{equation*} \end{prop}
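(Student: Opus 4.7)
The plan is to establish the stated bijection in three steps: (1) both maps are well-defined, i.e., take spectrally reduced ideals to spectrally reduced ideals; (2) both maps preserve inclusions; (3) they are mutually inverse. In fact, most of the calculation is already assembled in the discussion preceding the proposition, so the proof amounts to collating these pieces and verifying well-definedness.

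For step (1), consider first the map $I \mapsto I^{\flat}$. Given a spectrally reduced ideal $I$ of $R$, Theorem \ref{Spectrally reduced quotients are perfectoid} tells us that $R/I$ is perfectoid, and then Lemma \ref{The tilt of an ideal and perfectoid quotients} identifies $I^{\flat}$ with $\ker(\varphi^{\flat})$, where $\varphi: R \to R/I$ is the quotient map, and provides a topological isomorphism $R^{\flat}/I^{\flat} \simeq (R/I)^{\flat}$. Since $(R/I)^{\flat}$ is perfect and Banach it is uniform and hence spectrally reduced as a normed ring, so by Lemma \ref{Properties of spectrally reduced ideals}(ii), $I^{\flat}$ is spectrally reduced in $R^{\flat}$. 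For the map $J \mapsto J^{\#}$, note that by definition $J^{\#}$ is an intersection of ideals of the form $\ker(\phi^{\#})$ with $\phi^{\#} \in \mathcal{M}(R)$, so the supremum of these bounded multiplicative seminorms is a bounded power-multiplicative seminorm on $R$ whose kernel is $J^{\#}$, exhibiting $J^{\#}$ as spectrally reduced.

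For step (2), inclusion-preservation is immediate from the definitions: if $I_{1} \subseteq I_{2}$ then $I_{1}^{\flat} \subseteq I_{2}^{\flat}$ directly from the defining condition on $f^{(0)}$, and if $J_{1} \subseteq J_{2}$ then the indexing set in the intersection defining $J^{\#}$ shrinks when $J$ enlarges, giving $J_{1}^{\#} \subseteq J_{2}^{\#}$. For step (3), the two identities $(J^{\#})^{\flat} = J$ and $(I^{\flat})^{\#} = I$ are exactly the displayed calculations performed in the paragraph just above the proposition; they rest on two facts. First, for $\phi \in \mathcal{M}(R)$ one has $\ker(\phi)^{\flat} = \ker(\phi^{\flat})$, which is immediate from the formula $\varphi^{\flat}(f) = (\varphi(f^{(n)}))_{n}$, and this yields $(J^{\#})^{\flat} = J$ once $J$ is written as the intersection of the $\ker(\phi)$ containing it. Second, the homeomorphism $\mathcal{M}(R/I) \simeq \mathcal{M}(R^{\flat}/I^{\flat})$ (which exists because $R/I$ is perfectoid by Theorem \ref{Spectrally reduced quotients are perfectoid} with tilt $R^{\flat}/I^{\flat}$ by Lemma \ref{The tilt of an ideal and perfectoid quotients}), combined with Lemma \ref{Berkovich spectrum of a quotient}, identifies $\{\phi \in \mathcal{M}(R^{\flat}) : \ker(\phi) \supseteq I^{\flat}\}$ with $\{\psi \in \mathcal{M}(R) : \ker(\psi) \supseteq I\}$ through $\phi \mapsto \phi^{\#}$, which gives $(I^{\flat})^{\#} = I$.

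The main obstacle is not in the bijection itself, whose verification is essentially formal once the framework is in place, but in the underlying Theorem \ref{Spectrally reduced quotients are perfectoid}, which relies on the deep Bhatt--Scholze result on the perfectoidization of semiperfectoid rings. Once that theorem is granted, the key conceptual input is simply the observation that tilting commutes with taking kernels of bounded multiplicative seminorms, i.e., $\ker(\phi)^{\flat} = \ker(\phi^{\flat})$, which transports the spectral description of $I$ (respectively, $J$) on one side of the tilting equivalence to the corresponding spectral description on the other.
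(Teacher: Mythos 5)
Your proof is correct and follows essentially the same route as the paper: the paper's own argument is precisely the paragraph of calculations that precedes the proposition (well-definedness of $I \mapsto I^{\flat}$ via Theorem \ref{Spectrally reduced quotients are perfectoid} and Lemma \ref{The tilt of an ideal and perfectoid quotients}, well-definedness of $J \mapsto J^{\#}$ because a kernel of an intersection of bounded multiplicative seminorms is spectrally reduced, the identity $(J^{\#})^{\flat}=J$ from $\ker(\phi^{\#})^{\flat}=\ker(\phi^{\#\flat})=\ker(\phi)$, and the identity $(I^{\flat})^{\#}=I$ from the homeomorphism $\mathcal{M}(R/I)\simeq\mathcal{M}(R^{\flat}/I^{\flat})$ together with Lemma \ref{Berkovich spectrum of a quotient}). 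Your write-up merely organizes and makes explicit the well-definedness checks that the paper leaves partly implicit, and both you and the paper rely tacitly on the functoriality of the tilting homeomorphism of Berkovich spectra with respect to the quotient map $R \to R/I$ in the proof of $(I^{\flat})^{\#}=I$.
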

We note some straightforward consequences of Proposition \ref{Spectrally reduced ideals and tilting}.
\begin{cor} \label{Kernels of multiplicative seminorms and tilting} Let $R$ be a perfectoid Tate ring with tilt $R^{\flat}$ and suppose that $\phi_{1}, \phi_{2} \in \mathcal{M}(R)$ are two bounded multiplicative seminorms. Then $\ker(\phi_{1}^{\flat}) = \ker(\phi_{2}^{\flat})$ if and only if $\ker(\phi_{1}) = \ker(\phi_{2})$. \end{cor}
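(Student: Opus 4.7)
The plan is to reduce the corollary to the injectivity statement contained in Proposition \ref{Spectrally reduced ideals and tilting}, after checking that the tilt of a kernel coincides with the kernel of the tilt. First I would observe that for any $\phi \in \mathcal{M}(R)$ the ideal $\ker(\phi) \subsetneq R$ is spectrally reduced (it is the kernel of a bounded multiplicative, and in particular bounded power-multiplicative, seminorm), and similarly $\ker(\phi^{\flat})$ is a spectrally reduced ideal of $R^{\flat}$. Thus Proposition \ref{Spectrally reduced ideals and tilting} applies to both of these ideals, and the bijection $I \mapsto I^{\flat}$ restricted to ideals of the form $\ker(\phi_i)$ is injective.

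The key step is then to verify the identity $\ker(\phi^{\flat}) = \ker(\phi)^{\flat}$ for every $\phi \in \mathcal{M}(R)$. Since $\ker(\phi)$ is a prime (hence radical) closed ideal, the simplified description
\begin{equation*}
\ker(\phi)^{\flat} = \{\, f = (f^{(n)})_{n} \in R^{\flat} \mid f^{(0)} \in \ker(\phi) \,\}
\end{equation*}
noted in the paragraph preceding Lemma \ref{The tilt of an ideal and perfectoid quotients} applies. On the other hand, by construction of the tilting homeomorphism $\mathcal{M}(R) \simeq \mathcal{M}(R^{\flat})$, the seminorm $\phi^{\flat}$ satisfies $\phi^{\flat}(f) = \phi(f^{\#}) = \phi(f^{(0)})$, so that $f \in \ker(\phi^{\flat})$ is equivalent to $f^{(0)} \in \ker(\phi)$. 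Comparing the two descriptions gives the required equality.

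With these two ingredients in hand the corollary is immediate: the chain of equivalences
\begin{equation*}
\ker(\phi_{1}^{\flat}) = \ker(\phi_{2}^{\flat}) \iff \ker(\phi_{1})^{\flat} = \ker(\phi_{2})^{\flat} \iff \ker(\phi_{1}) = \ker(\phi_{2}),
\end{equation*}
where the first equivalence uses the identity $\ker(\phi_i^{\flat}) = \ker(\phi_i)^{\flat}$ and the second uses the injectivity of $I \mapsto I^{\flat}$ from Proposition \ref{Spectrally reduced ideals and tilting}, yields the statement.

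I do not expect a genuine obstacle here; the only point that requires any care is to ensure that we are allowed to use the simple description of $\ker(\phi)^{\flat}$ rather than its general definition, which rests on the fact that kernels of multiplicative seminorms are radical (in fact prime), a fact already recorded implicitly in Lemma \ref{Properties of spectrally reduced ideals}. Once this is noted, the argument is a formal consequence of the bijection established in Proposition \ref{Spectrally reduced ideals and tilting} together with the explicit formula defining $\phi^{\flat}$ on elements of $R^{\flat}$.
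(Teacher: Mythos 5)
Your argument is correct and matches the paper's proof exactly: both reduce the statement to the identity $\ker(\phi^{\flat}) = \ker(\phi)^{\flat}$ and then invoke the injectivity of the map $I \mapsto I^{\flat}$ from Proposition \ref{Spectrally reduced ideals and tilting}. You simply supply the short verification of the identity (via $\phi^{\flat}(f) = \phi(f^{(0)})$ and the simplified description of the tilt of a closed radical ideal) which the paper leaves implicit.
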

\begin{proof} For any $\phi \in \mathcal{M}(R)$ we have $\ker(\phi^{\flat}) = \ker(\phi)^{\flat}$, so the corollary follows from the part of Proposition \ref{Spectrally reduced ideals and tilting} asserting injectivity of the map $I \mapsto I^{\flat}$.\end{proof}
\begin{cor} \label{Absolute values and tilting} Let $R$ be a perfectoid Tate ring with tilt $R^{\flat}$. A bounded multiplicative seminorm $\phi \in \mathcal{M}(R)$ is an absolute value if and only if $\phi^{\flat} \in \mathcal{M}(R^{\flat})$ is an absolute value.\end{cor}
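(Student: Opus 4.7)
The plan is to reduce the statement to the bijection of spectrally reduced ideals established in Proposition \ref{Spectrally reduced ideals and tilting}. A bounded multiplicative seminorm $\phi$ on a ring is an absolute value if and only if it is a norm, equivalently if and only if $\ker(\phi) = 0$; the same reformulation works for $\phi^{\flat}$ on $R^{\flat}$. So the corollary amounts to the equivalence $\ker(\phi) = 0 \iff \ker(\phi^{\flat}) = 0$.

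First I would note the identity $\ker(\phi^{\flat}) = \ker(\phi)^{\flat}$, which was already observed in the paragraph preceding Corollary \ref{Kernels of multiplicative seminorms and tilting} (it follows directly from the definition $\varphi^{\flat}(f) = (\varphi(f^{(n)}))_n$ applied to the quotient map $\varphi : R \to R/\ker(\phi)$, using Lemma \ref{The tilt of an ideal and perfectoid quotients}). Second, I would verify that $0^{\flat} = 0$ inside $R^{\flat}$. Indeed, if $f = (f^{(n)})_{n \geq 0} \in R^{\flat}$ satisfies $f^{(0)} = 0$, then for every $n \geq 1$ the element $f^{(n)}$ is a $p^n$-th root of $0$ in $R$; since $R$ is uniform (by definition of perfectoid Tate ring) and hence reduced, this forces $f^{(n)} = 0$ for all $n$, so $f = 0$.

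With these two observations in hand, the corollary is immediate: since $\ker(\phi)$ is spectrally reduced (being the kernel of a bounded power-multiplicative seminorm, by Lemma \ref{Properties of spectrally reduced ideals}) and the zero ideal of $R$ is spectrally reduced (the spectral seminorm on a uniform Banach ring is a norm), the injectivity of $I \mapsto I^{\flat}$ from Proposition \ref{Spectrally reduced ideals and tilting} gives
\begin{equation*}
\ker(\phi) = 0 \iff \ker(\phi)^{\flat} = 0^{\flat} \iff \ker(\phi^{\flat}) = 0,
\end{equation*}
which is the desired equivalence. Alternatively, this is precisely the case $\phi_2 = 0\text{-valuation}$ of Corollary \ref{Kernels of multiplicative seminorms and tilting} (or rather, the specialization of its argument to $\ker = 0$).

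There is no real obstacle here: the entire content of the corollary is formal once Proposition \ref{Spectrally reduced ideals and tilting} is available, the only small point to check being that $0^{\flat} = 0$, which is where the uniformity (hence reducedness) of $R$ enters.
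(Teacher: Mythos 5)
Your proof is correct and follows exactly the same route as the paper: the zero ideal is spectrally reduced (uniformity), its tilt is the zero ideal of $R^{\flat}$, and the injectivity part of Proposition \ref{Spectrally reduced ideals and tilting} concludes. You actually supply one detail the paper states without argument (that $0^{\flat} = 0$, via reducedness of $R$), so nothing is missing.
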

\begin{proof} The zero ideal in any uniform normed ring is spectrally reduced and the tilt of the zero ideal of $R$ is given by the zero ideal of $R^{\flat}$. Hence the corollary again follows from the injectivity part of Proposition \ref{Spectrally reduced ideals and tilting}.
\end{proof}
\begin{cor} \label{Topological zero-divisors and tilting} Let $R$ be a perfectoid Tate ring which contains some nonarchimedean field. Then $R$ has no non-zero topological divisors of zero if and only if its tilt $R^{\flat}$ has no non-zero topological divisors of zero.
\end{cor}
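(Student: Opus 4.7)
The plan is to translate the absence of non-zero topological divisors of zero on both sides into a statement about the Shilov boundary via Escassut's theorem (Theorem \ref{Escassut's theorem}), and then transfer that statement across the tilting equivalence using Proposition \ref{Shilov boundary and tilting} together with Corollary \ref{Absolute values and tilting}.

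Concretely, both $R$ and $R^{\flat}$ are uniform Banach algebras over a nonarchimedean field (for $R$ by hypothesis, for $R^{\flat}$ via a pseudo-uniformizer inherited from the field inside $R$, which is exactly the setup in which Proposition \ref{Shilov boundary and tilting} already tacitly operates), so Escassut's theorem applies to each. Thus $R$ has no non-zero topological divisors of zero if and only if, for every $\psi$ in the Shilov boundary $\mathcal{S}(R)$, one has $\ker(\psi) = \{0\}$, i.e.\ every element of $\mathcal{S}(R)$ is an absolute value on $R$; and the analogous characterization holds for $R^{\flat}$.

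Now I would chain the three bijective correspondences. By Proposition \ref{Shilov boundary and tilting}, the tilting homeomorphism $\mathcal{M}(R) \simeq \mathcal{M}(R^{\flat})$, $\psi \mapsto \psi^{\flat}$, restricts to a bijection $\mathcal{S}(R) \leftrightarrow \mathcal{S}(R^{\flat})$. By Corollary \ref{Absolute values and tilting}, $\psi \in \mathcal{M}(R)$ is an absolute value on $R$ if and only if $\psi^{\flat}$ is an absolute value on $R^{\flat}$. Combining: every $\psi \in \mathcal{S}(R)$ is an absolute value if and only if every element of $\mathcal{S}(R^{\flat})$ is an absolute value, which by the previous paragraph is the desired equivalence.

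The bulk of the argument is a routine chain of equivalences; the only substantive point is the applicability of Escassut's theorem to $R^{\flat}$, which is ensured precisely by the standing hypothesis that $R$ contains a nonarchimedean field (the same hypothesis that makes Proposition \ref{Shilov boundary and tilting} available). No further approximation arguments are needed beyond what is already packaged in the cited propositions.
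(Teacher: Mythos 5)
Your proposal is correct and follows exactly the paper's proof, which is a one-line combination of Escassut's theorem (Theorem \ref{Escassut's theorem}), Proposition \ref{Shilov boundary and tilting}, and Corollary \ref{Absolute values and tilting}; you have simply unwound that combination into the explicit chain of equivalences, which is the intended argument.
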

\begin{proof} Combine Corollary \ref{Absolute values and tilting}, Proposition \ref{Shilov boundary and tilting} and Escassut's theorem on topological divisors of zero (Theorem \ref{Escassut's theorem}).  
\end{proof} There are also analogues of Corollary \ref{Kernels of multiplicative seminorms and tilting} and Corollary \ref{Absolute values and tilting} for elements in the adic spectrum $\Spa(R, R^{+})$, with $R^{+}$ a ring of integral elements of $R$. Before we show this let us recall from \cite{Wedhorn}, Remark and Definition 4.12, that an element $z$ in the adic spectrum $\Spa(R, R^{+})$ of a Huber pair $(R, R^{+})$ is called a secondary (or vertical) generization of an element $x \in \Spa(R, R^{+})$ if $z$ is a generization of $x$ (that is, $x$ is contained in the closure of the singleton set $\{z\}$ in $\Spa(R, R^{+})$) and $\ker(x) = \ker(z)$ (in fact, the reference \cite{Wedhorn} talks about generizations in the valuation spectrum of $R$ which contains $\Spa(R, R^{+})$ as a subspace). The original definition of secondary generizations, given by Huber in \cite{Huber1}, \S 2, was phrased in somewhat different language; equivalence of the two definitions follows, for example, from Proposition 2.14 in \cite{Wedhorn}. In the following we identify an equivalence class of continuous valuations $x \in \Spa(R, R^{+})$ with a representative of that equivalence class and thus we view elements of $\Spa(R, R^{+})$ as continuous valuations on $R$ which are $\leq 1$ on the subring $R^{+}$. Note that the ideal of $R$ which we call the kernel of a valuation $x$ on $R$ and denote $\ker(x)$ is usually called the support of $x$ and denoted $\supp(x)$ in the literature on adic spaces.
\begin{lemma} \label{Kernels of valuations are spectrally reduced} Let $R$ be a perfectoid Tate ring and $R^{+}$ a ring of integral elements of $R$. Every prime ideal of $R$ which is of the form $\ker(x)$ for some $x \in \Spa(R, R^{+})$ is spectrally reduced.\end{lemma}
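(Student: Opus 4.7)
The plan is to produce, for every $x \in \Spa(R, R^{+})$, a bounded multiplicative seminorm $\phi \in \mathcal{M}(R)$ with $\ker(\phi)=\ker(x)$; since the kernels of elements of $\mathcal{M}(R)$ are spectrally reduced prime ideals, this suffices.

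First I would reduce to the rank-$1$ case by passing from $x$ to its maximal (vertical) generization $\tilde{x}$ inside $\Spa(R,R^{+})$. Concretely, let $\Gamma$ be the value group of $x$; let $H \subset \Gamma$ be the largest convex subgroup not containing $|\varpi(x)|$, where $\varpi \in R$ is a fixed pseudo-uniformizer. Then $\Gamma/H$ is of rank $1$ (the image of $|\varpi(x)|$ generates a cofinal convex subgroup, by maximality of $H$), and the composition of $x$ with $\Gamma \to \Gamma/H$ is a continuous valuation on $R$ with the same support as $x$ and bounded by $1$ on $R^{+}$. Existence of this generization inside $\Spa(R,R^{+})$, and the preservation of the support, are the content of the standard rank-$1$ generization construction for Huber pairs (see \cite{Huber1} and the discussion in \cite{Wedhorn}); the only point that uses that $R$ is Tate rather than merely Huber is that $\varpi$ being a unit forces $|\varpi(x)| \neq 0$, so $|\varpi(x)|$ indeed survives the quotient and $\tilde{x}$ is nontrivial (rank exactly $1$ rather than rank $0$).

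Having done this, I would identify $\tilde{x}$ with a continuous, real-valued multiplicative seminorm on $R$ (a rank-$1$ valuation is the same data as such a seminorm). To promote continuity to boundedness I would invoke Lemma \ref{Continuity and boundedness}: since $R$ is a uniform Banach ring with a pseudo-uniformizer, Remark \ref{Uniform units} allows me to choose a power-multiplicative norm defining the topology of $R$ for which $\varpi$ is multiplicative. The lemma then produces some $s>0$ such that $\phi := \tilde{x}^{s}$ is bounded, hence $\phi \in \mathcal{M}(R)$. Taking powers does not change the kernel, so $\ker(\phi)=\ker(\tilde{x})=\ker(x)$, and therefore $\ker(x)$ is the kernel of some element of $\mathcal{M}(R)$, i.e. spectrally reduced.

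The main obstacle is the first step: checking that the rank-$1$ contraction $\tilde{x}$ is still a \emph{continuous} valuation that remains $\leq 1$ on $R^{+}$. Boundedness on $R^{+}$ is automatic because enlarging the valuation ring preserves the inequality $|f|\leq 1$. Continuity, in the Tate setting, reduces to the statement that the image of $\varpi$ in $\Gamma/H$ is topologically nilpotent, which holds by the choice of $H$ and the embedding $\Gamma/H \hookrightarrow \mathbb{R}_{>0}$. Everything else in the argument is essentially formal once this standard adic-spaces input is in place.
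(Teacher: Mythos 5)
Your proof is correct and follows essentially the same route as the paper's: pass to the maximal rank-$1$ (secondary) generization of $x$, which has the same support, then use Lemma \ref{Continuity and boundedness} (invoked in the paper via Remark \ref{Topological spectrum might depend on the seminorm}) to rescale the resulting continuous multiplicative seminorm into a bounded one. The only difference is cosmetic: you unpack the rank-$1$ generization explicitly via convex subgroups, whereas the paper cites Huber's results directly.
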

\begin{proof} Let $x \in \Spa(R, R^{+})$. Since $R$ is Tate, the adic spectrum $\Spa(R, R^{+})$ is an analytic adic space, so all generizations of $x$ are secondary by \cite{Huber1}, \S 3. Moreover, by \cite{Huber3}, Lemma 1.1.10 (i) and (ii), the set of all generizations of $x$ is totally ordered and the maximal element in this totally ordered set is a unique rank $1$ generization $z$ of $x$. In particular, $z$ is a continuous multiplicative seminorm on $R$ such that $\ker(x) = \ker(z)$. But by Remark \ref{Topological spectrum might depend on the seminorm} we know that the kernel of any continuous multiplicative seminorm on a uniform complete Tate ring $R$ is spectrally reduced. It follows that $\ker(x) = \ker(z)$ is spectrally reduced, as claimed.\end{proof}
\begin{cor} \label{Kernels of valuations and tilting} Let $R$ be a perfectoid Tate ring and let $R^{+} \subseteq R^{\circ}$ be a subring of integral elements of $R$. If $x, y \in \Spa(R, R^{+})$ are continuous valuations such that $\ker(x^{\flat}) = \ker(y^{\flat})$, then $\ker(x) = \ker(y)$. \end{cor}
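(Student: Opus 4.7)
The plan is to reduce Corollary \ref{Kernels of valuations and tilting} to the injectivity of the bijection $I \mapsto I^{\flat}$ supplied by Proposition \ref{Spectrally reduced ideals and tilting}. First, by Lemma \ref{Kernels of valuations are spectrally reduced} applied to $R$ (and to $R^{\flat}$), the kernels $\ker(x), \ker(y) \subset R$ and $\ker(x^{\flat}), \ker(y^{\flat}) \subset R^{\flat}$ are all spectrally reduced prime ideals of their respective rings. In particular, all four ideals are closed radical ideals, so the tilting operation $I \mapsto I^{\flat}$ on closed ideals is applicable to $\ker(x)$ and $\ker(y)$, and the simplified description $I^{\flat} = \{f = (f^{(n)}) \in R^{\flat} \mid f^{(0)} \in I\}$ from the discussion preceding Proposition \ref{Spectrally reduced ideals and tilting} is valid for both.

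The key identity I would then verify is
\begin{equation*}
\ker(x^{\flat}) = \ker(x)^{\flat},
\end{equation*}
and analogously for $y$. This is essentially a tautology once both sides are unwound: by the definition of $x^{\flat}$ via $|g(x^{\flat})| := |g^{\#}(x)|$, one has $g \in \ker(x^{\flat})$ if and only if $g^{\#} \in \ker(x)$, which, by the characterization of the tilted ideal for closed radical $I$ just recalled, is exactly the condition $g \in \ker(x)^{\flat}$ (using that $g^{(0)} = g^{\#}$).

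Putting the pieces together, the hypothesis $\ker(x^{\flat}) = \ker(y^{\flat})$ translates into $\ker(x)^{\flat} = \ker(y)^{\flat}$. Since $\ker(x)$ and $\ker(y)$ are both spectrally reduced ideals of $R$ and the map $I \mapsto I^{\flat}$ from spectrally reduced ideals of $R$ to those of $R^{\flat}$ is a bijection by Proposition \ref{Spectrally reduced ideals and tilting}, this forces $\ker(x) = \ker(y)$.

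The main point requiring care, and really the only non-formal step, is the identity $\ker(x^{\flat}) = \ker(x)^{\flat}$: one has to know that $\ker(x)$ is a \emph{closed radical} ideal in order to invoke the simplified formula $I^{\flat} = \{f \in R^{\flat} \mid f^{(0)} \in I\}$, and this in turn relies on Lemma \ref{Kernels of valuations are spectrally reduced} (via the passage to the rank $1$ generization of $x$ and Remark \ref{Topological spectrum might depend on the seminorm}). Everything else is bookkeeping with the bijection of Proposition \ref{Spectrally reduced ideals and tilting}.
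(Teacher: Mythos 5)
Your proof is correct and follows the paper's argument essentially verbatim: establish $\ker(x^{\flat}) = \ker(x)^{\flat}$, invoke Lemma \ref{Kernels of valuations are spectrally reduced} to see $\ker(x),\ker(y)$ are spectrally reduced, and conclude by the injectivity part of Proposition \ref{Spectrally reduced ideals and tilting}. You spell out the verification of $\ker(x^{\flat}) = \ker(x)^{\flat}$ in more detail than the paper, which simply asserts it, but the structure is the same.
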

\begin{proof} We have $\ker(x^{\flat}) = \ker(x)^{\flat}$ for every $x \in \Spa(R, R^{+})$. By Lemma \ref{Kernels of valuations are spectrally reduced} the ideals $\ker(x)$ and $\ker(y)$ are spectrally reduced, whence the corollary follows by the injectivity part of Proposition \ref{Spectrally reduced ideals and tilting}. \end{proof}\begin{cor} \label{Absolute values of higher rank and tilting} Let $R$ be a perfectoid Tate ring, let $R^{+}$ be a ring of integral elements of $R$ and let $x \in \Spa(R, R^{+})$. Then $\ker(x) = 0$ if and only if $\ker(x^{\flat}) = 0$.\end{cor}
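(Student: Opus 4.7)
The plan is to deduce the corollary directly from Proposition \ref{Spectrally reduced ideals and tilting} together with Lemma \ref{Kernels of valuations are spectrally reduced}, in much the same spirit as the preceding Corollary \ref{Kernels of valuations and tilting}. The essential point is that the tilting bijection $I \mapsto I^{\flat}$ of spectrally reduced ideals is, in particular, injective, and it sends the zero ideal of $R$ to the zero ideal of $R^{\flat}$.

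First I would observe the identity $\ker(x^{\flat}) = \ker(x)^{\flat}$ valid for every $x \in \Spa(R,R^{+})$; this follows immediately from the definition of $x^{\flat}$ via $|f(x^{\flat})| = |f^{\#}(x)|$ and from the description $\ker(x)^{\flat} = \{\, f = (f^{(n)})_{n} \in R^{\flat} \mid f^{(0)} \in \ker(x)\,\}$ valid whenever $\ker(x)$ is a closed radical ideal (which it is, being the kernel of a continuous valuation, and in fact spectrally reduced by Lemma \ref{Kernels of valuations are spectrally reduced}). Second, I would note that the zero ideal $(0) \subsetneq R$ is spectrally reduced (since $R$ is uniform, hence its spectral seminorm is a norm), and that its tilt is the zero ideal of $R^{\flat}$: indeed an element $f = (f^{(n)})_{n} \in R^{\flat}$ lies in $(0)^{\flat}$ iff $f^{(0)} = 0$, which, since each $f^{(n+1)}$ is a $p$-th root of $f^{(n)}$ in the perfect ring $R^{\flat}$, forces $f = 0$.

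With these two observations in hand, the corollary is now just the injectivity part of Proposition \ref{Spectrally reduced ideals and tilting} applied to the spectrally reduced ideal $\ker(x)$ of $R$: we have
\begin{equation*}
\ker(x^{\flat}) = 0 \iff \ker(x)^{\flat} = (0)^{\flat} \iff \ker(x) = 0,
\end{equation*}
where the final equivalence uses that $I \mapsto I^{\flat}$ is an inclusion-preserving bijection between spectrally reduced ideals of $R$ and of $R^{\flat}$. There is no real obstacle here: all the technical work has already been done in Proposition \ref{Spectrally reduced ideals and tilting} (which rests on Theorem \ref{Spectrally reduced quotients are perfectoid}) and in Lemma \ref{Kernels of valuations are spectrally reduced} (which rests on the existence of a unique rank $1$ generization of any $x \in \Spa(R,R^{+})$). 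The only mildly delicate point is to verify that the zero ideal of $R$ is genuinely spectrally reduced and to check that its tilt is zero, so as to make the appeal to Proposition \ref{Spectrally reduced ideals and tilting} legitimate; both are essentially formal given that $R$ is uniform and $R^{\flat}$ is perfect.
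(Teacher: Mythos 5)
Your proposal is correct and follows exactly the paper's own argument: invoke Lemma \ref{Kernels of valuations are spectrally reduced} to see that $\ker(x)$ is spectrally reduced, note that the zero ideal of the uniform ring $R$ is spectrally reduced with tilt zero, and conclude from the injectivity of the bijection in Proposition \ref{Spectrally reduced ideals and tilting}. The only minor slip is the parenthetical ``in the perfect ring $R^{\flat}$'': the components $f^{(n)}$ live in $R$, and the reason $f^{(0)}=0$ forces $f=0$ is that each $f^{(n)}$ is nilpotent in $R$, which is reduced because it is uniform.
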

\begin{proof} By Lemma \ref{Kernels of valuations are spectrally reduced}, the ideal $\ker(x)$ is spectrally reduced. Since the zero ideal is spectrally reduced in any uniform normed ring, we conclude by Proposition \ref{Spectrally reduced ideals and tilting}. \end{proof} As in the rest of the paper we write $\TopSpec(R)$ for the topological spectrum of a seminormed ring $R$, i.e.~the subspace of $\Spec(R)$ which consists of spectrally reduced prime ideals. As we show next, the bijection $I \mapsto I^{\flat}$ introduced above restricts to a homeomorphism $\TopSpec(R) \simeq \TopSpec(R^{\flat})$. 
\begin{thm} \label{Prime ideals and tilting} Let $R$ be a perfectoid Tate ring with tilt $R^{\flat}$. For every spectrally reduced prime ideal $\mathfrak{p}$ the subset \begin{equation*} \mathfrak{p}^{\flat} = \{\, (f^{(n)})_{n \in \mathbb{N}_{0}} \in R^{\flat} \mid f^{(0)} \in \mathfrak{p} \,\} \end{equation*} is a spectrally reduced prime ideal of $R^{\flat}$. The map \begin{equation*} \TopSpec(R) \to \TopSpec(R^{\flat}), \mathfrak{p} \mapsto \mathfrak{p}^{\flat}, \end{equation*} is a homeomorphism for the Zariski topology on $\TopSpec(R)$ and $\TopSpec(R^{\flat})$, with inverse given by \begin{equation*} \mathfrak{q} \mapsto \mathfrak{q}^{\#} = \bigcap_{\substack{\phi \in \mathcal{M}(R^{\flat}) \\ \ker(\phi) \supseteq \mathfrak{q}}}\ker(\phi^{\#}). \end{equation*} \end{thm}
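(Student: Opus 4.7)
The plan is to upgrade the bijection of Proposition~\ref{Spectrally reduced ideals and tilting} between spectrally reduced ideals to a bijection between spectrally reduced \emph{prime} ideals. Once this is done, inclusion-preservation of both $I\mapsto I^{\flat}$ and $J\mapsto J^{\#}$ gives the homeomorphism statement automatically, since any closed subset of $\TopSpec(R)$ has the form $V(I)\cap\TopSpec(R)$ for some spectrally reduced ideal $I$, and inclusion-preservation in both directions identifies this with $V(I^{\flat})\cap\TopSpec(R^{\flat})$.

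The forward direction, that $\mathfrak{p}^{\flat}$ is prime whenever $\mathfrak{p}\in\TopSpec(R)$, I expect to be elementary. By Lemma~\ref{Properties of spectrally reduced ideals}(i), $\mathfrak{p}$ is radical, so the recursion $(f^{(n+1)})^{p}=f^{(n)}$ combined with radicality propagates the condition $f^{(0)}\in\mathfrak{p}$ to every component $f^{(n)}\in\mathfrak{p}$, yielding the simpler description $\mathfrak{p}^{\flat}=\{f\in R^{\flat}:f^{(0)}\in\mathfrak{p}\}$. Since multiplication in the tilt is componentwise, $fg\in\mathfrak{p}^{\flat}$ is equivalent to $f^{(0)}g^{(0)}\in\mathfrak{p}$, and primality of $\mathfrak{p}$ immediately gives $f\in\mathfrak{p}^{\flat}$ or $g\in\mathfrak{p}^{\flat}$.

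For the reverse direction, given a spectrally reduced prime $\mathfrak{q}\subset R^{\flat}$, I would reduce the claim that $\mathfrak{q}^{\#}$ is prime to the integral-domain correspondence: a perfectoid Tate ring $S$ is an integral domain if and only if $S^{\flat}$ is an integral domain. The reduction is supplied by Theorem~\ref{Spectrally reduced quotients are perfectoid} and Lemma~\ref{The tilt of an ideal and perfectoid quotients}, which identify $R/\mathfrak{q}^{\#}$ as a perfectoid Tate ring with tilt $R^{\flat}/\mathfrak{q}$; granting the claim, $\mathfrak{q}^{\#}$ is prime iff $R/\mathfrak{q}^{\#}$ is a domain iff $R^{\flat}/\mathfrak{q}$ is a domain iff $\mathfrak{q}$ is prime. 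The direction ``$S$ a domain $\Rightarrow$ $S^{\flat}$ a domain'' of the correspondence is the componentwise argument of the forward step applied to $S$ itself.

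The hard direction of the correspondence, and hence the main obstacle of the theorem, is the implication ``$S^{\flat}$ a domain $\Rightarrow$ $S$ a domain''. My plan is to suppose $ab=0$ with $a,b\in S$ nonzero and derive a contradiction using Corollary~\ref{Approximation lemma 2}: for $\epsilon<\min(\|a\|,\|b\|)$ one finds $\tilde{a},\tilde{b}\in S^{\flat}$ whose $\#$-images approximate $a,b$ on $\mathcal{M}(S)$, and the fact that the spectral norm is attained on the compact Berkovich spectrum forces $\|\tilde{a}\|=\|a\|$ and $\|\tilde{b}\|=\|b\|$, so that $\tilde{a},\tilde{b}$ are nonzero elements of the integral domain $S^{\flat}$. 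The identity $\phi(a)\phi(b)=0$ for every $\phi\in\mathcal{M}(S)$ combines with the approximation estimates to give $\|\tilde{a}\tilde{b}\|_{\spc}\leq\epsilon\max(\|a\|,\|b\|)$. The technical heart of the argument will be turning this estimate into a genuine contradiction despite the dependence of $\tilde{a},\tilde{b}$ on $\epsilon$; my strategy is to transfer Shilov-boundary points of $\mathcal{M}(S)$ on which $a$ respectively $b$ attains its norm across to $\mathcal{M}(S^{\flat})$ via Proposition~\ref{Shilov boundary and tilting}, and use compactness of the Shilov boundary to extract a limiting pair witnessing a topological-zero-divisor relation in $S^{\flat}$ that is incompatible with $S^{\flat}$ being a perfect uniform Banach integral domain.
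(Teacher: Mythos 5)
Your forward direction (that $\mathfrak{p}^{\flat}$ is prime) and the reduction of the homeomorphism statement to the bijection of spectrally reduced primes both match the paper. The substantive difference is in the reverse direction, and there your proposal has a genuine gap.

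You propose to reduce primality of $\mathfrak{q}^{\#}$ to the statement that a perfectoid Tate ring $S$ is a domain if and only if $S^{\flat}$ is, and then to prove the hard implication ``$S^{\flat}$ a domain $\Rightarrow$ $S$ a domain'' analytically via Corollary~\ref{Approximation lemma 2}. Note first that in the paper the domain correspondence is \emph{deduced} from Theorem~\ref{Prime ideals and tilting} (it is Corollary~\ref{Integral domains and tilting}), so you are committing to supplying an independent proof of it, and your sketch does not close. If $ab=0$ with $a,b\neq 0$ in $S$, you correctly derive approximants $\tilde a_{\epsilon},\tilde b_{\epsilon}\in S^{\flat}$ with $\lVert\tilde a_{\epsilon}\rVert=\lVert a\rVert$, $\lVert\tilde b_{\epsilon}\rVert=\lVert b\rVert$ and $\lVert\tilde a_{\epsilon}\tilde b_{\epsilon}\rVert\leq\epsilon\max(\lVert a\rVert,\lVert b\rVert)$, but this is not yet a contradiction: in a uniform Banach domain the spectral norm need not be multiplicative, so a product of two elements of fixed positive norm can have arbitrarily small norm without vanishing (this is exactly the topological-zero-divisor phenomenon, which is compatible with being a domain). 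Your proposed remedy—extracting a limit via compactness of the Shilov boundary—does not supply what is needed: the Shilov boundary in $\mathcal{M}(S^{\flat})$ is compact, but the approximants $\tilde a_{\epsilon},\tilde b_{\epsilon}$ live in the (non-locally-compact) Banach ring $S^{\flat}$ and change with $\epsilon$, and there is no sequential compactness there to produce a limiting pair, nor a reason why the existence of a topological-zero-divisor relation in $S^{\flat}$ would contradict $S^{\flat}$ being a perfect uniform domain.

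The paper avoids all of this with a purely algebraic argument. Given $IJ\subseteq\mathfrak{q}^{\#}$ with $\mathfrak{q}$ a spectrally reduced prime of $R^{\flat}$, it uses that $\mathfrak{q}^{\#}$ is a spectrally reduced radical ideal to pass to $I_{\spc}\cap J_{\spc}\subseteq\mathfrak{q}^{\#}$ (the spectral radical commutes with finite intersections), then applies $\flat$ (which also commutes with intersections and is inclusion-preserving) to land $I_{\spc}^{\flat}\cap J_{\spc}^{\flat}\subseteq\mathfrak{q}$, uses primality of $\mathfrak{q}$ to conclude $I_{\spc}^{\flat}\subseteq\mathfrak{q}$ or $J_{\spc}^{\flat}\subseteq\mathfrak{q}$, and applies $\#$ to recover $I\subseteq\mathfrak{q}^{\#}$ or $J\subseteq\mathfrak{q}^{\#}$. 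This entirely sidesteps the need for an analytic ``$S^{\flat}$ domain $\Rightarrow S$ domain'' statement, which it then derives as a one-line corollary. To repair your proof, you should either adopt this algebraic route, or supply a genuine argument for the hard direction; as written, the key step is asserted rather than proved.
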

\begin{cor} \label{Integral domains and tilting} Let $R$ be a perfectoid Tate ring with tilt $R^{\flat}$. Then $R$ is an integral domain if and only if $R^{\flat}$ is an integral domain. 
\end{cor}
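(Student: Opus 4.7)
The plan is to reduce the statement directly to Theorem \ref{Prime ideals and tilting} applied to the zero ideal. Recall that for any commutative ring $A$, $A$ is an integral domain if and only if the zero ideal $(0)$ is prime in $A$. Since a perfectoid Tate ring $R$ is by definition uniform, its spectral seminorm defines the same (Hausdorff) topology as the original norm and is therefore itself a norm; equivalently, the zero ideal of $R$ is spectrally reduced. Hence $R$ is an integral domain if and only if $(0) \in \TopSpec(R)$, and analogously $R^{\flat}$ is an integral domain if and only if $(0) \in \TopSpec(R^{\flat})$.

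I would then identify the image of $(0) \in \TopSpec(R)$ under the homeomorphism $\TopSpec(R) \simeq \TopSpec(R^{\flat})$ supplied by Theorem \ref{Prime ideals and tilting}. By the formula $\mathfrak{p} \mapsto \mathfrak{p}^{\flat} = \{(f^{(n)})_{n \geq 0} \in R^{\flat} \mid f^{(0)} \in \mathfrak{p}\}$, I must show $(0)^{\flat} = (0)$ in $R^{\flat}$. This is the only substantive (and still entirely routine) point: if $f = (f^{(n)})_{n \geq 0} \in R^{\flat}$ satisfies $f^{(0)} = 0$, then for each $n \geq 1$ we have $(f^{(n)})^{p^{n}} = f^{(0)} = 0$, and since $R$ is uniform it is reduced, so $f^{(n)} = 0$ for all $n$, i.e.\ $f = 0$.

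Putting the two steps together, the chain of equivalences
\begin{equation*}
R~\text{is a domain} \iff (0) \in \TopSpec(R) \iff (0)^{\flat} = (0) \in \TopSpec(R^{\flat}) \iff R^{\flat}~\text{is a domain}
\end{equation*}
finishes the proof. There is no real obstacle, since essentially all of the work has already been done in establishing Theorem \ref{Prime ideals and tilting}; the only thing to verify is that the bijection of topological spectra sends the zero ideal to the zero ideal, which, as noted, follows immediately from reducedness of uniform Banach rings.
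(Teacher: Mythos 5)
Your proof is correct and follows the same route as the paper: both reduce to the observation that the zero ideal of $R$ is spectrally reduced, tilts to the zero ideal of $R^{\flat}$, and then invoke the homeomorphism $\TopSpec(R) \simeq \TopSpec(R^{\flat})$ of Theorem~\ref{Prime ideals and tilting}. You spell out a bit more explicitly why $(0)^{\flat} = (0)$ (via reducedness of the uniform ring $R$), which the paper leaves implicit, but the argument is identical.
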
 \begin{proof}[Proof of the corollary] The zero ideal of $R$ is spectrally reduced and tilts to the zero ideal of $R^{\flat}$. Hence the theorem implies that the zero ideal of $R$ is a prime ideal if and only if the zero ideal of $R^{\flat}$ is a prime ideal.\end{proof}
In the proof of Theorem \ref{Prime ideals and tilting} we will appeal to the concept of the 'spectral radical' $I_{\spc}$ of an ideal $I$ of $R$ (see Definition \ref{Spectral radical}).
\begin{proof}[Proof of Theorem \ref{Prime ideals and tilting}] It suffices to prove that the mutually inverse bijective maps $I \mapsto I^{\flat}$ and $J \mapsto J^{\#}$ send prime ideals to prime ideals. Since the map $R^{\flat} \to R, (f^{(n)})_{n \in \mathbb{N}} \mapsto f^{(0)}$, is multiplicative, ${\mathfrak{p}}^{\flat}$ is clearly a prime ideal whenever $\mathfrak{p}$ is a spectrally reduced prime ideal of $R$. To prove the converse, let $\mathfrak{q} \subsetneq{R^{\flat}}$ be a spectrally reduced prime ideal of $R^{\flat}$ and let $I, J$ be ideals of $R$ with $IJ \subseteq \mathfrak{q}^{\#}$. Since $\mathfrak{q}^{\#}$ is a radical ideal this implies $I \cap J \subseteq \mathfrak{q}^{\#}$ and since $\mathfrak{q}^{\#}$ is spectrally reduced we have $(I \cap J)_{\spc} = I_{\spc} \cap J_{\spc} \subseteq \mathfrak{q}^{\#}$ (the assignment $I \mapsto I_{\spc}$ commutes with intersections). Since for any ideal $I$ the spectral radical $I_{\spc}$ is by its definition a spectrally reduced ideal, ${I_{\spc}}^{\flat}$ is always defined. Hence ${I_{\spc}}^{\flat} \cap {J_{\spc}}^{\flat} = (I_{\spc} \cap J_{\spc})^{\flat} \subseteq{\mathfrak{q}}$. Since $\mathfrak{q}$ is prime this implies ${I_{\spc}}^{\flat} \subseteq \mathfrak{q}$ or ${J_{\spc}}^{\flat} \subseteq \mathfrak{q}$, so $I \subseteq I_{\spc} = {{I_{\spc}}^{\flat}}^{\#} \subseteq \mathfrak{q}^{\#}$ or $J \subseteq J_{\spc} = {{J_{\spc}}^{\flat}}^{\#} \subseteq \mathfrak{q}^{\#}$; consequently, $\mathfrak{q}^{\#}$ is prime. 

The closed sets for the Zariski topology on $\TopSpec(S)$ (where $S$ is one of $R$, $R^{\flat}$) are of the form $\mathcal{V}_{S}(I) = \{\, \mathfrak{p} \in \TopSpec(S) \mid \mathfrak{p} \supseteq I \,\}$, where $I$ is some spectrally reduced ideal of $S$ (if $I$ is not a spectrally reduced ideal, we can replace $I$ with its spectral radical without changing the set $\mathcal{V}_{S}(I)$). Since the maps $I \mapsto I^{\flat}$ and $J \mapsto J^{\#}$ from Proposition \ref{Spectrally reduced ideals and tilting} are inclusion-preserving, we have $\mathcal{V}_{R}(I)^{\flat} = \mathcal{V}_{R^{\flat}}(I^{\flat})$ and $\mathcal{V}_{R^{\flat}}(J)^{\#} = \mathcal{V}_{R}(J^{\#})$ for any spectrally reduced ideal $I$ of $R$ and any spectrally reduced ideal $J$ of $R^{\flat}$. This shows that the map $\TopSpec(R) \to \TopSpec(R^{\flat}), \mathfrak{p} \mapsto \mathfrak{p}^{\flat}$, is a homeomorphism.\end{proof}
\begin{rmk}As a word of caution, let us note that the map $I \mapsto I^{\flat}$ is far from being bijective for closed ideals of a perfectoid ring $R$ which are not spectrally reduced. As a simple example, consider the principal ideal $I = (X)_{R}$ of the perfectoid algebra $R = K\langle X^{1/p^{\infty}}\rangle$, for $K$ a perfectoid field. Then $I$ is closed since $X \in R$ is not a topological divisor of zero. We claim that $I^{\flat}$ is the zero ideal of $R^{\flat} = K^{\flat}\langle X^{1/p^{\infty}}\rangle$. To see this, let $f \in I^{\flat}$ and write $f = (f^{(m)})_{m}$ where $f^{(m)}$ is an element of $R$ with $f^{(m) p^{m}} = f^{\#}$. Since $f \in I^{\flat}$, we have $f^{(m)} \in I = (X)_{R}$ for all $m \geq 0$. But then $f^{\#} = f^{(m) p^{m}} \in (X^{p^{m}})_{R}$ for all $m$ whence $f^{\#} \in \bigcap_{m \geq 0}(X^{p^{m}})_{R} = (0)$. Thus we see that $f^{(m)} = 0$ for all $m$ (the ring $R$ being reduced) and, consequently, $f = 0$. \end{rmk}       
\bibliographystyle{plain} 
\bibliography{Bib} 

\begin{thebibliography}{10}

\bibitem{Andre18-2}
Y.~Andr{\'e}.
\newblock La conjecture du facteur direct.
\newblock {\em Publ. Math. IH{\'E}S}, 127:71--93, 2018.

\bibitem{Berkovich}
V.~Berkovich.
\newblock {\em Spectral Theory and Analytic Geometry over Non-Archimedean
  Fields}, volume~33 of {\em Surveys and Monographs}.
\newblock Amer. Math. Soc., Providence, 1990.

\bibitem{Bhatt}
B.~Bhatt.
\newblock On the direct summand conjecture and its derived variant.
\newblock {\em Inventiones mathematicae}, 212(2):297--317, 2018.

\bibitem{BMS}
B.~Bhatt, M.~Morrow, and P.~Scholze.
\newblock Integral $p$-adic {H}odge theory.
\newblock {\em Publ. Math. IH{\'E}S}, 128:219--397, 2018.

\bibitem{Prismatic}
B.~Bhatt and P.~Scholze.
\newblock Prisms and {P}rismatic {C}ohomology.
\newblock To appear in Annals of Math., arXiv:1905.08229, 2019.

\bibitem{BGR}
S.~Bosch, U.~G{\"u}ntzer, and R.~Remmert.
\newblock {\em Non-Archimedean analysis. A systematic approach to
  rigid-analytic geometry}, volume 261 of {\em Grundlehren der Math. Wiss.}
\newblock Springer-Verlag, Berlin, 1984.

\bibitem{Bourbaki}
N.~Bourbaki.
\newblock {\em Th{\'e}ories spectrales}.
\newblock Hermann, Paris, 1967.

\bibitem{Stacks}
A.~J. de~Jong~et al.
\newblock {\em Stacks Project}.
\newblock http://stacks.math.columbia.edu, 2005-.

\bibitem{Escassut2}
A.~Escassut.
\newblock {\em Ultrametric Banach algebras}.
\newblock World Scientific Publishing Co., Singapore, 2003.

\bibitem{Escassut1}
A.~Escassut.
\newblock Topological divisors of zero and {Shilov} boundary.
\newblock {\em Topology and its Applications}, 153:1152--1163, 2006.

\bibitem{EM2}
A.~Escassut and N.~Ma{\"\i}netti.
\newblock Spectral seminorm of a p-adic {Banach} algebra.
\newblock {\em Bull. of the Belg. Math. Soc. Simon Stevin}, 5:79--91, 1998.

\bibitem{EM1}
A.~Escassut and N.~Ma{\"\i}netti.
\newblock Shilov boundary for ultrametric algebras.
\newblock In {\em p-adic Numbers in Number Theory, Analytic Geometry and
  Functional Analysis}, pages 81--89. Belgian Math. Soc., 2002.

\bibitem{Fargues-Fontaine}
L.~Fargues and J.-M. Fontaine.
\newblock {\em Courbes et fibr{\'e}s vectoriels en th{\'e}orie de {H}odge
  $p$-adique}, volume 406.
\newblock Ast{\'e}risque, 2018.

\bibitem{Fontaine}
J.-M. Fontaine.
\newblock {\em Perfecto{\"\i}des, presque puret{\'e} et monodromie-poids
  (d'apr{\`e}s Peter Scholze)}, volume 352 of {\em S{\'e}minaire Bourbaki,
  volume 2011/2012}.
\newblock Ast{\'e}risque, 2013.

\bibitem{FK}
K.~Fujiwara and F.~Kato.
\newblock {\em Foundations of Rigid Geometry I}, volume~7 of {\em EMS
  Monographs in Mathematics}.
\newblock European Math. Soc., 2018.

\bibitem{Gelfand-Raikov-Shilov}
I.~Gelfand, D.~Raikov, and G.~Shilov.
\newblock {\em Commutative Normed Rings}.
\newblock Chelsea Co., New York, 1964.

\bibitem{Guennebaud}
B.~Guennebaud.
\newblock {\em Sur une notion de spectre pour les alg{\`e}bres norm{\'e}es
  ultram{\'e}triques}.
\newblock PhD thesis, Universit{\'e} de Poitiers, 1973.

\bibitem{Henkel}
T.~Henkel.
\newblock An open mapping theorem for rings which have a zero sequence of
  units.
\newblock arXiv:1407.5647v2, 2014.

\bibitem{Huber1}
R.~Huber.
\newblock Continuous valuations.
\newblock {\em Math. Z.}, 212:455--477, 1993.

\bibitem{Huber3}
R.~Huber.
\newblock {\em {\'E}tale cohomology of rigid analytic varieties and adic
  spaces}, volume E30 of {\em Aspects of Mathematics}.
\newblock Friedr. Vieweg \& Sohn, Braunschweig, 1996.

\bibitem{Johansson-Newton2}
C.~Johansson and J.~Newton.
\newblock Extended eigenvarieties for overconvergent cohomology.
\newblock Corrected version of [22].

\bibitem{Johansson-Newton1}
C.~Johansson and J.~Newton.
\newblock Extended eigenvarieties for overconvergent cohomology.
\newblock {\em Algebra Number Theory}, 13(1):93--158, 2019.

\bibitem{Ka-N}
R.~Kantrowitz and M.~Neumann.
\newblock Normed algebras and the geometric series test.
\newblock {\em Surveys in Mathematics and its Applications}, 12:203--217, 2017.

\bibitem{Kaplansky}
I.~Kaplansky.
\newblock Topological rings.
\newblock {\em Amer. J. Math.}, 69(1):153--183, 1947.

\bibitem{Kedlaya13}
K.S. Kedlaya.
\newblock Nonarchimedean geometry of {W}itt vectors.
\newblock {\em Nagoya Math. J.}, 209:111--165, 2013.

\bibitem{Kedlaya18}
K.S. Kedlaya.
\newblock On commutative nonarchimedean {Banach} fields.
\newblock {\em Documenta Mathematica}, 23:171--188, 2018.

\bibitem{Kedlaya17}
K.S. Kedlaya.
\newblock Sheaves, stacks and shtukas.
\newblock In {\em Perfectoid {S}paces: {L}ectures from the 2017 Arizona Winter
  School}, volume 242 of {\em Math. Surv. Monogr.}, pages 45--191, Providence,
  2019. Amer. Math. Soc.

\bibitem{Kedlaya-Liu}
K.S. Kedlaya and R.~Liu.
\newblock {\em Relative $p$-adic {H}odge theory: Foundations}, volume 371.
\newblock Ast{\'e}risque, 2015.

\bibitem{Kedlaya-Liu2}
K.S. Kedlaya and R.~Liu.
\newblock {\em Relative $p$-adic {H}odge theory, II: {I}mperfect period rings}.
\newblock arXiv:1602.06899v3, 2019.

\bibitem{Ohm-Pendleton}
J.~Ohm and R.~Pendleton.
\newblock Rings with {N}oetherian spectrum.
\newblock {\em Duke Mathematical Journal}, 35:631--639, 1968.

\bibitem{Schikhof}
W.H. Schikhof.
\newblock Uniqueness of {Banach} algebra topology for a class of
  non-archimedean algebras.
\newblock {\em Nederl. Akad. Wetensch. Indag. Math.}, 87(1):47--49, 1984.

\bibitem{Scholze}
P.~Scholze.
\newblock Perfectoid spaces.
\newblock {\em Publ. Math. IH{\'E}S}, 116:245--313, 2012.

\bibitem{Tanaka}
H.~Tanaka.
\newblock Zariskian adic spaces.
\newblock {\em Kodai Mathematical Journal}, 41(3):652--695, 2018.

\bibitem{Warner}
S.~Warner.
\newblock {\em Topological fields}, volume 157 of {\em North Holland Math.
  Studies}.
\newblock North Holland, 1989.

\bibitem{Wedhorn}
T.~Wedhorn.
\newblock Adic spaces.
\newblock Lecture notes, arXiv 1910.05934, 2019.

\end{thebibliography}

\textsc{Technische Universit\"{a}t M\"{u}nchen, Garching, Germany} \newline 

E-mail address: \textsf{dimas.dine@tum.de}

\end{document}